\definecolor{rouge}{rgb}{0.85,0.1,.4}
\definecolor{bleu}{rgb}{0.1,0.2,0.9}
\definecolor{violet}{rgb}{0.7,0,0.8}
\newcommand{\Miura}{\Upupsilon}
\newcommand{\Wak}[2]{\mathbb{W}_{#2}(#1)}
\renewcommand{\L}{\mathbb{L}}
\newcommand{\V}{\mathbb{V}}
\newcommand{\M}{\mathbb{M}}
\newcommand{\wh}{\widehat}
\newcommand{\bra}{{\langle}}
\newcommand{\ket}{{\rangle}}
\newcommand{\Lam}{\Lambda}
\newcommand{\germ}{\mathfrak}
\newcommand{\cprime}{$'$}
\newcommand{\on}{\operatorname}
\newcommand{\+}{\mathop{\oplus}}
\renewcommand{\*}{{\otimes}}
\newcommand{\mc}{\mathcal}
\newcommand{\mf}{\mathfrak}
\newcommand{\g}{\mf{g}}
\newcommand{\h}{\mf{h}}
\newcommand{\affg}{\widehat{\mf{g}}}
\newcommand{\affh}{\widehat{\mf{h}}}
\newcommand{\isomap}{{\;\stackrel{_\sim}{\to}\;}}
\newcommand{\Z}{\mathbb{Z}}
\newcommand{\C}{\mathbb{C}}
\newcommand{\N}{\mathbb{N}}
\newcommand{\Q}{\mathbb{Q}}
\newcommand{\W}{\mathscr{W}}
\newcommand{\ra}{\longrightarrow}
\newcommand{\lam}{\lambda}
\def\leq{\leqslant}
\def\geq{\geqslant}
\DeclareMathOperator{\gr}{gr}
\DeclareMathOperator{\Hom}{Hom}
\theoremstyle{theorem}
\newtheorem{Th}{Theorem}[section]
\newtheorem{MainTh}{Main Theorem}
\newtheorem{Pro}[Th]{Proposition}
\newtheorem{Lem}[Th]{Lemma}
\newtheorem{Co}[Th]{Corollary}
\theoremstyle{remark}
\newtheorem{Def}[Th]{Definition}
\newtheorem{Rem}[Th]{Remark}
\title{W-algebras as coset vertex algebras}
\author{Tomoyuki Arakawa}
\address{Research Institute for Mathematical Sciences, Kyoto University,
Kyoto 606-8502 JAPAN}
\email{arakawa@kurims.kyoto-u.ac.jp}
\author{Thomas Creutzig}
\address{Department of Mathematical and Statistical Sciences, University of Alberta, Edmonton, AB T6G 2G1 Canada}
\email{creutzig@ualberta.ca}
\author{Andrew R. Linshaw}
\address{Department of Mathematics, University of Denver
Denver, CO 80208}
\email{andrew.linshaw@du.edu}
\thanks{This work was partially supported by JSPS KAKENHI Grants
(\#17H01086 and \#17K18724 to T. Arakawa), an NSERC Discovery Grant (\#RES0019997 to T. Creutzig), and a grant from the Simons Foundation (\#318755 to A. Linshaw)}
\begin{document}
\begin{abstract}
We prove the long-standing conjecture on the coset construction of the minimal series principal $W$-algebras 
of $ADE$ types in full generality.
We do this by first
establishing Feigin's conjecture
on  the coset realization of the universal principal $W$-algebras,
which are not necessarily simple. As consequences, the unitarity of the \lq\lq discrete series" of principal $W$-algebras is established, a second coset realization of rational and unitary $W$-algebras of type $A$ and $D$ are given and the rationality of Kazama-Suzuki coset vertex superalgebras is derived. 
\end{abstract}
\maketitle

\section{Introduction}
Let $\g$ be a simple Lie algebra.
For each nilpotent element $f\in \g$ and $k\in \C$,
one associates the $W$-algebra $\W^k(\g,f)$ at level $k$
via quantum Drinfeld-Sokolov reduction \cite{FF90,KacRoaWak03}.
In the instance that $f$ is  a principal nilpotent element  $\W^k(\g,f)$ is called the universal principal $W$-algebra of $\g$ at level $k$
and denoted by  $\W^k(\g)$. These $W$-algebras have appeared prominently in various problems of mathematics and physics as 
the conformal field theory to higher spin gravity duality \cite{GG}, the AGT correspondence \cite{AGT,SchVas13,BraFinNak16}, the (quantum) geometric Langlands program \cite{Fre07,Gai16,AgaFreOko, CG, Gai18,  FG} and integrable systems \cite{B89, D03, De-KacVal13, BakMil13}.

Let $\W_k(\g)$ be the unique simple graded quotient of $\W^k(\g)$.
It has been conjectured in \cite{FKW92}
and was proved by the first named author \cite{Ara09b,A2012Dec} 
that $\W_k(\g)$ is rational and lisse for some special values of $k$.
These $W$-algebras are called the {\em minimal series principal $W$-algebras}
since in the case that $\g=\mf{sl}_2$ they are exactly the minimal series Virasoro vertex algebras \cite{BPZ84,BeiFeiMaz,Wan93}.
As in the case of the Virasoro algebra,
a minimal series principal $W$-algebra is not necessarily unitary.
However,
 in the case that $\g$ is simply laced,
 there exists a sub-series called the {\em discrete series}
which are conjectured to be unitary.

It has been believed in physics since 1988 that the discrete series principal $W$-algebras
can be realized by the coset construction \cite{GodKenOli86}
from integrable representations of the affine Kac-Moody algebra $\affg$
 \cite{Bais, FatLyk88}. 
 Note that 
 the validity of this belief immediately proves the unitarity of the discrete series of $W$-algebras.
 The conjectural   character formula of Frenkel, Kac and Wakimoto \cite{FKW92} 
 of minimal series representations of $W$-algebras  that was proved in \cite{Ara07}
 together with the character formula of Kac-Wakimoto \cite{KacWak90}
 of branching rules 
 proves the matching of characters,
which gives strong evidence to this belief.
In fact these character formulas  give
 an even stronger conjecture that 
 all minimal series principal $W$-algebras of $ADE$ types should be realized by the coset construction
 if we consider more general representations of $\affg$,
 namely,
admissible representations \cite{KacWak89}.

One of
the aims of the present  paper is to prove 
this 
conjectural realization of the minimal series principal $W$-algebras 
in full generality.

\subsection{Main Theorems}
Let us formulate our result more precisely.
Let $V_k(\g)$ be the universal affine vertex algebra associated to $\g$ at level $k$,
and
denote by $L_k(\g)$  the unique simple graded quotient of $V_k(\g)$.
Suppose that  $k$ is an admissible level for $\affg$,
that is,
$L_k(\g)$ is an admissible representation.
Consider the tensor product vertex algebra
$L_k(\g)\* L_1(\g)$.
The invariant subspace
$(L_k(\g)\* L_1(\g))^{\g[t]}$ 
with respect to the 
diagonal action of $\g[t]$
is naturally a vertex subalgebra of $L_k(\g)\* L_1(\g)$,
consisting of elements whose Fourier modes 
commute with the diagonal action of $\affg$.
This is an example of 
 {\em coset vertex algebras}.

\begin{MainTh}\label{Main1}
Let $\g$ be simply laced,
and let  $k$ be an admissible level for $\affg$.
Define the rational number $\ell$ by
the formula 
\begin{align}
\ell+h^{\vee}=\frac{k+h^{\vee}}{k+h^{\vee}+1},
\label{eq:ellintro}
\end{align}
which is a non-degenerate admissible level for $\affg$
so that $\W_{\ell}(\g)$ is a minimal series $W$-algebra.
We have the vertex algebra isomorphism
$$\W_{\ell}(\g)\cong  (L_k(\g)\* L_1(\g))^{\g[t]},$$
and 
$L_{k+1}(\g)$ and 
 $\W_{\ell}(\g)$ form a dual pair  in $ L_k(\g)\* L_1(\g)$.
 \end{MainTh}

For $\g=\mf{sl}_2$ and $k$ a non-negative integer,
Theorem \ref{Main1}  recovers a celebrated result of 
Goddard, Kent and Olive 
\cite{GodKenOli86},
which is  known as 
the {coset construction} (or the {\em{GKO construction}}) of the discrete unitary series of the Virasoro algebra.
Theorem \ref{Main1} was extended to the case of an arbitrary admissible level  $k$
for $\g=\mf{sl}_2$
by Kac and Wakimoto
 \cite{KacWak90}.
 For a higher rank $\g$,
Theorem \ref{Main1} has been proved only in some  special cases:
 $\g=\mf{sl}_n$ and $k=1$ by Arakawa, Lam and Yamada \cite{ALY17};
for $\g=\mf{sl}_3$ and $k\in \Z_{\geq 0}$ by Arakawa and Jiang \cite{AraJia}.

Theorem \ref{Main1}
realizes an  arbitrary minimal series $W$-algebra 
of $ADE$ types as 
the coset  $(L_k(\g)\* L_1(\g))^{\g[t]}$ for some admissible level $k$.
The discrete series $W$-algebras corresponds to the cases that 
 $k$ is a non-negative integer.
 As we have already mentioned above, 
with Main Theorem \ref{Main1} we are able to prove the unitarity of the discrete series of $W$-algebras,
 see Theorem \ref{Th:Unitarity}.

We also note that
Theorem \ref{Main1}
 is the key starting assumption of the conformal field theory to higher spin gravity correspondence of \cite{GG}.

 \smallskip
 
 Since
Kac and Wakimoto \cite{KacWak90}
 have already confirmed the 
 matching of characters,
the essential step
 in proving   Main Theorem \ref{Main1}
 is to define the action of $\W^{\ell}(\g)$ on 
 $(L_k(\g)\* L_1(\g))^{\g[t]}$,
 which is highly non-trivial since
 there is no closed presentation of   $\W^{\ell}(\g)$
 by generators and relations (OPEs) for a general $\g$.
 We overcome this difficulty by establishing the following assertion
that has been conjectured by B. Feigin  (cf.\ \cite{FeiJimMiw16}).
 \begin{MainTh}[Theorem \ref{Th:universalGKO}]\label{Main2}
Let $\g$ be simply laced,
 $k+h^{\vee}\not\in \Q_{\leq 0}$,
 and define $\ell\in \C$ by the formula \eqref{eq:ellintro}.
We have
the vertex algebra isomorphism
 $$\W^{\ell}(\g)\cong (V_k(\g)\* L_1(\g))^{\g[t]}.$$
 Moreover,
 $\W^{\ell}(\g)$ and $V_{k+1}(\g)$ form a dual pair in $V_k(\g)\* L_1(\g)$ if $k$ is generic.
\end{MainTh}
The advantage of replacing $\W_{\ell}(\g)$ by the universal $W$-algebra $\W^{\ell}(\g)$ lies in the fact that 
one can use the description of $\W^{\ell}(\g)$
in terms of screening operators,
at least for a generic $\ell$.
Using such a description,
we are able to establish the statement of Main Theorem \ref{Main2}
for deformable families \cite{CL} of $\W^{\ell}(\g)$ and $ (V_k(\g)\* L_1(\g))^{\g[t]}$,
see Section \ref{section:coset-vs-W}
for the details.
The main tool here is a property of the semi-regular bimodule 
obtained in \cite{A-BGG}, see Proposition \ref{Pro:key-iso}.

\smallskip

The second part of our main result is the branching rules, i.e. the decomposition of modules of $V_k(\g)\* L_1(\g)$ and $L_k(\g)\* L_1(\g)$ into modules of the tensor product of the two commuting subalgebras. For this we need to introduce some notation that is also explained in full detail in the main body of the work. 

Let $P_+$ be the set of dominant weights of $\g$, $Q$ its root lattice and $\rho^\vee$ half the sum of positive coroots. For $\lam\in P_+$
define
$\V_k(\lam):=U(\affg)\otimes_{U(\g[t]\+ \C K)} E_{\lam}$,
where $E_{\lam}$ is the irreducible finite-dimensional $\g$-module
with highest weight $\lam$ regarded as a $\g[t]$-module on which $\g[t]t$ acts trivially and $K$ acts by multiplication with the level $k\in\mathbb C$. Let $\L_k(\lam)$ be the simple quotient of $\V_k(\lam)$ and for $m\in \N$,
let $P^m_+$ be the set of highest-weights such that $\{\L_m(\lam)\mid \lam \in P^m_+\}$
gives  the complete set of isomorphism classes of irreducible integrable
representation of  $\affg$ of level $m$.
We denote by $\chi_{\lam}$ the central character associated to the weight $\lam$, see \eqref{eq:centralchar} for details. 
Let $\mathbf{M}_k(\chi_{\lam})$ be the Verma module of $\W^k(\g)$ with highest weight $\chi_{\lam}$ (see Section
\ref{section:Miura}) and denote by $\mathbf{L}_k(\chi_{\lam})$ be the unique irreducible (graded) quotient of $\mathbf{M}_k(\chi_{\lam})$.
\begin{MainTh} \label{main3} Define $\ell\in \C$ by \eqref{eq:ellintro}, then
the following branching rules hold:
\begin{enumerate}
\item 
Let $\mu\in P_+^{p-h^{\vee}}$,
$\nu\in P_+^1$,
We have
\begin{align*}
\L_k(\mu)\otimes \L_1(\nu)\cong \bigoplus_{\lam\in P^{p+q-h^{\vee}}_+\atop \lam-\mu-\nu\in Q}
\L_{k+1}(\lam)\otimes  \mathbf{L}_{\ell}(\chi_{\mu-(\ell+h^{\vee})\lam})
\end{align*}
as $L_{k+1}(\g)\* \W_{\ell}(\g)$-modules.
\item
Suppose that $k\not\in \mathbb{Q}$.
For $\lam,\mu\in P_+$ and $\nu\in P_+^1$,
we have
\begin{align*}
\V_k(\mu)\* \L_1(\nu)=\bigoplus_{\substack{\lam\in P_+\\ \lam-\mu-\nu\in Q}}\V_{k+1}(\lam)\*  \mathbf{L}_\ell(\chi_{\mu-(\ell+h^{\vee})\lam})
\end{align*}
as  $V_{k+1}\*\W^{\ell}(\g)$-modules.
\end{enumerate}
\end{MainTh}
The generic decomposition is Theorem \ref{thm:genericdecomp} and the one at admissible level is Theorem \ref{Th:minimal-series-modules}.

We note that the $\W^\kappa(\g)$-modules 
$\mathbf{L}_{\ell}(\chi_{\mu-(\ell+h^{\vee})\lam})
$ that appear in Theorem \ref{thm:genericdecomp} 
play a crucial role in the quantum geometric Langlands program and gauge theory \cite{CG, Gai18,FG}.  

\subsection{Corollaries}

Since the minimal series $W$-algebras are rational
and lisse \cite{Ara09b,A2012Dec},
Theorem \ref{Main1} 
establishes  the rationality of a large class of coset vertex algebras.
We are able to derive further rationality statements as Corollaries from Main Theorem \ref{Main1}.
It is worth mentioning that the rationality problem is wide open for a general coset vertex algebra.

The first one is Corollary \ref{Co:rationalprod}, saying that
\begin{Co} 
Let $\g$ be simply laced,
$k$ an admissible number,
$n$ a positive integer.
Then
the coset vertex algebra
$(L_k(\g)\* L_1(\g)^{\otimes n})^{\g[t]}$ 
 is rational and lisse.  Here $L_1(\g)^{\otimes n}$ denotes the tensor product of $n$ copies of $L_1(\g)$, and $\g[t]$ acts on $L_k(\g)\* L_1(\g)^{\otimes n}$ diagonally.
 In particular, $(L_m(\g)\* L_1(\g)^{\otimes n})^{\g[t]}$  is rational and lisse for any positive integers $m,n$.
 \end{Co}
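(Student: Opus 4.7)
The plan is induction on $n$ with the inductive statement: \emph{for every admissible level $k$, the coset $(L_k(\g)\otimes L_1(\g)^{\otimes n})^{\g[t]}$ is rational and lisse.} The base case $n=1$ is Main Theorem~\ref{Main1}, which identifies the coset with the minimal series $W$-algebra $\W_\ell(\g)$, rational and lisse by \cite{Ara09b,A2012Dec}. A crucial preparatory observation is that admissibility in the simply laced case is preserved under the shift $k\mapsto k+1$: if $k+h^\vee=p/q$ is in lowest terms with $p\geq h^\vee$, then $k+1+h^\vee=(p+q)/q$ is also in lowest terms with $p+q>h^\vee$.

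For the inductive step, apply Main Theorem~\ref{Main1} together with the branching rule of Main Theorem~\ref{main3}(1) to the first two tensor factors of $L_k(\g)\otimes L_1(\g)^{\otimes(n+1)}$. The diagonal $\g[t]$-action on those two factors is realized by the embedded subalgebra $L_{k+1}(\g)$, which commutes with $\W_\ell(\g)\subset L_k(\g)\otimes L_1(\g)$. Taking total diagonal $\g[t]$-invariants therefore yields the finite decomposition
\begin{align*}
\bigl(L_k(\g)\otimes L_1(\g)^{\otimes(n+1)}\bigr)^{\g[t]}
\;\cong\;
\bigoplus_{\lam}\bigl(\L_{k+1}(\lam)\otimes L_1(\g)^{\otimes n}\bigr)^{\g[t]}\otimes
\mathbf{L}_\ell\bigl(\chi_{-(\ell+h^\vee)\lam}\bigr)
\end{align*}
as a module over the conformal subalgebra $C_n\otimes\W_\ell(\g)$, where $C_n:=(L_{k+1}(\g)\otimes L_1(\g)^{\otimes n})^{\g[t]}$ appears as the $\lam=0$ summand. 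By the inductive hypothesis at the admissible level $k+1$, together with the base case for $\W_\ell(\g)$, this subalgebra is rational and lisse.

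The main obstacle and the key final step is to lift regularity from $C_n\otimes\W_\ell(\g)$ to the full coset. The coset is simple, being the $\g[t]$-commutant in the simple vertex algebra $L_k(\g)\otimes L_1(\g)^{\otimes(n+1)}$, and it shares its conformal vector with $C_n\otimes\W_\ell(\g)$ by construction of the commutant Sugawara element. The non-vacuum summands should be simple $C_n\otimes\W_\ell(\g)$-modules: the factor $\mathbf{L}_\ell(\chi_{-(\ell+h^\vee)\lam})$ is simple by admissibility of $\ell$, and simplicity of the $C_n$-module factor follows from the module-level analogue of the same coset decomposition, applied iteratively along the induction. One then invokes the general extension principle---going back to Huang--Kirillov--Lepowsky, Carnahan, Miyamoto, and available in the recent theory of commutative algebras in rigid vertex tensor categories---that a simple vertex operator algebra which is a finite conformal extension of a regular vertex subalgebra is itself regular. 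The ``in particular'' assertion is then immediate, since every positive integer is an admissible level.
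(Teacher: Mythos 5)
Your strategy is essentially the paper's: exhibit the coset as a finite conformal extension of a regular vertex subalgebra produced by the GKO result and invoke the extension theorem of \cite[Theorem 3.5]{HKL}. The paper does this without induction: iterating Theorem \ref{Th:minimal-sereis} shows directly that $(L_k(\g)\* L_1(\g)^{\otimes n})^{\g[t]}$ contains $\W_{\ell_0}(\g)\*\cdots\*\W_{\ell_{n-1}}(\g)$ (with $\ell_i+h^{\vee}=(k+i+h^{\vee})/(k+i+h^{\vee}+1)$) as a conformal subalgebra, applies \cite[Theorem 3.5]{HKL} once, and obtains simplicity from \cite[Lemma 2.1]{ACKL}. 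Your induction, with the branching rule of Theorem \ref{Th:minimal-series-modules} supplying the decomposition over $C_n\*\W_\ell(\g)$ at each step, is the same mechanism applied one tensor factor at a time; unwinding it recovers the paper's subalgebra. Your observation that admissibility is preserved under $k\mapsto k+1$, and the matching of conformal vectors, are correct and needed in either formulation.

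Two of your justifications need repair, though neither is fatal. First, simplicity of the coset does \emph{not} follow merely from its being the $\g[t]$-commutant inside a simple vertex algebra; commutants in simple vertex algebras need not be simple in general, and the paper invokes \cite[Lemma 2.1]{ACKL} (whose hypotheses are met here thanks to the complete reducibility coming from the iterated branching rules). Alternatively, note that the rationality/lisse conclusion via \cite[Theorem 3.5]{HKL} does not require simplicity of the extension, so you could drop this step from the form of the extension principle you invoke. Second, your claim that each factor $(\L_{k+1}(\lam)\* L_1(\g)^{\otimes n})^{\g[t]}$ is a \emph{simple} $C_n$-module is neither needed nor clear: iterating Theorem \ref{Th:minimal-series-modules} exhibits it as a finite direct sum of simple modules over $\W_{\ell_1}(\g)\*\cdots\*\W_{\ell_n}(\g)$, which is all that is required, namely that the extension is an object of the (finite, semisimple) module category of the regular subalgebra; asserting simplicity over $C_n$ would require a separate argument. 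With these adjustments your induction goes through and proves the statement, including the ``in particular'' clause since every positive integer level is admissible.
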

Second, we establish level-rank dualities of types $A$ and $D$.
 For this let $L_k(\mf{gl}_n)=L_k(\mf{sl}_n)\* \mc{H}$ be the simple affine vertex algebra associated to $\mf{gl}_n$ at level $k$,
where $\mc{H}$ is the rank $1$ Heisenberg vertex algebra.
The natural embedding $\mf{gl}_{n}\hookrightarrow \mf{gl}_{n+1}$
gives rise to the vertex algebra embedding
$L_k(\mf{gl}_n)\hookrightarrow L_k(\mf{gl}_{n+1})$.
The invariant subspace
$ L_k(\mf{gl}_{n+1})^{\mf{gl}_n[t]}$
is  the coset vertex subalgebra of $ L_k(\mf{gl}_{n+1})$,
consisting of elements whose Fourier modes 
commute with the action of $L_k(\mf{gl}_n)$. Theorem \ref{th:levelrankA} implies:

\begin{Co}\label{Th:affine-Gelfand-Tsetlin}{ \rm(Level-rank duality of type $A$)}
For positive integers  $k,n$ one has
\begin{align*}
L_k(\mf{gl}_{n+1})^{\mf{gl}_{n}[t]}\cong  \W_{\ell}(\mf{gl}_k),
\end{align*}
 where $\ell$ is the non-degenerate admissible number defined by the formula  $$\ell+k= \frac{k+n}{k+n+1}.$$
In particular,
$L_k(\mf{sl}_{n+1})^{\mf{gl}_{n}[t]}\cong  \W_{\ell}(\mf{sl}_k)$,
and is simple, rational and lisse.
\end{Co}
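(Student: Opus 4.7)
The plan is to deduce this corollary by combining the cited level-rank duality (Theorem \ref{th:levelrankA}) with Main Theorem \ref{Main1} applied to the simply laced Lie algebra $\mf{sl}_k$ at admissible level $n$.

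Concretely, since the positive integer $n$ is an admissible level for $\widehat{\mf{sl}_k}$ and $\mf{sl}_k$ has dual Coxeter number equal to $k$, Main Theorem \ref{Main1} yields
$$\W_{\ell}(\mf{sl}_k) \cong \bigl(L_n(\mf{sl}_k)\*L_1(\mf{sl}_k)\bigr)^{\mf{sl}_k[t]},$$
with $\ell$ defined by $\ell + k = (k+n)/(k+n+1)$, which is exactly the formula in the statement. By the very conclusion of Main Theorem \ref{Main1}, this $\ell$ is a non-degenerate admissible level for $\widehat{\mf{sl}_k}$, so $\W_\ell(\mf{sl}_k)$ is a minimal series principal $W$-algebra.

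Theorem \ref{th:levelrankA} supplies the classical level-rank identification of commutants
$$L_k(\mf{gl}_{n+1})^{\mf{gl}_n[t]} \cong \bigl(L_n(\mf{sl}_k)\*L_1(\mf{sl}_k)\bigr)^{\mf{sl}_k[t]} \* \mc{H},$$
where $\mc{H}$ is the rank-one Heisenberg vertex algebra coming from the centralizer of $\mf{gl}_n$ inside $\mf{gl}_{n+1}$. Substituting the previous isomorphism and using the standard decomposition $\W_\ell(\mf{gl}_k) = \W_\ell(\mf{sl}_k)\*\mc{H}$ gives $L_k(\mf{gl}_{n+1})^{\mf{gl}_n[t]} \cong \W_\ell(\mf{gl}_k)$. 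For the $\mf{sl}$-version, one restricts along the embedding $L_k(\mf{sl}_{n+1}) \hookrightarrow L_k(\mf{gl}_{n+1})$, which removes the common Heisenberg factor on both sides and produces $L_k(\mf{sl}_{n+1})^{\mf{gl}_n[t]} \cong \W_\ell(\mf{sl}_k)$.

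Finally, the simplicity, rationality and lisseness of $\W_\ell(\mf{sl}_k)$ are immediate from the theorems of Arakawa \cite{Ara09b,A2012Dec} on minimal series principal $W$-algebras recalled in the introduction, together with the fact noted above that $\ell$ is non-degenerate admissible. The principal obstacle in this approach is establishing Theorem \ref{th:levelrankA} itself; once that level-rank duality is in hand, the present corollary reduces to the parameter matching performed above.
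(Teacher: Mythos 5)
Your proposal is correct and follows essentially the same route as the paper: the corollary is exactly the second statement of Theorem \ref{th:levelrankA}, which the paper obtains by taking $\ell=1$ in the general level-rank duality and invoking Theorem \ref{Th:minimal-sereis} (Main Theorem \ref{Main1}) with the same parameter matching $\ell+k=(k+n)/(k+n+1)$ that you perform. The only (cosmetic) difference is that you package the $\mf{gl}$-version first and strip the Heisenberg factor to get the $\mf{sl}$-statement, whereas the paper's theorem directly yields $\on{Com}(L_k(\mf{sl}_n)\*\mc H, L_k(\mf{sl}_{n+1}))\cong\W_\ell(\mf{sl}_k)$; the rationality, lisseness and simplicity conclusions are drawn from the same sources.
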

In other words,
the simple affine vertex algebra 
$L_k(\mf{gl}_{n})$ contains a simple vertex subalgebra isomorphic to
\begin{align*}
\W_{\ell_1}(\mf{gl}_k)\otimes \W_{\ell_2}(\mf{gl}_k)\otimes \dots \otimes \W_{\ell_n}(\mf{gl}_k)
\end{align*}
with $\ell_i+k=(k+n-i)/(k+n-i+1)$,
which may be regarded as an affine, non-commutative analogue of the Gelfand-Tsetlin subalgebra
of $U(\mf{gl}_n)$.
Note that iterating this coset construction tells us that $L_k(\mf{sl}_{n})^{\mf{gl}_{m}[t]}$ for positive integers $m<n$ is simple, rational and lisse (Corollary \ref{Co:rationalA}).
Note also that for $n=1$,
$L_k(\mf{sl}_{n+1})^{\mf{gl}_{n}[t]}$ is the $\mf{sl}_2$-parafermion vertex algebra
and Theorem \ref{Th:affine-Gelfand-Tsetlin} has been proved in 
 \cite{ALY17}.

There is a similar statement for type $D$. It is Theorem \ref{th:levelrankD} and it implies:

\begin{Co}{ \rm(Level-rank duality of type $D$)}
Let  $k,n$ be positive integers and $k$ even. Let $G = \mathbb Z/2\mathbb Z$ for $n$ odd and $G = \mathbb Z/2\mathbb Z \times  \mathbb Z/2\mathbb Z$ for $n$ even and let $\omega_1$ be the first fundamental weight of $\mf{so}_{n+1}$. Then 
\begin{align*}
\left(\left(L_k(\mf{so}_{n+1})\oplus \L_n(n\omega_1)\right)^{\mf{so}_{n}[t]}\right)^G \cong  \W_{\ell}(\mf{so}_k),
\end{align*}
 where $\ell$ is the non-degenerate admissible number defined by the formula  $$\ell+k-2= \frac{k+n-2}{k+n-1}.$$
 In particular,
$L_k(\mf{so}_{n+1})^{\mf{so}_{n}[t]}$ is simple, rational and lisse.
\end{Co}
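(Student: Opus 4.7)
The plan is to derive the corollary from Main Theorem \ref{Main1}, applied to $\mf{so}_k$ at level $n$, bridged by the type-$D$ level-rank duality furnished by Theorem \ref{th:levelrankD}.

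First I would apply Main Theorem \ref{Main1} with $\g = \mf{so}_k$ (which is simply laced of type $D$ since $k$ is even) at the positive integer level $K = n$, which is automatically admissible. With $h^{\vee}=k-2$, formula \eqref{eq:ellintro} returns
$$\ell+(k-2)=\frac{n+(k-2)}{n+(k-2)+1}=\frac{k+n-2}{k+n-1},$$
which is exactly the non-degenerate admissible level prescribed in the statement. Main Theorem \ref{Main1} then identifies
$$\W_{\ell}(\mf{so}_k)\;\cong\; (L_n(\mf{so}_k)\otimes L_1(\mf{so}_k))^{\mf{so}_k[t]},$$
with $L_{n+1}(\mf{so}_k)$ and $\W_{\ell}(\mf{so}_k)$ forming a dual pair inside $L_n(\mf{so}_k)\otimes L_1(\mf{so}_k)$.

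Next I would invoke the type-$D$ level-rank duality of Theorem \ref{th:levelrankD} in order to match the coset appearing in the corollary with the coset on the right of the display above. Classically, $L_n(\mf{so}_k)$ and $L_k(\mf{so}_{n+1})$ sit as mutual commutants (up to simple-current extensions) inside a common free-fermion-type ambient, so that taking $\mf{so}_n[t]$-invariants on the affine side translates into taking $\mf{so}_k[t]$-invariants on the dual side. The summand $\L_n(n\omega_1)$ in the corollary accounts for a simple current needed to realize the relevant extended algebra on the $\mf{so}_{n+1}$-side, while the finite group $G$ orbifolds away the residual simple currents coming from $L_1(\mf{so}_k)$: its two spinor currents are always present, and the vector current must be modded out additionally when $n$ is even, explaining why $G=\mathbb{Z}/2\mathbb{Z}$ for $n$ odd and $G=\mathbb{Z}/2\mathbb{Z}\times\mathbb{Z}/2\mathbb{Z}$ for $n$ even.

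The main obstacle is executing the type-$D$ level-rank duality at the VOA level rather than only at the level of characters or branching multiplicities. Unlike type $A$, the module category of $L_1(\mf{so}_k)$ has several competing simple currents, and the ambient orthogonal algebra behaves differently in types $B$ and $D$ depending on the parity of $n$; this is precisely what necessitates the case distinction in $G$. Once Theorem \ref{th:levelrankD} is in hand, composing it with Main Theorem \ref{Main1} yields the stated isomorphism, and the simplicity, rationality, and $C_2$-cofiniteness of $L_k(\mf{so}_{n+1})^{\mf{so}_n[t]}$ transfer from the corresponding properties of the minimal series principal $W$-algebra $\W_{\ell}(\mf{so}_k)$ established in \cite{Ara09b, A2012Dec}.
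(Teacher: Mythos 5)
Your route to the displayed isomorphism is the paper's own: the Corollary is exactly the specialization of Theorem \ref{th:levelrankD} to $\ell=1$ (in that theorem's notation), whose proof feeds Theorem \ref{Th:minimal-sereis} (i.e.\ Main Theorem \ref{Main1} for $\g=\mf{so}_k$ at the integer, hence admissible, level $n$) into the type-$D$ level-rank duality, with precisely the level bookkeeping you carry out. Your heuristic for the group $G$ (spinor/vector currents of $L_1(\mf{so}_k)$) is looser than the paper's actual bookkeeping, where $G=A_{m,n}\times\mathbb{Z}/2\mathbb{Z}$ arises from the conformal embeddings \eqref{eq:D1} of \cite{KFPX} and the simple current extension \eqref{eq:D2} of \cite{Ada16}; but since you invoke Theorem \ref{th:levelrankD} as a black box, this does not affect correctness of the isomorphism.

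The genuine gap is the final sentence. The displayed isomorphism only identifies a $G$-orbifold of the \emph{extended} coset with $\W_{\ell}(\mf{so}_k)$, and simplicity, rationality and lisse-ness do not simply ``transfer'' across cosets, extensions or orbifolds — indeed the paper stresses that rationality of general cosets is wide open — so the claim about $L_k(\mf{so}_{n+1})^{\mf{so}_n[t]}$ needs an argument. The paper's is: the coset $C:=\bigl(L_k(\mf{so}_{n+1})\oplus \L_n(n\omega_1)\bigr)^{\mf{so}_n[t]}$ contains the regular $\W_{\ell}(\mf{so}_k)$ as an orbifold subalgebra, hence is a vertex algebra extension of it and is therefore regular by \cite[Theorem 3.5]{HKL}; then $L_k(\mf{so}_{n+1})^{\mf{so}_n[t]}=C^{\mathbb{Z}/2\mathbb{Z}}$ for the involution fixing $L_k(\mf{so}_{n+1})$ and acting by $-1$ on $\L_n(n\omega_1)$, so it is rational and lisse by the cyclic orbifold theorem \cite[Theorem 5.24]{CarM} and simple by \cite[Lemma 2.1]{ACKL}. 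You would either need to reproduce this chain, or observe that $L_k(\mf{so}_{n+1})^{\mf{so}_n[t]}$ is itself an extension of $\W_{\ell}(\mf{so}_k)$ (of index $1$ or $2$ depending on the parity of $n$) and apply \cite{HKL} and \cite{ACKL} directly; as written, ``transfer'' is not a proof of this part.
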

Iterating this coset construction tells us that $L_k(\mf{so}_{n})^{\mf{so}_{m}[t]}$ for positive integers $m, n, k$ such that $2 \leq m<n$ and $k$ even is simple, rational and lisse (Corollary \ref{Co:rationalD}).

Main Theorem \ref{Main1}
has as another Corollary rationality of certain coset vertex superalgebras.
The type $A$ case is called Kazama-Suzuki coset in physics and it is our 
Corollary \ref{Co:supercoset}. We also have a type $D$ case which is Corollary \ref{Co:supercosetD}.
These are important since the corresponding superconformal field theories for the Kazama-Suzuki cosets can be used as building blocks for sigma models in string theory \`a la Gepner \cite{Gep}, and also its rationality is the starting assumption of the superconformal field theory to higher spin supergravity correspondences of  \cite{CHR, CHR2}.

\subsection{Gauge Theory and the quantum geometric Langlands program}

Recently there has been considerable interest in connecting four-dimensional supersymmetric gauge theories, the quantum geometric Langlands program and vertex algebras. 
On the vertex algebra side, one is interested in certain {\em master chiral algebras} that serve as a kernel for the quantum geometric Langlands correspondence \cite{Gai18} and at the same time as a corner vertex algebra for the junction of topological Dirichlet boundary conditions in gauge theory \cite{CG, FG}. Roughly speaking, physics predicts the existence of vertex algebra extensions of tensor products of vertex algebras associated to $\g$, the Langlands dual ${}^L\g$ of $\g$, and the coupling $\Psi$. Different such extensions are expected to be related via coset constructions and quantum Drinfeld-Sokolov reductions. These vertex algebra extensions are then expected to imply equivalences of involved vertex tensor categories and also spaces of conformal blocks (twisted $D$-modules). We refer to \cite{FG, Gai18} for recent progress in this direction.

We will now explain that our Main Theorem \ref{main3} (b) proves two physics conjectures. 
The gauge theory is specified by a coupling $\Psi$, a generic complex number, and a compact Lie group $G$, the gauge group. Let $\g$ be the Lie algebra of $G$ and assume $\g$ is simply-laced. 
Let $n$ be a positive integer and define $k=\Psi-h^\vee$, then the conjectural junction vertex algebra for the Dirichlet boundary conditions $B_{n, 1}^D$ and $B^D_{0, 1}$ is
\begin{align*}
A^{(n)}[G, \Psi] \cong \bigoplus_{\lam\in P_+\atop \lam\in Q}
\V_{k}(\lam)\otimes \V_{\ell}(\lam), \qquad \qquad \frac{1}{k+h^\vee} +\frac{1}{\ell+h^\vee}=n;
\end{align*}
as a module for $V_k(\g) \otimes V_\ell(\g)$. The existence of this simple vertex algebra is presently only established for $\g=\mathfrak{sl}_2$ and $n=1, 2$ \cite{CG, CGL}.
Our main Theorem \ref{main3} fits very nicely into this context and confirms two physics predictions of \cite[Section 2 and 3]{CG}: First, one takes the case $n=1$ and then notices that $V_{k-1}(\g)\otimes L_1(\g)$ is isomorphic to the $V_{k}(\g)\otimes \W^\ell(\g)$-module obtained by replacing the Weyl modules $\V_\ell(\lam)$ of $V_\ell(\g)$ in $A^{(1)}[G, \Psi]$ by the corresponding modules of $\W^{\ell}(\g)$ obtained via quantum Drinfeld-Sokolov reduction.  Second, the junction vertex algebra between Neumann and Dirichlet boundary conditions is the affine vertex algebra $V_{\Psi-h^\vee}(\g)$ and the one between Neumann and Neumann boundary conditions is the regular $W$-algebra of $\g$. Concatenating two such junction vertex algebras gives an extension of these two vertex algebras that is precisely of the form of Main Theorem \ref{main3} (b) and so isomorphic to $V_{\Psi-1-h^\vee}(\g)\otimes L_1(\g)$, confirming another physics prediction. 

We note that the relations between vertex algebras, physics, and quantum geometric Langlands is rich and we will continue to prove further vertex algebra statements in this context as e.g. \cite{CGL}. The methods of our work should be quite helpful for that.

\subsection*{Notation}
For a vertex algebra $V$ and a vertex subalgebra $W\subset V$,
the {\em commutant} of $W$ in $V$,
or the coset of $V$ by $W$, is the vertex subalgebra of $V$ defined by
\begin{align*}
\on{Com}(W,V)
=\{v \in V\mid [w_{(m)},v_{(n)}]=0\ \text{for all }m,n\in \Z,\ w\in W\}\\
=\{v\in V\mid w_{(n)}v=0\ \text{for all }n\in \Z_{\geq 0}, \ w\in W\}.
\end{align*}
Vertex subalgebras $W_1$ and $W_2$ of a vertex algebra $V$ are said to form a {\em dual pair} 
if they are mutually commutant, that is,
\begin{align*}
W_2=\on{Com}(W_1,V)\quad \text{and}\quad W_1=\on{Com}(W_2,V).
\end{align*}

\subsection*{Acknowledgement}
This work  started when we visited Perimeter Institute for Theoretical Physics, Canada,
for the conference
 ^^ ^^ Exact operator algebras in superconformal field theories" in  December 2016.
 We thank the organizers of the conference and the institute.
 The  first named author would like to thank
MIT for its
 hospitality 
 during his visit  from February 2016 to January 2018.

\section{The universal commutant vertex algebra}\label{section:universal-coset}
Let $\g$ be a simple Lie algebra,
$\g=\mf{n}_-\+\mf{h}\+\mf{n}$ a triangular decomposition,
$\Delta$ the set of roots of $\g$,
$\Delta_+$ the set of positive roots of $\g$,
$W$ the Weyl group of $\g$,
$Q$ the root lattice of $\g$,
$Q^{\vee}$ the coroot lattice of $\g$,
$P$ the weight lattice of $\g$,
$P^{\vee}$ the coweight lattice of $\g$,
$P_+\subset P$ the set of dominant weights of $\g$,
$P_+^{\vee}\subset P^{\vee}$ the set of dominant coweights of $\g$.
For $\lam\in P_+$,
denote by $E_{\lam}$ the irreducible finite-dimensional representation of $\g$ with highest weight $\lam$.

Let $\affg=\g\otimes \C[t,t^{-1}]\+ \C K$, the affine Kac-Moody algebra associated to $\g$,
$\affh=\h\+ \C K$ the Cartan subalgebra of $\affg$,
$\affh^*=\h^*\+ \C \Lam_0$ the dual of $\affh$.
Set
$\widehat{\mf{n}}=\mf{n}+ \g[t]t$,
$\widehat{\mf{n}}_-=\mf{n}_-+ \g[t^{-1}]t$,
$\widehat{\mf{b}}=\affh\+ \widehat{\mf{n}}$,
so that $\affg=\widehat{\mf{n}}_-\+ \affh\+\widehat{\mf{n}}=\widehat{\mf{n}}_-\+\widehat{\mf{b}}$.

Let $\widetilde{\mf{h}}=\affh\+ \C D$ be the extended Cartan subalgebra (\cite{Kac90})
of $\affg$.
Then 
$\widetilde{\mf{h}}^*=\h^* \+ \C \Lam_0\+ \C \delta$,
where 
$\Lam_0(K)=\delta(D)=1$,
$\Lam_0(\h\+ \C D)=\delta(\h\+ K)=0$.
Let $\widehat{\Delta}\subset \widetilde{\mf{h}}^*$ be the set of roots of $\affg$,
$\widehat{\Delta}^{re}$, the set of real roots, 
$\widehat{\Delta}_+$, the set of positive roots,
$\widehat{\Delta}^{re}_+=\widehat{\Delta}_+\cap \widehat{\Delta}^{re}$.
Let $\widehat{W}=W\ltimes Q^{\vee}$, the affine Weyl group of $\affg$.
For $\mu\in Q^{\vee}$,
the corresponding element in $\widehat{W}$ is denote by $t_{\mu}$.
The dot action of $\widehat{W}$ acts on ${\affh}^*$ 
is given by
$w\circ \Lam=w(\Lam+\hat{\rho})-\hat{\rho}$,
where $\hat{\rho}=\rho+h^{\vee}\Lam_0$
and $\rho$ is the half sum of positive roots of $\g$.

\smallskip

Let $T$ be an integral $\C[K]$-domain
with the structure map $\tau:\C[K]\ra T$.
Define
\begin{align*}
V_T(\g)=U(\affg)\otimes_{U(\g[t]\+ \C K)}T,
\end{align*}
where
$T$  is regarded as a $\g[t]$-module on which
$\g[t]$ acts trivially
and $K$ acts as the multiplication by $\tau(K)$.
There is a unique vertex algebra structure on
$V_T(\g)$ such that 
$\mathbf{1}=1\*1$ is the vacuum vector,
$Y(T\mathbf{1},z)=1 \*T$,
$Y(xt^{-1}\mathbf{1},z)=x(z):=\sum_{n\in \Z}((xt^n)\* 1)z^{-n-1}$,
$x\in \g$.
$V_T(\g)$ is called 
 the {\em universal affine vertex algebra associated to $\g$ over $T$},
 cf.\ \cite{De-Kac06,CL}.
 
 If $T=\C$ and $\tau(K)=k\in \C$,
$V_T(\g)$ is the  {\em universal affine vertex algebra
associated to $\g$ at level $k$} and,
is  denoted also by $V_k(\g)$.
 The simple graded quotient of $V_k(\g)$ is denoted by $L_k(\g)$.
 
 A $V_T(\g)$-module is the same as a smooth $U(\affg)\* T$-module on which
 $K$ acts as the multiplication by $\tau(K)$.
 Here  a  $U(\affg)\* T$-module $M$ is called smooth if
 $x(z)$, $x\in \g$, is a field on $M$,
 that is,
  $x(z)m\in M((z))$ for all $m\in M$.

In the rest of this section we assume that 
 $\tau(K)+h^{\vee}$ is invertible in $T$,
 so that
the vertex algebra $V_T(\g)$ is conformal by the Sugawara construction.
The field corresponding to the conformal vector $\omega\in V_T(\g)$ is given by
\begin{align*}
L(z)=\sum_{n\in \Z}L_nz^{-n-2}:=\frac{1}{2(\tau(K)+h^{\vee})}\sum_i: x_i(z)x^i(z):.
\end{align*}
The central charge  of $V_T(\g)$ 
equals to  $\tau(K)\dim \g/(\tau(K)+h^{\vee})\in T$.

Set $\h_T=\h\* T$,
$\h_T^*=\on{Hom}_T(\h_T,T)$.
We regard $\h^*$ as a subset of $\h_T^*$
by the 
natural embedding
$\h^*\hookrightarrow \h_T^*$,
$\lam\mapsto( h\* a\mapsto a\lam(h))$.
For a $V_T(\g)$-module $M$,
$\lam\in \h^*_T$,
and $\Delta\in T$,
put
\begin{align*}
&M^{\lam}=\{m\in M\mid hm=\lam(h)m,\  \forall h\in{\h}\},
\\
&M_{[\Delta]}=\{m\in M\mid (L_0-\Delta)^r m=0,\  r\gg 0\},\quad 
M_{\Delta}=\{m\in M\mid L_0m =\Delta m\},\\
&M_{[\Delta]}^\lam=M^{\lam}\cap M_{[\Delta]}.
\end{align*}
$M$ is called as a {\em weight module} if $M=\bigoplus\limits_{\lam\in {\h}^*_T,\
\Delta\in T}M^{\lam}_{[\Delta]}$.

The {\em Kazhdan-Lusztig category} $\on{KL}_T$ over $T$
is the category of all $V_T(\g)$-modules $M$
such that (1) $M$ is a weight module
and $M^{\lam}=0$ unless $\lam\in P$,
(2)  $U(\g[t])\*T. m$ is finitely generated as a $T$-module for all $m\in M$,
(3) $M$ is direct sum of finite-dimensional $\g$-modules. 

For $\lam\in P_+$,
define
\begin{align*}
\V_T(\lam):=U(\affg)\otimes_{U(\g[t]\+ \C K)} (E_{\lam}\* T)\in \on{KL}_T,
\end{align*}
where $E_{\lam}$ is the irreducible finite-dimensional $\g$-module
with highest weight $\lam$ regarded as a $\g[t]$-module on which $\g[t]t$ acts trivially.
We have
$$\V_T(\lam)=\bigoplus_{d\in \Z_{\geq 0}}\V_T(\lam)_{h_{\lam}+d},\quad 
\V_T(\lam)_{h_{\lam}}=E_{\lam}\* T,$$
and each $\V_T(\lam)_{h_{\lam}+d}$ is a free $T$-module of finite rank,
where
\begin{align}
h_{\lam}:=\frac{(\lam+2\rho|\lam)}{2(\tau(K)+h^{\vee})}=\frac{1}{2(\tau(K)+h^{\vee})}(|\lam|^2+\sum_{\alpha\in \Delta_+}(\lam|\alpha))\in
T.
\label{eq:lowest-weight}
\end{align}
Note that 
$V_T(\g)\cong \V_T(0)$ as an object of $\on{KL}_T$.

If $T=\C$ and $\tau(K)=k\in \C$,
we write $\on{KL}_k$ for $\on{KL}_T$
and $\V_k(\lam)$ for $\V_T(\lam)$.
\begin{Lem}\label{lem:projective}
Suppose that 
$k+h^{\vee}\in \C\backslash \mathbb{Q}_{\leq 0}$.
Then
$
\on{Hom}_{\affg}(V_k(\g), M)\cong M_{[0]}
$
for any $M\in \on{KL}_k$.
In particular $V_k(\g)$ is projective in $\on{KL}_k$.

\end{Lem}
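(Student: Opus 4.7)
Since $V_k(\g)$ is cyclically generated over $\affg$ by the vacuum $\mathbf{1}$, which is annihilated by $\g[t]$ and on which $K$ acts by $k$, every $\affg$-homomorphism $\phi\colon V_k(\g)\to M$ is determined by $\phi(\mathbf{1})$, and any vector of $M^{\g[t]}$ arises this way. This gives $\on{Hom}_{\affg}(V_k(\g),M)\cong M^{\g[t]}$, and the plan is to show $M^{\g[t]}=M_{[0]}$ using the Sugawara formula together with a bound on the $L_0$-spectrum in $\on{KL}_k$. The inclusion $M^{\g[t]}\subseteq M_{[0]}$ is immediate: in the Sugawara formula for $L_0$ every term has some $x_{(n)}$ with $n\geq 0$ on the right, which annihilates any $v\in M^{\g[t]}$.

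For the reverse inclusion I first control the $L_0$-spectrum. Any $m\in M$ generates a $U(\g[t])$-submodule which, by the defining properties of $\on{KL}_k$, is a quotient of some Weyl module $\V_k(\lam)$ with $\lam\in P_+$; hence every generalized $L_0$-eigenvalue of $M$ lies in $h_\lam+\Z_{\geq 0}$ for some such $\lam$. The hypothesis $k+h^\vee\notin\Q_{\leq 0}$, together with $(\lam|\lam+2\rho)>0$ for $\lam\in P_+\setminus\{0\}$, excludes $h_\lam\in\Z_{\leq 0}$ for all such $\lam$ (a value $h_\lam=-n$ with $n\geq 1$ would force $k+h^\vee\in\Q_{<0}$). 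Consequently $h_\lam+d\in\Z_{\leq 0}$ with $\lam\in P_+$ and $d\in\Z_{\geq 0}$ forces $(\lam,d)=(0,0)$. In particular $M_{[-n]}=0$ for all $n\geq 1$, so for any $v\in M_{[0]}$, $x\in\g$, $n\geq 1$, the element $x_{(n)}v\in M_{[-n]}$ vanishes.

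It remains to show $\g\cdot M_{[0]}=0$. Since $M$ is a direct sum of finite-dimensional $\g$-modules, so is $M_{[0]}$, which is therefore $\g$-semisimple. If an irreducible $\g$-summand of $M_{[0]}$ had highest weight $\mu\in P_+\setminus\{0\}$, its highest weight vector $w$ would be annihilated by $\mf{n}$ and, by the previous paragraph, by $\g\otimes\C[t]t$; the Sugawara formula would then yield $L_0 w=h_\mu w$ as an actual eigenvalue, while $(L_0)^N w=0$ for some $N$ would force $h_\mu=0$, hence $\mu=0$, a contradiction. This establishes $M_{[0]}=M^{\g[t]}$. Projectivity of $V_k(\g)$ in $\on{KL}_k$ then follows because $M\mapsto M_{[0]}$ is exact on weight modules: the decomposition $M=\bigoplus_\Delta M_{[\Delta]}$ is preserved by every $\affg$-morphism, so a short exact sequence in $\on{KL}_k$ restricts to a short exact sequence on each generalized $L_0$-eigenspace. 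The principal subtlety in the whole argument is the zero-mode step of the second inclusion, where $\g$-semisimplicity of objects in $\on{KL}_k$ is essential to reduce to highest weight vectors and invoke Sugawara.
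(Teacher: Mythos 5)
Your argument is correct and follows essentially the same route as the paper's proof: spectral control via the formula for $h_\lam$ and the hypothesis $k+h^\vee\notin\Q_{\leq 0}$ to force $M_{[-n]}=0$ for $n>0$, the Sugawara/Casimir computation to identify $M_{[0]}$ with $M^{\g[t]}$, Frobenius reciprocity, and exactness of $M\mapsto M_{[0]}$ for projectivity. One minor imprecision: the $U(\g[t])$-submodule generated by $m$ is finite-dimensional and is not itself a quotient of a Weyl module; you should instead say that the $\affg$-submodule generated by $m$ is a quotient of a module with a finite filtration by (quotients of) Weyl modules, or pass to a $\g$-highest weight vector annihilated by $\g[t]t$, either of which yields the needed containment of the generalized $L_0$-spectrum in $\bigcup_{\lam\in P_+}(h_\lam+\Z_{\geq 0})$.
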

\begin{proof}
Although this statement is well-known,
we include a proof for completeness. 
The assumption and the second formula of  \eqref{eq:lowest-weight}
imply that 
 $h_{\lam}\not \in \mathbb{R}_{<0}$
  for 
$\lam\in P_+$,
 and that 
$h_{\lam}=0$ if and only $\lam=0$.
It follows that 
$M_{[-n]}=0$ for all $n\in \Z_{>0}$,
and thus,
$L_0$ acts as 
$\Omega/2(k+h^{\vee})$ on $M_{[0]}$,
where $\Omega$ is the Casimir element of $U(\g)$.
As $\Omega$ acts semisimply on an object of $\on{KL}_k$,
it follows that $M_{[0]}=M_0$.
Moreover,
$M_{[0]}$ is a direct sum of trivial representations of $\g$.
Therefore,
we conclude that  $M_{[0]}=M_0^{\g[t]}:=M_0\cap M^{\g[t]}$.
Hence 
$\on{Hom}_{\affg}(V_k(\g), M)\cong M_0^{\g[t]}=M_{[0]}$,
by the Frobenius reciprocity.
The last statement 
follows from the fact that 
$M\mapsto M_{[0]}$ is an exact functor
from $\on{KL}_k$ to the category of $\g$-modules.
\end{proof}

We now assume that 
$\tau(K)+h^{\vee}+1$ is invertible in $T$
as well as $\tau(K)+h^{\vee}$.
For $a\in \C$, let  $T+a$ denote the $\C[K]$-algebra
$T$ with the structure map $K\mapsto \tau(K)+a$.

Consider the tensor product
$V_T(\g)\* L_1(\g)$.
As $V_T(\g)$
is free over $U(\g[t^{-1}]t^{-1})$,
so is $V_T(\g)\* L_1(\g)$.
It follows that we have  the vertex algebra embedding
\begin{align*}
V_{T+1}(\g)\hookrightarrow V_T(\g)\* L_1(\g),\quad
u\mathbf{1}\mapsto \Delta(u)(\mathbf{1}\*\mathbf{1})\quad (u\in U(\affg)),
\end{align*}
where $\Delta(u)$ denotes the coproduct of $U(\affg)$.

The following assertion is clear.
\begin{Lem}
$V_T(\g)\* L_1(\g)$ 
is an object of $\on{KL}_{T+1}$ 
as a $V_{T+1}(\g)$-module.
Moreover, each weight space
$(V_T(\g)\* L_1(\g))^{\lam}_{\Delta}$ is a free $T$-module.
\end{Lem}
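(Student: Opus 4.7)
The plan is to inherit the three conditions defining $\on{KL}_{T+1}$ directly from the structure of $V_T(\g)\otimes L_1(\g)$, exploiting the fact that the $V_{T+1}(\g)$-action arises from the diagonal embedding $\affg\hookrightarrow \affg\oplus\affg$ realized via the coproduct $\Delta$, so each condition decouples across the two tensor factors.

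For conditions (1) and (3), I would observe that both $V_T(\g)$ and $L_1(\g)$ decompose as direct sums of finite-dimensional $\g$-modules with $\h$-weights lying in the root lattice $Q\subset P$: for $V_T(\g)$ this follows from the identification $V_T(\g)\cong U(\g[t^{-1}]t^{-1})\otimes T$ as a $\g$-module under the adjoint action on each $\g t^{-n}$; for $L_1(\g)$ it is a consequence of integrability, by which each graded piece is a finite-dimensional semisimple $\g$-module. The tensor product under the diagonal $\g$-action then inherits both properties: finite-dimensional tensored with finite-dimensional is finite-dimensional, and weights in $P$ add to weights in $P$.

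For condition (2), the coproduct satisfies $\Delta(U(\g[t]))\subset U(\g[t])\otimes U(\g[t])$, so for any pure tensor $v_1\otimes v_2$ the orbit is contained in $(U(\g[t])v_1)\otimes_T(U(\g[t])v_2)$. The first factor is finitely generated over $T$ since $V_T(\g)\in\on{KL}_T$, and the second is finite-dimensional over $\C$ because $L_1(\g)$ is integrable and hence $\g[t]$ acts locally finitely; extending to arbitrary elements by linearity completes condition (2).

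For the freeness of each weight space, I would use the bigrading of $V_T(\g)\otimes L_1(\g)$ by $\h$-weight and by the tensor-product conformal weight $d=L_0^{V_T}+L_0^{L_1}$. Each bigraded piece $(V_T(\g)\otimes L_1(\g))^\lam_d$ is a free $T$-module of finite rank. Since the Sugawara vector $\omega_{V_{T+1}}$ has conformal weight $2$ with respect to this tensor-product grading, the zero mode $L_0^{V_{T+1}}$ preserves the bigrading, acts $T$-linearly on each bigraded piece, and commutes with $L_0^{V_T}+L_0^{L_1}$. Its generalized eigenspaces then refine the bigrading, giving the decomposition $M^\lam_{[\Delta]}=\bigoplus_d M^\lam_{d,[\Delta]}$. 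The main obstacle is that generalized eigenspaces of a $T$-linear operator on a free module over a general integral domain need not be free; I would overcome this by noting that on each finite-rank bigraded piece the relevant characteristic polynomial of $L_0^{V_{T+1}}$ factors according to the explicit form of the Sugawara and the bigraded structure, so the generalized eigenspaces appear as direct summands of free $T$-modules and are themselves free.
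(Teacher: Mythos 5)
The paper itself offers no argument for this lemma (it is recorded as ``clear''), so the comparison is really with what a careful reader would supply. Your treatment of the routine conditions is fine: $V_T(\g)\cong U(\g[t^{-1}]t^{-1})\otimes_{\C}T$ with the adjoint $\g$-action is a direct sum of finite-dimensional $\g$-modules with weights in $Q\subset P$, $L_1(\g)$ is integrable with weights in $P$, and $\Delta(U(\g[t]))\subset U(\g[t])\otimes U(\g[t])$ together with local finiteness of $\g[t]$ on $L_1(\g)$ gives the finite-generation condition. Note also that the freeness assertion is immediate in its natural reading: with respect to the bigrading by $\h$-weight and the tensor-product conformal weight, every bigraded piece of $V_T(\g)\otimes L_1(\g)$ is a finite-dimensional $\C$-vector space tensored with $T$, hence free -- which is presumably why the paper states the lemma without proof.

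The genuine gap is in your final paragraph, i.e.\ exactly at the one step with content: the weight-module property with respect to the Sugawara $L_0$ of the diagonal $V_{T+1}(\g)$, and the structure of its (generalized) eigenspaces over $T$. You assert that ``the relevant characteristic polynomial of $L_0^{V_{T+1}}$ factors according to the explicit form of the Sugawara and the bigraded structure,'' but no argument is given, and the explicit Sugawara formula does not by itself yield a factorization with roots in $T$. What actually does it is a filtration of $V_T(\g)\otimes L_1(\g)$ by the $V_{T+1}(\g)$-submodules generated by the subspaces of tensor degree $\leq n$: each subquotient is a quotient of a finite direct sum of (degree-shifted) Weyl modules $\V_{T+1}(\mu)$, on which $L_0$ acts semisimply with eigenvalues $h_{\mu}+\Z_{\geq 0}$, and these lie in $T$ precisely because $\tau(K)+h^{\vee}+1$ is assumed invertible -- a standing hypothesis your proof never invokes. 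Even granting eigenvalues in $T$, your resolution of the obstacle you yourself raise is not an argument: over a non-field domain an operator on a finite-rank free module whose characteristic polynomial splits with roots in $T$ need not decompose the module into generalized eigenspaces over $T$ (already a $2\times 2$ matrix over $\C[\mathbf{k}]$ with eigenvalues $0$ and $\mathbf{k}$ and one off-diagonal entry fails, since the projections acquire denominators), and the principle ``a direct summand of a free $T$-module is free'' is false over a general integral domain (such summands are only projective). As written, your argument does establish the lemma over the fields $F$ and $\C$, and over a PID such as $R$ the kernel pieces are automatically free, but for an arbitrary integral $\C[K]$-domain $T$ the decomposition $M^{\lam}_{d}=\bigoplus_{\Delta}M^{\lam}_{d,[\Delta]}$ and the freeness of its pieces still require either the filtration input above together with an argument that the projections are defined over $T$, or a retreat to the bigraded reading of the weight spaces indicated in the first paragraph.
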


Define
the vertex algebra $\mc{C}_T(\g)$ by
\begin{align}
\mc{C}_T(\g):=\on{Com}(V_{T+1}(\g), V_T(\g)\* L_1(\g)).
\label{eq:universal-coset}
\end{align}
Note that
$$\mc{C}_T(\g)\cong  \Hom_{\on{KL}_{T+1}}(V_{T+1}(\g),V_T(\g)\* L_1(\g))
\cong (V_T(\g)\* L_1(\g)))^{\g[t]}$$
by the Frobenius resprocity,
where
$ (V_T(\g)\* L_1(\g)))^{\g[t]}$
denotes the $\g[t]$-invariant subspace of $V_T(\g)\* L_1(\g)$
with respect to the diagonal action.
As $V_T(\g)$, $ L_1(\g)$, and $V_{T+1}(\g)$
are conformal,
$\mc{C}_T(\g)$ is conformal 
with central charge
\begin{align*}
\frac{\tau(K)\dim \g}{\tau(K)+h^{\vee}}+\frac{\dim \g}{h^{\vee}+1}-\frac{(\tau(K)+1)\dim \g}{\tau(K)+h^{\vee}+1}
=\frac{\tau(K)(\tau(K)+2h^{\vee}+1)\dim \g}{(h^{\vee}+1)(\tau(K)+h^{\vee})(\tau(K)+h^{\vee}+1)},
\end{align*}
which equals to
\begin{align}
\frac{\tau(K)(\tau(K)+2h^{\vee}+1)\on{rank} \g}{(\tau(K)+h^{\vee})(\tau(K)+h^{\vee}+1)},
\end{align}
in the case that $\g$ is simply laced.

If $T=\C$ and $\tau(K)=k\in \C$,
we write $\mc{C}_k(\g)$ for $\mc{C}_T(\g)$.
As $V_k(\g)\* L_1(\g)$ belongs to $\on{KL}_{k+1}$,
 Lemma \ref{lem:projective} gives 
 the following assertion.
\begin{Lem}\label{Lem:Cg(g)}
Suppose that $k+h^{\vee}+1\not\in \Q_{\leq 0}$.
Then 
\begin{align*}
\mc{C}_k(\g)\isomap (V_k(\g)\* L_1(\g))_{[0]}.
\end{align*}
\end{Lem}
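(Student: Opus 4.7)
The plan is to combine Lemma \ref{lem:projective} (applied at the shifted level $k+1$) with the Frobenius-reciprocity description of $\mc{C}_T(\g)$ that was established just before the lemma. No new computation is really needed; the statement is essentially a chaining of two things already in hand.

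First I would verify that the hypothesis $k+h^{\vee}+1\not\in \Q_{\leq 0}$ is precisely the hypothesis of Lemma \ref{lem:projective} applied at level $k+1$, i.e.\ $(k+1)+h^{\vee}\not\in \Q_{\leq 0}$. By the preceding lemma, $V_k(\g)\otimes L_1(\g)$ lies in $\on{KL}_{k+1}$ as a module over the diagonal vertex subalgebra $V_{k+1}(\g)\hookrightarrow V_k(\g)\otimes L_1(\g)$. Applying Lemma \ref{lem:projective} to $M=V_k(\g)\otimes L_1(\g)\in \on{KL}_{k+1}$ yields
\begin{align*}
\on{Hom}_{\affg}\bigl(V_{k+1}(\g),\, V_k(\g)\otimes L_1(\g)\bigr)\isomap \bigl(V_k(\g)\otimes L_1(\g)\bigr)_{[0]}.
\end{align*}

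Next I would invoke the identification
\begin{align*}
\mc{C}_k(\g)\cong \on{Hom}_{\on{KL}_{k+1}}\bigl(V_{k+1}(\g),\, V_k(\g)\otimes L_1(\g)\bigr)\cong \bigl(V_k(\g)\otimes L_1(\g)\bigr)^{\g[t]}
\end{align*}
recorded in the discussion right after \eqref{eq:universal-coset}, which is Frobenius reciprocity for the induced module $V_{k+1}(\g)=U(\affg)\otimes_{U(\g[t]\oplus \C K)}\C_{k+1}$. Composing with the isomorphism from the previous paragraph produces the desired isomorphism $\mc{C}_k(\g)\isomap (V_k(\g)\otimes L_1(\g))_{[0]}$, where in both steps the identification is through evaluation on the vacuum vector of $V_{k+1}(\g)$.

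The only point that needs a word of care, rather than a genuine obstacle, is that the map is the same one throughout: under Frobenius reciprocity a homomorphism $\varphi:V_{k+1}(\g)\to V_k(\g)\otimes L_1(\g)$ corresponds to $\varphi(\mathbf{1})\in (V_k(\g)\otimes L_1(\g))^{\g[t]}$, and the proof of Lemma \ref{lem:projective} shows further that under the generic level hypothesis every such $\g[t]$-invariant vector automatically sits in conformal weight $0$ and is annihilated by all positive modes of $V_{k+1}(\g)$; equivalently, the $\g[t]$-invariant subspace coincides with the $L_0$-weight-zero subspace. Thus the inclusion $(V_k(\g)\otimes L_1(\g))_{[0]}\subseteq (V_k(\g)\otimes L_1(\g))^{\g[t]}=\mc{C}_k(\g)$ is an equality, which is exactly the claimed isomorphism.
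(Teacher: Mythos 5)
Your proposal is correct and is essentially the paper's own argument: the paper proves this lemma in one line by observing that $V_k(\g)\otimes L_1(\g)$ lies in $\on{KL}_{k+1}$ and applying Lemma \ref{lem:projective} at the shifted level $k+1$, combined with the Frobenius-reciprocity identification $\mc{C}_k(\g)\cong (V_k(\g)\otimes L_1(\g))^{\g[t]}$ stated after \eqref{eq:universal-coset}. Your additional remark that the two identifications are both given by evaluation at the vacuum, so the resulting isomorphism is the expected one, is a fine (and accurate) elaboration of what the paper leaves implicit.
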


Throughout this paper we will use the following notation.
\begin{Def} \label{def:RandF}
Let $R$
be the ring of rational functions in $\mathbf{k}$ with poles lying in $\{\mathbb{Q}_{\leq 0}-h^{\vee}\} \cup \{\infty\}$,
regarded as a $\C[K]$-algebra by the structure map $\tau(K)=\mathbf{k}$. Let $F$ denote the quotient field of $R$, which is just field of rational functions $\C(\mathbf{k})$.
\end{Def}

\begin{Pro}\label{Pro:Crg}
We have
$\mc{C}_{R}(\g)\cong (V_R(\g)\* L_1(\g))_{[0]}$. 
Therefore $\mc{C}_{R}(\g)$ is a free $R$-module.
\end{Pro}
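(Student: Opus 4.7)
The plan is to mirror the proofs of Lemmas \ref{lem:projective} and \ref{Lem:Cg(g)} in the $R$-linear family setting, lifting the pointwise Sugawara arguments to the ring $R$. By Frobenius reciprocity, $\mc{C}_R(\g) \cong (V_R(\g)\otimes L_1(\g))^{\g[t]}$, so the real content of the proposition is to identify this with the generalized $0$-eigenspace of the diagonal Sugawara $L_0$ of $V_{R+1}(\g)$, and then to establish $R$-freeness.

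The first step I would carry out is an $R$-version of Lemma \ref{lem:projective}: for any $M \in \on{KL}_{R+1}$, $\Hom_{\affg}(V_{R+1}(\g), M) \cong M_{[0]}$. The proof should go through with only the following observation: as an element of $R$, the value $h_\lam + n = (\lam+2\rho|\lam)/(2(\mathbf{k}+h^\vee+1)) + n$ is nonzero for every $(\lam,n)\in P_+\times\Z_{\geq 0}$ with $(\lam,n)\neq (0,0)$, simply because $h_\lam$ is a nonconstant rational function of $\mathbf{k}$ for $\lam\neq 0$ and cannot equal the constant $-n$. This yields $M_{[-n]}=0$ for $n>0$. The same Sugawara computation as in Lemma \ref{lem:projective} then gives $L_0|_{M_{[0]}} = \Omega/(2(\mathbf{k}+h^\vee+1))$, and semisimplicity of the Casimir on finite-dimensional $\g$-modules forces $M_{[0]}$ into the trivial $\g$-isotype, whence $\g[t]t\cdot M_{[0]} \subseteq \bigoplus_{n>0} M_{[-n]} = 0$ and $M_{[0]}\subseteq M^{\g[t]}$; the reverse inclusion is the standard Casimir computation.

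For the $R$-freeness, my approach would be to use the internal conformal weight grading $V_R(\g)\otimes L_1(\g) = \bigoplus_n (V_R(\g)\otimes L_1(\g))_n$ into free $R$-modules of finite rank. On each summand $L_0$ acts $R$-linearly, and after base change to $F = \C(\mathbf{k})$ its eigenvalues are of the form $h_\lam + d$ with $(\lam,d) \in P_+\times\Z_{\geq 0}$ by the semisimple decomposition of $V_F(\g)\otimes L_1(\g)$ into Weyl modules in $\on{KL}_{F+1}$. Every nonzero such value is in fact a unit in $R$: as a rational function of $\mathbf{k}$, its zero either does not exist (the cases $\lam=0$ and $\lam\neq 0,d=0$) or equals $-h^\vee - 1 - (\lam+2\rho|\lam)/(2d)$, which lies in $\Q_{\leq 0} - h^\vee$ and so is built into the allowed pole set of $R$. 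This forces the characteristic polynomial of $L_0$ on each $(V_R(\g)\otimes L_1(\g))_n$ to factor in $R[T]$ as $T^{m_n}p(T)$ with $T$ and $p(T)$ coprime, whereupon Bezout splits off the generalized $0$-eigenspace as an $R$-direct summand, hence a free $R$-module. Summing over $n$ then concludes.

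The main obstacle I foresee is the $R$-analog of Lemma \ref{lem:projective}: we need to separate the eigenvalue $0$ from all others \emph{over $R$} rather than merely over $F$, so that the generalized eigenspace behaves well and produces a genuine $R$-module direct summand. This is precisely the function of the pole set $\{\Q_{\leq 0}-h^\vee\}$ in the definition of $R$: it makes every unwanted eigenvalue $h_\lam+d\neq 0$ invertible in $R$, which is exactly what the Sugawara argument and the Bezout decomposition both require to lift cleanly from a single generic level to the $R$-family.
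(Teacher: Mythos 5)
Your argument is correct, but it follows a genuinely different route from the paper's. The paper does not rerun Lemma \ref{lem:projective} over $R$: after the Frobenius-reciprocity reduction to the equality $(V_R(\g)\* L_1(\g))_{[0]}=(V_R(\g)\* L_1(\g))_0^{\g[t]}$, it proves the nontrivial inclusion by specialization — for $v$ in the generalized $0$-eigenspace, the images of $L_0v$ and of $\g[t]v$ in $V_k(\g)\* L_1(\g)=(V_R(\g)\* L_1(\g))\*_R\C_k$ vanish for every allowed complex point $k$ by the already-established pointwise Lemma \ref{lem:projective} (through Lemma \ref{Lem:Cg(g)}), and an element of a free $R$-module whose specializations all vanish is zero; freeness is then immediate because $R$ is a principal ideal domain (a localization of $\C[\mathbf{k}]$) and the space just identified sits inside the free $R$-module $V_R(\g)\* L_1(\g)$. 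You instead argue uniformly over the base ring: you redo the Sugawara/Casimir argument of Lemma \ref{lem:projective} $R$-linearly, the key point being that every nonzero value $h_{\lam}+d$ at level $\mathbf{k}+1$ is a unit of $R$ because its unique zero lies in $\Q_{\leq 0}-h^{\vee}$ — the same design feature of $R$ that the paper uses, but exploited to invert eigenvalue gaps rather than to provide enough closed points to specialize at. Your Cayley--Hamilton/Bezout splitting of each graded piece goes beyond what the paper records and buys something real: it exhibits $(V_R(\g)\* L_1(\g))_{[0]}$ as an $R$-module \emph{direct summand}, hence free and compatible with arbitrary base change, which would also streamline the proof of Proposition \ref{Pro:character-of-coset}. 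One caution: your blanket claim that the $R$-analogue of Lemma \ref{lem:projective} holds for every $M\in\on{KL}_{R+1}$ with only the unit observation is more than you need and would require additional care (for an arbitrary object over the ring one must still justify that the generalized $L_0$-eigenvalues are of the form $h_{\lam}+d$ and deal with possible $R$-torsion); for the one module that matters here, $V_R(\g)\* L_1(\g)$, this control is exactly what your grading and base-change-to-$F$ analysis provide, so the proposition itself is unaffected.
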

\begin{proof}
Although the statement is essentially proved in \cite{CL},
we include the proof for completeness.
By the Frobenius reciprocity 
we have
$\mc{C}_{R}(\g)\cong (V_R(\g)\* L_1(\g))_0^{\g[t]}$.
Hence it is sufficient to show that
$(V_R(\g)\* L_1(\g))_{[0]} =(V_R(\g)\* L_1(\g))_0^{\g[t]}$.
The inclusion
$\supset $ is clear.
Let $v\in (V_R(\g)\* L_1(\g))_{[0]} $.
Then $(L_0v)\* 1=L_0(v\*1)=0$
in $(V_R(\g)\* L_1(\g))\*_R \C_k=V_k(\g)\* L_1(\g)$
for all $k\not \in \Q_{\leq 0}-h^{\vee}$ by Lemma \ref{lem:projective},
where 
$\C_k=R/(\mathbf{k}-k)$.
Thus $L_0v=0$.
Similarly,  $\g[t]v=0$.
\end{proof}

Observe that $\on{KL}_F$ is semisimple (see e.g.\ \cite{Fie06}).
Therefore,
$$\mc{C}_{F}(\g)\cong (V_F(\g)\* L_1(\g))_{[0]}=  (V_F(\g)\* L_1(\g))_0= (V_F(\g)\* L_1(\g))_0^{\g[t]}. $$
It follows from Proposition \ref{Pro:Crg} that 
\begin{align}
\mc{C}_{F}(\g)=\mc{C}_R(\g)\*_R F.
\label{eq:CFg}
\end{align}

\begin{Pro}\label{Pro:character-of-coset}
Suppose that $k+h^{\vee}\not\in \Q_{\leq 0}$.
We have
\begin{align*}
\mc{C}_k(\g)=\mc{C}_R(\g)\*_{R}\C_k.
\end{align*}
In particular,  the character of $\mc{C}_k(\g)$ 
is independent of $k$ and 
coincides with that of $\mc{C}_{F}(\g)$.
\end{Pro}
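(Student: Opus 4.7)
The plan is to construct a canonical base-change map $\phi_k: \mc{C}_R(\g) \otimes_R \C_k \to \mc{C}_k(\g)$ and prove it is an isomorphism, deducing the character statement as a formal consequence. By Proposition \ref{Pro:Crg}, the inclusion $\mc{C}_R(\g) = (V_R(\g)\otimes L_1(\g))_{[0]}\hookrightarrow V_R(\g)\otimes L_1(\g)$ is an inclusion of free $R$-modules. Tensoring with $\C_k$ and identifying $(V_R(\g)\otimes L_1(\g))\otimes_R\C_k = V_k(\g)\otimes L_1(\g)$ produces an $R$-linear map into $V_k(\g)\otimes L_1(\g)$; since the diagonal $\g[t]$-action is $R$-linear, the image is $\g[t]$-invariant and hence lies in $\mc{C}_k(\g)=(V_k(\g)\otimes L_1(\g))^{\g[t]}$, giving $\phi_k$.

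For injectivity, I would work in each fixed $\h$-weight and each fixed piece of the total grading by $L_0^{V_R(\g)}+L_0^{L_1(\g)}$ (a grading by non-negative rationals independent of $\mathbf k$), producing a finite-rank free $R$-module $X$. The corresponding piece of $\mc{C}_R(\g)$ is $\ker(L_0^{\mathrm{diag}}|_X)$, using the identification $M_{[0]}=M_0^{\g[t]}$ from the proof of Lemma \ref{lem:projective}. Since $R$ is a PID, the image of $L_0^{\mathrm{diag}}|_X$—a submodule of the free $R$-module $X$—is free, so the exact sequence $0\to\ker\to X\to\operatorname{im}\to 0$ splits and the kernel is a direct $R$-summand of $X$. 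Because direct summands survive base change, $\phi_k$ is injective.

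For surjectivity, note that the hypothesis $k+h^\vee\notin\Q_{\leq 0}$ implies $k+h^\vee+1\notin\Q_{\leq 0}$, so Lemma \ref{Lem:Cg(g)} gives $\mc{C}_k(\g)\cong(V_k(\g)\otimes L_1(\g))_{[0]}$ and the task reduces to matching dimensions piece-by-piece. Over $F$ the category $\on{KL}_{F+1}$ is semisimple, so $V_F(\g)\otimes L_1(\g)$ decomposes into Weyl modules $\V_{F+1}(\lam)$, on each of which $L_0^{\mathrm{diag}}$ acts with eigenvalues $h_\lam+n$ where $h_\lam=(\lam|\lam+2\rho)/(2(\mathbf k+h^\vee+1))$, $\lam\in P_+$, $n\in\Z_{\geq 0}$. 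Under the hypothesis, $h_\lam(k)+n=0$ with $(\lam,n)\neq(0,0)$ is impossible: for $\lam\neq 0$ the numerator $(\lam|\lam+2\rho)$ is strictly positive, so this would force $k+h^\vee+1<0$, contradicting the hypothesis. Hence no additional eigenvalue collides with $0$ at the specialization and $\dim_\C\mc{C}_k(\g)_n=\operatorname{rank}_R\mc{C}_R(\g)_n$ in every graded piece, which combined with injectivity proves $\phi_k$ is an isomorphism.

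The character assertion is then immediate: since $\mc{C}_R(\g)$ is a graded free $R$-module of finite rank in each degree, $\operatorname{ch}\mc{C}_k(\g)=\sum_n(\operatorname{rank}_R\mc{C}_R(\g)_n)q^n$ is independent of $k$ and coincides with $\operatorname{ch}\mc{C}_F(\g)$ via \eqref{eq:CFg}. The main obstacle lies in the surjectivity step: one must verify that the specialization does not enlarge the generalized $0$-eigenspace of $L_0^{\mathrm{diag}}$, and this is precisely where the numerical hypothesis $k+h^\vee\notin\Q_{\leq 0}$ is essential, ruling out accidental eigenvalue coincidences with $0$.
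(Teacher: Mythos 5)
Your proposal is correct and takes essentially the same route as the paper: both reduce, via Lemma \ref{Lem:Cg(g)} and Proposition \ref{Pro:Crg}, to the base-change identity $(V_k(\g)\otimes L_1(\g))_{[0]}=(V_R(\g)\otimes L_1(\g))_{[0]}\otimes_R\C_k$, which the paper asserts in a single line and you justify in detail (kernel as a direct summand over the PID $R$, plus the non-collision $h_{\lam}(k)+n\neq 0$ for $(\lam,n)\neq(0,0)$, using semisimplicity over $F$). One cosmetic point: a collision $h_{\lam}(k)=-n$ with $n\geq 1$ forces $k+h^{\vee}+1$ to be a \emph{negative rational} (not merely negative), and it is this rationality that contradicts the hypothesis $k+h^{\vee}\notin\Q_{\leq 0}$.
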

\begin{proof}
We have
$(V_k(\g)\* L_1(\g))_{[0]}=(V_R(\g)\* L_1(\g))_{[0]}\*_R \C_k$.
Therefore the assertion
follows from 
Lemma \ref{Lem:Cg(g)},
Proposition \ref{Pro:Crg},
and
\eqref{eq:CFg}.
\end{proof}

\section{Wakimoto modules and Screening operators}\label{section:Wakimoto}
 We continue to assume that
$T$ is an integral $\C[K]$-domain
with the structure map $\tau:\C[K]\ra T$
such that $\tau(K)+h^{\vee}$ is invertible.

For $\lam\in \h^*_T$,
the {\em Verma module with highest weight $\lam$ over $T$}
is defined as 
\begin{align*}
\M_T(\lam)=U(\affg)\otimes_{U(\widehat{\mf{b}})}T_{\lam},
\end{align*}
where $T_{\lam}$ denotes the
$\widehat{\mf{b}}$-module $T$ on which
$\widehat{\mf{n}}$ acts trivially,
$\h$ acts by the character $\lam$,
and $K$ acts as the multiplication by $\tau(K)$.
This is an object of the deformed category $\mc{O}_T$,
which 
is
 the category of all  $V_T(\g)$-modules
such that (1) $M$ is a weight module,
(2)  $U(\widehat{\mf{b}})\*_{\C}T. m$ is finitely generated as a $T$-module for all $m\in M$.
We have
$\M_T(\lam)=\bigoplus_{\Delta}\M_T(\lam)_{\Delta}$.
Let 
$ \M_T(\lam)^*\in \mc{O}_T$ be  the contragredient dual 
$\bigoplus_{\mu,\Delta}\on{\Hom}_T(\M_T(\lam)^{\mu}_\Delta,T)$ of $ \M_T(\lam)$.

If $T=\C$ and $\tau(K)=k\in \C\backslash\{-h^{\vee}\}$,
we write $\M_k(\lam)$ for $\M_T(\lam)$
and $\mc{O}_k$ for $\mc{O}_T$.
Let $\L_k(\lam)\in \mc{O}_k$ be the unique simple quotient of $\M_k(\lam)$.
Note that $L_k(\g)\cong \L_k(0)$.

\smallskip
We now introduce the Wakimoto modules following \cite{FeuFre90,Fre05}.
Let $M_{\g}$ be 
 the $\beta\gamma$-system generated
by $a_{\alpha}(z)$,
$a^*_{\alpha}(z)$,
$\alpha\in \Delta$,
satisfying the OPEs
\begin{align*}
a_{\alpha}(z)a^*_{\beta}(w)\sim \frac{\delta_{\alpha \beta}}{z-w},
\quad
a_{\alpha}(z)a_{\beta}(w)\sim a^*_{\alpha}(z)a^*_{\beta}(w)\sim 0.
\end{align*}
Let $\pi_T$ be the Heisenberg vertex algebra over $T$,
which is generated by
fields
$$b_i(z) = \sum_{n\in \mathbb{Z}} (b_i)_{(n)} z^{-n-1},\qquad i=1,\dots, \on{rank}\g,$$
with OPEs
\begin{align}
b_i(z)b_j(w)\sim \frac{\tau(K)(\alpha_i|\alpha_j)}{(z-w)^2}.
\label{eq:b_i}
\end{align}
Define the vertex algebra
$$\Wak{0}{T}:=M_\g\*_{\C} \pi_{T+h^{\vee}}.$$

By \cite[Theorem 5.1]{Fre05},
we have the vertex algebra embedding
\begin{align}
V_T(\g)\hookrightarrow \Wak{0}{T}.
\label{eq:Miura-for-Lie-algebra}
\end{align}
Here, since we work over $T$ we need to replace $\kappa$ in 
\cite[Theorem 5.1]{Fre05} by $$(\tau(k)+h^{\vee})\kappa_0.$$

More generally,
for any
$\lam\in \h^*_T$,
let 
$\pi_{T,\lam}=U(\mc{H})\*_{\mc{H}_{\geq 0}}T_{\lam}$,
where
$\mc{H}=\h[t,t^{-1}]\* \C K\subset \affg$,
$\mc{H}_{\geq 0}=\h[t]\+ \C K \subset \mc{H}$,
$T_{\lam}=T$  on which $\h[t]t$  acts trivially,
$h\in \h$ acts by multiplication by $\lam(h)$,
and $K$ acts by multiplication by $\tau(K)$.
Then $\pi_{T,\lam}$ is naturally a $\pi_T$-module,
and thus,
$$\Wak{\lam}{T}:=M_{\g}\*_\C \pi_{T+h^{\vee},\lam}$$
is a $M_{\g}\*_\C \pi_{T+h^{\vee}}$-module,
and hence,
a
$V_T(\g)$-module,
which belongs to $\mc{O}_T$.
$\Wak{\lam}{T}$ is called the {\em Wakimoto module with highest weight $\lam$ over $T$.}
If $T=\C$ and $\tau(K)=k\in \C$,
$\Wak{\lam}{T}$ is the usual Wakimoto module with highest weight $\lam$
at level $k$
and is denoted also by $\Wak{\lam}{k}$.
If this is the case $\pi_{T+h^{\vee},\lam}$ is denoted  by 
$\pi_{k+h^{\vee},\lam}$.

Let $$L\mf{n}:=\mf{n}[t,t^{-1}]\subset \affg,$$
and 
let
$H^{\frac{\infty}{2}+i}(L\mf{n},M)$
be the semi-infinite $L\mf{n}$-cohomology with coefficients in a smooth $L\mf{n}$-module $M$ (\cite{Feu84}).
Fix a basis $\{x_{\alpha}\mid \alpha\in \Delta_+\}$
of $\mf{n}$,
and let 
$c_{\alpha,\beta}^{\gamma}$ be the corresponding structure constant of $\mf{n}$.
Denote by
$\bigwedge^{\frac{\infty}{2}+\bullet}(\mf{n})$ the fermionic ghost system generated by odd fields
$\psi_{\alpha}(z)$, $\psi_{\alpha}^*(z)$, $\alpha\in \Delta_+$, with OPEs
\begin{align*}
\psi_{\alpha}(z)\psi^*_{\beta}(w)\sim \frac{\delta_{\alpha \beta}}{z-w},
\quad
\psi_{\alpha}(z)\psi_{\beta}(w)\sim \psi^*_{\alpha}(z)\psi^*_{\beta}(w)\sim 0.
\end{align*}
By definition, $H^{\frac{\infty}{2}+\bullet}(L\mf{n},M)$
is the cohomology of the complex 
$(M\* \bigwedge^{\frac{\infty}{2}+\bullet}(\mf{n}),Q^{st}_{(0)})$,
where 
\begin{align*}
Q^{st}(z)=\sum_{n\in \Z}Q^{st}_{(n)}z^{-n-1}=\sum_{\alpha\in \Delta_+}x_{\alpha}(z)\psi_{\alpha}^*(z)-\frac{1}{2}
\sum_{\alpha,\beta,\gamma\in\Delta_+}c_{\alpha,\beta}^{\gamma}\psi_{\alpha}^*(z)\psi_{\beta}^*(z)\psi_{\gamma}(z).
\end{align*}
If $M$ is a $\pi_T$-module, then 
 $H^{\frac{\infty}{2}+\bullet}(L\mf{n},M)$ naturally a $\pi_{T+h^{\vee}}$-module by the correspondence
 \begin{align}
b_i(z)\mapsto b_i(z)+\sum_{\alpha\in \Delta_+}(\alpha|\alpha_i)
:\psi_{\alpha}(z)\psi_{\alpha}^*(z):.
\label{eq:natual-Heisenberg-action}
\end{align}

By construction,
$L\mf{n}$ only acts on the first factor $M_{\g}$ of $\Wak{\lam}{T}=M_{\g}\*_\C \pi_{T+h^{\vee},\lam}$.
Hence
\begin{align}
H^{\frac{\infty}{2}+i}(L\mf{n},\Wak{\lam}{T})\cong 
H^{\frac{\infty}{2}+i}(L\mf{n},M_\g)\* \pi_{T+h^{\vee},\lam}
\cong 
\begin{cases}
\pi_{T+h^{\vee},\lam}&\text{for }i=0\\
0&\text{otherwise,}
\end{cases}
\label{eq:Wakimoto_characterizaion}
\end{align}
by \cite[Theorem 2.1]{Vor93}
since $M_\g$ is free as $U(\mf{n}[t^{-1}]t^{-1})$ and cofree as $U(\mf{n}[t])$-module.
In the case that $\lam=0$,
\eqref{eq:Wakimoto_characterizaion}
gives the isomorphism
$H^{\frac{\infty}{2}+0}(L\mf{n},\Wak{0}{T})\cong \pi_{T+h^{\vee}}$ of vertex algebras,
and \eqref{eq:Wakimoto_characterizaion} is an isomorphism as $\pi_{T+h^{\vee}}$-modules.

\begin{Lem}[\cite{FeiFre92}]\label{Lem:same1}
The above described vertex algebra  isomorphism
\begin{align*}
\pi_{T+h^{\vee}}\ra H^{\frac{\infty}{2}+0}(L\mf{n},\Wak{0}{T}),\quad b_i(z)\mapsto b_i(z)
\end{align*}
coincides with the vertex algebra  homomorphism
$\pi_{T+h^{\vee}}\ra H^{\frac{\infty}{2}+i}(L\mf{n},\Wak{0}{T})$ induced by
the action of  $\pi_{T}\subset V_T(\g)$ on $\Wak{0}{T}$ by \eqref{eq:natual-Heisenberg-action}.
\end{Lem}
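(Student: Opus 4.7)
The strategy is to verify the equality on the generators of $\pi_{T+h^{\vee}}$. Both maps are morphisms of vertex algebras and $\pi_{T+h^{\vee}}$ is strongly generated by the fields $b_i(z)$, so it suffices to compare the images of these. The first map sends $b_i(z)$ to the cohomology class of the field $b_i(z)$ viewed as acting on the second tensor factor of $\Wak{0}{T}=M_\g\*_{\C}\pi_{T+h^{\vee}}$; this representative is $Q^{st}_{(0)}$-closed because $L\mf{n}$ acts trivially on the $\pi_{T+h^{\vee}}$ factor and there are no ghost contractions. For the second map, the inclusion $\pi_T\subset V_T(\g)$ sends $b_i$ to the affine Cartan current $h_i(z)\in V_T(\g)\subset \Wak{0}{T}$, and under the embedding \eqref{eq:Miura-for-Lie-algebra} this field takes the standard Wakimoto form
\begin{align*}
h_i(z)=b_i(z)-\sum_{\alpha\in \Delta_+}\alpha(h_i):a_\alpha^*(z)a_\alpha(z):.
\end{align*}
Combining with the ghost correction coming from \eqref{eq:natual-Heisenberg-action}, the second map sends $b_i(z)$ to the class of $b_i(z)+D_i(z)$, where
\begin{align*}
D_i(z):=\sum_{\alpha\in \Delta_+}\bigl((\alpha|\alpha_i):\psi_\alpha(z)\psi_\alpha^*(z):-\alpha(h_i):a_\alpha^*(z)a_\alpha(z):\bigr).
\end{align*}
The lemma therefore reduces to the claim that $D_i(z)$ is $Q^{st}_{(0)}$-exact in the complex $\Wak{0}{T}\otimes\bigwedge\nolimits^{\frac{\infty}{2}+\bullet}(\mf{n})$.

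The natural candidate for a primitive is
\begin{align*}
\Lambda_i(z):=-\sum_{\alpha\in \Delta_+}\alpha(h_i):a_\alpha^*(z)\psi_\alpha(z):,
\end{align*}
and one aims to verify $[Q^{st}_{(0)},\Lambda_i(z)]=D_i(z)$ by a direct OPE computation, treating $Q^{st}_{(0)}$ as an odd derivation. As a sanity check, for $\g=\mf{sl}_2$ the set $\Delta_+$ has a single element $\alpha$, the cubic ghost term in $Q^{st}(z)$ is absent, and $x_\alpha^{Wak}(z)=a_\alpha(z)$; a two-line computation then yields $[Q^{st}_{(0)},:a_\alpha^*(z)\psi_\alpha(z):]=:a_\alpha^*(z)a_\alpha(z):-:\psi_\alpha(z)\psi_\alpha^*(z):$, so using $\alpha(h_1)=(\alpha|\alpha_1)=2$ one recovers $D_1=[Q^{st}_{(0)},\Lambda_1]$ on the nose. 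This illustrates the mechanism that is expected to operate in general.

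The main difficulty is the higher-rank case, where the Wakimoto realization of $x_\alpha(z)$ for non-simple positive roots $\alpha$ is a rather intricate polynomial in the $a_\beta$, $a_\beta^*$ and $b_j$ whose coefficients are prescribed by the structure constants $c_{\beta\gamma}^{\delta}$ of $\mf{n}$. Consequently $[Q^{st}_{(0)},\Lambda_i(z)]$ produces many terms beyond the obvious quadratic ones, and the core of the verification is to show that all such extra terms cancel against the contribution of the cubic ghost piece $-\tfrac{1}{2}\sum c_{\alpha\beta}^{\gamma}\psi_\alpha^*\psi_\beta^*\psi_\gamma$ of $Q^{st}(z)$. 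This cancellation is ultimately governed by the Jacobi identity on $\mf{n}$ and is the very same mechanism that forces $(Q^{st}_{(0)})^2=0$ on the Wakimoto complex; in particular, the required identity is already implicit in the construction of the Wakimoto realization in \cite{FeiFre92}, to which one can appeal to complete the proof.
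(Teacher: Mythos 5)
Your reduction to the generators is fine, and the identification of the difference field (the paper quotes exactly this quadratic expression $A_i(z)$ from \cite[Theorem 4.7]{Fre05}), but the core of your argument — that this difference is $Q^{st}_{(0)}$-exact — is not actually established beyond $\mf{sl}_2$. For higher rank, $[Q^{st}_{(0)},\Lambda_i(z)]$ with your candidate $\Lambda_i(z)=-\sum_{\alpha}\alpha(h_i):a^*_\alpha(z)\psi_\alpha(z):$ produces, besides the desired quadratic terms, purely bosonic higher-order terms coming from the nonlinear parts of the Wakimoto currents $x_\alpha(z)$ (these cannot cancel against the fermionic cubic ghost contribution, since they are monomials of a different type), as well as mixed terms of the form $:a^*\psi^*\psi:$ from two distinct sources whose coefficients involve the structure constants weighted by $\alpha(h_i)$; a quick bookkeeping with $\alpha=\beta+\gamma$ shows these do not obviously cancel either. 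So it is not clear that your $\Lambda_i$ is a primitive at all without correction terms, and saying the cancellation is ``the same mechanism as $(Q^{st}_{(0)})^2=0$'' or ``implicit in \cite{FeiFre92}'' is an assertion, not a proof. Since this computation is precisely the nontrivial content of the lemma in higher rank, the proposal has a genuine gap.

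Note that the paper sidesteps the explicit-primitive route entirely: the difference $A_i(z)$ commutes with every $b_j(z)$, so its class in $H^{\frac{\infty}{2}+0}(L\mf{n},\Wak{0}{T})\cong\pi_{T+h^{\vee}}$ lies in the invariant subspace $\pi_{T+h^{\vee}}^{\h[t]}=T$; since $A_i$ has conformal weight $1$ and $T$ sits in weight $0$, the class must vanish. If you want to keep your approach, you would either have to carry out (and correct, if necessary) the full OPE computation for general $\g$, or replace it by a structural vanishing argument of this kind.
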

\begin{proof}
The difference of the two actions of $b_i(z)$ on $H^{\frac{\infty}{2}+0}(L\mf{n},\Wak{0}{T})$ is 
given by
\begin{align*}
A_i(z)=-\sum_{\alpha\in \Delta_+}(\alpha| \alpha_i^{\vee})
:a_{\alpha}(z)a_{\alpha}^*(z):+\sum_{\alpha\in \Delta_+}(\alpha|\alpha_i^{\vee})
:\psi_{\alpha}(z)\psi_{\alpha}^*(z):,
\end{align*}
see   \cite[Theorem 4.7]{Fre05}.
As $A_i(z)$ commutes with $b_j(z)$ for any $j$,
the corresponding state $A_i=\lim\limits_{z\ra 0}A_i(z)\mathbf{1}$
belongs to the center  $\pi_{T+h^{\vee}}^{\h[t]}$
of  the vertex algebra $\pi_{T+h^{\vee}}=H^{\frac{\infty}{2}+0}(L\mf{n},\Wak{0}{T})$.
On the other hand it is straightforward to see that 
$\pi_{T+h^{\vee}}^{\h[t]}$
is trivial, that is,
$\pi_{T+h^{\vee}}^{\h[t]}=T$.
Hence,  $A_i=0$ in $H^{\frac{\infty}{2}+0}(L\mf{n},\Wak{0}{T})$
as $A_i$ has the conformal weight $1$. 
\end{proof}

Though the following theorem was stated in \cite{FeuFre90} and proved in
 \cite{A-BGG} for $T=\C$, the same proof applies.
\begin{Th}\label{Th:uniqueness-wakimoto}
Assume that $T$ is a field.
The Wakimoto module $\Wak{\lam}{T}$
is a unique object in $\mc{O}_T$
satisfying \eqref{eq:Wakimoto_characterizaion}.
\end{Th}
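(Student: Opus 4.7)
The plan is to adapt the argument of A-BGG, which establishes the statement for $T = \C$; its extension to the stated field case should be essentially cosmetic. The strategy is to construct a nonzero morphism $\phi : \Wak{\lam}{T} \to M$ and then show it must be an isomorphism.

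First, I would exploit the hypothesis \eqref{eq:Wakimoto_characterizaion} to select a distinguished class in $H^{\frac{\infty}{2}+0}(L\mf{n}, M) \cong \pi_{T+h^{\vee},\lam}$, namely the class of the highest-weight vector $1_{\lam}$. Lifting this class to a cocycle in $(M \otimes \bigwedge^{\frac{\infty}{2}+0}(\mf{n}))^{\lam}_{[h_{\lam}]}$ and isolating its component of lowest conformal weight produces a vector $v_{0} \in M^{\lam}$ that a weight count forces to be annihilated by $\widehat{\mf{n}}$. Using the Wakimoto presentation $\Wak{\lam}{T} = M_{\g} \otimes_{\C} \pi_{T+h^{\vee},\lam}$ and the embedding \eqref{eq:Miura-for-Lie-algebra}, the vector $v_{0}$ extends canonically to the desired morphism $\phi : \Wak{\lam}{T} \to M$.

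Next, I would apply the functor $H^{\frac{\infty}{2}+0}(L\mf{n}, -)$ to $\phi$. By \eqref{eq:Wakimoto_characterizaion} (applied to $\Wak{\lam}{T}$) and the hypothesis (applied to $M$), both sides are identified with $\pi_{T+h^{\vee},\lam}$, and Lemma \ref{Lem:same1} pins down the induced Heisenberg action as the natural one, which forces $H^{\frac{\infty}{2}+0}(\phi)$ to send the cyclic generator to the cyclic generator, hence to be an isomorphism. Separately, applying the Euler-Poincaré principle to the BRST complex $M \otimes \bigwedge^{\frac{\infty}{2}+\bullet}(\mf{n})$ and comparing with the analogous computation for $\Wak{\lam}{T}$ gives the equality of (appropriately regularized) characters $\on{ch} M = \on{ch} \Wak{\lam}{T}$. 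A nonzero morphism between two weight modules in $\mc{O}_{T}$ whose graded $T$-ranks agree weight-by-weight is necessarily an isomorphism, and the claim follows.

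The hard part will be the controlled lifting of the cohomology class of $1_{\lam}$ to a genuine $\widehat{\mf{n}}$-invariant vector in $M$. A-BGG handles this via the ghost-degree filtration on $\bigwedge^{\frac{\infty}{2}+\bullet}(\mf{n})$ and a degeneration argument for the associated spectral sequence. Since $T$ is a field, all relevant weight spaces of $M$ and of $\Wak{\lam}{T}$ are free $T$-modules of finite rank, the freeness of $M_{\g}$ as a $U(\mf{n}[t^{-1}]t^{-1})$-module and its cofreeness as a $U(\mf{n}[t])$-module are preserved, and Voronov's vanishing theorem \cite{Vor93} applies over $T$; so the A-BGG argument carries over with only cosmetic modifications.
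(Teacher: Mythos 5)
Your overall strategy agrees with the paper's, which offers no new argument here: it simply records that the theorem was proved in \cite{A-BGG} for $T=\C$ and that the same proof applies over a field $T$ (all relevant weight spaces are finite-rank free $T$-modules, Voronov's vanishing still applies, etc.), exactly as in your closing paragraph. The trouble is with your reconstruction of the underlying argument, which has two genuine gaps. First, a vector $v_0\in M^{\lam}$ annihilated by $\widehat{\mf{n}}$ only yields a homomorphism $\M_T(\lam)\to M$ from the Verma module; it does not yield a homomorphism $\Wak{\lam}{T}\to M$, since Wakimoto modules have no Verma-type universal property. Indeed, under the genericity hypothesis of Proposition \ref{Pro:dual-Verma=Wakimoto} one has $\Wak{\lam}{T}\cong \M_T(\lam)^*$, a contragredient dual, which is a natural \emph{target} rather than a source of maps determined by a singular vector; and the theorem is asserted for all $\lam\in\h^*_T$, including weights where $\Wak{\lam}{T}$ is neither a Verma nor a dual Verma module. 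The ``canonical extension'' you invoke via $\Wak{\lam}{T}=M_\g\*\pi_{T+h^{\vee},\lam}$ and \eqref{eq:Miura-for-Lie-algebra} is not available: $M$ is only a $V_T(\g)$-module, not a module over the larger vertex algebra $\Wak{0}{T}$, so there is nothing to extend along, and the existence of your map $\phi$ is unproved.

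Second, your concluding step --- a nonzero morphism between two objects of $\mc{O}_T$ whose weight-space ranks agree is an isomorphism --- is false: the canonical map $\M_T(\lam)\to\M_T(\lam)^*$ coming from the Shapovalov form is a counterexample whenever $\M_T(\lam)$ is not simple. Equal characters upgrade an injective or a surjective map to an isomorphism, but your argument supplies neither; knowing that $H^{\frac{\infty}{2}+0}(\phi)$ is an isomorphism does not close this, because you have no a priori control of the semi-infinite cohomology of the kernel and cokernel in the long exact sequence. Note also that the paper's treatment of Wakimoto modules (see the discussion surrounding Proposition \ref{Pro:key-iso}) goes through the semi-regular bimodule realization of $M_\g$ from \cite{A-BGG}, and it is that mechanism, not a singular-vector construction, which produces the relevant comparison maps; if you intend to ``adapt the argument of \cite{A-BGG},'' you need to reproduce that device, and as written both the construction of $\phi$ and the passage from $\phi\neq 0$ to $\phi$ being an isomorphism are missing.
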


For $\lam\in \h_T^*$,
set 
\begin{align}
\hat{\lam}=\lam+ \tau(K)\Lam_0\in \affh^*_T. \label{eq:affine-weight}
\end{align}
Let
\begin{align*}
\Delta(\hat \lam)=\{\alpha\in \widehat{\Delta}\mid 
\bra \hat \lam+\hat\rho,\alpha^{\vee}\ket\in \Z\},
\end{align*}
 the set of integral roots with respect to $\hat{\lam}$.

\begin{Pro}\label{Pro:dual-Verma=Wakimoto}
Suppose that $T$ is a field,
$\lam\in \h_T^*$,
and suppose that 
$\widehat{\Delta}(\hat\lam)\cap \{-\alpha+n\delta\mid \alpha\in \Delta_+,\ n\in\Z_{\geq 1}\}=\emptyset $.
Then 
$$\Wak{\lam}{T}\cong  \M_T(\lam)^*.$$
\end{Pro}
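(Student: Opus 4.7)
The plan is to invoke the characterization of Wakimoto modules from Theorem \ref{Th:uniqueness-wakimoto}: it suffices to verify that $\M_T(\lam)^*$ lies in $\mc{O}_T$ (which is immediate) and
\begin{align*}
H^{\frac{\infty}{2}+i}(L\mf{n}, \M_T(\lam)^*) \cong \begin{cases} \pi_{T+h^{\vee},\lam} & \text{if } i=0, \\ 0 & \text{if } i\neq 0, \end{cases}
\end{align*}
with the Heisenberg action on the degree-zero cohomology matching the one induced from $\pi_T \subset V_T(\g)$ via \eqref{eq:natual-Heisenberg-action}.

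To compute this cohomology, I would exploit the polarization $L\mf{n} = \mf{n}[t] \oplus \mf{n}[t^{-1}]t^{-1}$, which splits the ghost system $\bw{\frac{\infty}{2}+\bullet}(\mf{n})$ and induces a natural filtration on $Q^{st}_{(0)}$. Since $\M_T(\lam)$ is free over $U(\widehat{\mf{n}}_-)$ by the PBW theorem, the graded dual $\M_T(\lam)^*$ is cofree over $U(\mf{n}[t^{-1}]t^{-1})$. A Voronov-type vanishing argument, parallel to the one underlying \eqref{eq:Wakimoto_characterizaion} based on \cite[Theorem 2.1]{Vor93}, then concentrates the $\mf{n}[t^{-1}]t^{-1}$-part of the cohomology in degree zero and identifies it with a Heisenberg Fock module; a weight count matches this with $\pi_{T+h^{\vee},\lam}$. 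To rule out higher cohomology on the $\mf{n}[t]$ side, I would apply the hypothesis directly: any non-trivial higher class would have to sit in a weight space of $\M_T(\lam)^*$ reached from $\hat\lam$ by an integral reflection along an affine real root of the form $-\alpha+n\delta$ with $\alpha\in\Delta_+$ and $n\geq 1$, and the condition $\widehat{\Delta}(\hat\lam)\cap\{-\alpha+n\delta\mid \alpha\in\Delta_+,\ n\in\Z_{\geq 1}\}=\emptyset$ excludes precisely this possibility.

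The main obstacle is the careful execution of the spectral sequence and its collapse at the expected page, together with the precise identification of the $E_\infty$-term with $\pi_{T+h^{\vee},\lam}$ in the correct degree and with the correct Heisenberg structure from \eqref{eq:natual-Heisenberg-action}. Conceptually this is an affine Kostant-type theorem for the dual Verma module, with the integrality hypothesis playing the role of the standard genericity condition that suppresses all unwanted contributions from affine reflections. Once this bookkeeping is done, Theorem \ref{Th:uniqueness-wakimoto} delivers the isomorphism $\Wak{\lam}{T}\cong \M_T(\lam)^*$.
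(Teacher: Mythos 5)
Your overall skeleton (compute $H^{\frac{\infty}{2}+\bullet}(L\mf{n},\M_T(\lam)^*)$ and then invoke Theorem \ref{Th:uniqueness-wakimoto}) matches the paper's, but the key step is misallocated: the hypothesis is invoked where it is not needed, while the place where it is genuinely needed is passed off as ``PBW''. For the contragredient (weight-preserving) dual --- the one that keeps $\M_T(\lam)^*$ in $\mc{O}_T$ and is the object of the statement --- freeness of $\M_T(\lam)$ over $U(\widehat{\mf{n}}_-)$ dualizes, through the Cartan antiinvolution, into cofreeness of $\M_T(\lam)^*$ over $U(\widehat{\mf{n}})$; that is, it controls the $\mf{n}[t]$-half of the polarization, not the $\mf{n}[t^{-1}]t^{-1}$-half. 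The structure of $\M_T(\lam)^*$ over $U(\mf{n}[t^{-1}]t^{-1})$ is equivalent to the structure of $\M_T(\lam)$ over $U(\mf{n}_-[t]t)$, a subalgebra of the \emph{raising} operators $\widehat{\mf{n}}$, and a Verma module is not automatically free or cofree over such a subalgebra. This is precisely where the assumption $\widehat{\Delta}(\hat\lam)\cap \{-\alpha+n\delta\mid \alpha\in \Delta_+,\ n\in\Z_{\geq 1}\}=\emptyset$ does its work in the paper: by \cite[Theorem 3.1]{Ara04} it yields freeness of $\M_T(\lam)$ over $U(\widehat{\mf{n}}\cap t_{\mu}(\widehat{\mf{n}}_-))$ for every $\mu\in P_+$, and since these subalgebras exhaust $\mf{n}_-[t]t$ in the limit, one obtains the required (co)freeness on the hard half, after which the Voronov criterion gives the vanishing in nonzero degrees. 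So the half of the Voronov hypothesis you declare obvious is exactly the content of the proposition's assumption, and your derivation of it from PBW is incorrect.

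Correspondingly, your proposed use of the hypothesis --- excluding higher classes ``on the $\mf{n}[t]$ side'' because such a class would have to sit at a weight reached from $\hat\lam$ by an integral reflection in a root $-\alpha+n\delta$ --- is not a proof: no linkage principle for $H^{\frac{\infty}{2}+\bullet}(L\mf{n},-)$ confining nonzero classes to such weights is stated or available here, and in any case the $\mf{n}[t]$ side is the easy side, controlled by PBW as above. A smaller gap: once vanishing is known, the Euler--Poincar\'e principle and your ``weight count'' only give an equality of characters between $H^{\frac{\infty}{2}+0}(L\mf{n},\M_T(\lam)^*)$ and $\pi_{T+h^{\vee},\lam}$; to get the isomorphism of $\pi_{T+h^{\vee}}$-modules that Theorem \ref{Th:uniqueness-wakimoto} requires, you still need the evident nonzero homomorphism $\pi_{T+h^{\vee},\lam}\ra H^{\frac{\infty}{2}+0}(L\mf{n},\M_T(\lam)^*)$ together with the simplicity of the Fock module, as in the paper's argument.
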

\begin{proof}
By \cite[Theorem 3.1]{Ara04},
the assumption implies that 
$\M_T(\lam)$ is free over 
$U(\widehat{\mf{n}}\cap t_{\mu}(\widehat{\mf{n}}_-))$
for any $ \mu\in P_+$,
where $t_{\mu}$ denotes a Tits lifting of $t_{\mu}$.
Since
$\mf{n}_-[t]t=\lim\limits_{\longrightarrow\atop \mu\in P_+}\widehat{\mf{n}}_+\cap t_{\mu}(\widehat{\mf{n}}_-)$,
this shows that $\M_T(\lam)$ is cofree over $U(\mf{n}_-[t]t)$,
and hence,
$\M_T(\lam)^*$ is cofree over $U(\mf{n}[t^{-1}]t^{-1})$.
As $\M_T(\lam)^*$ is obviously free over $U(\mf{n}[t^{-1}]t^{-1})$,
$H^{\frac{\infty}{2}+i}(L\mf{n},\M_T(\lam)^*)=0$ for $i\ne 0$.
It follows from the Euler-Poincar\'{e} principle  that 
the character of $H^{\frac{\infty}{2}+0}(L\mf{n},\M_T(\lam)^*)$  equals to that of $\pi_{T+h^{\vee},\lam}$.
On the other hand, there is an obvious non-zero 
homomorphism 
$\pi_{T+h^{\vee},\lam}\ra H^{\frac{\infty}{2}+0}(L\mf{n},\M_T(\lam)^*)$.
Since  $\pi_{T+h^{\vee},\lam}$ is simple,
we conclude that $H^{\frac{\infty}{2}+0}(L\mf{n},\M_T(\lam)^*)\cong \delta_{i,0}\pi_{T+h^{\vee},\lam}$,
and we are done by Theorem \ref{Th:uniqueness-wakimoto}.
\end{proof}

Let $V(\mf{n})$ be the universal affine vertex algebra
associated with $\mf{n}$,
which can be identified with the
vertex subalgebra
of
$V_T(\g)$ generated by $x_{\alpha}(z)$, $\alpha\in \Delta_+$.
The $L\mf{n}$-action on $M_\g$ induces the vertex algebra
embedding $V(\mf{n})\hookrightarrow M_\g$.

There is also a right action
$x\mapsto x^R$ of $L\mf{n}$ on $M_\g$
that commutes with the left action of $L\mf{n}$ (\cite{Fre05}).
In fact, as a $U(L\mf{n})$-bimodule
$M_{\g}$  is isomorphic to the {\em semi-regular bimodule} \cite{Vor93,Vor99}
of $L\mf{n}$, see \cite{A-BGG}.

\begin{Pro}\label{Pro:key-iso}
\begin{enumerate}
\item (\cite[Proposition 2.1]{A-BGG}). Let $M$ be a $\mf{n}((t))$-module 
that is integrable over $\mf{n}[[t]]$.
There is a $T$-linear isomorphism 
\begin{align*}
\Phi:\Wak{\lam}{T}\*_\C M\overset{\sim}{\ra} \Wak{\lam}{T}\*_\C M
\end{align*}
such that
\begin{align*}
\Phi\circ \Delta(x)=(x\*1)\circ \Phi,
\quad \Phi \circ (x^R\*1)=(x^R\*1-1\* x) \circ \Phi\quad \text{for }x\in L\mf{n}.
\end{align*}
Here 
$\Delta$ denotes the coproduct:
$\Delta(x)=x\*1 +1\*x$.
\item 
Let $V$ be a vertex algebra equipped with a vertex algebra 
 homomorphism
$V(\mf{n})\ra V$,
and the induced 
action of $\mf{n}[[t]]$ on $V$ is integrable.
Then the map $\Phi$ in (1) for $M=V$ is a vertex algebra isomorphism.
\end{enumerate}
\end{Pro}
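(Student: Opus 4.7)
Since part (1) is cited directly from \cite[Proposition 2.1]{A-BGG}, I would take it as given and focus on part (2): showing that when $M = V$ is a vertex algebra with an integrable $\mf{n}[[t]]$-action, the $T$-linear isomorphism $\Phi$ respects the tensor product vertex algebra structure on $\Wak{\lam}{T}\otimes V$. My plan is to invoke Goddard's uniqueness theorem and reduce everything to the intertwining properties already established in part (1).

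First, I would unpack the construction of $\Phi$ from \cite{A-BGG}. It comes from the semi-regular $U(L\mf{n})$-bimodule structure on $M_\g$ and can be written as an explicit composition of exponentials of operators built from the right $L\mf{n}$-action on the $M_\g$-factor of $\Wak{\lam}{T}$ and the left $L\mf{n}$-action on $V$. The integrability of $\mf{n}[[t]]$ on $V$ is precisely what makes this composition well-defined as a $T$-linear endomorphism: on any fixed vector only finitely many factors act nontrivially.

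Next, I would verify the hypotheses of Goddard's uniqueness theorem. Vacuum preservation, $\Phi(\mathbf{1}\otimes\mathbf{1}) = \mathbf{1}\otimes\mathbf{1}$, is immediate from the explicit form of $\Phi$, since every exponent contains an operator that annihilates the vacuum on at least one of the two tensor factors. For the intertwining $\Phi\circ Y(a,z) = Y(\Phi a, z)\circ \Phi$, it suffices by uniqueness to check it on a strong generating set of $\Wak{\lam}{T}\otimes V$. Such a set is provided by (i) $w\otimes \mathbf{1}$ for strong generators $w$ of $\Wak{\lam}{T}$, namely $a_\alpha, a_\alpha^*, b_i$, and (ii) $\mathbf{1}\otimes v$ for $v$ in a strong generating set of $V$. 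On generators of type (i), the exponents defining $\Phi$ commute with the corresponding fields (they involve only the right action on $\Wak{\lam}{T}$), so the intertwining reduces to the right-action identity from part (1). On generators of type (ii) lying in the image of $V(\mf{n})\hookrightarrow V$, the desired intertwining is exactly the mode-level form of $\Phi\circ\Delta(x) = (x\otimes 1)\circ \Phi$. For the remaining generators of $V$, which can be chosen to commute with the $\mf{n}[[t]]$-action up to the integrable action, a direct analysis of how $\Phi$ conjugates their fields completes the argument.

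The main obstacle is the combinatorial bookkeeping in lifting the Lie-algebra-level identities from \cite{A-BGG} to identities of generating functions: one must check that all formal power-series manipulations converge, that normal-orderings are handled correctly, and that the required commutation relations are inherited by the exponentials. The integrability hypothesis is essential here, as it collapses all infinite sums to finite ones on any given vector and reduces the verification to standard formal-calculus manipulations.
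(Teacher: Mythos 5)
Your overall strategy (take (1) from \cite{A-BGG}, then upgrade $\Phi$ to a vertex algebra map by checking vacuum preservation and the intertwining $\Phi\, Y(a,z)=Y(\Phi a,z)\,\Phi$ on strong generators) is reasonable in outline, but two of your key steps do not hold as stated. First, the intertwiner $\Phi$ is \emph{not} built only from the right $L\mf{n}$-action on the $M_\g$-factor: already for $\g=\mf{sl}_2$ it has the shape $\exp\bigl(\sum_n (a^*)_{(n)}\otimes e_{(-n-1)}\bigr)$, i.e.\ multiplication by the coordinate functions $a^*_\alpha\in\C[J_{\infty}N]\subset M_\g$ tensored with the $\mf{n}[[t]]$-action on $V$. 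Such operators commute with $a^*_\beta(z)\otimes 1$ and $b_i(z)\otimes 1$, but \emph{not} with $a_\beta(z)\otimes 1$, so your claim that the exponents commute with all fields of type (i) fails for the $a_\alpha$'s; note also that if the exponent really only involved the right action it would commute with the left action $x(z)\otimes 1$, which is incompatible with $\Phi\circ\Delta(x)=(x\otimes 1)\circ\Phi$. Moreover, even where commutation does hold you still must show $\Phi$ fixes (or explicitly compute $\Phi$ on) the corresponding states, since the homomorphism property compares $\Phi Y(w\otimes\mathbf{1},z)\Phi^{-1}$ with $Y(\Phi(w\otimes\mathbf{1}),z)$, and $\Phi$ does not act as the identity on $M_\g\otimes\mathbf{1}$ (e.g.\ $\Phi(x_\beta\otimes\mathbf{1})=x_\beta\otimes\mathbf{1}-\mathbf{1}\otimes x_\beta-\cdots$).

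The more serious gap is your treatment of the generators of $V$ outside the image of $V(\mf{n})$: an arbitrary vertex algebra $V$ with an integrable $\mf{n}[[t]]$-action has no preferred strong generating set with controlled commutation against that action, so "a direct analysis of how $\Phi$ conjugates their fields" is a placeholder for precisely the hard part of the proof. In the paper this is where the real work happens: one integrates the $\mf{n}[[t]]$-action to the prounipotent group $J_{\infty}N$, proves that the comodule map $\phi:V\to\C[J_{\infty}N]\otimes V$ is a vertex algebra homomorphism on \emph{all} of $V$ (via the commutator formula and the identity $(x_{(m)}(f_{ij})_{(-r-1)})(1)=\binom{m}{r}(x_{(m-r)}(f_{ij})_{(-1)})(1)$), and then assembles $\Phi$ from $\phi$ together with the factorization $M_\g\cong V(\mf{n})\otimes\C[J_{\infty}N]$; a separate argument (centrality of the discrepancy $a=\Phi(x^R_\alpha\otimes 1)-x^R_\alpha\otimes 1+1\otimes x_\alpha$, simplicity of $M_\g$, and a weight count) is then needed to recover the right-action identity at the vertex-algebra level. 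None of these ingredients appear in your proposal, and without them the verification on the $V$-side generators, and hence the conclusion of part (2), is not established.
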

\begin{proof}
(2) 
The commutative vertex subalgebra
of $M_\g$ generated by
$a_{\alpha}^*(z)$,
$\alpha\in \Delta_+$,
is naturally identified with
functions
$\C[J_{\infty}N]$
on the arc space $J_{\infty}N$
of the unipotent group $N$ whose Lie algebra is $\mf{n}$.
In this identification,
the subalgebra
$\C[N]$ of $\C[J_{\infty}N]$
is identified with
$\C[(a_{\alpha}^*)_{(-1)}]=\C[(a_{\alpha}^*)_{(-1)}]\mathbf{1}$.
The arc space $J_{\infty}N$ is a prounipotent group
whose Lie algebra is $J_{\infty}\mf{n}=\mf{n}[[t]]$.
The vertex subalgebra
$\C[J_{\infty}N]\subset M_\g$
is a  $\mf{n}[[t]]$-bi-submodule
of $M_{\g}$,
and the  $\mf{n}[[t]]$-bi-submodule structure of 
$\C[J_{\infty}N]$ is identical to the one obtained by
differentiating the natural $J_{\infty}N$-bimodule structure of $\C[J_{\infty}N]$.
We have the  isomorphism
\begin{align*}
V(\mf{n})\*_{\C} \C[J_{\infty}N]\overset{\sim}{\ra} M_\g,\quad u\mathbf{1}\* f\mapsto  u f,
\end{align*}
($u\in U(t^{-1}\mf{n}[t^{-1}])$)
as left $t^{-1}\mf{n}[t^{-1}]$-modules and right $\mf{n}[[t]]$-modules.
We have
\begin{align}
x_{\alpha}(z)a_{\beta}^*(w)\sim \frac{1}{z-w}(x_{\alpha}a_\beta^*)(w)
\label{eq:added2019-03-18-10:27}
\end{align}
for $\alpha,\beta\in \Delta_+$,
where on the right-hand side $x_\alpha\in \mf{n}$
acts on $a_{\beta}^*=(a_{\beta}^*)_{(-1)}\mathbf{1}\in \C[N]$ as a left-invariant vector field.

By the assumption the action
of $\mf{n}[[t]]$ on $V$ integrates to the action of
$J_{\infty}N$.
Let
$\phi: V\ra \C[J_{\infty}N]\* V$
be the corresponding comodule map.
Thus,
$\phi(v_i)=\sum_{j}f_{ij}\* v_j$
if $\{v_i\}$ is a basis of $V$
and $g v_i=\sum_{j}f_{ij}(g)v_j$ with $f_{ij}\in \C[J_{\infty}N]$ for all $g\in J_{\infty}N$.
We have
$\phi\circ g=(g\* 1)\circ \phi$ for $g\in J_{\infty}N$.
We shall show that
$\phi$ is a vertex algebra homomorphism,
that is,
\begin{align*}
\phi((v_i)_{(n)}v_k)=\phi(v_i)_{(n)}\phi(v_k)=\sum_{j,l\atop r\geq 0}((f_{ij})_{(-r-1)}f_{kl})\* ((v_{k})_{(n+r)}v_l)
\end{align*}
for all $i,k,n$.
By the definition of $\phi$
this is equivalent to that
\begin{align*}
g.(v_i)_{(n)}v_k=\sum_{i,j\atop r\geq 0}(f_{ij})_{(-r-1)}(g)f_{kl}(g)(v_{k})_{(n+r)}v_l
=\sum_{r\geq 0}(f_{ij})_{(-r-1)}(g) (v_{k})_{(n+r)} (g.v_k),
\end{align*}
for  $g\in J_{\infty}N$,
or equivalently,
\begin{align*}
\on{Ad}(g) (v_i)_{(n)}=\sum_{j\atop r\geq 0}(f_{ij})_{(-r-1)}(g)(v_j)_{(n+r)}
\end{align*}
for  $g\in J_{\infty}N$.
By differentiating both sides,
it is enough to show that
\begin{align}
[x_{(m)},(v_i)_{(n)}]=\sum_{j\atop r\geq 0}(x_{(m)}(f_{ij})_{(-r-1)})(1)(v_j)_{(n+r)}
\label{eq:added-2019-03-17-21}
\end{align}
for $x\in \mf{n}$, $m\geq 0$, $n\in \Z$,
where
$(x_{(m)}(f_{ij})_{(-r-1)})(1)$ is the value of
$x_{(m)}(f_{ij})_{(-r-1)}\in \C[J_{\infty}N]$ at the identity.
By  the commutation formula 
we have
$[x_{(m)},(v_i)_{(n)}]=\sum\limits_{s\geq 0}\begin{pmatrix}m\\s\end{pmatrix}(x_{(s)}v_i)_{(m+n-s)}=
\sum\limits_{s\geq 0\atop j}\begin{pmatrix}m\\s\end{pmatrix}((x_{(s)}f_{ij})(1)v_j)_{(m+n-s)}$.
Hence  \eqref{eq:added-2019-03-17-21}
 follows from the fact that
$$(x_{(m)}(f_{ij})_{(-r-1)})(1)
=\frac{m}{r}(x_{(m-1)}(f_{ij})_{(-r)})(1)=\dots =\begin{pmatrix}m\\r
\end{pmatrix}(x_{(m-r)}(f_{ij})_{(-1)})(1)$$
for $m, r\geq 0$.

Next set
\begin{align*}
\tilde{\phi}:\C[J_{\infty}N]\* V\ra \C[J_{\infty}N]\* V,\quad f\* v\mapsto (f\*1)\phi(v).
\end{align*}
Then
$\tilde{\phi}$ is a linear isomorphism
that satisfies
\begin{align}
\tilde{\phi}\circ (g\* g)=(g\* 1)\circ \tilde{\phi},
\label{eq:added2019-03-18-10:10}\\
\tilde{\phi}\circ (g^R\* 1)=(g^R\* g^{-1})\circ \tilde{\phi},
\label{eq:added2019-03-18-10:10-2}
\end{align}
for $g\in J_{\infty}N$,
where 
$g^R$ denotes the right action $(g^R f)(a)=f(ga)$.
Moreover,
$\tilde{\phi}$ is a vertex algebra homomorphism
since $\phi$ is so and $\C[J_{\infty}N]$ is commutative.

Define the linear isomorphism
\begin{align*}
\Psi:M_\g\* V=V(\mf{n})\* \C[J_{\infty}N]\* V\isomap M_\g\* V,
\quad u\* w\mapsto \Delta(u)(\tilde{\phi}^{-1}(u)),
\end{align*}
($u\in V(\mf{n})$, $w\in \C[J_{\infty}N]\* V$),
where 
$\Delta$ is the coproduct of $ U(t^{-1}\mf{n}[t^{-1}])$ (that is identified with
$V(\mf{n})$)
and
$\C[J_{\infty}N]\* V$ is naturally considered as a vertex subalgebra 
of $M_{\g}\* V$.
We claim that
$\Psi$ is a vertex algebra homomorphism.
To see this,
 first note that
the restrictions of $\Psi$ to vertex subalgebras
$V(\mf{n})$, $\C[J_{\infty}N]\* V$ are 
clearly vertex algebra homomorphism.
Therefore it is sufficient to check that
$\Psi$ preserves the OPE's between generators of
$V(\mf{n})$ and  $\C[J_{\infty}N]\* V$.
By \eqref{eq:added2019-03-18-10:10},
$\Psi( V)=\tilde{\phi}^{-1}(V)$ is contained in the commutant $(M_\g\* V)^{\mf{n}[t]}$
of vertex subalgebra $\Delta(V(\mf{n}))$ in $M_{\g}\* V$.
Also,
since the restriction of $\Psi$ to $ \C[J_{\infty}N]$ is the identify map,
we find that \eqref{eq:added2019-03-18-10:27}
is preserved by 
$\Psi$.
We have shown that $\Psi$ is a vertex algebra isomorphism,
and thus.
$\tilde{\Phi}:=\Psi^{-1}$
is also a vertex algebra isomorphism.

Since $\tilde{\Phi}\circ \Delta(x)=(x\*1)\circ \tilde{\Phi}$ for $x\in \mf{n}\subset V(\mf{n})$
by definition and $\tilde{\Phi}$ is a vertex algebra homomorphism,
we get that
$\tilde{\Phi}\circ \Delta(x)=(x\*1)\circ \tilde{\Phi}$ for all $x\in \mf{n}((t))$.
Next we show that
$ \tilde{\Phi} \circ (x^R\*1)=(x^R\*1-1\* x) \circ \tilde{\Phi}$
for $x\in \mf{n}((t))$.
By the same reasoning as above,
it is sufficient to show that 
the element
$a=\tilde{\Phi}(x^R_{\alpha}\* 1)-x_{\alpha}^R\*1+1\* x_{\alpha}$
is zero for all $\alpha\in\Delta_+$.
By \eqref{eq:added2019-03-18-10:10-2},
we 
have
$a_{(n)}v=0$
for all $n\geq 0$, $v\in M_\g\* V$,
that is,
$a$ belongs to the center of $M_\g\* V$.
Since $M_\g$ is simple,
this implies that $a$ belongs to the center of $V\subset M_\g\* V$.
On the other hand,
we have $x_\alpha^R=x_{\alpha}+\sum_{\beta>\alpha}(P_{\alpha,\beta})_{(-1)}x_{\beta}$
for some polynomial $P_{\alpha,\beta}$
in $\C[N]$
of weight $\alpha-\beta$,
see \cite[Remark 4.4]{Fre05},
where
we count the weight of $a^*_{\alpha}$ as $-\alpha$.
Also we have
$\tilde{\Phi}(x_{\beta}\* 1)=x_{\beta}\* 1-\tilde{\Phi}(1\* x_{\beta})
=x_{\beta}\* 1-\phi(x_{\beta})
=x_{\beta}\* 1-1\* x_{\beta}-\sum_{\gamma>\beta}R_{\beta,\gamma}\* x_{\gamma}$,
where 
$R_{\beta,\gamma}$ is some polynomial in $\C[N]$
of weight $\beta-\gamma$.
It follows that
$a\in \C[N]^* V(\mf{n})\* V$,
where $\C[N]^* $ is the argumentation ideal of $\C[N]$.
Therefore, we get that $a=0$.

The assertion is proved by
extending $\tilde{\Phi}$ to the vertex algebra isomorphism
$\Phi:\Wak{\lam}{T}\*_\C V\overset{\sim}{\ra} \Wak{\lam}{T}\*_\C V$
whose restriction to
 the vertex subalgebra 
 $\pi_{T+h^{\vee}}$
is the identity map.

(1) 
Although the assertion was proved in \cite{A-BGG},
we give a yet another based on the statement (2) we have just proved.
As in (2),  we obtain a linear isomorphism
$\Phi:\Wak{\lam}{T}\*_\C M\overset{\sim}{\ra} \Wak{\lam}{T}\*_\C M$.
Note that  $\Wak{\lam}{T}\*_\C M$ is naturally a module over the vertex algebra
$\Wak{\lam}{T}\*_\C V(\mf{n})$,
and we have the isomorphism
$\Phi:\Wak{\lam}{T}\*_\C V(\mf{n})\isomap \Wak{\lam}{T}\*_\C V(\mf{n})$
of vertex algebras obtained in (2).
By construction we have
$\Phi(u_{(n)})\Phi(v)=\Phi(u_{(n)}v)$ for $u\in \Wak{\lam}{T}\*_\C V(\mf{n})$,
and therefore, $\Phi$ satisfies the required properties. 
\end{proof}

For each $i=1,\dots, \on{rank}\g$,
define
an operator $S_i(z): \Wak{\mu}{T}\ra \Wak{\mu-\alpha_i}{T}$,
$\mu\in \h^*_T$,
by
\begin{align*}
S_{i}(z)= :e_i^R(z):e^{\int -\frac{1}{\tau(K)+h^{\vee}}b_i(z)dz}::,
\end{align*}
where
\begin{align}
&:e^{\int -\frac{1}{\tau(K)+h^{\vee}}b_i(z)dz}:
\label{eq:eb}
\\&=T_{-\alpha_i} z^{-\frac{(b_i)_{(0)}}{\tau(K)+h^{\vee}}}
\exp(-\frac{1}{\tau(K)+h^{\vee}}\sum_{n<0}\frac{(b_i)_{(n)}}{n}z^{-n})\exp(-\frac{1}{\tau(K)+h^{\vee}}\sum_{n>0}\frac{(b_i)_{(n)}}{n}z^{-n}).
\nonumber
\end{align}
Here
$z^{-\frac{(b_i)_{(0)}}{\tau(K)+h^{\vee}}}=\exp(-\frac{(b_i)_{(0)}}{\tau(K)+h^{\vee}}\log z)$
and
 $T_{-\alpha_i}$
is the translation operator
$\pi_{T,\mu}\ra \pi_{T,\mu-\alpha_i}$
sending the highest weight vector to the highest weight vector and commuting with all $(b_j)_{(n)}$, $n\ne 0$.
The residue
\begin{align}
S_{i}:=\int S_{i}(z)dz\label{eq:intertwiner1}
\end{align}
is an
intertwining operator between the $\affg$-modules
$\Wak{0}{T}$ and $\Wak{-\alpha_i}{T}$ (\cite{Fre05}).
\begin{Pro}[\cite{FeiFre92}]\label{Pro:resolution-for-generic-level}
Let $T=F$ (see Definition \ref{def:RandF}), or $T=\C$ with $\tau(K)=k\not\in \mathbb{Q}$.
Then there exists a resolution 
of the $\affg$-module $V_T(\g)$ of the form
\begin{align*}
&0\ra V_T(\g)\ra C_0\overset{d_0}{\ra} C_1\ra\dots \ra C_{\ell(w_0)}\ra 0,\\
& C_i=\bigoplus_{w\in W\atop \ell(w)=i}\Wak{w\circ 0}{T},
\end{align*}
where 
$\ell(w)$ is the length of $w\in W$,
$w_0$ is the longest element of $W$,
and 
$d_0=\oplus_{i=1}^{\on{rank}\g} c_i S_i$
 for some $c_i\in \C^*$.
\end{Pro}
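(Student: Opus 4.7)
The plan is to derive the resolution by contragredient dualization of the affine BGG resolution of $V_T(\g)$ by affine Verma modules, and then to identify the dualized Verma modules with Wakimoto modules via Proposition \ref{Pro:dual-Verma=Wakimoto}. I would begin with the classical BGG resolution
\begin{equation*}
0 \to \bigoplus_{\ell(w) = \ell(w_0)} M(w \circ 0) \to \cdots \to \bigoplus_{\ell(w) = 1} M(w \circ 0) \to M(0) \to \C \to 0
\end{equation*}
of the trivial $\g$-module $E_0 = \C$, where $M(\mu)$ denotes the $\g$-Verma module. Applying the exact induction functor $\on{Ind}_{\g[t]\+ \C K}^{\affg}$ (with $\g[t]t$ acting trivially and $K$ acting as $\tau(K)$, and working $T$-linearly) produces, by transitivity of induction, the affine BGG resolution
\begin{equation*}
0 \to \bigoplus_{\ell(w) = \ell(w_0)} \M_T(w \circ 0) \to \cdots \to \M_T(0) \to V_T(\g) \to 0,
\end{equation*}
since the induced module of $M(\mu)$ is $\M_T(\mu)$ and the induced module of $\C$ is $V_T(\g) = \V_T(0)$.

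Next I would apply the contragredient duality $(-)^*$ in $\mc{O}_T$, which is exact and reverses arrows. At non-critical level, the Shapovalov-type invariant bilinear form on $V_T(\g)$ yields $V_T(\g)^* \cong V_T(\g)$, producing the dual complex
\begin{equation*}
0 \to V_T(\g) \to \M_T(0)^* \to \bigoplus_{\ell(w) = 1} \M_T(w\circ 0)^* \to \cdots \to \bigoplus_{\ell(w) = \ell(w_0)} \M_T(w\circ 0)^* \to 0.
\end{equation*}
The genericity of $T$ ensures that for every $w\in W$ and every real affine root of the form $\beta=-\alpha + n\delta$ with $\alpha\in\Delta_+$, $n\geq 1$, the pairing $\bra \widehat{w\circ 0}+\hat{\rho},\beta^\vee\ket$ acquires a summand proportional to $\tau(K)+h^\vee$ and so fails to be integral. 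The hypothesis of Proposition \ref{Pro:dual-Verma=Wakimoto} therefore applies termwise, and the isomorphism $\M_T(w\circ 0)^*\cong \Wak{w\circ 0}{T}$ converts the dual complex into the desired form.

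To identify the first differential, the embedding $\M_T(s_i\circ 0)\hookrightarrow \M_T(0)$ in the BGG resolution dualizes to a non-zero $\affg$-intertwiner $\Wak{0}{T}\to \Wak{-\alpha_i}{T}$. The uniqueness statement of Theorem \ref{Th:uniqueness-wakimoto}, combined with Frobenius reciprocity applied to the one-dimensional space of singular vectors of weight $-\alpha_i$ in $\Wak{-\alpha_i}{T}$, forces this intertwiner space to be one-dimensional at generic $T$. Since the screening operator $S_i$ of \eqref{eq:intertwiner1} is a non-zero element of the same space, the differential must equal $c_iS_i$ for some $c_i\in\C^*$.

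The main obstacle is this final identification: simultaneously verifying the one-dimensionality of $\on{Hom}_{\affg}(\Wak{0}{T},\Wak{-\alpha_i}{T})$ at generic $T$, and the non-vanishing of the explicit screening $S_i$. The first is essentially a rephrasing of the semi-infinite cohomology characterization of Wakimoto modules, while the second reduces to an explicit computation showing that the exponential vertex operator in \eqref{eq:eb} carries the vacuum of $\Wak{0}{T}$ to a non-zero lowest-weight vector of $\Wak{-\alpha_i}{T}$.
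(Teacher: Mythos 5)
Your argument is correct, but it establishes the existence of the resolution by a genuinely different mechanism than the paper. The paper invokes Fiebig's equivalence: at the levels in question the integral root system of the block containing $\hat 0$ is the finite system $\Delta$, so the dual BGG resolution of the trivial $\g$-module transfers categorically to a resolution of $\L_T(0)=V_T(\g)$ by the contragredient Verma modules $\M_T(w\circ 0)^*$, which are then identified with Wakimoto modules exactly as you do. You instead build the resolution by hand: induce the finite BGG resolution through the exact functor $U(\affg)\otimes_{U(\g[t]\+\C K)}(-)$ (using transitivity of induction to recognize the induced modules as $\M_T(w\circ 0)$ and $V_T(\g)$), and then apply contragredient duality. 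Your route is more elementary and self-contained, but it is tied to the highest weight $0$: it uses that $V_T(\g)$ is induced from a finite-dimensional $\g$-module and that $V_T(\g)^*\cong V_T(\g)$, whereas the Fiebig argument applies verbatim to the companion statement (Proposition \ref{Pro:resolution-for-generic-level-more-general}) for $\L_k(\lam-(k+h^{\vee})\mu)$, which is not of induced type. Two points in your write-up should be tightened. First, the self-duality $V_T(\g)^*\cong V_T(\g)$ is not a consequence of non-criticality alone (it fails, e.g., at positive integral levels, where $\V_k(0)\ne \L_k(0)$); what you need is that the hypotheses $T=F$ or $k\notin\Q$ force $\V_T(0)$ to be simple, equivalently the Shapovalov-type form to be nondegenerate. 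Second, the one-dimensionality of the intertwiner space is most cleanly obtained not from singular vectors in the target $\Wak{-\alpha_i}{T}$ via Frobenius reciprocity (maps into a contragredient module are controlled by coinvariants, not by singular vectors), but from the contravariant equivalence $(-)^*$ itself: $\Hom_{\mc{O}_T}(\M_T(0)^*,\M_T(-\alpha_i)^*)\cong \Hom_{\mc{O}_T}(\M_T(-\alpha_i),\M_T(0))$, which is one-dimensional, spanned by the map hitting the singular vector $f_i v_0$; combined with the non-triviality of $S_i$ and the non-vanishing of the dualized BGG differential, this pins down $d_0=\oplus_i c_iS_i$ with $c_i\in\C^*$, exactly as in the paper.
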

\begin{proof}
By Fiebig's equivalence  \cite{Fie06},
there exists a 
resolution 
\begin{align}
0\ra\L_k(0)\ra C_0\overset{d_0}{\ra} C_1\ra\dots \ra C_n\ra 0
\label{eq:more-general-resoluction}
\end{align}
such that
$C_i=\bigoplus\limits_{w\in W\atop \ell(w)=i}\M_T(w\circ 0)^*$,
which corresponds to the dual of BGG resolution of the trivial representation of $\g$.
By Proposition \ref{Pro:dual-Verma=Wakimoto},
$\M_T(w\circ 0)^*\cong \Wak{w\circ 0}{T}$.
The equality $d_0=\oplus_{i=1}^{\on{rank}\g} c_i S_i$
follows from the facts that  $S_i$ is non-trivial and
that
$\dim_T \Hom_{\mc{O}_T} (\M_T(0)^*,\M_T(-\alpha_i)^*)=1$.
\end{proof}

\section{More on screening operators}\label{section:More}
In this section we let $T=\C$ with $\tau(K)=k\in \C\backslash \{-h^{\vee}\}$.

By the formula
just before Proposition 7.1 of \cite{Fre05},
the construction \eqref{eq:intertwiner1} of the intertwining operator is generalized as follows (see \cite{TsuKan86}
for the details):
Let $\mu\in \h^*$
such that
\begin{align}
(\mu|\alpha_i)+m(k+h^{\vee})=\frac{(\alpha_i|\alpha_i)}{2}(n-1).
\label{eq:weight}
\end{align}
for some $n\in \Z_{\geq 0}$ and $m\in \Z$.
Note that  \eqref{eq:weight}
 is equivalent to
$$\bra \hat{\mu}+\hat{\rho},(\alpha_i+m\delta)^{\vee}\ket=n,$$
where $\hat{\mu}=\mu+k\Lam_0$  as before, see \eqref{eq:affine-weight}.
We have
\begin{align*}
&S_i(z_1)S_i(z_2)\dots S_i(z_{n})|_{\Wak{\mu}{k}}\\
&=\prod_{i=1}^{n}z_i^{-\frac{(\mu|\alpha_i)}{k+h^{\vee}}}
\prod_{1\leq i<j\leq n}(z_i-z_j)^{\frac{(\alpha_i|\alpha_i)}{k+h^{\vee}}}
:S_i(z_1)S_i(z_2)\dots S_i(z_n):.
\end{align*}
By setting
 $z_1=z$,
$z_i=zy_{i-1}$, $i\geq 2$,
we have
\begin{align}
&\prod_{i=1}^{n}z_i^{-\frac{(\mu|\alpha_i)}{k+h^{\vee}}}
\prod_{1\leq i<j\leq n}(z_i-z_j)^{\frac{(\alpha_i|\alpha_i)}{k+h^{\vee}}}
\label{eq:multi-function}\\&=
z^{-\frac{n((\mu|\alpha_i)-\frac{(\alpha_i|\alpha_i)}{2}(n-1))}{k+h^{\vee}}}
\prod_{i=1}^{n-1}y_i^{m-\frac{(n-1)(\alpha_i|\alpha_i)}{2(k+h^{\vee})}}(1-y_i)^{\frac{(\alpha_i|\alpha_i)}{k+h^{\vee}}}\prod_{1\leq i<j\leq n-1}(y_i-y_j)^{\frac{(\alpha_i|\alpha_i)}{k+h^{\vee}}}
\nonumber\\
&=z^{nm}
\prod_{i=1}^{n-1}y_i^{m-\frac{(n-1)(\alpha_i|\alpha_i)}{2(k+h^{\vee})}}(1-y_i)^{\frac{(\alpha_i|\alpha_i)}{k+h^{\vee}}}\prod_{1\leq i<j\leq n-1}(y_i-y_j)^{\frac{(\alpha_i|\alpha_i)}{k+h^{\vee}}}.
\nonumber
\end{align}
Let $\mc{L}^*_{n}(\mu,k)$ be the 
the local system with coefficients in $\C$
associated to the monodromy group of the multi-valued function \eqref{eq:multi-function} 
on the manifold
$Y_{n}=\{(z_1,\dots, z_{n})\in (\C^*)^{n}\mid z_i\neq z_j\}$,
and denote by $\mc{L}_{n}(\mu,k)$  the dual local system of $\mc{L}^*_n(\mu,k)$
(\cite{AomKit11}).
Then, for an element $\Gamma\in H_{n}(Y_{n},\mc{L}_n(\mu,k))$,
\begin{align}
S_i(n,\Gamma):=\int_{\Gamma} S_i(z_1)S_i(z_2)\dots S_i(z_n) dz_1\dots dz_n:\Wak{\mu}{k}\ra \Wak{\mu-n\alpha_i}{k}
\label{eq:intertwiner2}
\end{align}
defines a
$\affg$-module homomorphism.

The following statement was proved by Tsuchiya and Kanie in the case of affine $\mathfrak{sl}_2$ (see \cite[Theorem 0.6]{TsuKan86}), but the same proof applies.
\begin{Th} \label{Th:Tsuhiya-Kanie}
Suppose that
$$\frac{2d(d+1)}{(k+h^{\vee})(\alpha_i|\alpha_i)}\not\in \Z,\quad
\frac{2d(d-n)}{(k+h^{\vee})(\alpha_i|\alpha_i)}\not\in \Z,
$$
for all $1\leq d\leq n-1$.
Then there exits a cycle
$\Gamma\in H_{n}(Y_{n},\mc{L}_n(\mu,k))$ such that  
$S_i(n,\Gamma)$ is non-trivial.
\end{Th}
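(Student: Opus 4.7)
The plan is to reduce the non-triviality of $S_i(n,\Gamma)$ to an explicit evaluation of a Selberg-type integral. Because $S_i(n,\Gamma)$ is an intertwiner of $\affg$-modules between Wakimoto modules, it suffices to exhibit a single non-zero matrix element. I would pair the image $S_i(n,\Gamma)v_\mu$ of the highest weight vector $v_\mu\in\Wak{\mu}{k}$ with the appropriate weight vector in $\Wak{\mu-n\alpha_i}{k}^*$. The key observation is that the screening field $S_i(z)$ involves the Heisenberg field $b_i$ only in the direction of $\alpha_i$, so after contractions this matrix element becomes, up to a non-vanishing normal-ordering prefactor coming from the fields $e_i^R(z_j)$, exactly the pairing of $\Gamma$ with the multi-valued function \eqref{eq:multi-function} evaluated at the given highest weight.

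Next I would apply the change of variables $z_1 = z$, $z_j = z y_{j-1}$ for $j\ge 2$, the substitution already performed in the excerpt. This factors out the harmless $z^{nm}$ and reduces the computation to
\[
\int_{\Gamma'}\prod_{j=1}^{n-1}y_j^{a}(1-y_j)^{b}\prod_{1\le j<\ell\le n-1}(y_j-y_\ell)^{2c}\,dy_1\cdots dy_{n-1},
\]
with $a = m-(n-1)c$, $b = 2c$, and $c=(\alpha_i|\alpha_i)/(2(k+h^\vee))$, where $\Gamma'$ is the induced twisted cycle on the configuration space of $n-1$ distinct points in $(0,1)$. I would take $\Gamma'$ to be the ordered simplex $\{0<y_{n-1}<\cdots<y_1<1\}$ with canonical branches of the multi-valued integrand; by Aomoto-Kita theory \cite{AomKit11} this represents a non-zero class in twisted homology, and lifts to a non-zero class in $H_n(Y_n,\mc{L}_n(\mu,k))$.

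Selberg's integral formula then evaluates this pairing in closed form as an explicit product of Gamma functions
\[
\prod_{j=0}^{n-2}\frac{\Gamma(a+1+jc)\,\Gamma(b+1+jc)\,\Gamma(1+(j+1)c)}{\Gamma(a+b+2+(n-2+j)c)\,\Gamma(1+c)}.
\]
Under the hypothesis $k+h^\vee\ne 0$ and the constraint \eqref{eq:weight}, the denominator is finite and nonzero, so non-triviality of $S_i(n,\Gamma)$ becomes the statement that no numerator Gamma function has a pole. Translating ``a numerator argument is a non-positive integer'' into conditions on integers $d\in\{1,\ldots,n-1\}$, and using \eqref{eq:weight} to eliminate $(\mu|\alpha_i)$ in favor of $n$ and $m$, one obtains exactly the two resonance conditions stated in the theorem.

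The main obstacle I anticipate is bookkeeping rather than conceptual: ensuring that the normal-ordering prefactor from the $e_i^R(z_j)$ is non-vanishing, choosing branches so the simplex integral equals the Selberg integral (rather than a linear combination of Selberg integrals with symmetric partners that could cancel), and tracking that the two resonance conditions in the theorem exhaust all the bad arguments of the numerator Gamma functions as $d$ and $j$ vary over their respective ranges. Each of these steps follows the Tsuchiya-Kanie \cite{TsuKan86} computation for $\affg=\widehat{\mf{sl}}_2$ line by line, now applied along the $\mf{sl}_2$-triple attached to the simple root $\alpha_i$, which is all that matters since the screening operator only probes this rank-one direction.
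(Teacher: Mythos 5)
Your overall route coincides with the paper's: the paper offers no independent argument for this theorem, but simply observes that the screening current only involves the rank-one direction attached to $\alpha_i$, so that Tsuchiya--Kanie's proof of \cite[Theorem 0.6]{TsuKan86} (a matrix-element computation reduced, after the substitution leading to \eqref{eq:multi-function}, to the pairing of a twisted cycle with a Selberg-type integrand in the sense of \cite{AomKit11}) applies verbatim. Your reduction to a nonzero matrix element of the highest weight vector, the passage to the $(y_1,\dots,y_{n-1})$-integral, and the appeal to twisted homology are exactly that argument.

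There is, however, a concrete problem with the way you propose to extract the two hypotheses, and it sits precisely at the step you defer as bookkeeping. The Gamma function has no zeros, so a pole of a \emph{numerator} Gamma factor cannot make the Selberg product vanish; it only signals divergence of the unregularized integral. Vanishing of the pairing can come only from poles of the \emph{denominator} Gamma factors or, more to the point, from the trigonometric (cyclotomic) factors relating the pairing of the particular cycle you choose (a regularized ordered simplex, or Tsuchiya--Kanie's loaded cycle) to the symmetric Selberg integral. Moreover, if you actually run the pole analysis on the Selberg product with $a=m-(n-1)c$, $b=2c$, $c=(\alpha_i|\alpha_i)/2(k+h^{\vee})$ and $m\in\Z$ as in \eqref{eq:weight}, the resulting integrality conditions are \emph{linear} in the running index (of the form $dc\in\Z$, shifted by the integer $m$), whereas the hypotheses of the theorem are \emph{quadratic} in $d$, involving $d(d+1)$ and $d(d-n)$. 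Those quadratic expressions arise exactly from the sine-type prefactors produced by the regularization and branch choices, i.e. from the possibility that your simplex pairs to a linear combination of Selberg integrals whose trigonometric coefficients cancel -- the very contingency you set aside. So the skeleton is correct and matches the paper's (and Tsuchiya--Kanie's) proof, but the decisive non-vanishing step must be carried out as in \cite{TsuKan86}: evaluate the pairing for the specific cycle, including its trigonometric prefactor, and verify that the stated non-resonance conditions keep both that prefactor and the Gamma quotient nonzero; as written, your criterion would produce the wrong (and differently shaped) conditions.
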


\begin{Pro}[\cite{Fre92Car}]\label{Pro:resolution-for-generic-level-more-general}
Let $k$ be generic,
$\lam\in P_+$, $\mu \in P_+^{\vee}$.
There exists a resolution 
of the $\affg$-module $\L_k(\lam-(k+h^{\vee})\mu)$ of the form
\begin{align*}
&0\ra \L_k(\lam-(k+h^{\vee})\mu)\ra C_0\overset{d_0}{\ra} C_1\ra\dots \ra C_n\ra 0,\\
& C_i=\bigoplus_{w\in W\atop \ell(w)=i}\Wak{w\circ \lam-(k+h^{\vee})\mu}{k}.
\end{align*}
The map $d_0$ is given by
$$d_0=\sum_{i=1}^{\on{rank}\g}
c_i S_i(n_i,\Gamma_i)$$
for some $c_i\in \C$,
with $n_i=\bra \lam+\rho,\alpha_i^\vee\ket$
and $\Gamma_i$ is the cycle  as in Theorem \ref{Th:Tsuhiya-Kanie}.
\end{Pro}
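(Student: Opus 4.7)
The plan is to adapt the argument of Proposition~\ref{Pro:resolution-for-generic-level} to the shifted highest weight $\lam - (k+h^{\vee})\mu$, combining three ingredients: a BGG-type resolution of $\L_k(\lam - (k+h^{\vee})\mu)$ by dual Verma modules obtained via Fiebig's equivalence \cite{Fie06}, the identification of each dual Verma with a Wakimoto module via Proposition~\ref{Pro:dual-Verma=Wakimoto}, and the explicit realization of the differential via the screening operators $S_i(n, \Gamma)$ constructed in Section~\ref{section:More}.

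First I would analyze the integral roots at $\hat{\lam} - (k+h^{\vee})\mu$. A direct computation gives
\begin{align*}
\bra \hat{\lam} - (k+h^{\vee})\mu + \hat{\rho}, (\alpha + n\delta)^{\vee}\ket = \bra \lam + \rho, \alpha^{\vee}\ket + (k+h^{\vee})\bigl(n - \bra \mu, \alpha^{\vee}\ket\bigr),
\end{align*}
which for generic $k$ is integral if and only if $n = \bra \mu, \alpha^{\vee}\ket$. Hence the integral Weyl group is isomorphic to the finite Weyl group $W$, and the $i$-th simple integral reflection sends $\lam - (k+h^{\vee})\mu$ under the dot action to $s_i \circ \lam - (k+h^{\vee})\mu$. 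Applying Fiebig's equivalence to the dual of the classical BGG resolution of $E_{\lam}$ then produces a resolution of $\L_k(\lam - (k+h^{\vee})\mu)$ by dual Vermas $\M_k(w \circ \lam - (k+h^{\vee})\mu)^{*}$; the hypothesis of Proposition~\ref{Pro:dual-Verma=Wakimoto} is easily verified at each of these weights for generic $k$, yielding the asserted Wakimoto resolution.

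Next, to identify $d_0$ as a sum of screenings, I would use that a standard computation for generic $k$ yields $\dim \Hom_{\affg}(\M_k(\lam - (k+h^{\vee})\mu)^{*}, \M_k(s_i \circ \lam - (k+h^{\vee})\mu)^{*}) = 1$. Setting $n_i = \bra \lam + \rho, \alpha_i^{\vee}\ket$ and $m_i = \bra \mu, \alpha_i^{\vee}\ket$, both sides of~\eqref{eq:weight} reduce to $(\lam | \alpha_i)$, so $S_i(n_i, \Gamma_i)$ is defined; the Tsuchiya--Kanie hypothesis of Theorem~\ref{Th:Tsuhiya-Kanie} holds automatically for generic $k$, so $\Gamma_i$ can be chosen to make $S_i(n_i, \Gamma_i)$ nonzero. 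The $i$-th component of $d_0$ therefore equals $c_i S_i(n_i, \Gamma_i)$ for some $c_i \in \C$, with $c_i \neq 0$ by exactness of the complex at $C_0$.

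The main obstacle I expect is the careful transport of the classical BGG differentials through Fiebig's equivalence in this shifted setting: one must track how the generators of $\Hom$ between neighboring dual Vermas at the affine weights $w \circ \lam - (k+h^{\vee})\mu$ correspond to their finite counterparts, and check that the shift by $-(k+h^{\vee})\mu$ (which places us in a different affine chamber) does not disrupt the identification. Once this is in place, the Wakimoto and screening-operator identifications follow routinely from the already-developed machinery of Sections~\ref{section:Wakimoto} and~\ref{section:More}.
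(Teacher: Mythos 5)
Your proposal follows essentially the same route as the paper's proof: compute that $\widehat{\Delta}(\hat\Lambda)=t_{-\mu}(\Delta)\cong\Delta$ for generic $k$, invoke Fiebig's equivalence to transport the dual BGG resolution into a resolution by dual Vermas, identify these with Wakimoto modules via Proposition \ref{Pro:dual-Verma=Wakimoto}, and pin down $d_0$ using the non-triviality of $S_i(n_i,\Gamma_i)$ from Theorem \ref{Th:Tsuhiya-Kanie} together with the one-dimensionality of $\Hom_{\affg}(\M_k(\lam-(k+h^{\vee})\mu)^*,\M_k(s_i\circ\lam-(k+h^{\vee})\mu)^*)$. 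The only caveat is a harmless normalization slip (the integer in \eqref{eq:weight} is $m_i=(\mu|\alpha_i)$ rather than $\bra\mu,\alpha_i^\vee\ket$, which coincide in the simply-laced setting), so your argument matches the paper's.
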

\begin{proof}
We prove in the same manner as Proposition  \ref{Pro:resolution-for-generic-level}.
Set
$\Lam=\lam-(k+h^{\vee})\mu+k\Lam_0\in \widehat{\h}^*$.
Then
$\hat{\Delta}(\Lam)=t_{-\mu}(\Delta)\cong \Delta$.
Hence
Fiebig's equivalence \cite{Fie06}
implies that  
there exists a 
resolution 
\begin{align}
0\ra \L_k(\lam-(k+h^{\vee})\mu)\ra C_0\overset{d_0}{\ra} C_1\ra\dots \ra C_n\ra 0
\label{eq:more-general-resoluction}
\end{align}
such that
$$C_i=\bigoplus\limits_{w\in W\atop \ell(w)=i}\M_k(w\circ \lam-(k+h^{\vee})\mu)^*.$$
On the other hand,  Proposition \ref{Pro:dual-Verma=Wakimoto} gives  that 
$$\M_k(w\circ \lam-(k+h^{\vee})\mu)^*\cong \Wak{w\circ \lam-(k+h^{\vee})\mu}{k}.$$
The second assertion follows from 
the non-triviality of the map
$S_i(n_i,\Gamma_i)$
and the fact that 
$\Hom_{\affg}(\M_k(\lam-(k+h^{\vee})\mu)^*, \M_k(s_i\circ\lam-(k+h^{\vee})\mu)^*)$ is one-dimensional.
\end{proof}

\begin{Co}[\cite{FeiFre92}]\label{Co:generic-BRST-cohomology}
Let $k$ be generic,
$\lam\in P_+$, $\mu \in P_+^{\vee}$.
Then
\begin{align*}
H^{\frac{\infty}{2}+i}(L\mf{n},\L_k(\lam-(k+h^{\vee})\mu))\cong \bigoplus_{w\in W\atop
\ell(w)=i}\pi^{k+h^{\vee}}_{w\circ \lam-(k+h^{\vee})\mu}
\end{align*}
as modules over the Heisenberg vertex algebra $\pi^{k+h^{\vee}}$.
\end{Co}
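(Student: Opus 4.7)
The plan is to apply the semi-infinite cohomology functor $H^{\frac{\infty}{2}+\bullet}(L\mf{n},-)$ termwise to the Wakimoto-type resolution of $\L_k(\lam-(k+h^{\vee})\mu)$ supplied by Proposition \ref{Pro:resolution-for-generic-level-more-general}, and then read the answer off a hypercohomology spectral sequence.

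Concretely, tensoring the resolution $0\to\L_k(\lam-(k+h^\vee)\mu)\to C_0\to\cdots\to C_n\to 0$ with the fermionic ghost system $\bw{\frac{\infty}{2}+\bullet}(\mf{n})$ yields a double complex with commuting differentials $Q^{st}_{(0)}$ and the resolution map $d^{\text{res}}$ (the two commute because $d^{\text{res}}$ is an $\affg$-homomorphism and $Q^{st}_{(0)}$ is built out of $L\mf{n}$-action and ghost operators). Since the resolution is exact, the spectral sequence obtained by first taking $d^{\text{res}}$-cohomology collapses and its abutment is $H^{\frac{\infty}{2}+\bullet}(L\mf{n},\L_k(\lam-(k+h^{\vee})\mu))$. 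The other spectral sequence has first page
\begin{align*}
E_1^{p,q}=H^{\frac{\infty}{2}+q}(L\mf{n},C_p)=\bigoplus_{\substack{w\in W\\ \ell(w)=p}}H^{\frac{\infty}{2}+q}(L\mf{n},\Wak{w\circ\lam-(k+h^\vee)\mu}{k}),
\end{align*}
which, by \eqref{eq:Wakimoto_characterizaion}, equals $\bigoplus_{\ell(w)=p}\pi^{k+h^\vee}_{w\circ\lam-(k+h^\vee)\mu}$ for $q=0$ and vanishes for $q\neq 0$. Thus only one row survives and the abutment becomes the cohomology of the single-row complex $(E_1^{\bullet,0},d_1)$.

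It remains to check that the induced differential
\begin{align*}
d_1:\bigoplus_{\ell(w)=p}\pi^{k+h^\vee}_{w\circ\lam-(k+h^\vee)\mu}\longrightarrow\bigoplus_{\ell(w')=p+1}\pi^{k+h^\vee}_{w'\circ\lam-(k+h^\vee)\mu}
\end{align*}
vanishes identically. By naturality, together with Lemma \ref{Lem:same1} (which identifies the Heisenberg action on $H^{\frac{\infty}{2}+0}(L\mf{n},\Wak{\nu}{k})$ with the natural one on $\pi^{k+h^\vee}_{\nu}$ and extends verbatim to nonzero highest weight), each component of $d_1$ is a $\pi^{k+h^\vee}$-module homomorphism between two irreducible Fock modules. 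Such a homomorphism must preserve the zero-mode weight, so it can be nonzero only when the two weights coincide. Since $\lam\in P_+$ forces $\lam+\rho$ to be regular, the vectors $w\circ\lam=w(\lam+\rho)-\rho$ are pairwise distinct over $w\in W$; shifting uniformly by $-(k+h^\vee)\mu$ preserves this, so all Heisenberg weights appearing in $E_1^{\bullet,0}$ are distinct. Hence $d_1=0$, the spectral sequence degenerates at $E_2=E_\infty$, and we obtain the stated Heisenberg-module isomorphism.

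The main obstacle is the vanishing of $d_1$, i.e.\ verifying that the Bruhat-adjacent intertwiners between Wakimoto modules induce zero maps on semi-infinite cohomology as $\pi^{k+h^\vee}$-modules; this reduces, via Lemma \ref{Lem:same1}, to the elementary observation that Heisenberg homomorphisms between Fock modules of distinct weights vanish, which for generic $k$ is guaranteed by the regularity of $\lam+\rho$.
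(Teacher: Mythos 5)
Your proposal is correct and follows essentially the same route as the paper: apply $H^{\frac{\infty}{2}+\bullet}(L\mf{n},?)$ to the Wakimoto-type resolution of Proposition \ref{Pro:resolution-for-generic-level-more-general} and use the acyclicity of Wakimoto modules, so that the cohomology is computed by the induced complex of Fock modules. The only difference is that you make explicit the step the paper leaves implicit in ``whence the statement,'' namely that the induced differentials vanish because they are $\pi^{k+h^{\vee}}$-equivariant (via Lemma \ref{Lem:same1}) maps between Fock modules whose highest weights $w\circ\lam-(k+h^{\vee})\mu$ are pairwise distinct by regularity of $\lam+\rho$ — a correct and welcome elaboration.
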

\begin{proof}
As Wakimoto modules are acyclic with respect to the cohomology functor $H^{\frac{\infty}{2}+i}(L\mf{n} ,?)$,
$H^{\frac{\infty}{2}+i}(L\mf{n} ,\L_k(\lam-(k+h^{\vee})\mu))$
is the cohomology of the complex obtained by applying the
functor $H^{\frac{\infty}{2}+i}(L\mf{n} ,?)$
to the resolution in
Proposition \ref{Pro:resolution-for-generic-level-more-general},
whence the statement.
\end{proof}

\section{Principal $W$-algebras and the Miura map}\label{section:Miura}
Let
$T$ be an integral $\C[K]$-domain
with the structure map $\tau:\C[K]\ra T$.

Let $f\in \mf{n}_-$ be a principal nilpotent element,
$\{e,f,h\}$ an $\mf{sl}_2$-triple. 
Let $H_{DS}^\bullet(?)$ be the Drinfeld-Sokolov reduction cohomology functor associated to $f$ (\cite{FF90,FreBen04}).
By definition,
$$H_{DS}^\bullet(M)=H^{\frac{\infty}{2}+\bullet}(L\mf{n},M\* \C_{\Psi}),$$
where $L\mf{n}$ acts on $M\* \C_{\Psi}$ diagonally
and
$\C_{\Psi}$ is the one-dimensional representation
of $L\mf{n}$ defined by the character
$\Psi:L\mf{n}\ni xt^n\mapsto \delta_{n,-1}(f|x)$.
The space 
$$\W^T(\g):=H_{DS}^0(V_T(\g))$$
is naturally a vertex algebra,
and is called 
the  {\em principal $W$-algebra associated to
$\g$ over $T$}. 
We have
\begin{align}
H_{DS}^i(V_T(\g))=0\quad  \forall i\ne 0.
\label{eq:vanighing-Fr-Ben}
\end{align}
This was proved in \cite{FreBen04} for $T=\C$,
but the same proof applies for the general case.
We write $\W^k(\g)$ for $\W^T(\g)$ if $T=\C$ and $\tau(K)=k$.

We have
\begin{align}
\gr \W^T(\g)\cong \C[J_{\infty}\mc{S}_f]\* T
\label{eq:associated-graded-W-algebra}
\end{align}
as Poisson vertex algebras,
where $\gr V$ denotes the graded Poisson vertex algebra associated to the canonical filtration
\cite{Li05} of $V$,
$\mc{S}_f=f+\g^e\subset \g=\g^*$ is the Kostant slice,
$J_{\infty}X$ is the infinite jet scheme of $X$.
This was proved in \cite{Ara09b} for $T=\C$, but the same proof applies.
In particular,
$\W^T(\g)$ is free as a $T$-module.

From the proof
of \eqref{eq:vanighing-Fr-Ben},
or the fact
 \eqref{eq:associated-graded-W-algebra},
it follows that
for a given
 $\C[K]$-algebra homomorphism
$T_1\ra T_2$ 
we have
\begin{align*}
\W^{T_2}(\g)=\W^{T_1}(\g)\*_{T_1} {T_2}.
\end{align*}

Suppose that $\tau(K)+h^{\vee}$ is invertible.
Then
the vertex algebra $\W^T(\g)$ is conformal and its central charge is given by
\begin{align}
-\frac{((h+1)(\tau(K)+h^{\vee})-h^{\vee})(r^{\vee}{}^Lh^{\vee}(\tau(K)+h^{\vee})-(h+1))\on{rank}\g}{\tau(K)+h^{\vee}},
\end{align}
where $h$ is the Coxeter number of $\g$, ${}^Lh^{\vee}$ is the dual Coxeter number of the Langlands dual
${}^L\g$ of $\g$,
and $r^{\vee}$ is the maximal number of the edges in the Dynkin diagram of $\g$.
We have
$\W^T(\g)=\bigoplus_{\Delta\in \Z_{\geq 0}}\W^T(\g)_{\Delta}$,
$\W^T(\g)_0=T$,
and each $\W^T(\g)_{\Delta}$ is a free $T$-module of finite rank.
Here $\W^T(\g)_{\Delta}$ is the $T$-submodule of $\W^T(\g)$ spanned by the vectors of conformal weight $\Delta$.

As  explained in \cite{Ara16},
there is a vertex algebra embedding 
\begin{align*}
\Miura: \W^T(\g)\hookrightarrow \pi_{T+h^{\vee}}
\end{align*}
called the {\em Miura map} (\cite{FF90,Ara16}).
The induced map
$\gr \Miura: \gr \W^T(\g)=\C[J_{\infty}\mc{S}_f]\* T\ra \pi_{T+h^{\vee}}=\C[J_{\infty}\h^*]\* T
$ between associated graded Poisson vertex algebras is an injective homomorphism,
and we have
\begin{align}
\gr( \Miura (\W^T(\g)))= (\gr \Miura)(\gr \W^T(\g))=\C[J_{\infty}(\h^*/W)]\* T.
\label{eq:associated-graded-of-Miura}
\end{align}

 \begin{Lem}\label{Lem:base-change-for-Miura}
 Let $T_2$ be a $\C[K]$-subalgebra of $T_1$. 
 Then 
 $$\Miura(\W^{T_2}(\g))=\Miura( \W^{T_1}(\g))\cap \pi_{T_2+h^{\vee}}.$$
\end{Lem}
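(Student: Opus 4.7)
The plan is to prove that the $T_2$-module $Q := \pi_{T_2+h^{\vee}}/\Miura(\W^{T_2}(\g))$ is flat over $T_2$, and then deduce the lemma by a short diagram chase. The inclusion $\Miura(\W^{T_2}(\g)) \subseteq \Miura(\W^{T_1}(\g)) \cap \pi_{T_2+h^{\vee}}$ is automatic from the naturality of the Miura map with respect to the base change $T_2 \hookrightarrow T_1$, so all the content is in the reverse inclusion.

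Granting flatness of $Q$, the short exact sequence $0 \to \Miura(\W^{T_2}(\g)) \to \pi_{T_2+h^{\vee}} \to Q \to 0$ stays exact after applying $- \otimes_{T_2} T_1$. Combined with the base change identity $\W^{T_2}(\g) \otimes_{T_2} T_1 \cong \W^{T_1}(\g)$ and naturality of $\Miura$, this identifies $\Miura(\W^{T_2}(\g)) \otimes_{T_2} T_1$ with $\Miura(\W^{T_1}(\g))$ as a submodule of $\pi_{T_1+h^{\vee}}$, and makes $Q \hookrightarrow Q \otimes_{T_2} T_1$ injective. One then considers the composite $\pi_{T_2+h^{\vee}} \hookrightarrow \pi_{T_1+h^{\vee}} \twoheadrightarrow \pi_{T_1+h^{\vee}}/\Miura(\W^{T_1}(\g)) = Q \otimes_{T_2} T_1$ and computes its kernel in two ways: from the right as $\pi_{T_2+h^{\vee}} \cap \Miura(\W^{T_1}(\g))$, and by factoring through $\pi_{T_2+h^{\vee}} \twoheadrightarrow Q \hookrightarrow Q \otimes T_1$ as $\Miura(\W^{T_2}(\g))$. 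The two kernels must agree, yielding the reverse inclusion.

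For the flatness, I would argue conformal weight by conformal weight, so that every module in sight is a finite rank free $T_2$-module. The canonical filtration on $\pi_{T_2+h^{\vee},\Delta}$ induces a finite filtration on $Q_\Delta$, and by \eqref{eq:associated-graded-of-Miura} together with the standard identification $\gr \pi_{T+h^{\vee}} \cong \C[J_\infty \h^*] \otimes T$, the associated graded of $Q_\Delta$ is a graded piece of $\bigl(\C[J_\infty \h^*]/\C[J_\infty(\h^*/W)]\bigr) \otimes_\C T_2$. Since this is a $\C$-vector space base changed to $T_2$, it is a free $T_2$-module, and iterated extensions of flat $T_2$-modules are flat, so $Q_\Delta$---and hence $Q$---is $T_2$-flat.

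The main obstacle will be the compatibility of the canonical filtration with the relative (base-ring) setting: one needs the filtration on $\Miura(\W^{T_2}(\g))$ induced from $\pi_{T_2+h^{\vee}}$ to coincide with the canonical filtration coming from its own vertex algebra structure over $T_2$, so that \eqref{eq:associated-graded-of-Miura} genuinely identifies $\gr \Miura(\W^{T_2}(\g))$ with $\C[J_\infty(\h^*/W)] \otimes T_2$ as a filtered submodule of $\gr \pi_{T_2+h^{\vee}}$. Once this deformed version of the associated-graded description is in place---the argument being parallel to the $T=\C$ case---the rest is formal homological algebra.
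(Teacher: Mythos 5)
Your proof is correct, but it reaches the conclusion by a different mechanism than the paper, even though both rest on the same key input \eqref{eq:associated-graded-of-Miura}. The paper's proof is a two-line reduction: after the obvious inclusion $\Miura(\W^{T_2}(\g))\subseteq\Miura(\W^{T_1}(\g))\cap\pi_{T_2+h^{\vee}}$, it notes that it suffices to compare associated graded modules (for the filtration induced from $\pi_{T_2+h^{\vee}}$, which is finite in each conformal weight), and equality there is immediate from \eqref{eq:associated-graded-of-Miura} together with the elementary fact that $\bigl(\C[J_{\infty}(\h^*/W)]\otimes T_1\bigr)\cap\bigl(\C[J_{\infty}\h^*]\otimes T_2\bigr)=\C[J_{\infty}(\h^*/W)]\otimes T_2$ inside $\C[J_{\infty}\h^*]\otimes T_1$. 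You instead use \eqref{eq:associated-graded-of-Miura} to prove that the cokernel $Q$ is weight-wise free, hence $T_2$-flat, and then run a flat base-change and kernel computation; this is sound, and it makes explicit the module-theoretic reason the intersection behaves well under $T_2\hookrightarrow T_1$, but it costs a longer homological detour and needs one extra compatibility the direct route does not have to invoke so explicitly, namely that $\Miura$ commutes with base change and $\W^{T_2}(\g)\otimes_{T_2}T_1\cong\W^{T_1}(\g)$ (this is available: the base-change property of $\W^{T}(\g)$ is recorded earlier in this section, $\Miura$ is constructed by applying $H^0_{DS}$ to \eqref{eq:Miura-for-Lie-algebra}, and in fact the paper's ``clearly'' for the easy inclusion uses the same compatibility). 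Finally, the obstacle you flag at the end---that \eqref{eq:associated-graded-of-Miura} must be read with respect to the filtration induced from $\pi_{T+h^{\vee}}$---is not a genuine gap: the displayed equality $\gr(\Miura(\W^{T}(\g)))=(\gr\Miura)(\gr\W^{T}(\g))$ in \eqref{eq:associated-graded-of-Miura} is exactly this statement in the relative setting, and the paper's own one-line proof of the lemma relies on the same reading, so you may simply cite it.
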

\begin{proof}
Clearly,
$\Miura(\W^{T_1}(\g))\subset \Miura( \W^{T}(\g))\cap \pi_{T_1+h^{\vee}}$.
Thus,
it is sufficient to show that 
$\gr (\Miura(\W^{T_1}(\g)))=\gr ( \Miura( \W^{T}(\g))\cap \pi_{T_1+h^{\vee}})$.
But this follows from \eqref{eq:associated-graded-of-Miura}.
\end{proof}

The Miura map $\Miura$ may be described as follows.
By applying the functor $H_{DS}^0(?)$ to the embedding 
\eqref{eq:Miura-for-Lie-algebra},
we obtain the vertex algebra homomorphism
\begin{align}
\W^T(\g)=H_{DS}^0(V_T(\g))\ra H_{DS}^0(\Wak{0}{T})\cong \pi_{T+h^{\vee}}.
\label{eq:Miura-2}
\end{align}
Here the last isomorphism follows from the following lemma.
\begin{Lem}[\cite{FeiFre92}]\label{Lem:vanishing-DS-cohomology}
We have $H_{DS}^i(\Wak{0}{T})=0$ for $i\ne 0$
and
$H_{DS}^0(\Wak{0}{T})\cong \pi_{T+h^{\vee}}$ as vertex algebras.
More generally,
$H_{DS}^i(\Wak{\lam}{T})\cong \delta_{i,0}\pi_{T+h^{\vee},\lam}$
as $ \pi_{T+h^{\vee}}$-modules
for any $\lam\in \h^*_T$.
\end{Lem}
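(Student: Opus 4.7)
The plan is to proceed in three steps, using the untwisted calculation \eqref{eq:Wakimoto_characterizaion} as the main input. First, because $L\mf{n}$ acts only on the $M_{\g}$-factor of the decomposition $\Wak{\lam}{T}=M_{\g}\otimes_{\C}\pi_{T+h^{\vee},\lam}$, and $\C_{\Psi}$ is likewise concentrated on that factor, a K\"unneth-type identity gives
\[
H_{DS}^{i}(\Wak{\lam}{T}) \;\cong\; H^{\frac{\infty}{2}+i}(L\mf{n},\, M_{\g}\otimes \C_{\Psi})\,\otimes_{\C}\, \pi_{T+h^{\vee},\lam}.
\]
This reduces the module-level statement to computing $H^{\frac{\infty}{2}+i}(L\mf{n},\, M_{\g}\otimes \C_{\Psi})$, while naturally packaging the $\pi_{T+h^{\vee}}$-module structure.

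Second, I would verify that $M_{\g}\otimes \C_{\Psi}$ retains the two bi-freeness properties of $M_{\g}$ used in the argument for \eqref{eq:Wakimoto_characterizaion}: since $\Psi$ restricts to zero on $\mf{n}[[t]]$, we have $M_{\g}\otimes \C_{\Psi}\cong M_{\g}$ as $\mf{n}[[t]]$-modules and cofree-ness over $U(\mf{n}[[t]])$ is preserved; and tensoring with the one-dimensional $\mf{n}[t^{-1}]t^{-1}$-character $\C_{\Psi}$ does not disturb freeness of $M_{\g}$ over $U(\mf{n}[t^{-1}]t^{-1})$. Voronov's theorem \cite[Theorem 2.1]{Vor93} then gives $H^{\frac{\infty}{2}+i}(L\mf{n},\, M_{\g}\otimes \C_{\Psi}) \cong \delta_{i,0}\, T$, yielding the module-level assertion.

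Third, for $\lam=0$ the decomposition $\Wak{0}{T}=M_{\g}\otimes_{\C}\pi_{T+h^{\vee}}$ is one of vertex algebras, so the Heisenberg vertex subalgebra $\pi_{T+h^{\vee}}\hookrightarrow \Wak{0}{T}$ commutes with the $L\mf{n}$-action and descends to a vertex algebra map $\pi_{T+h^{\vee}}\ra H_{DS}^{0}(\Wak{0}{T})$. Both sides have the same graded $T$-character by the second step, and an argument analogous to Lemma \ref{Lem:same1} — namely that $\pi_{T+h^{\vee}}^{\h[t]}=T$ forces any potential correction term at positive conformal weight to vanish — shows the map is injective. Hence it is a vertex algebra isomorphism.

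The main obstacle I anticipate is not the cohomological vanishing itself, which Voronov's theorem handles once the (co)freeness properties are in place, but rather verifying that the argument of \cite{FeiFre92} (written for $T=\C$) is compatible with the parameter ring $T$: one must ensure the bi-freeness conditions are genuinely $T$-linear and that no specialization issues arise when tracking the vertex algebra structure through the spectral sequence underlying Voronov's theorem.
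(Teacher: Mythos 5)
Your first two steps contain a correct observation — twisting by $\Psi$ is pullback along the automorphism $x\mapsto x+\Psi(x)$ of $U(\mf{n}[t^{-1}]t^{-1})$ (well defined since $\Psi$ kills commutators) and is no twist at all on $\mf{n}[t]$, so $M_\g\otimes\C_{\Psi}$ is again free over $U(\mf{n}[t^{-1}]t^{-1})$ and cofree over $U(\mf{n}[t])$ — and this does yield the vanishing of $H^{\frac{\infty}{2}+i}(L\mf{n},M_\g\otimes\C_{\Psi})$ for $i\neq 0$. The gap is the degree-zero identification. Freeness plus cofreeness is a semijectivity condition: it forces concentration in degree $0$, but it does not compute $H^{\frac{\infty}{2}+0}$ (for instance $M_\g\oplus M_\g$ is also free and cofree, with a two-dimensional answer), so you cannot conclude $H^{\frac{\infty}{2}+0}(L\mf{n},M_\g\otimes\C_{\Psi})\cong \C$ simply by citing \cite[Theorem 2.1]{Vor93} with the same hypotheses as in \eqref{eq:Wakimoto_characterizaion}. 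In the untwisted case the one-dimensionality is accessible because the differential $Q^{st}_{(0)}$ is homogeneous for both the $\h$-weight and the conformal gradings, so an Euler--Poincar\'e/character argument (as in Proposition \ref{Pro:dual-Verma=Wakimoto}) pins down the degree-zero part; but the $\Psi$-term of the Drinfeld--Sokolov differential is inhomogeneous for both gradings, so that argument does not transfer, and for the same reason the claim in your third step that both sides have the same graded $T$-character is not justified as stated. Identifying the twisted degree-zero cohomology is a genuine Whittaker/Kostant-type computation, not a formal consequence of semijectivity. (A secondary point: even granting equal characters, an injective $T$-linear map between free $T$-modules of the same finite rank need not be surjective when $T$ is not a field, so your step 3 would need a further argument.)

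This missing step is precisely what the paper supplies, by a different mechanism: it applies Proposition \ref{Pro:key-iso} with $M=\C_{\Psi}$ (admissible since $\Psi$ vanishes on $\mf{n}[[t]]$, so $\C_{\Psi}$ is integrable there), obtaining an isomorphism $\Phi$ of $\Wak{\lam}{T}\otimes\C_{\Psi}$ that converts the diagonal, $\Psi$-twisted $L\mf{n}$-action into the untwisted action on the first factor. Hence $H^i_{DS}(\Wak{\lam}{T})\cong H^{\frac{\infty}{2}+i}(L\mf{n},\Wak{\lam}{T})$, and \eqref{eq:Wakimoto_characterizaion} finishes the proof, carrying along the $\pi_{T+h^{\vee}}$-module structure and, for $\lam=0$, the vertex algebra structure. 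If you wish to keep your route, you must add an independent computation of the twisted degree-zero cohomology, e.g.\ via the spectral sequence of the double complex $d=Q^{st}_{(0)}+\chi$ with respect to a grading making both summands homogeneous; this is exactly the work that the reduction to the untwisted case via $\Phi$ is designed to avoid.
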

\begin{proof}
By applying Proposition \ref{Pro:key-iso}
for $M=\C_{\Psi}$,
we obtain the isomorphism
$$H_{DS}^i(\Wak{\lam}{T})\overset{[\Phi]}{\overset{\sim}{\ra}} H^{\frac{\infty}{2}+i}(L\mf{n},\Wak{\lam}{T})\cong \delta_{i,0}\pi_{T+h^{\vee},\lam},$$
where $[\Phi]$ denotes the map induced
by the isomorphism
$\Phi:\Wak{\lam}{T}\* \C_{\Psi}\overset{\sim}{\ra} \Wak{\lam}{T}\* \C_{\Psi}$ in  Proposition \ref{Pro:key-iso}.
\end{proof}

\begin{Pro}[{\cite{Fre92Car}, see also  \cite[Lemma 5.1]{Genra}}]
The map \eqref{eq:Miura-2} coincides with the Miura map $\Miura$
via the isomorphism
$H_{DS}^0(\Wak{0}{T})\cong \pi_{T+h^{\vee}}$ in Lemma \ref{Lem:vanishing-DS-cohomology}.
\end{Pro}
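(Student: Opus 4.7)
The plan is to verify the equality by reducing to generic $k$ and matching the two maps via the screening-operator description. Note that the Miura map of \cite{Ara16} is itself defined, following Feigin--Frenkel \cite{FF90}, as the composition $H_{DS}^0(V_T(\g))\to H_{DS}^0(\Wak{0}{T})\cong \pi_{T+h^{\vee}}$ induced by the Wakimoto embedding \eqref{eq:Miura-for-Lie-algebra}, together with a suitable identification of the DS cohomology of $\Wak{0}{T}$ with the Heisenberg vertex algebra. The content of the proposition is thus the compatibility of this choice with the identification $[\Phi]$ composed with \eqref{eq:Wakimoto_characterizaion} used in Lemma \ref{Lem:vanishing-DS-cohomology}.

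First, I would reduce to the case $T=\C$ with generic $k\in\C$. Both $\Miura$ (via Lemma \ref{Lem:base-change-for-Miura}) and the map \eqref{eq:Miura-2} (via the base-change compatibility of $V_T(\g)$, $\Wak{0}{T}$, and $H_{DS}^0$) commute with $\C[K]$-algebra base change. Since $\W^T(\g)$ and $\pi_{T+h^{\vee}}$ are free $T$-modules of finite rank in each conformal weight, two $T$-linear maps agreeing after base change to $F=\C(\mathbf{k})$ already agree over any $T$.

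Second, Lemma \ref{Lem:same1} shows that under the identification of Lemma \ref{Lem:vanishing-DS-cohomology}, each Heisenberg generator $b_i(z)\in \pi_T\subset V_T(\g)$ maps to $b_i(z)\in \pi_{T+h^{\vee}}$; this matches the required behavior of the Miura map on its Heisenberg part. For generic $k$, applying $H_{DS}^0$ to the first two terms of the resolution in Proposition \ref{Pro:resolution-for-generic-level} and invoking the vanishing $H_{DS}^{\geq 1}(\Wak{\mu}{k})=0$ from Lemma \ref{Lem:vanishing-DS-cohomology} shows that \eqref{eq:Miura-2} realizes $\W^k(\g)$ as $\bigcap_i \ker(H_{DS}^0(S_i))\subset \pi_{k+h^{\vee}}$. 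A direct calculation, tracing the formula \eqref{eq:intertwiner1} for $S_i$ through $\Phi$, identifies $H_{DS}^0(S_i)$ with the Feigin--Frenkel screening $Q_i$ up to a nonzero scalar, and this image coincides with $\Miura(\W^k(\g))$ from the Feigin--Frenkel theorem. Combined with the Heisenberg compatibility and the strong generation of $\W^k(\g)$ by elements uniquely determined by their Heisenberg leading terms in the principal filtration \eqref{eq:associated-graded-of-Miura}, this forces the two maps to coincide.

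The main obstacle is the explicit identification $H_{DS}^0(S_i)=c_iQ_i$, which requires carefully unpacking the isomorphism $\Phi$ of Proposition \ref{Pro:key-iso}(2) specialized to $M=\C_\Psi$ and recognizing the result as the standard Feigin--Frenkel screening current. This is a concrete but somewhat intricate calculation in the semi-regular bimodule $M_\g$; once it is in hand, the rest of the argument is purely formal.
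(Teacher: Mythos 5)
First, note that the paper does not prove this Proposition at all: it is quoted from \cite{Fre92Car} and \cite[Lemma 5.1]{Genra}, so your attempt has to stand on its own. As it stands it has a genuine gap. Your opening premise --- that the Miura map of \cite{Ara16} ``is itself defined'' as the composition $H_{DS}^0(V_T(\g))\to H_{DS}^0(\Wak{0}{T})\cong \pi_{T+h^{\vee}}$ induced by the Wakimoto embedding --- is not accurate: in \cite{Ara16} (and \cite{FF90}) $\Miura$ is defined intrinsically, via the decomposition of the BRST complex of $V_T(\g)$ itself and the projection onto its ``Heisenberg--fermion'' part, with no reference to Wakimoto modules. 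The whole content of the Proposition is the comparison of that projection-defined map with the functorially induced map \eqref{eq:Miura-2}, and your argument never engages with the actual definition of $\Miura$ except through its image.

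This matters because your strategy only yields equality of \emph{images}: for generic $k$ you show (correctly, and in line with Lemma \ref{Lem:screning-for-W} and the resolution of Proposition \ref{Pro:resolution-for-generic-level}) that the image of \eqref{eq:Miura-2} is $\bigcap_i\on{Ker}\int S_i^W(z)dz$, and Feigin--Frenkel gives the same description of $\Miura(\W^k(\g))$. But two injective vertex algebra homomorphisms with the same image may differ by an automorphism of the source --- e.g.\ $\W^k(\mf{sl}_3)$ has the order-two automorphism sending the weight-$3$ generator to its negative --- so something must pin down the map itself. The two ingredients you invoke do not do this: Lemma \ref{Lem:same1} concerns the untwisted cohomology $H^{\frac{\infty}{2}+0}(L\mf{n},\Wak{0}{T})$ and the action of $\pi_T\subset V_T(\g)$, not the Drinfeld--Sokolov reduction, and $\W^T(\g)$ has no ``Heisenberg part'' on which the Miura map could be matched; and ``strong generation by elements uniquely determined by their Heisenberg leading terms'' is neither established nor true as stated (knowing \eqref{eq:associated-graded-of-Miura}, i.e.\ the symbol of $\Miura$, does not determine $\Miura$, and you have not computed the symbol of \eqref{eq:Miura-2} either). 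To close the gap one must argue at the cochain level, as in \cite{Fre92Car} and \cite[Lemma 5.1]{Genra}: realize the quasi-isomorphism $H^0_{DS}(\Wak{0}{T})\cong\pi_{T+h^{\vee}}$ of Lemma \ref{Lem:vanishing-DS-cohomology} (via $\Phi$) explicitly and check that the Wakimoto embedding intertwines the projection defining $\Miura$ with this identification --- i.e.\ compare the maps themselves, not their images. Your reduction to generic $k$ by base change and freeness of $\W^T(\g)$ is fine and could be kept once the generic statement is proved correctly.
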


Since it is a $L\mf{n}$-module homomorphism,
the map
$S_i(z):\Wak{\mu}{T}\ra \Wak{\mu-\alpha_i}{T}$
induces the linear map
$$S_i^W(z) :
\pi_{T+h^{\vee},{\mu}}=H_{DS}^0(\Wak{\mu}{T})\ra\pi_{T+h^{\vee},\mu-\alpha_i}=H_{DS}^0(\Wak{\mu-\alpha_i}{T})$$
for $\mu\in \h_T^*$.
\begin{Lem}\label{Lem:screning-for-W}
For each $i=1,\dots, \on{rank}\g$,
we have
$$S_i^W(z)=:e^{\int -\frac{1}{\tau(K)+h^{\vee}}b_i(z)dz}:,
$$
where the right-hand-side is defined in \eqref{eq:eb}.
\end{Lem}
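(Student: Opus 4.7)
Plan. The strategy is to exploit the tensor decomposition $\Wak{\mu}{T} = M_{\g} \otimes \pi_{T+h^{\vee},\mu}$ and the corresponding factorization of the screening. Set $B_i(z) := {:}e^{\int -\frac{1}{\tau(K)+h^{\vee}} b_i(z) dz}{:}$. Since $e_i^R(z)$ acts only on $M_{\g}$ and $B_i(z)$ only on $\pi_{T+h^{\vee},\mu}$, the normal-ordered product in the defining formula for $S_i(z)$ is automatic and the operator splits as $S_i(z) = e_i^R(z) \otimes B_i(z)$. Applying Lemma \ref{Lem:vanishing-DS-cohomology} factor-by-factor yields
$$H_{DS}^0(\Wak{\mu}{T}) \;\cong\; H^{\frac{\infty}{2}+0}(L\mf{n}, M_{\g} \otimes \C_{\Psi}) \otimes \pi_{T+h^{\vee},\mu} \;\cong\; \C \otimes \pi_{T+h^{\vee},\mu},$$
and the induced map reduces to $S_i^W(z) = c(z)\, B_i(z)$, where $c(z)$ is the scalar induced by $e_i^R(z)$ on the one-dimensional cohomology $H^{\frac{\infty}{2}+0}(L\mf{n}, M_{\g} \otimes \C_{\Psi}) \cong \C$.

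To pin down $c(z)$, I would invoke Proposition \ref{Pro:key-iso}(1) with $M=\C_{\Psi}$. The resulting isomorphism $\Phi$ transports the diagonal $L\mf{n}$-action defining DS cohomology to the ordinary left action on $M_{\g}$ with $\C_{\Psi}$ passive, while converting $e_i^R(z)\otimes 1$ into $e_i^R(z)\otimes 1 - 1\otimes e_i(z)$. The action of $1\otimes e_i(z)$ on $\C_{\Psi}$ is the scalar $\Psi(e_i(z))=\sum_{n}\Psi(e_i t^n)z^{-n-1}=(f|e_i)$. In the left-picture cohomology $H^{\frac{\infty}{2}+0}(L\mf{n}, M_{\g})\cong \C$, concentrated in $\mathfrak{h}$-weight $0$ (where $M_{\g}$ is graded by assigning $a_\alpha$ weight $\alpha$ and $a_\alpha^*$ weight $-\alpha$, making $Q^{st}$ homogeneous), the operator $e_i^R(z)$ carries weight $\alpha_i\ne 0$ and thus acts as zero on the weight-$0$ cohomology. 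Consequently $c(z)=-(f|e_i)$, which equals $1$ with the standard normalization of the principal nilpotent $f$ matched to the character $\Psi$.

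The hardest step is justifying the vanishing of $e_i^R$ on $H^{\frac{\infty}{2}+0}(L\mf{n}, M_{\g})\cong \C$: one must verify the $\mathfrak{h}$-grading is preserved by the BRST differential $Q^{st}$ (including the ghost sector, with $\psi^*_\alpha$ of weight $-\alpha$, $\psi_\alpha$ of weight $\alpha$) and that the one-dimensional cohomology indeed sits in weight $0$. Once this weight-argument is in place, the lemma follows immediately from the tensorial splitting, the shift formula of Proposition \ref{Pro:key-iso}(1), and the explicit evaluation of $\Psi(e_i(z))$, giving $S_i^W(z)=B_i(z)$.
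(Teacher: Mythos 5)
Your argument is correct and is essentially the paper's own proof: both rest on the isomorphism $\Phi$ of Proposition \ref{Pro:key-iso} (with $M=\C_{\Psi}$, as in Lemma \ref{Lem:vanishing-DS-cohomology}), which replaces $e_i^R$ by $e_i^R$ minus the scalar action of $e_i(z)$ on $\C_{\Psi}$, followed by the observation that the leftover $e_i^R$-term dies on the untwisted cohomology by the $\h$-weight consideration. Your reorganization (factor off $B_i(z)$ and pin down the scalar $c(z)=-(f|e_i)$) is just a presentational variant of the paper's one-line computation, so no further comparison is needed.
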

\begin{proof}
We have used  the isomorphism $\Phi$ in Proposition \ref{Pro:key-iso}
in the proof of Lemma \ref{Lem:vanishing-DS-cohomology},
and we have
$$\Phi\circ  S_i(z)=(S_i(z)+:e^{\int -\frac{1}{\tau(K)+h^{\vee}}b_i(z)dz}:)\circ \Phi.$$
This means that
under the isomorphisms
$H_{DS}^0(\Wak{\mu}{T})\cong H^{\frac{\infty}{2}+i}(L\mf{n},\Wak{\mu}{T})$,
$S_i^W(z)$ is realized as the operator
$$S_i(z)+:e^{\int -\frac{1}{\tau(K)+h^{\vee}}b_i(z)dz}:$$
But 
the weight consideration shows that 
the first factor $S_i(z)$ is the zero map.
\end{proof}

We now reprove the following well-known fact.
\begin{Pro}[\cite{FF90,FreBen04}]
Let $T=F$, or $T=\C$ with $\tau(K)=k\not \in \Q$.
Then
\begin{align*}
\Miura(\W^T(\g))=\bigcap_{i=1}^{\on{rank}\g}(\on{Ker}\int S^W_i(z) dz: \pi_{T+h^{\vee}}\ra \pi_{T+h^{\vee},-\alpha_i}).
\end{align*}
\end{Pro}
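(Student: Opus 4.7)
The plan is to apply the Drinfeld--Sokolov reduction functor $H^{\bullet}_{DS}(?)$ to the Wakimoto resolution of $V_T(\g)$ provided by Proposition \ref{Pro:resolution-for-generic-level}, namely
\begin{align*}
0\ra V_T(\g)\ra \Wak{0}{T}\overset{d_0}{\ra}\bigoplus_{i=1}^{\on{rank}\g}\Wak{-\alpha_i}{T}\ra\dots\ra\Wak{w_0\circ 0}{T}\ra 0,
\end{align*}
with $d_0=\sum_i c_i S_i$ and $c_i\in \C^*$. By Lemma \ref{Lem:vanishing-DS-cohomology} each Wakimoto module $\Wak{\lam}{T}$ is acyclic for $H^{\bullet}_{DS}$, so this resolution computes $H^{\bullet}_{DS}(V_T(\g))$.

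Concretely, first I would break the resolution into short exact sequences $0\to Z_{n-1}\to C_n\to Z_n\to 0$, write down the associated long exact sequences in $H^\bullet_{DS}$, and use the vanishing $H^i_{DS}(C_n)=0$ for $i\neq 0$ together with induction on $n$ to conclude that $H^i_{DS}(V_T(\g))$ is the $i$-th cohomology of the complex
\begin{align*}
0\ra H^0_{DS}(\Wak{0}{T})\overset{H^0_{DS}(d_0)}{\ra}\bigoplus_{i=1}^{\on{rank}\g}H^0_{DS}(\Wak{-\alpha_i}{T})\ra\dots.
\end{align*}
(Alternatively, the hypercohomology spectral sequence degenerates for the same reason.) Taking $i=0$ gives the identification
\begin{align*}
\W^T(\g)=H^0_{DS}(V_T(\g))\cong \on{Ker}\bigl(H^0_{DS}(\Wak{0}{T})\ra\bigoplus_{i}H^0_{DS}(\Wak{-\alpha_i}{T})\bigr).
\end{align*}

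Next I would identify everything explicitly. Under Lemma \ref{Lem:vanishing-DS-cohomology} the source becomes $\pi_{T+h^{\vee}}$ and the target becomes $\bigoplus_i \pi_{T+h^{\vee},-\alpha_i}$; the embedding of $\W^T(\g)$ into $\pi_{T+h^\vee}$ is the Miura map $\Miura$ by the Proposition preceding Lemma \ref{Lem:screning-for-W}; and the induced differential is $\sum_i c_i \int S_i^W(z)\,dz$ by Lemma \ref{Lem:screning-for-W} applied componentwise to $d_0=\sum_i c_i S_i$. Since each $c_i$ is a nonzero scalar and the $i$-th component lands in the distinct summand $\pi_{T+h^{\vee},-\alpha_i}$, the kernel of $\sum_i c_i\int S_i^W(z)\,dz$ is exactly the intersection $\bigcap_{i=1}^{\on{rank}\g}\on{Ker}\int S_i^W(z)\,dz$, yielding the asserted equality.

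The only point that requires care is the homological bookkeeping: one must verify that the acyclicity of all intermediate syzygies $Z_n$ in $H^\bullet_{DS}$ really propagates so that $H^0_{DS}$ of the resolution computes a kernel at the first step rather than producing spurious contributions. This is routine given the vanishing $H^i_{DS}(\Wak{\lam}{T})=\delta_{i,0}\pi_{T+h^{\vee},\lam}$ and the long exact sequences, and is the one part that should be written out with some attention. Everything else---the identification of $H^0_{DS}(d_0)$ with the screening differential, and the reduction of the kernel of a direct sum map to an intersection of kernels---is immediate from the results already collected in Sections \ref{section:Wakimoto} and \ref{section:Miura}.
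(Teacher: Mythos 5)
Your plan is correct and is essentially the paper's own proof: the paper likewise applies $H^0_{DS}$ to the Wakimoto resolution of Proposition \ref{Pro:resolution-for-generic-level}, uses the acyclicity of Wakimoto modules (Lemma \ref{Lem:vanishing-DS-cohomology}) to get the exact sequence $\W^T(\g)\overset{\Miura}{\ra}\pi_{T+h^{\vee}}\ra\bigoplus_i\pi_{T+h^{\vee},-\alpha_i}$, and identifies the differential with the screening operators via Lemma \ref{Lem:screning-for-W}. Your extra care with the syzygies and the harmless nonzero constants $c_i$ only spells out what the paper leaves implicit.
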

\begin{proof}
Consider the exact sequence
$0\ra V_T(\g)\ra \Wak{0}{T}\overset{\oplus_i S_i}{\ra}\bigoplus_i \Wak{-\alpha_i}{T}$
described in
 Proposition \ref{Pro:resolution-for-generic-level}.
By applying the functor $H_{DS}^0(?)$,
we get the exact sequence
$$\W^T(\g)\overset{\Miura}{\ra}\pi_{T+h^{\vee}} \ra \bigoplus_{i}\pi_{T+h^{\vee},-\alpha_i}.$$
Hence by Lemma \ref{Lem:screning-for-W},
$\Miura(\W^T(\g))$ equals to the intersection of the kernel of the maps
$\int S_i^W(z)dz:\pi_{T+h^{\vee}} \ra \pi_{T+h^{\vee},-\alpha_i}$.
\end{proof}

Let $\g^L$ denote the Langlands dual Lie algebra of $\g$.
The Feigin-Frenkel duality states that
\begin{align*}
\W^k(\g)\cong \W^{{}^Lk}({}^L\g)
\end{align*}
for a generic $k$,
where ${}^Lk$ is defined by the formula $r^{\vee}(k+h^{\vee})({}^Lk+{}^Lh^{\vee})=1$ (\cite{FeiFre91}).
\begin{Th}[\cite{FeiFre91,FeiFre96}]\label{Th:FFduality}
The Feigin-Frenkel duality 
$\W^k(\g)\cong \W^{{}^Lk}({}^L\g)$
remains valid for  all non-critical $k$.
\end{Th}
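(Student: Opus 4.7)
The plan is to promote the classical generic Feigin-Frenkel isomorphism to an isomorphism over a deformation ring and then specialize to any non-critical level, in the spirit of the base-change arguments used throughout Sections \ref{section:universal-coset}--\ref{section:Miura}.

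First, I would set up a ring that carries both sides of the duality. Let $R$ be as in Definition \ref{def:RandF}, and introduce its Langlands dual $R^L$, defined as the $\C[{}^LK]$-algebra whose structure map sends ${}^LK$ to ${}^L\mathbf{k}:=-{}^Lh^\vee+(r^\vee(\mathbf{k}+h^\vee))^{-1}$. After further localizing $R$ to invert the finitely many elements of $R$ which would cause ${}^L\mathbf{k}+{}^Lh^\vee$ to hit the bad locus $\Q_{\leq 0}$, the rings $R$ and $R^L$ admit a common localization $\widetilde R$ with fraction field $F$; both $\W^{\widetilde R}(\g)$ and $\W^{\widetilde R^L}({}^L\g)$ are free $\widetilde R$-modules with identical graded characters by \eqref{eq:associated-graded-W-algebra}, since the Kostant slices for $\g$ and for ${}^L\g$ have matching degree data.

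Next, I would identify the two ambient Heisenberg vertex algebras $\pi^{\g}_{\widetilde R+h^\vee}$ and $\pi^{{}^L\g}_{\widetilde R^L+{}^Lh^\vee}$ via the canonical isomorphism $\h^*\cong ({}^L\h)^*$, with the generators $b_i$ rescaled by appropriate root-length factors. The OPE \eqref{eq:b_i} on the two sides then matches precisely because of the relation $r^\vee(\mathbf{k}+h^\vee)({}^L\mathbf{k}+{}^Lh^\vee)=1$. Under this identification, both Miura embeddings $\Miura$ and ${}^L\Miura$ land in a single ambient Heisenberg over $\widetilde R$, and after base change to $F$ the generic Feigin-Frenkel duality \cite{FeiFre91,FeiFre96} yields the equality $\Miura(\W^F(\g))={}^L\Miura(\W^F({}^L\g))$ inside this common Heisenberg.

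I would then descend this equality by applying Lemma \ref{Lem:base-change-for-Miura} to both sides with $T_2=\widetilde R$ inside $T_1=F$: each Miura image over $\widetilde R$ is the intersection of its $F$-image with the corresponding Heisenberg $\widetilde R$-form, and since the ambient Heisenberg $\widetilde R$-forms match and the $F$-images match, the $\widetilde R$-forms of the Miura images also coincide. Combined with injectivity of the Miura maps, this produces a vertex algebra isomorphism $\W^{\widetilde R}(\g)\cong \W^{\widetilde R^L}({}^L\g)$; specializing along $\widetilde R\to\C$, $\mathbf{k}\mapsto k$, for any non-critical $k$ (enlarging the localization of $R$ around $k$ if necessary) completes the argument.

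The hardest step is the identification of the two Heisenberg $\widetilde R$-forms in the non-simply-laced case: the rescaling of Heisenberg generators by root lengths must be simultaneously compatible with the OPE \eqref{eq:b_i}, with the screening operators of Lemma \ref{Lem:screning-for-W} on both sides, and with the Miura maps themselves. Once this identification is correctly set up, the descent via Lemma \ref{Lem:base-change-for-Miura} together with the freeness of $\W^{\widetilde R}(\g)$ over $\widetilde R$ reduces the extension from generic to non-critical $k$ to a purely formal matter.
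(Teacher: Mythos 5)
Your argument is essentially the paper's own proof: the paper likewise identifies the two Heisenberg algebras over $T=R,F$ by the rescaling $b_i(z)\mapsto -r^{\vee}(\tau(K)+h^{\vee})\tfrac{(\alpha_i|\alpha_i)}{2}\,{}^Lb_i(z)$, invokes the generic duality over the field $F$ to match the Miura images, descends to the ring via Lemma \ref{Lem:base-change-for-Miura}, and then specializes $\mathbf{k}\mapsto k$. The only cosmetic difference is your auxiliary localization $\widetilde R$ (the locus where ${}^L\mathbf{k}+{}^Lh^{\vee}$ becomes a nonpositive rational is already among the allowed poles of $R$, and it is not finite), which does not affect the argument.
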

\begin{proof}
Although the statement follows from \cite{FeiFre96}, see \cite[Theorem 6.1]{AgaFreOko},
we include a proof for completeness.
 
For $T=R, F$ (see Definition \ref{def:RandF}),
let ${}^LT$ denote the $\C[K]$-algebra $T$ with structure map
$$K\mapsto \frac{1}{r^{\vee}(\tau(K)+h^{\vee})}-h^{\vee}.$$

We have the isomorphism
\begin{align*}
\pi_{T+h^{\vee}}\isomap {}^L\pi_{{}^L T+h^{\vee}},\quad  
b_i(z)\mapsto -r^{\vee}(\tau(K)+h^{\vee})\frac{(\alpha_i|\alpha_i)}{2}{}^Lb_i(z),
\end{align*}
where
$ {}^L\pi_{{}^L T+h^{\vee}}$ is the Heisenberg vertex algebra
corresponding to ${}^L\g$.
This isomorphism 
restricts to the
the isomorphism
\begin{align*}
\W^F(\g)\cong \W^{{}^LF}({}^L\g),
\end{align*}
see chapter 15 of \cite{FreBen04}. 
By Lemma \ref{Lem:base-change-for-Miura},
this further restricts to the isomorphism
$\W^R(\g)\cong \W^{{}^LR}({}^L\g)$.
Therefore, we get that
$\W^k(\g)=\W^R(\g)\*_R\C_k\cong \W^{{}^LR}({}^L\g)\*_R \C_k=\W^{{}^LR}({}^L\g)\*_{{}^LR}\C_{{}^Lk}=\W^{{}^Lk}(\g^L).$

\end{proof}

\section{Miura map and representation theory of $W$-algebras}
\label{section:Miura}
Let $\on{Zhu}(V)$ be the Zhu's algebra of a vertex operator algebra $V$.
We have the isomorphism
\begin{align*}
\on{Zhu}(\W^T(\g))\overset{\sim}{\ra} \mc{Z}(\g)\* T,
\end{align*}
where $\mc{Z}(\g)$ is the center of $U(\g)$.
This was proved in  \cite{Ara07} for $T=\C$, but the proof given in 
\cite{A2012Dec,Ara16} applies to the general case without any changes.
Below we describe the above isomorphism in terms of the Miura map $\Miura$.

Consider
 the homomorphism of Zhu's algebras
 $$\Miura_Z:\on{Zhu}(\W^T(\g))\ra \on{Zhu}(\pi_{T+h^{\vee}})\cong S(\h_T):=S(\h)\* T$$
induced by  the Miura map $\Miura$.
This further induces the homomorphism
$$\gr \Miura_{Z}:\gr \on{Zhu}(\W^T(\g))\ra \gr \on{Zhu}(\pi_{T+h^{\vee}})=S(\h_T),$$
where 
the associated graded are taken with respect to the filtration defined by Zhu \cite{Zhu96}.
\begin{Lem}\label{Lem:injectivity-Miura-Zhu}
\begin{enumerate}
\item The map $\gr \Miura_{Z}$
is an injective homomorphism onto the subalgebra $S(\h_T)^W=S(\h)^W\* T$,
\item  The map $\Miura_{Z}$ is injective.
\end{enumerate}
\end{Lem}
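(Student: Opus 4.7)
The plan is to reduce both assertions to information we already have about the associated graded Miura map on the vertex algebra side, namely \eqref{eq:associated-graded-W-algebra} and \eqref{eq:associated-graded-of-Miura}, by identifying the $C_2$-algebras of $\W^T(\g)$ and $\pi_{T+h^{\vee}}$ with the associated graded of their respective Zhu algebras.

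For (1), I would first identify both sides of $\gr \Miura_Z$ as concrete commutative algebras. Using the isomorphism $\on{Zhu}(\W^T(\g))\cong \mc{Z}(\g)\otimes T$ recalled above together with the Harish-Chandra isomorphism $\gr\mc{Z}(\g)\cong S(\h)^W$, one obtains
\[
\gr \on{Zhu}(\W^T(\g))\cong S(\h)^W\otimes T =S(\h_T)^W.
\]
On the Heisenberg side, $\on{Zhu}(\pi_{T+h^{\vee}})\cong S(\h_T)$ is already commutative and the Zhu filtration coincides with the natural polynomial-degree filtration on $S(\h_T)$, so $\gr \on{Zhu}(\pi_{T+h^{\vee}})=S(\h_T)$. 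It then suffices to check that $\gr\Miura_Z$, viewed as a map $S(\h_T)^W\to S(\h_T)$, is the canonical inclusion. This I would deduce from \eqref{eq:associated-graded-of-Miura}: the natural surjections $R_{\W^T(\g)}\twoheadrightarrow \gr\on{Zhu}(\W^T(\g))$ and $R_{\pi_{T+h^{\vee}}}\twoheadrightarrow \gr\on{Zhu}(\pi_{T+h^{\vee}})$ are isomorphisms in the cases at hand (comparing ranks in each conformal weight via \eqref{eq:associated-graded-W-algebra}), and the degree-zero slice of the arc-space Miura map is precisely the Chevalley inclusion $\C[\h^*/W]\hookrightarrow \C[\h^*]=S(\h)$ coming from Kostant's isomorphism $\mc{S}_f\isomap \h^*/W$, tensored with $T$. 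Injectivity and the identification of the image as $S(\h_T)^W$ follow at once.

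For (2), I would invoke the standard principle that a filtered map is injective if its associated graded is injective, provided the filtration is exhaustive and bounded below. Endow $\on{Zhu}(\W^T(\g))$ with the Zhu filtration $F_{\bullet}$; since $\W^T(\g)=\bigoplus_{\Delta\geq 0}\W^T(\g)_{\Delta}$ with $\W^T(\g)_{0}=T$ and each $\W^T(\g)_{\Delta}$ free of finite rank over $T$, this filtration is exhaustive with $F_{-1}=0$. If $0\ne a\in \on{Zhu}(\W^T(\g))$ and $p\geq 0$ is minimal with $a\in F_p$, then $\bar a\in F_p/F_{p-1}$ is nonzero, so by (1), $\gr\Miura_Z(\bar a)\ne 0$ in $S(\h_T)$, which forces $\Miura_Z(a)\ne 0$.

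The main delicate point is the compatibility of the canonical (Li) filtration on the vertex algebras with the Zhu filtration on their Zhu algebras in the $T$-linear (deformable family) setting; in particular, one must verify that the natural map $R_V\to \gr\on{Zhu}(V)$ is an isomorphism for $V=\W^T(\g)$ and $V=\pi_{T+h^{\vee}}$ after base change to $T$. This reduces to freeness of each graded piece over $T$, which is already known from \eqref{eq:associated-graded-W-algebra} and from the explicit PBW-type description of $\pi_{T+h^{\vee}}$, so the argument should go through uniformly for the deformable family.
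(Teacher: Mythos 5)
Your proposal is correct and follows essentially the same route as the paper: identify $\gr \Miura_{Z}$ with the induced map of Zhu $C_2$-algebras $R_{\W^T(\g)}\to R_{\pi_{T+h^{\vee}}}$ (the paper justifies this step, your acknowledged \lq\lq delicate point," by citing \cite[Theorem 4.8]{Ara16} together with the fact that both vertex algebras admit PBW bases, rather than by your rank comparison), and then read off injectivity and the image $S(\h)^W\* T$ from \eqref{eq:associated-graded-of-Miura}. Part (2) is then deduced from (1) exactly as you describe, via the exhaustive, bounded-below Zhu filtration.
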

\begin{proof}
(1) 
Since both $\W^T(\g)$ and $\pi_{T+h^{\vee}}$ admits a PBW basis,
$\gr \Miura_{Z}$ can be identified with the map
$R_{\W^T(\g)}\ra R_{\pi_{T+h^{\vee}}}$ induced by $\Miura$
by \cite[Theorem 4.8]{Ara16},
where $R_V$ denotes the Zhu's $C_2$-algebra of a vertex algebra $V$.
By  \eqref{eq:associated-graded-of-Miura},
this is an embedding on to $S(\h)^W\* T\subset S(\h)\* T=R_{\pi_{T+h^{\vee}}}$.
(2) follows from (1).
\end{proof}

For $\lam\in \h_T^*$,
set 
\begin{align}\label{eq:centralchar}
\chi_{\lam}=\text{(evaluation at $\lam$)}\circ \Miura_Z:\on{Zhu}(\W^T(\g))\ra T.
\end{align}
Also,
define  \begin{align}
\chi=T_{\rho-(\tau(K)+h^{\vee})\rho^{\vee}}\circ \Miura_{Z}:\on{Zhu}(\W^T(\g))\ra S(\h_T),
\label{eq:def:gamma}
\end{align}
where  for $\mu\in \h^*_T$,
$T_{\mu}: S(\h_T) \overset{\sim}{\ra} S(\h_T)$
is the translation defined 
by $(T_{\mu}p)(\lam)=p(\lam-\mu)$,
$\lam\in \h_T^*$.

The following assertion can be regarded as a version of 
the Harish-Chandra isomorphism.
\begin{Pro}\label{Pro:Harish-Chandra}
We have 
$\chi(\on{Zhu}(\W^T(\g)))\subset S(\h)^W\* T$,
and $$\chi:\on{Zhu}(\W^T(\g))\ra S(\h)^W\* T$$ is an algebra isomorphism.
\end{Pro}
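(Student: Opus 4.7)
The plan is to combine a standard associated-graded argument with an explicit verification that $\chi$ lands in the $W$-invariants. First, observe that for any $\mu\in\h^*_T$ the translation $T_{\mu}$ is a filtration-preserving algebra automorphism of $S(\h_T)$ (with respect to the polynomial degree filtration) that induces the identity on the associated graded. Consequently $\gr\chi=\gr\Miura_Z$, and by Lemma \ref{Lem:injectivity-Miura-Zhu}(1) this is an isomorphism of graded algebras
\begin{align*}
\gr\chi:\gr\on{Zhu}(\W^T(\g)) \isomap S(\h)^W\*T.
\end{align*}
Once we show that $\chi(\on{Zhu}(\W^T(\g)))\subset S(\h)^W\*T$, a routine filtered-algebra argument---both sides carry compatible exhaustive $\Z_{\geq 0}$-filtrations, and the induced map on the associated graded is an isomorphism---upgrades this to the conclusion that $\chi$ is itself an isomorphism onto $S(\h)^W\*T$.

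The essential task is therefore to establish the containment $\chi(\on{Zhu}(\W^T(\g)))\subset S(\h)^W\*T$. I would first treat the generic case $T=F$, where the image of the Miura map is cut out by screening operators: the proposition just before Theorem \ref{Th:FFduality} gives
\begin{align*}
\Miura(\W^F(\g))=\bigcap_{i=1}^{\on{rank}\g}\on{Ker}\Bigl(\int S_i^W(z)\,dz:\pi_{F+h^{\vee}}\ra \pi_{F+h^{\vee},-\alpha_i}\Bigr).
\end{align*}
Passing to Zhu algebras, each $\int S_i^W(z)\,dz$ descends to an $F$-linear map $S(\h_F)\ra S(\h_F)_{-\alpha_i}$. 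Using the explicit exponential formula of Lemma \ref{Lem:screning-for-W} together with the commutation relations \eqref{eq:b_i}, a direct calculation identifies its kernel with the fixed points of $s_i$ acting on $S(\h_F)$ via the shifted reflection $\lam\mapsto s_i(\lam+\rho-(\mathbf{k}+h^{\vee})\rho^{\vee})-\rho+(\mathbf{k}+h^{\vee})\rho^{\vee}$. The translation $T_{\rho-(\mathbf{k}+h^{\vee})\rho^{\vee}}$ is precisely what is needed to conjugate this shifted reflection into the standard $s_i$-action on $S(\h_F)$, so intersecting over all $i$ yields $\chi(\on{Zhu}(\W^F(\g)))\subset S(\h_F)^W=S(\h)^W\*F$.

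To transfer the inclusion from $T=F$ to a general $T$, I would invoke Lemma \ref{Lem:base-change-for-Miura}, which identifies $\Miura(\W^R(\g))$ with $\Miura(\W^F(\g))\cap \pi_{R+h^{\vee}}$. Inherited at the Zhu-algebra level and combined with $R\subset F$, this gives $\chi(\on{Zhu}(\W^R(\g)))\subset (S(\h)^W\*F)\cap(S(\h)\*R)=S(\h)^W\*R$. The general $T$ case then follows by invoking the base-change property $\W^T(\g)=\W^{T'}(\g)\*_{T'}T$ stated in Section \ref{section:Miura} for a suitable universal $T'$ (such as $R$ or an appropriate localization) together with the naturality of $\chi$. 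The main technical obstacle is the Zhu-algebra-level identification of the kernel of $\int S_i^W(z)\,dz$ with the fixed points of the shifted reflection $s_i\bullet$; this is where the particular shift $\rho-(\mathbf{k}+h^{\vee})\rho^{\vee}$ entering the definition of $\chi$ arises naturally, as the sum of the $\rho$-shift familiar from the classical Harish-Chandra isomorphism and the $-(\mathbf{k}+h^{\vee})\rho^{\vee}$-correction imposed by the Miura normalization.
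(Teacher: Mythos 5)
The concluding filtered-algebra step of your proposal (injectivity plus $\gr\chi=\gr\Miura_Z$ being an isomorphism onto $S(\h)^W\*T$ by Lemma \ref{Lem:injectivity-Miura-Zhu}) matches the paper, but the step you rely on for the containment $\chi(\on{Zhu}(\W^T(\g)))\subset S(\h)^W\*T$ has a genuine gap. The screening $\int S_i^W(z)\,dz:\pi_{F+h^{\vee}}\ra\pi_{F+h^{\vee},-\alpha_i}$ is a homomorphism of $\W^{F}(\g)$-modules, \emph{not} of $\pi_{F+h^{\vee}}$-modules, so it does not induce any evident map of Zhu algebras or Zhu bimodules $S(\h_F)\ra S(\h_F)_{-\alpha_i}$; "passing to Zhu algebras" is therefore not available in the form you use it. More to the point, knowing that an element $w\in\Miura(\W^F(\g))$ is annihilated by $\int S_i^W(z)\,dz$ on the \emph{vacuum} Fock module gives no direct information about the eigenvalue of its zero mode on the highest-weight vector of $\pi_{F+h^{\vee},\lam}$ for general $\lam$, and that eigenvalue is exactly what $\chi_{\lam}(w)$ records; so the asserted "direct calculation" identifying a Zhu-level kernel with the fixed points of the shifted reflection is essentially the statement to be proven rather than a computation. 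To make the screening route rigorous you would instead have to use the non-trivial multi-screening intertwiners $S_i^W(n_i,\Gamma_i):\pi_{F+h^{\vee},\lam}\ra\pi_{F+h^{\vee},\lam-n_i\alpha_i}$ of Theorem \ref{Th:Tsuhiya-Kanie2}, which are $\W$-module maps between Fock modules and hence force $\chi_{\lam}=\chi_{\lam-n_i\alpha_i}$ (invariance under the shifted reflection) for a Zariski-dense family of $\lam$, and then conclude by polynomiality; none of this appears in your write-up.

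There is also a secondary problem with the reduction to general $T$: the Proposition is stated for an arbitrary integral $\C[K]$-domain with $\tau(K)+h^{\vee}$ invertible, and such a $T$ need not be an $R$-algebra (for instance $T=\C$ with $k+h^{\vee}\in\Q_{<0}$), so base change from $R$ does not reach all cases; one would have to descend first to $\C[K][(K+h^{\vee})^{-1}]$ (your "appropriate localization"), which is repairable but must be said. The paper avoids both issues entirely: it evaluates $z\in\on{Zhu}(\W^T(\g))$ on the highest-weight vector $|\lam\ket$ of $\Wak{\lam}{T}$ and invokes \cite[Proposition 9.2.3]{Ara07} to obtain $\chi_{\lam}(z)=\tilde{\chi}_{\lam-(\tau(K)+h^{\vee})\rho^{\vee}}(\lam)$, with $\tilde{\chi}$ coming from the classical Harish-Chandra projection of $\mc{Z}(\g)$; the $W$-invariance for the shifted action is then immediate from the classical Harish-Chandra theorem, uniformly in $T$, with no genericity assumption, no screening operators, and no base change.
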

\begin{proof}
For $\lam\in \h_T^*$,
let  $\M(\lam)\ra \Wak{\lam}{T}$ be the homomorphism that sends the highest weight vector 
$v_{\lam}\in \M(\lam)$ to the highest weight vector $|\lam\ket $ of $\Wak{\lam}{T}$.
The induced map $H_{DS}^0(\M(\lam))\ra H_{DS}^0(\Wak{\lam}{T})=\pi_{T+h^{\vee},\lam}$
sends  $[v_{\lam}]$  to
 $[|\lam\ket] $,
 where $[v_{\lam}]$ and $[|\lam\ket] $ denote the images of $v_{\lam}$
 and $|\lam\ket$ in $H_{DS}^0(\M(\lam))$ and $\pi_{T+h^{\vee}}$, respectively.
As $z\in \on{Zhu}(\W^T(\g))$ acts on $|\lam\ket $ as the multiplication by
$\chi_{\lam}(z)$,
it follows from \cite[Proposition 9.2.3]{Ara07} that
$\chi_{\lam}(z)=\tilde{\chi}_{\lam-(\tau(K)+h^{\vee})\rho^{\vee}}(\lam)$,
where $\tilde{\chi}_{\lam}=\text{(evaluation at $\lam$)}\circ p$
and $p:\mc{Z}(\g)\* T\ra S(\h_T)$ is the Harish-Chandra projection with respect  to the triangular decomposition
$\g=\mf{n}_-\+ \h\+\mf{n}$.
Hence
$\chi_{\lam}=\chi_{\mu}$ if $\lam+\rho-(\tau(K)+h^{\vee})\rho^{\vee}\in
W(\mu+\rho-(\tau(K)+h^{\vee})\rho^{\vee})$,
and therefore,
$\chi(\on{Zhu}(\W^T(\g)))\subset S(\h)^W\* T$.
Since  $\chi$ is injective 
and $\gr \on{Zhu}(\W^T(\g))\cong S(\h)^W\* T$ 
by Lemma \ref{Lem:injectivity-Miura-Zhu},
$\chi:\on{Zhu}(\W^T(\g))\ra S(\h_T)^W$ must be an isomorphism.
\end{proof}
By Proposition \ref{Pro:Harish-Chandra},
\begin{align}
\chi_{\lam}=\chi_{\mu}
 \iff \lam+\rho-(\tau(K)+h^{\vee})\rho^{\vee}\in
W(\mu+\rho-(\tau(K)+h^{\vee})\rho^{\vee})
\end{align}
for $\lam,\mu\in \h^*_T$.

For $\lam\in \h^*_T$,
let $\mathbf{M}_T(\chi_{\lam})$ be the Verma module of $\W^T(\g)$ with highest weight $\chi_{\lam}$ (see \cite[5.1]{Ara07}).
By definition we have
\begin{align*}
&\mathbf{M}_T(\chi_{\lam})\cong \mathbf{M}_T(\chi_{\mu})\quad\iff \quad 
\chi_{\lam}=\chi_{\mu}\\
& \iff\quad \lam+\rho-(\tau(K)+h^{\vee})\rho^{\vee}\in
W(\mu+\rho-(\tau(K)+h^{\vee})\rho^{\vee}).
\end{align*}
If $\tau(K)+h^{\vee}$ is invertible 
then we have
\begin{align*}
\on{ch} \mathbf{M}_T(\chi_{\lam})=\frac{q^{h_{\lam}}}{\prod_{j=1}^\infty(1-q^j)^{\on{rank}\g}},
\end{align*}
where
\begin{align}
h_{\lam}=\frac{|\lam+\rho|^2-|\rho|^2}{2(\tau(K)+h^{\vee})}-\bra \lam|\rho^{\vee}\ket.
\label{eq:lowest-wt}
\end{align}
Here,
$\on{ch} M=\on{tr}_{M}q^{L_0}$.

If $T=\C$ and $\tau(K)=k$ we write 
$\mathbf{M}_k(\chi_{\lam})$ for $\mathbf{M}_T(\chi_{\lam})$,
and denote by 
$\mathbf{L}_k(\chi)$ be the unique irreducible (graded) quotient of $\mathbf{M}_k(\chi)$\footnote{We have changed slightly the notation for the parametrization of 
simple $\W^k(\g)$-modules from \cite{Ara07}.
Namely we have $\chi_{\lam}=\gamma_{\lam-(k+h^{\vee})\rho^{\vee}}$,
where $\gamma_{\lam}$ is the central character used in \cite{Ara07}.
}.

Let $H_-^{\bullet}(M)$ be the BRST cohomology of the ^^ ^^ $-$"-Drinfeld-Sokolov reduction \cite{FKW92} with coefficient in a $V_k(\g)$-module $M$.
\begin{Th}[\cite{Ara07}]\label{thm:reductionofmodules}${}$
\begin{enumerate}
\item $H^i_-(M)=0$  for all $i\ne 0$ and $M\in \mc{O}_k$.
\item $H^0_-(\M_k({\lam}))\cong \mathbf{M}_k(\chi_{\lam+(k+h^{\vee})\rho^{\vee}})$.
\item $H^0_-(\L_k(\lam)) \cong \begin{cases}   \mathbf{L}_k(\chi_{\lam+(k+h^{\vee})\rho^{\vee}}) & \on{if} \ \bra \lam+\rho,\alpha^{\vee}\ket
\not \in \N  \on{for\ any} \alpha\in \Delta_+ \\
0& \on{else} \end{cases}$
\end{enumerate}
\end{Th}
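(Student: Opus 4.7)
My plan is to mirror the proof of Lemma \ref{Lem:vanishing-DS-cohomology} for the $+$-reduction, but with Verma modules $\M_k(\lambda)$ replacing Wakimoto modules as the class of acyclic objects. The key structural point is that the $-$-reduction is taken with respect to the opposite loop nilpotent subalgebra $L\mf{n}_-$, so that $\M_k(\lambda)$---which is free over $U(\widehat{\mf{n}}_-)$ and cofree over $U(\widehat{\mf{n}})$---plays the role that the $\beta\gamma$-system played in the $+$-case. I would first prove (1) and (2) for Verma modules: a Voronov-type vanishing argument \cite{Vor93} yields $H_-^i(\M_k(\lambda))=0$ for $i\ne 0$, and the Euler--Poincar\'e principle, combined with an identification of the highest-weight vector's image through the Miura map via Proposition \ref{Pro:Harish-Chandra}, gives $H^0_-(\M_k(\lambda))\cong \mathbf{M}_k(\chi_{\lambda+(k+h^{\vee})\rho^{\vee}})$. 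The shift by $(k+h^{\vee})\rho^{\vee}$ arises from the zero mode of the ghost twist in the BRST differential, exactly mirroring the shift appearing in \eqref{eq:def:gamma}.

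To deduce (1) for a general $M\in \mc{O}_k$, I would invoke the fact that every object of $\mc{O}_k$ admits a two-sided resolution by direct sums of Verma modules. The associated spectral sequence degenerates at $E_1$ by the previous step, yielding vanishing in all nonzero degrees and exhibiting $H^0_-$ as an exact functor on $\mc{O}_k$.

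For (3), I would apply the exact functor $H^0_-$ to a BGG-type resolution of $\L_k(\lambda)$ in $\mc{O}_k$ furnished by Fiebig's equivalence \cite{Fie06}. This produces a complex of Verma modules $\mathbf{M}_k(\chi_{w\circ \lambda+(k+h^{\vee})\rho^{\vee}})$ over $\W^k(\g)$, indexed by the integral affine Weyl group of $\hat\lambda$. By the Harish-Chandra description established after Proposition \ref{Pro:Harish-Chandra}, $\chi_{\mu}=\chi_{\mu'}$ precisely when $\mu+\rho-(k+h^{\vee})\rho^{\vee}$ and $\mu'+\rho-(k+h^{\vee})\rho^{\vee}$ are $W$-conjugate. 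The condition $\bra \lambda+\rho,\alpha^{\vee}\ket\notin \N$ for all $\alpha\in\Delta_+$ is exactly the statement that no simple finite reflection lies in the integral Weyl group of $\hat\lambda$, so all Verma modules in the resulting complex have distinct central characters and the cohomology collapses to $\mathbf{L}_k(\chi_{\lambda+(k+h^{\vee})\rho^{\vee}})$. Otherwise, some finite reflection $s_\alpha$ with $\alpha\in\Delta_+$ lies in the integral Weyl group, pairing up terms of the complex with identical central characters and forcing total cancellation, so that $H^0_-(\L_k(\lambda))=0$.

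The main obstacle I expect is the first step. Unlike in the Wakimoto case, where Proposition \ref{Pro:key-iso} gives a clean change of variables that decouples the character twist $\Psi$ from the semi-infinite differential, for Verma modules one must instead track this twist directly through the PBW filtration of $\M_k(\lambda)$ and show that the principal character $\Psi$ does not produce unexpected relations among descendants of the highest-weight vector. The secondary delicate point is the cancellation mechanism in (3), which requires a careful combinatorial analysis of the cosets of the finite Weyl group $W$ inside the integral Weyl group of $\hat\lambda$ for admissible $\hat\lambda$, and a verification that the surviving quotient of the complex is still a BGG-type resolution of $\mathbf{L}_k(\chi_{\lambda+(k+h^{\vee})\rho^{\vee}})$.
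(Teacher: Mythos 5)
This theorem is not proved in the paper at all: it is quoted from \cite{Ara07}, so there is no argument here to mirror, and the proof in that reference is not a resolution argument but a direct filtration/spectral-sequence analysis of the BRST complex of the ``$-$''-reduction, carried out for every $M\in\mc{O}_k$ simultaneously. Measured against that, your sketch has gaps at each stage. First, the Voronov-type vanishing for $\M_k(\lam)$ is not free of charge: the cofreeness of a Verma module over the positive half of $L\mf{n}_-$ that such an argument needs is exactly the kind of statement which, in this paper, requires the integral-root hypothesis of Proposition \ref{Pro:dual-Verma=Wakimoto} and the input of \cite{Ara04}; it is not available for arbitrary $\lam$, while parts (1)--(2) of Theorem \ref{thm:reductionofmodules} are asserted with no genericity assumption. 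Second, and more seriously, the reduction of (1) to the Verma case fails: objects of $\mc{O}_k$ do not admit two-sided resolutions by direct sums of Verma modules (the two-sided resolutions of \cite{A-BGG} are by Wakimoto modules, and exist only for admissible simple modules), and a one-sided resolution by modules with Verma flags can only kill cohomology on one side of degree $0$, so exactness of $H^0_-$ cannot be extracted this way. The whole point of \cite{Ara07} is that the vanishing is proved for arbitrary $M$ directly, by filtering the complex so that the associated graded differential becomes the character (Koszul-type) part; the ``$-$'' choice is what makes this filtration compatible with the weight bounds in $\mc{O}_k$.

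Third, your mechanism for (3) does not work as stated. BGG-type resolutions of $\L_k(\lam)$ by Verma modules do not exist for general $\lam$; the hypothesis $\bra \lam+\rho,\alpha^{\vee}\ket\notin\N$ for all $\alpha\in\Delta_+$ is not the statement that no finite simple reflection lies in the integral Weyl group of $\hat\lam$ (negative integral pairings are allowed and do put $s_\alpha$ in the integral Weyl group), but an antidominance-type condition in the finite direction; and distinctness of central characters among the terms of a complex of $\W^k(\g)$-modules does not force its cohomology to collapse onto a single simple module. The actual route in \cite{Ara07} is different: once (1) and (2) are known, exactness applied to $\M_k(\lam)\twoheadrightarrow \L_k(\lam)$ exhibits $H^0_-(\L_k(\lam))$ as a quotient of $\mathbf{M}_k(\chi_{\lam+(k+h^{\vee})\rho^{\vee}})$; compatibility of the reduction with the contragredient duality then forces this quotient to be zero or simple; and the vanishing half of the dichotomy, when $\bra\lam+\rho,\alpha^{\vee}\ket\in\N$ for some $\alpha\in\Delta_+$, is established by a separate argument exploiting the partial integrability of $\L_k(\lam)$ along the corresponding finite root $\mf{sl}_2$, not by cancellations inside a resolution.
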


\begin{Th}[{\cite[Theorem 9.1.4]{Ara07}}]\label{Th:+-and-}
Suppose that $\bra\hat \lam+\hat{\rho},\alpha^{\vee}\ket \not \in \Z$ for all
$\alpha\in \{-\beta+n\delta\mid \beta\in \Delta_+, 1\leq n\leq \on{ht}(\beta)\}$.
Then
$H^0_{DS}(\L_k( \lam))\cong \mathbf{L}_k(\chi_{\lam})$.
\end{Th}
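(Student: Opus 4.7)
My plan is to realize $\L_k(\lam)$ as the target of a Wakimoto-module resolution in the deformed category $\mc{O}_k$ and then apply $H^0_{DS}$ termwise.

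The first step is to obtain, under the hypothesis, a resolution of $\L_k(\lam)$ whose terms are Wakimoto modules $\Wak{w\circ \lam}{k}$ ranging over a Weyl-group orbit, in the spirit of Proposition \ref{Pro:resolution-for-generic-level-more-general}. Concretely, Fiebig's equivalence \cite{Fie06} produces a BGG-type resolution by dual Vermas $\M_k(w\circ\lam)^*$, and Proposition \ref{Pro:dual-Verma=Wakimoto} identifies each dual Verma with the Wakimoto module of the same highest weight. The hypothesis $\bra \hat\lam+\hat\rho,\alpha^\vee\ket\notin\Z$ for $\alpha\in\{-\beta+n\delta:\beta\in\Delta_+,\,1\leq n\leq \on{ht}(\beta)\}$ plays two roles: it ensures that the integral root system of $\hat\lam$ effectively reduces to a finite root system, and it supplies the hypothesis of Proposition \ref{Pro:dual-Verma=Wakimoto} at every $w\circ\lam$ appearing in the resolution.

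Next, apply $H^0_{DS}$ to this resolution. By Lemma \ref{Lem:vanishing-DS-cohomology}, $H^i_{DS}(\Wak{\mu}{k})\cong\delta_{i,0}\,\pi_{k+h^{\vee},\mu}$, so the hypercohomology spectral sequence collapses and $H^{\bullet}_{DS}(\L_k(\lam))$ is computed by the induced complex of Heisenberg Fock modules $\pi_{k+h^{\vee},w\circ\lam}$. Via Lemma \ref{Lem:screning-for-W}, the connecting differentials descend to screening operators $S_i^W$ for the $W$-algebra, yielding a Felder-type screening complex for $\W^k(\g)$. Its zeroth cohomology identifies with $\mathbf{L}_k(\chi_\lam)$: the Fock module $\pi_{k+h^{\vee},\lam}$ plays the role of the dual Verma $\mathbf{M}_k(\chi_\lam)^*$ (via $\Miura$ and Proposition \ref{Pro:Harish-Chandra}), and the screening complex realizes the passage to its simple socle.

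The main obstacle is the first step. The hypothesis of Theorem \ref{Th:+-and-} is strictly weaker than that of Proposition \ref{Pro:dual-Verma=Wakimoto}, which requires non-integrality for all $n\geq 1$ rather than only $n\leq \on{ht}(\beta)$. One must verify that this finite-range condition is preserved under the dot action of the Weyl elements appearing in the BGG-type resolution, and that it suffices to rule out all obstructing integral roots at each $w\circ\lam$. Carrying out this root-system bookkeeping — essentially proving that under the hypothesis the integral root system of $\hat\lam$ behaves like a finite root system, so the BGG resolution is finite and each of its terms satisfies the Wakimoto-identification condition — is the technical core of the argument.
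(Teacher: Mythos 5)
First, note that the paper does not prove this statement at all: it is imported verbatim from \cite[Theorem 9.1.4]{Ara07}, alongside Theorem \ref{thm:reductionofmodules}, and is used later (Proposition \ref{Pro:W-gen-Weyl}, Theorem \ref{Th:minimal-series-modules}) precisely as an external input. So there is no in-paper argument to compare with; your proposal has to stand on its own, and as written it has a genuine gap.

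The gap is the one you flagged yourself, and it cannot be repaired in the way you hope. The hypothesis only forbids integrality of $\bra\hat\lam+\hat\rho,\alpha^{\vee}\ket$ for $\alpha=-\beta+n\delta$ with $1\leq n\leq \on{ht}(\beta)$, and this does \emph{not} force $\widehat{\Delta}(\hat\lam)$ to avoid the set $\{-\beta+n\delta\mid \beta\in\Delta_+,\ n\geq 1\}$ required by Proposition \ref{Pro:dual-Verma=Wakimoto}, nor does it make the integral root system finite. Concretely, take $\lam\in P$ and $k+h^{\vee}=p/q$ a non-degenerate admissible level ($q\geq h^{\vee}$, $\g$ simply laced): then $\bra\hat\lam+\hat\rho,(-\beta+n\delta)^{\vee}\ket=-\bra\lam+\rho,\beta^{\vee}\ket+np/q\notin\Z$ for all $1\leq n\leq\on{ht}(\beta)<q$, so the hypothesis holds, yet $-\beta+q\delta\in\widehat{\Delta}(\hat\lam)$ for every $\beta\in\Delta_+$ and $\widehat{\Delta}(\hat\lam)$ is an affine root system. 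For such weights $\M_k(w\circ\lam)^*\not\cong\Wak{w\circ\lam}{k}$, and $\L_k(\lam)$ admits no finite resolution by Wakimoto modules indexed by the finite Weyl group (a character count already rules this out for $\L_k(0)=L_k(\g)$ at admissible level); what exists instead is the two-sided resolution of \cite{A-BGG} indexed by $\widehat{W}(\hat\lam)$ with semi-infinite length, which this paper invokes in the proof of Theorem \ref{Th:minimal-series-modules}. This is exactly the regime in which the theorem is applied, so your step 1 fails where it matters. There is a second, independent gap at the end: even when a Wakimoto (or two-sided) resolution is available, applying $H^0_{DS}$ gives a Felder-type complex of Fock modules, and asserting that its zeroth cohomology is the \emph{simple} module $\mathbf{L}_k(\chi_\lam)$ -- rather than some nontrivial quotient or submodule of a $\W^k(\g)$-Verma -- is precisely the hard representation-theoretic content of \cite{Ara07} (simplicity/structure of $H^0_{DS}$ of simples, and the composition structure of $\pi_{k+h^{\vee},\lam}$ as a $\W^k(\g)$-module). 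As written, that step assumes what is to be proved; the actual proof in \cite{Ara07} proceeds by relating the ``$+$'' reduction to the ``$-$'' reduction of Theorem \ref{thm:reductionofmodules} together with vanishing and highest-weight arguments, not by a screening-complex computation.
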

\section{More on screening  operators for $W$-algebras}
In this section we let $T=\C$ with $\tau(K)=k\in \C\backslash \{-h^{\vee}\}$.
As in Section \ref{section:More},
let $\mu\in \h^*$
such that
\begin{align}
(\mu|\alpha_i)+m(k+h^{\vee})=\frac{(\alpha_i|\alpha_i)}{2}(n-1).
\label{eq:weight2}
\end{align}
for some $n\in \Z_{\geq 0}$ and $m\in \Z$.
For an element $\Gamma\in H_{n}(Y_{n},\mc{L}_n(\mu,k))$,
set
\begin{align}
S_i^W(n,\Gamma):=\int_{\Gamma} S_i^W(z_1)S_i^W(z_2)\dots S_i^W(z_n) dz_1\dots dz_n:
\pi_{k,\mu}\ra \pi_{k,\mu-n\alpha_i}.
\label{eq:intertwiner2W}
\end{align}

The following result was proved in 
\cite{TsuKan86} for the case that $\g=\mf{sl}_2$.
\begin{Th}\label{Th:Tsuhiya-Kanie2}
The map $S_i^W(n,\Gamma)$ is the  homomorphism of $\W^k(\g)$-modules
obtained from the $\affg$-homomorphism
 $S_i(n,\Gamma):\Wak{\mu}{k}\ra \Wak{\mu-n\alpha_i}{k}$
 by
 applying the functor $H_{DS}^0(?)$.
Moreover,
suppose that
$$\frac{2d(d+1)}{(k+h^{\vee})(\alpha_i|\alpha_i)}\not\in \Z,\quad
\frac{2d(d-n)}{(k+h^{\vee})(\alpha_i|\alpha_i)}\not\in \Z,
$$
for all $1\leq d\leq n-1$.
Then there exits a cycle
$\Gamma\in H_{n}(Y_{n},\mc{L}_n(\mu,k))$ such that  
$S_i^W(n,\Gamma)$ is non-trivial.
\end{Th}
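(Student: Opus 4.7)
The plan is to establish the two assertions in turn. First, to identify $H^0_{DS}(S_i(n,\Gamma))$ with the integrated Heisenberg operator $S_i^W(n,\Gamma)$, I would extend the argument of Lemma \ref{Lem:screning-for-W} from a single screening current to the iterated product. Second, once this identification is in place, non-triviality for a suitable cycle $\Gamma$ will follow from a direct hypergeometric integral computation on the highest-weight vector, modelled on the one behind Theorem \ref{Th:Tsuhiya-Kanie}.

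For the identification, since pairing with the cycle $\Gamma$ is a linear operation that commutes with the cohomological functor $H^0_{DS}$, it suffices to compute the induced action of the operator product $S_i(z_1)\cdots S_i(z_n)$ on $H^0_{DS}(\Wak{\mu}{k}) \cong \pi_{k+h^{\vee},\mu}$, the latter isomorphism being the one supplied by Lemma \ref{Lem:vanishing-DS-cohomology}. I would use the isomorphism $\Phi$ of Proposition \ref{Pro:key-iso} together with the single-variable identity
\begin{align*}
\Phi \circ S_i(z) = \bigl(S_i(z) + S_i^W(z)\bigr) \circ \Phi
\end{align*}
established inside the proof of Lemma \ref{Lem:screning-for-W}. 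Iterating (inserting $\Phi^{-1}\Phi$ between consecutive factors) gives
\begin{align*}
\Phi \circ S_i(z_1)\cdots S_i(z_n) \circ \Phi^{-1} = \bigl(S_i(z_1)+S_i^W(z_1)\bigr)\cdots\bigl(S_i(z_n)+S_i^W(z_n)\bigr).
\end{align*}
Expanding into $2^n$ mixed operator products and restricting to cohomology, one has to extend the weight argument of Lemma \ref{Lem:screning-for-W} to kill every term containing at least one original $S_i(z_j)$ factor, leaving only the purely Heisenberg product $S_i^W(z_1)\cdots S_i^W(z_n)$. Integrating against $\Gamma$ then yields exactly $S_i^W(n,\Gamma)$ as in \eqref{eq:intertwiner2W}.

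For the non-triviality, with the identification in hand it suffices to exhibit a cycle $\Gamma$ such that $S_i^W(n,\Gamma)$ is nonzero on the highest-weight vector $|\mu\ket \in \pi_{k+h^{\vee},\mu}$. By the standard Heisenberg OPE the product $S_i^W(z_1)\cdots S_i^W(z_n)|\mu\ket$ factors as the multi-valued function \eqref{eq:multi-function} (up to the single-valued factor $z^{nm}$ produced by the integrality assumption \eqref{eq:weight2}) times a non-zero normal-ordered vector in $\pi_{k+h^{\vee},\mu-n\alpha_i}$. Hence the operator $S_i^W(n,\Gamma)|\mu\ket$ is governed by exactly the same hypergeometric integrand as $S_i(n,\Gamma)|\mu\ket$, and the cycle $\Gamma$ produced by Theorem \ref{Th:Tsuhiya-Kanie} under the stated conditions on $d$ pairs non-trivially with it, giving the desired non-zero $S_i^W(n,\Gamma)$.

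The main obstacle is the vanishing of the mixed terms in the $2^n$-fold expansion used in the identification step. A standalone $S_i(z_j)$ factor is killed on $H^0_{DS}$ by the weight argument of Lemma \ref{Lem:screning-for-W}, but a mixed composition such as $S_i^W(z_1)\, S_i(z_2)\, S_i^W(z_3)\cdots$ is not manifestly zero factor by factor; one must show each such term represents a BRST-exact operator after integration against $\Gamma$. The cleanest route is probably a filtration on the semi-infinite complex adapted to the number of $S_i$-factors, or alternatively an argument that bypasses the expansion entirely by computing both sides of the claimed equality on the highest-weight vector and then extending by $\W^k(\g)$-equivariance, using cyclicity of $\pi_{k+h^{\vee},\mu-n\alpha_i}$ as a $\W^k(\g)$-module at generic $k$.
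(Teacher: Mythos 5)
Your identification step stops exactly where the real content lies, and the route you chose makes it look harder than it is. The paper's argument is pure functoriality: each screening current $S_i(z_j)$ at a fixed point is already an $L\mf{n}$-module map (this is stated just before Lemma \ref{Lem:screning-for-W}; only its integrals are $\affg$-maps, but $L\mf{n}$-equivariance holds pointwise since $e_i^R$ and the Heisenberg vertex operator both commute with the left $L\mf{n}$-action), hence a chain map for the Drinfeld--Sokolov complex. By Lemma \ref{Lem:screning-for-W}, applied with highest weight $\mu-(j-1)\alpha_i$, its induced map on $H^0_{DS}$ is the Heisenberg current $S_i^W(z_j)$, and since induced maps on cohomology compose, the composite $S_i(z_1)\cdots S_i(z_n)$ induces $S_i^W(z_1)\cdots S_i^W(z_n)$ with no chain-level bookkeeping at all; integrating against $\Gamma$ is done matrix-coefficient-wise on finite-dimensional graded components, so it commutes with passing to cohomology and yields \eqref{eq:intertwiner2W}. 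The same observation dissolves your ``main obstacle'': each of your $2^n$ mixed terms is itself a composition of chain maps, and any term containing a genuine $S_i(z_j)$ factor induces the zero map on cohomology because that single factor already does (the weight argument at the end of the proof of Lemma \ref{Lem:screning-for-W}); no filtration and no BRST-exactness analysis after integration is needed. As written, though, you do not supply this (or any complete) argument --- you only flag the problem and sketch remedies --- so the first assertion is not actually proved in your proposal.

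Your fallback remedy is moreover flawed: equality of two module maps follows from agreement on $|\mu\ket$ only if the \emph{source} $\pi_{k+h^{\vee},\mu}$ is generated by $|\mu\ket$ over $\W^k(\g)$; cyclicity of the target $\pi_{k+h^{\vee},\mu-n\alpha_i}$, which is what you invoke, gives nothing. And precisely because $\mu$ satisfies the resonance condition \eqref{eq:weight2}, this Fock module is not a generic $\W^k(\g)$-module even for generic $k$, so its cyclicity over the highest-weight vector cannot be taken for granted (if it were always simple, the screening maps themselves would be forced to be trivial or invertible). Your treatment of the non-triviality statement, on the other hand, is in line with the paper: one runs the Tsuchiya--Kanie computation of Theorem \ref{Th:Tsuhiya-Kanie} directly for the Heisenberg screenings, and this direct computation is indeed necessary, since non-vanishing of $S_i(n,\Gamma)$ combined with the first assertion would not by itself exclude that $H_{DS}^0$ kills it.
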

\begin{proof}
The first assertion following from
 Lemma \ref{Lem:screning-for-W}.
 The proof of the non-triviality is the same as 
 that of Theorem \ref{Th:Tsuhiya-Kanie} (and hence as \cite{TsuKan86}).
\end{proof}

\begin{Pro}\label{Pro:W-gen-Weyl}
Let $k$ be generic,
$\lam\in P_+$,
$\mu\in P_+^{\vee}$.
Then
\begin{align*}
&\mathbf{L}_k(\chi_{\lam-(k+h^{\vee})\mu})
\cong \bigcap_{i=1}^r \on{Ker}
S_i^W(n_i,\Gamma_i): \pi_{\lam-(k+h^{\vee})\mu}^{k+h^{\vee}}\ra\pi_{\lam-(k+h^{\vee})\mu-n_i\alpha_i}^{k+h^{\vee}}
\end{align*}
where $n_i=\bra \lam+\rho,\alpha_i^\vee\ket$
and $\Gamma_i\in H_{n}(Y_{n},\mc{L}_n(\lam-(k+h^{\vee})\mu,k))$ is as in Theorem \ref{Th:Tsuhiya-Kanie2}.
\end{Pro}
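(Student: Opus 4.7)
The plan is to apply the Drinfeld--Sokolov functor $H^{\bullet}_{DS}(?)$ term by term to the Wakimoto resolution of $\L_k(\lam-(k+h^{\vee})\mu)$ furnished by Proposition \ref{Pro:resolution-for-generic-level-more-general}, and then to read off the first two terms of the resulting complex as the screening-operator presentation of $\mathbf{L}_k(\chi_{\lam-(k+h^{\vee})\mu})$. Concretely, I would apply $H^0_{DS}(?)$ to
\begin{equation*}
0 \to \L_k(\lam-(k+h^{\vee})\mu) \to C_0 \xrightarrow{d_0} C_1 \to \cdots, \qquad C_i = \bigoplus_{w\in W,\; \ell(w)=i} \Wak{w\circ \lam-(k+h^{\vee})\mu}{k}.
\end{equation*}
By Lemma \ref{Lem:vanishing-DS-cohomology}, each Wakimoto summand is $H^{\bullet}_{DS}$-acyclic with $H^0_{DS}(\Wak{\nu}{k}) \cong \pi_{k+h^{\vee},\nu}$, so the complex $H^0_{DS}(C_{\bullet})$ computes $H^{\bullet}_{DS}(\L_k(\lam-(k+h^{\vee})\mu))$ in each degree by the standard hyperderived-functor argument.

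The next step is to identify the degree-zero cohomology and the differential $d_0^W$ induced on it. For generic $k$ the weight $\nu := \lam - (k+h^{\vee})\mu$ satisfies the hypothesis of Theorem \ref{Th:+-and-}, since the pairings $\bra \hat{\nu}+\hat{\rho}, (-\beta+n\delta)^{\vee}\ket$ depend affine-linearly on $k+h^{\vee}$ and so are non-integral off a countable bad set. Theorem \ref{Th:+-and-} then yields
\begin{equation*}
H^0_{DS}(\L_k(\lam-(k+h^{\vee})\mu)) \cong \mathbf{L}_k(\chi_{\lam-(k+h^{\vee})\mu}).
\end{equation*}
By Proposition \ref{Pro:resolution-for-generic-level-more-general} one has $d_0 = \sum_{i} c_i\, S_i(n_i,\Gamma_i)$ with $c_i \neq 0$, since the relevant $\Hom_{\affg}$ space is one-dimensional and both $d_0$ and each $S_i(n_i,\Gamma_i)$ are non-trivial. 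Theorem \ref{Th:Tsuhiya-Kanie2} then identifies the induced map on DS cohomology as $d_0^W = \sum_i c_i\, S_i^W(n_i,\Gamma_i):\pi_{k+h^{\vee},\nu} \to \bigoplus_i \pi_{k+h^{\vee},\nu-n_i\alpha_i}$, whose kernel coincides with $\bigcap_{i=1}^{\on{rank}\g} \ker S_i^W(n_i,\Gamma_i)$. Combining the two identifications finishes the proof.

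The main technical point is the careful verification of the genericity hypotheses, specifically that Theorem \ref{Th:+-and-} applies to the non-integral weight $\lam - (k+h^{\vee})\mu$ and that Theorem \ref{Th:Tsuhiya-Kanie2} produces non-trivial cycles $\Gamma_i$ at the level $k$ in question. Each requirement excludes only a countable set of values of $k+h^{\vee}$, so a single ``generic $k$'' suffices for all of them simultaneously; making this precise is essentially bookkeeping and does not involve new ideas beyond the preceding machinery.
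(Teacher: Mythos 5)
Your proposal is correct and follows essentially the same route as the paper: identify $\mathbf{L}_k(\chi_{\lam-(k+h^{\vee})\mu})$ with $H^0_{DS}(\L_k(\lam-(k+h^{\vee})\mu))$ via Theorem \ref{Th:+-and-}, compute this cohomology from the Wakimoto resolution of Proposition \ref{Pro:resolution-for-generic-level-more-general} using acyclicity of Wakimoto modules under $H^{\bullet}_{DS}$, and identify the induced differential with the screening operators $S_i^W(n_i,\Gamma_i)$ via Theorem \ref{Th:Tsuhiya-Kanie2}. The only difference is that you spell out the genericity bookkeeping (non-integrality of the pairings and non-triviality of the cycles $\Gamma_i$), which the paper leaves implicit in the word ``generic.''
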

\begin{proof}
By Theorem \ref{Th:+-and-},
we have 
$\mathbf{L}_k(\chi_{\lam-(k+h^{\vee})\mu})\cong H_{DS}^0(\L_k(\lam-(k+h^{\vee})\mu))$.
As Wakimoto modules are acyclic with respect to the cohomology functor $H_{DS}^\bullet(?)$,
$H^{\bullet}_{DS}(\L_k(\lam-(k+h^{\vee})\mu))$
is the cohomology of the complex obtained by applying the
functor $H_{DS}^\bullet(?)$
to the resolution in
Proposition \ref{Pro:resolution-for-generic-level-more-general}.
Therefore the assertion follows from Theorem \ref{Th:Tsuhiya-Kanie2}.
\end{proof}

\section{Coset construction of universal principal $W$-algebras}\label{section:coset-vs-W}
We assume that $\g$ is simply laced for the rest of the paper.
The goal of this section is to provide a proof of Main Theorem \ref{Main2}.

For $m\in \N$,
let $P^m_+=\{\lam\in P_+\mid \lam(\theta)\leq m\}$.
Then,
$\{\L_k(\lam)\mid \lam \in P^m_+\}$
gives  the complete set of isomorphism classes of irreducible integrable
representation of  $\affg$ of level $m$.

Recall that, 
for $\nu\in P_+^1$,
the Frenkel-Kac construction \cite{FreKac80} gives the isomorphism
of $\affg$-modules
\begin{align}
\L_1(\nu)\cong \bigoplus_{\beta\in Q}\pi_{1,\nu+\beta},
\label{eq:Frenkel-Kac-moules}
\end{align}
where the action of $\affg$ on the right-hand-side is given by
\begin{align*}
h_i(z)\mapsto \alpha_i(z),
\quad e_i(z)=:e^{\int \alpha_i(z)dz}:,
\quad f_i(z)=:e^{-\int \alpha_i(z)dz}:.
\end{align*}
Here we denoted by $\alpha_i(z)$  the generator of the Heisenberg vertex algebra $\pi_{1}$ satisfying the OPE
\begin{align}
\alpha_i(z)\alpha_j(w)\sim \frac{(\alpha_i,\alpha_j)}{(z-w)^2}.
\label{eq:alpha_i}
\end{align}

Let $T$ be an integral $\C[K]$-domain
with the structure map $\tau:\C[K]\ra T$
such that $\tau(K)+h^{\vee}$ and $\tau(K)+h^{\vee}+1$ are invertible.
Consider 
 the vertex algebra
$H^{\frac{\infty}{2}+\bullet}(L\mf{n} ,\Wak{0}{T}\* L_1(\g))$,
where
$L\mf{n} $ acts on $\Wak{0}{T}\* L_1(\g)$ diagonally.
For $\lam\in \h^*_T$ and $\nu\in P_+^1$,
$H^{\frac{\infty}{2}+i}(L\mf{n},\Wak{\lam}{T}\* \L_1(\nu))$ is naturally a 
$H^{\frac{\infty}{2}+\bullet}(L\mf{n} ,\Wak{0}{T}\* L_1(\g))$-module.
\begin{Lem}\label{Lem:decomp1}
There is an embedding 
$$\pi_{T+h^{\vee}}\* \pi_{1}\hookrightarrow H^{\frac{\infty}{2}+0}(L\mf{n},\Wak{0}{T}\* L_1(\g))$$
of vertex algebras,
and we have
$$H^{\frac{\infty}{2}+i}(L\mf{n},\Wak{\lam}{T}\* \L_1(\nu))\cong 
\delta_{i,0} \bigoplus_{\beta}\pi_{T+h^{\vee},\lam}\* \pi_{1,\nu+\beta}$$
as $\pi_{T+h^{\vee}}\* \pi_{1}$-modules
for $\lam\in \h^*_T$, $\nu\in P_+^1$.
\end{Lem}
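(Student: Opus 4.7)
The plan is to untangle the diagonal action of $L\mf{n}$ via Proposition~\ref{Pro:key-iso}, so that the cohomology can be computed using \eqref{eq:Wakimoto_characterizaion} together with the Frenkel--Kac realization \eqref{eq:Frenkel-Kac-moules}.

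Since $\L_1(\nu)$ is integrable over $\affg$, the action of $\mf{n}[[t]]$ on $\L_1(\nu)$ is integrable, so Proposition~\ref{Pro:key-iso}(1) applied with $M = \L_1(\nu)$ yields a $T$-linear isomorphism
\[
\Phi \colon \Wak{\lam}{T} \otimes \L_1(\nu) \isomap \Wak{\lam}{T} \otimes \L_1(\nu)
\]
under which the diagonal action $\Delta(x)$ of $x\in L\mf{n}$ is converted into the action $x \otimes 1$ on the first factor alone. Specializing to $\lam=0$, $\nu=0$, Proposition~\ref{Pro:key-iso}(2) applied to the vertex algebra $L_1(\g)$ (which carries a map from $V(\mf{n})$ and on which $\mf{n}[[t]]$ acts integrably) upgrades $\Phi$ to a vertex algebra isomorphism, with respect to which the general $\Phi$ is a module map. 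Passing to cohomology and using that after conjugation $L\mf{n}$ no longer acts on the second factor,
\[
H^{\frac{\infty}{2}+i}\bigl(L\mf{n}, \Wak{\lam}{T} \otimes \L_1(\nu)\bigr)
\;\cong\; H^{\frac{\infty}{2}+i}\bigl(L\mf{n}, \Wak{\lam}{T}\bigr) \otimes \L_1(\nu)
\;\cong\; \delta_{i,0}\, \pi_{T+h^{\vee},\lam} \otimes \L_1(\nu),
\]
by \eqref{eq:Wakimoto_characterizaion}. Substituting the Frenkel--Kac decomposition $\L_1(\nu) \cong \bigoplus_{\beta \in Q} \pi_{1,\nu+\beta}$ produces the direct sum asserted in (2).

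For the embedding statement, specialize the isomorphism above to $\lam=0$, $\nu=0$: the right-hand side is now a vertex algebra containing $\pi_{T+h^{\vee}} \otimes \pi_1$, where the first Heisenberg sits inside $\Wak{0}{T}$ and the second is the Heisenberg Cartan subalgebra $\pi_1 \subset L_1(\g)$ generated by the fields $\alpha_i(z)$. Pulling back through $\Phi^{-1}$ exhibits $\pi_{T+h^{\vee}} \otimes \pi_1$ as a vertex subalgebra of $H^{\frac{\infty}{2}+0}(L\mf{n}, \Wak{0}{T} \otimes L_1(\g))$, and the identification of the natural Heisenberg action on the first factor with the cohomological one is exactly the content of Lemma~\ref{Lem:same1}.

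The only thing that needs care is keeping track of the $\pi_{T+h^{\vee}} \otimes \pi_1$-module structure through the isomorphism $\Phi$; once Proposition~\ref{Pro:key-iso}(2) is applied to $L_1(\g)$ so that $\Phi$ is genuinely a vertex algebra isomorphism (and hence preserves the two commuting Heisenberg subalgebras), this becomes a matter of unwinding definitions rather than a substantive obstacle.
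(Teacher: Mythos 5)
Your proposal is correct and follows essentially the same route as the paper: conjugate by the isomorphism $\Phi$ of Proposition \ref{Pro:key-iso} (parts (1) and (2), applied with $M=\L_1(\nu)$ and $V=L_1(\g)$) so that $L\mf{n}$ acts only on the Wakimoto factor, compute the cohomology via \eqref{eq:Wakimoto_characterizaion}, and then insert the Frenkel--Kac decomposition \eqref{eq:Frenkel-Kac-moules}. Your closing remark about matching the cohomological Heisenberg action with the natural one is also the paper's point of view, handled there by Lemma \ref{Lem:same1} and the subsequent Lemma \ref{Lem:same2}.
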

\begin{proof}
By Proposition \ref{Pro:key-iso},
we have  the isomorphism \begin{align*}
H^{\frac{\infty}{2}+0}(L\mf{n},\Wak{0}{T}\* L_1(\g))\overset{[\Phi]}{\overset{\sim}{\ra}}
H^{\frac{\infty}{2}+0}(L\mf{n},\Wak{0}{T})\* L_1(\g)\cong \pi_{T+h^{\vee}}\* L_1(\g)\\
\cong \bigoplus_{\beta\in Q}\pi_{T+h^{\vee}}\* \pi_{1,\beta},
\end{align*}
of vertex algebras,
where $[\Phi]$ denotes the map induced
by the isomorphism
$\Phi:\Wak{0}{T}\* L_1(\g) \overset{\sim}{\ra} \Wak{0}{T}\* L_1(\g)$ 
for $M=L_1(\g)$ in  Proposition \ref{Pro:key-iso}.
Similarly,
we have
\begin{align*}
H^{\frac{\infty}{2}+0}(L\mf{n},\Wak{\lam}{T}\* \L_1(\nu))\overset{[\Phi]}{\overset{\sim}{\ra}}
H^{\frac{\infty}{2}+0}(L\mf{n},\Wak{\lam}{T})\* \L_1(\nu)\cong \pi_{T+h^{\vee},\lam}\* \L_1(\nu)\\
\cong \bigoplus_{\beta\in Q}\pi_{T+h^{\vee},\lam}\* \pi_{1,\nu+\beta}
\end{align*}
as  $\pi_{T+h^{\vee}}\* \pi_{1}$-modules.
\end{proof}

The proof of the following assertion is the same as Lemma \ref{Lem:same1}.
\begin{Lem}\label{Lem:same2}
The embedding
$\pi_{1}\hookrightarrow H^{\frac{\infty}{2}+0}(L\mf{n},\Wak{0}{T}\* L_1(\g))$
in Lemma \ref{Lem:decomp1}
coincides with 
the
 vertex algebra homomorphism 
\begin{align*}
\begin{array}{ccc}
\pi_{1}& \ra & H^{\frac{\infty}{2}+0}(L\mf{n},\Wak{0}{T}\* L_1(\g)),\\
\alpha_i(z)&\mapsto & \alpha_i(z)-\sum_{\alpha\in \Delta_+}\alpha(\alpha_i^{\vee})
:a_{\alpha}(z)a_{\alpha}^*(z):+\sum_{\alpha\in \Delta_+}\alpha(\alpha_i^{\vee})
:\psi_{\alpha}(z)\psi_{\alpha}^*(z):.
\end{array}
\end{align*}
Here we have omitted the tensor product sign.
\end{Lem}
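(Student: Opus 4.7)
The plan is to imitate the proof of Lemma \ref{Lem:same1}. Let $B_i(z)$ denote the difference between the two claimed actions of $\alpha_i(z)$ on $H^{\frac{\infty}{2}+0}(L\mf{n},\Wak{0}{T}\* L_1(\g))$, that is, the image of $\alpha_i(z) \in \pi_1$ under the embedding of Lemma \ref{Lem:decomp1} minus the expression $\alpha_i(z) - \sum_{\alpha\in\Delta_+}\alpha(\alpha_i^\vee){:}a_\alpha a_\alpha^*{:} + \sum_{\alpha\in\Delta_+}\alpha(\alpha_i^\vee){:}\psi_\alpha\psi_\alpha^*{:}$. Using the explicit form of $\Phi$ recalled in Proposition \ref{Pro:key-iso} together with the Wakimoto formula for $h_i(z) \in V_T(\g) \hookrightarrow \Wak{0}{T}$ and the BRST-correction formula \eqref{eq:natual-Heisenberg-action}, $B_i(z)$ can be written down explicitly as a field in the tensor product $\Wak{0}{T}\* L_1(\g)\* \bw{\infty/2+\bullet}(\mf{n})$; its precise form is not needed, only that it is a well-defined conformal weight-$1$ element of $H^{\frac{\infty}{2}+0}$.

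Next I would show that $B_i(z)$ commutes with every generator $b_j(z)$ of $\pi_{T+h^{\vee}}$ and every generator $\alpha_j(z)$ of $\pi_1$. This is essentially automatic: by construction both candidate maps send $\alpha_i(z)$ to an element satisfying the Heisenberg OPE of $\pi_1$ against the images of $\alpha_j(z)$, and trivial OPE against the images of $b_j(z)$; hence their difference $B_i(z)$ has trivial OPE with all of $b_j(z)$ and $\alpha_j(z)$. Consequently the corresponding state $B_i := \lim_{z \to 0} B_i(z)\mathbf{1}$ lies in the Heisenberg-invariant subspace
\begin{equation*}
\bigl(\pi_{T+h^{\vee}}\* \pi_1\bigr)^{(\h\oplus\h)[t]} \subset H^{\frac{\infty}{2}+0}(L\mf{n},\Wak{0}{T}\* L_1(\g)).
\end{equation*}

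A straightforward PBW argument (as in the proof of Lemma \ref{Lem:same1}) shows that $(\pi_{T+h^{\vee}}\* \pi_1)^{(\h\oplus\h)[t]} = T$, i.e., it consists only of the scalars, which are of conformal weight $0$. Since $B_i$ has conformal weight $1$, it must vanish, giving the desired coincidence of the two homomorphisms.

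The step that requires the most care is the commutation check with $b_j(z)$ and $\alpha_j(z)$: one needs to confirm that, after passing through the isomorphism $[\Phi]$ of Lemma \ref{Lem:decomp1}, the image of $\alpha_i(z)$ really does have the standard OPE with $b_j(z)$ (trivially) and with $\alpha_j(z)$ (as in \eqref{eq:alpha_i}). This is where the specific form of $\Phi$ from Proposition \ref{Pro:key-iso}, together with the fact that $\Phi$ acts as the identity on the $\pi_{T+h^{\vee}}$ factor and intertwines right $L\mf{n}$-actions, is used; once this is in hand, the rest of the argument is purely formal and identical in structure to the proof of Lemma \ref{Lem:same1}.
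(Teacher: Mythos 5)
Your overall skeleton is indeed the one the paper intends (its proof is literally "the same as Lemma \ref{Lem:same1}"): form the difference $B_i$ of the two images of $\alpha_i$, show it is annihilated by the nonnegative modes of the Heisenberg generators, and then kill it using triviality of the Heisenberg commutant together with the conformal-weight-one constraint. The genuine gap is the step you declare "essentially automatic". Knowing that each of the two maps is a vertex algebra homomorphism out of $\pi_1$ only controls the OPE of each image against images under the \emph{same} map; to conclude that $B_i$ has regular OPE with the embedded $\alpha_j$ you need the \emph{mixed} OPE between the explicit field in the statement and the image of $\alpha_j$ under the $[\Phi]$-twisted embedding of Lemma \ref{Lem:decomp1} (equivalently, you need to compute $[\Phi]$ of the explicit weight-one field). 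This is not a formality: the two embeddings $\alpha\mapsto\alpha\*1$ and $\alpha\mapsto 1\*\alpha$ of a rank-one Heisenberg vertex algebra into $\pi_1\*\pi_1$ are both homomorphisms with identical internal OPEs, yet their difference commutes with neither image. In Lemma \ref{Lem:same1} this issue does not arise because \emph{both} actions of $b_i$ are given by explicit free fields (the formula quoted from \cite{Fre05}), so the difference $A_i(z)$ visibly contains no Heisenberg oscillators and the commutation is a free-field triviality; in the present lemma one of the two maps goes through $[\Phi]^{-1}$ and is not an explicit field, so the analogous input must actually be supplied. Your proposal names the relevant ingredients ($\Phi$ restricts to the identity on $\pi_{T+h^{\vee}}$ and intertwines the right $L\mf{n}$-action) but never performs the check, while at the same time asserting that "the precise form is not needed", which removes exactly the mechanism that makes the model proof work.

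Note also that the part of the commutation which \emph{is} free, namely regularity against the $b_j$'s (since $\Phi$ is the identity on $\pi_{T+h^{\vee}}$ and the explicit field contains no $b$-oscillators), does not suffice to finish: the commutant of $\pi_{T+h^{\vee}}\*1$ inside $\pi_{T+h^{\vee}}\* L_1(\g)$ is $T\* L_1(\g)$, whose weight-one subspace is $1\*\g\neq 0$, so the weight argument cannot absorb the missing commutation with the $\alpha_j$'s; that commutation is indispensable and is precisely the unproved point. (A minor further imprecision: $B_i$ is not known a priori to lie in the subalgebra $\pi_{T+h^{\vee}}\*\pi_1$, so one should work with the commutant of the embedded $\pi_{T+h^{\vee}}\*\pi_1$ inside the whole cohomology $\cong\pi_{T+h^{\vee}}\*L_1(\g)$; by the Frenkel--Kac decomposition this commutant is still $T$, so this part is easily repaired.) As written, then, the argument is incomplete at the single nontrivial step of the lemma, and the one sentence offered in its place is logically invalid.
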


Let $T'$ denote the $\C[K]$-algebra
$T$ with the structure map 
\begin{align}
\label{eq:tprime} K\mapsto \tau'(K):=\frac{\tau(K)+h^{\vee}}{\tau(K)+h^{\vee}+1}-h^{\vee}.\end{align}
We have the vertex algebra isomorohism
\begin{align}
\begin{array}{ccc}
 \pi_{T+h^{\vee}+1}\* \pi_{T'} & \isomap & \pi_{T+h^{\vee}}\* \pi_{1},\\
 b_i(z)\* 1 & \mapsto & b_i(z)\* 1+1\* \alpha_i(z),\\
 1\* b_i(z) &\mapsto  &
 \frac{1}{\tau(K)+h^{\vee}+1}b_i(z)\* 1 -\frac{\tau(K)+h^{\vee}}{\tau(K)+h^{\vee}+1} 1\* \alpha_i(z).
\end{array}
\label{eq:commutant-embedding}
\end{align}
Note that,
by Lemma \ref{Lem:same2},
the map
$$ \pi_{T+h^{\vee}+1}\hookrightarrow  \pi_{T+h^{\vee}+1}\* \pi_{T'} \isomap  \pi_{T+h^{\vee}}\* \pi_{1}\hookrightarrow 
H^{\frac{\infty}{2}+0}(L\mf{n} ,\Wak{0}{T}\* L_1(\g)),$$
where the first map is 
the embedding to the first component,
coincides with the vertex algebra homomorphism  induced by the diagonal embedding  
$\pi_{T+1}\hookrightarrow \Wak{0}{T}\* L_1(\g)$.

The following assertion follows immediately from 
Lemma \ref{Lem:decomp1}.
\begin{Pro}\label{Pro:com-semiinf-Wakimoto}
We have the isomorphism
\begin{align*}
H^{\frac{\infty}{2}+i}(L\mf{n} ,\Wak{\mu}{T}\* \L_1(\nu))\cong 
\delta_{i,0}\bigoplus_{\lam\in \h^*\atop \lam-\mu-\nu\in Q}\pi_{T+h^{\vee}+1,\lam}\otimes \pi_{T'+h^{\vee}, \mu-(\tau'(K)+h^{\vee})\lam}
\end{align*}
as $ \pi_{T+h^{\vee}+1}\* \pi_{T'+h^{\vee}}$-modules
for $\lam\in \h^*_T$ and $\nu\in P_+^1$.
In particular,
$ \pi_{T+h^{\vee}+1}$ and
$\pi_{T'+h^{\vee}}$ form a dual pair in $H^{\frac{\infty}{2}+0}(L\mf{n},\Wak{0}{T}\* L_1(\g))$.
\end{Pro}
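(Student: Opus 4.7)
The plan is to combine the vanishing and decomposition already supplied by Lemma \ref{Lem:decomp1} with the explicit change of Heisenberg generators in \eqref{eq:commutant-embedding}. Lemma \ref{Lem:decomp1} gives the vanishing for $i\neq 0$ together with the identification
\[ H^{\frac{\infty}{2}+0}(L\mf{n},\Wak{\mu}{T}\* \L_1(\nu))\cong \bigoplus_{\beta\in Q} \pi_{T+h^{\vee},\mu}\*\pi_{1,\nu+\beta} \]
as $\pi_{T+h^{\vee}}\*\pi_1$-modules, so the only remaining task is to re-express each summand with respect to the two commuting Heisenberg subalgebras $\pi_{T+h^{\vee}+1}$ and $\pi_{T'+h^{\vee}}$ sitting inside $\pi_{T+h^{\vee}}\*\pi_1$ via \eqref{eq:commutant-embedding}.

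Since Heisenberg vertex algebras are abelian, a Fock module is determined up to isomorphism by the eigenvalues of its zero modes on a highest weight vector; I would therefore take the obvious highest weight vector $|\mu\ket\*|\nu+\beta\ket$ of $\pi_{T+h^{\vee},\mu}\*\pi_{1,\nu+\beta}$ and compute, using \eqref{eq:commutant-embedding}, the zero-mode eigenvalues of the new generators. Writing $\lam:=\mu+\nu+\beta$, so that the condition $\lam-\mu-\nu\in Q$ precisely parametrizes $\beta$, the image of the first new generator $b_i\*1+1\*\alpha_i$ manifestly gives the weight $\lam$, while the image of the second new generator yields $\frac{1}{\tau(K)+h^{\vee}+1}\mu-\frac{\tau(K)+h^{\vee}}{\tau(K)+h^{\vee}+1}(\nu+\beta)$. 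Invoking the defining identity $\tau'(K)+h^{\vee}=(\tau(K)+h^{\vee})/(\tau(K)+h^{\vee}+1)$ from \eqref{eq:tprime}, this simplifies directly to $\mu-(\tau'(K)+h^{\vee})\lam$. Reindexing the direct sum by $\lam$ in place of $\beta$ then produces exactly the formula claimed in the proposition.

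For the dual pair assertion, one specializes to $\mu=0$ and $\nu=0$ and uses the resulting decomposition $\bigoplus_{\lam\in Q}\pi_{T+h^{\vee}+1,\lam}\*\pi_{T'+h^{\vee},-(\tau'(K)+h^{\vee})\lam}$, together with the standard fact that any vector whose Fourier modes commute with a Heisenberg vertex algebra must generate the trivial module of weight zero over it. This forces $\lam=0$ and places the vector in the top line of the $\pi_{T+h^{\vee}+1}$-factor, i.e.\ inside $\pi_{T'+h^{\vee}}$; the symmetric argument gives the reverse containment.

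I do not anticipate any serious obstacle here: the content of the proposition is essentially already contained in Lemma \ref{Lem:decomp1}, and the only real subtlety is the bookkeeping of Heisenberg normalizations under \eqref{eq:commutant-embedding}, which is settled once the identity $\tau'(K)+h^{\vee}=(\tau(K)+h^{\vee})/(\tau(K)+h^{\vee}+1)$ is used correctly on highest weight vectors.
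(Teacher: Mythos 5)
Your proposal is correct and follows essentially the same route as the paper, which simply declares the proposition an immediate consequence of Lemma \ref{Lem:decomp1} combined with the change of Heisenberg generators \eqref{eq:commutant-embedding}; you have merely made explicit the zero-mode bookkeeping (relabeling each Fock summand $\pi_{T+h^{\vee},\mu}\otimes\pi_{1,\nu+\beta}$ by $\lam=\mu+\nu+\beta$ and checking the second weight equals $\mu-(\tau'(K)+h^{\vee})\lam$) that the paper leaves implicit. The dual-pair argument via the $\mu=\nu=0$ specialization and invertibility of $\tau'(K)+h^{\vee}$ is also exactly what the paper intends.
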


Now recall the commutant vertex algebra
$\mc{C}_T(\g)=\on{Com}(V_{T+1}(\g), V_T(\g)\* L_1(\g))$.
We define the vertex algebra homomorphism
\begin{align*}
\uppsi_T:\mc{C}_T(\g)\ra  \pi_{T'+h^{\vee}}\end{align*}
as follows.
First, define the vertex algebra homomorphism
\begin{align}  \label{eq:uppsi1T} 
\uppsi_{1,T}:\mc{C}_T(\g)= (V_T(\g)\* L_1(\g))^{\g[t]}\ra 
H^{\frac{\infty}{2}+0}(L\mf{n}, V_T(\g)\* L_1(\g)),
\quad v\mapsto [v\* \mathbf{1}].
\end{align} 
The diagonal  embedding
 $ \pi_{T+1}\hookrightarrow V_T(\g)\* L_1(\g)$,
gives rise to
 a  vertex algebra embedding
$
 \pi_{T+h^{\vee}+1}\hookrightarrow H^{\frac{\infty}{2}+0}(L\mf{n} ,V_T(\g)\* L_1(\g))
$.
By definition, the image of $\uppsi_{1,T}$ is contained in 
$$\on{Com}(\pi_{T+h^{\vee}+1},H^{\frac{\infty}{2}+0}(L\mf{n},V_T(\g)\* L_1(\g))) \subset H^{\frac{\infty}{2}+0}(L\mf{n},V_T(\g)\* L_1(\g)).$$
Next
let 
\begin{equation}  \begin{split} \label{eq:uppsi2T} 
\uppsi_{2,T}:\on{Com}(\pi_{T+h^{\vee}+1}, H^{\frac{\infty}{2}+0}&(L\mf{n},V_T(\g)\* L_1(\g)))\\
\ra & \on{Com}(\pi_{T+h^{\vee}+1}, H^{\frac{\infty}{2}+0}(L\mf{n},\Wak{0}{T}\* L_1(\g)))
\cong \pi_{T'+h^{\vee}}
\end{split} \end{equation}
be the vertex algebra homomorphism
induced by 
the embedding $V_T(\g)\hookrightarrow \Wak{0}{T}$.
We set  
\begin{align} \label{eq:uppsiT} 
\uppsi_T=\uppsi_{2,T}\circ \uppsi_{1,T}.
\end{align}

If $T=\C$ and $\tau(K)=k$ we write
$\uppsi_{k}$ for $\uppsi_{T}$
and $\uppsi_{i,k}$ for $\uppsi_{i,T}$, $i=1,2$,
and we define the complex number $\ell$ by 
the formula \eqref{eq:ellintro}
\begin{Pro}
Suppose that $k+h^{\vee}\not\in \Q_{\leq 0}$.
The  vertex algebra homomorphism $$\uppsi_k:\mc{C}_k(\g)\ra \pi_{\ell+h^{\vee}}$$ is injective.
\end{Pro}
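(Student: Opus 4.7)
The plan is to prove injectivity by promoting $\uppsi_k$ to a universal version over the ring $R$ of Definition \ref{def:RandF}, verifying injectivity at the generic fiber $F = \on{Frac}(R)$, and then descending to the individual specializations. By functoriality of every ingredient of the construction (the universal affine vertex algebra, the Wakimoto module, the semi-infinite $L\mf{n}$-cohomology, and the commutants), one obtains a homomorphism $\uppsi_R : \mc{C}_R(\g) \to \pi_{R'+h^\vee}$ with $\uppsi_k = \uppsi_R \otimes_R \C_k$ and $\uppsi_F = \uppsi_R \otimes_R F$. Proposition \ref{Pro:character-of-coset} and its Heisenberg analogue make both $\mc{C}_R(\g)$ and $\pi_{R'+h^\vee}$ free $R$-modules of finite rank in each conformal weight.

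The heart of the argument is the injectivity of $\uppsi_F$. Over $F$ the category $\on{KL}_F$ is semisimple, $V_F(\g)$ is simple in $\on{KL}_F$, and $\Wak{0}{F} \cong \M_F(0)^*$ by Proposition \ref{Pro:dual-Verma=Wakimoto} (which applies since $\mathbf{k}$ is transcendental over $\C$, so the integrality hypothesis holds vacuously). The Wakimoto embedding $V_F(\g) \hookrightarrow \Wak{0}{F}$ therefore splits as a direct summand in $\on{KL}_F$. Tensoring with $L_1(\g)$ and applying $H^{\frac{\infty}{2}+0}(L\mf{n},-)$ yields an injection of $H^{\frac{\infty}{2}+0}(L\mf{n}, V_F(\g) \otimes L_1(\g))$ into $H^{\frac{\infty}{2}+0}(L\mf{n}, \Wak{0}{F} \otimes L_1(\g))$, whose target Proposition \ref{Pro:com-semiinf-Wakimoto} identifies with $\bigoplus_{\beta \in Q} \pi_{F+h^\vee+1,\beta} \otimes \pi_{F'+h^\vee, -(\ell+h^\vee)\beta}$ and whose commutant of the diagonal $\pi_{F+h^\vee+1}$ is the vacuum summand $\pi_{F'+h^\vee}$. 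For nonzero $v \in \mc{C}_F(\g)$, I would compute the image of $[v \otimes \mathbf{1}_\psi]$ through the explicit isomorphism $\Phi$ of Proposition \ref{Pro:key-iso}: because $v$ is diagonally $\g[t]$-invariant, $\Phi^{-1}(v \otimes \mathbf{1}_\psi)$ lives where the $\mf{n}((t))$-action sits on the Wakimoto factor alone, and its projection to the vacuum Heisenberg summand preserves the full information of $v$. This yields injectivity of $\uppsi_F$; the embedding $\mc{C}_R(\g) \hookrightarrow \mc{C}_F(\g)$ of \eqref{eq:CFg} then gives injectivity of $\uppsi_R$.

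The remaining task is to descend injectivity of $\uppsi_R$ to each $\uppsi_k$ with $k+h^\vee \notin \Q_{\leq 0}$. This is the main obstacle, because an injection of free $R$-modules need not remain injective after base change. The natural device is the Li filtration: by \eqref{eq:associated-graded-W-algebra} and \cite[Theorem 4.8]{Ara16}, $\gr \uppsi_R$ is a map of commutative graded Poisson vertex algebras whose description in terms of arc spaces is essentially parameter-free, so injectivity of $\gr \uppsi_F$ transfers to every $\gr \uppsi_k$, which in turn forces $\uppsi_k$ itself to be injective. The hardest step in this plan is making the associated graded analysis precise — identifying $\gr \mc{C}_R(\g)$ geometrically as an invariant ring on arc spaces and verifying that the associated graded of the isomorphism $\Phi$ of Proposition \ref{Pro:key-iso} descends to a clean geometric isomorphism independent of the parameter; this is where the geometry of the Wakimoto realization and the interplay with the unipotent radical $N$ enters essentially.
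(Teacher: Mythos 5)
Your proposal does not follow the paper's route, and as written it has two genuine gaps. First, the generic-fiber step rests on a false claim: $\Wak{0}{F}\cong \M_F(0)^*$ is not an object of $\on{KL}_F$ (it is not a direct sum of finite-dimensional $\g$-modules), and the embedding $V_F(\g)\hookrightarrow \M_F(0)^*$ never splits, at any level. Indeed, by Kac--Kazhdan the Verma module $\M_F(0)$ always has singular vectors of weight $-\alpha_i$ (since $\langle \hat{\rho},\alpha_i^{\vee}\rangle=1$), so the contragredient Verma is indecomposable with socle $V_F(\g)$; semisimplicity of $\on{KL}_F$ is irrelevant here. Injectivity of $\uppsi_F$ is true, but it requires the Wakimoto resolution of Proposition \ref{Pro:resolution-for-generic-level} tensored with $L_1(\g)$ together with Propositions \ref{Pro:com-semiinf-Wakimoto} and \ref{Pro:key} (this is exactly the content of Theorem \ref{Th:iso} and its proof in Section \ref{Section:proof-of-generic-iso}); the phrase ``preserves the full information of $v$'' is not a substitute for that computation. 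Second, and more seriously, the descent from $R$ to $\C_k$ is not established: injectivity of $\uppsi_R$ does not imply injectivity of $\uppsi_k=\uppsi_R\otimes_R\C_k$, since the cokernel of an injection of free $R$-modules can have torsion at the point $\mathbf{k}=k$. Your proposed repair via the Li filtration is exactly the unproven part: identifying $\gr\,\mc{C}_R(\g)$ with a parameter-free ring of arc-space invariants is itself a delicate open-type assertion (the associated graded of a commutant only embeds into the invariants of the associated graded), and even granting it you would still have to prove injectivity of the specialized map $\gr\,\uppsi_k$, which re-raises the same base-change problem. Nor can you close the loop with Corollary \ref{Co:character-are-the-same}: comparing $\on{ch}\,\uppsi_k(\mc{C}_k(\g))$ with $\on{ch}\,\W^{\ell}(\g)$ presupposes that $\uppsi_k$ does not kill anything, i.e.\ the very statement being proved.

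For contrast, the paper avoids specialization altogether and proves injectivity directly at the given level $k$ with $k+h^{\vee}\notin\Q_{\leq 0}$. It factors $\uppsi_k=\uppsi_{2,k}\circ\uppsi_{1,k}$ and shows each factor is injective by applying the long exact sequence of semi-infinite cohomology to the cokernel $N$ of $V_{k+1}(\g)\otimes\mc{C}_k(\g)\hookrightarrow V_k(\g)\otimes L_1(\g)$ and to the cokernel $U$ of $V_k(\g)\hookrightarrow\Wak{0}{k}$ (tensored with $L_1(\g)$): since these cokernels have no vectors of conformal weight in $\Z_{\leq 0}$ (Lemma \ref{Lem:Cg(g)}), the terms $H^{\frac{\infty}{2}-1}(L\mf{n},N)$ and $H^{\frac{\infty}{2}-1}(L\mf{n},U\otimes L_1(\g))$ vanish in weight zero, so the induced maps on the weight-zero generalized eigenspaces are embeddings, and these restrict precisely to $\uppsi_{1,k}$ and $\uppsi_{2,k}$. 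If you want to salvage your outline, you should replace both the splitting argument and the Li-filtration descent by this kind of cohomological vanishing argument, which works uniformly in $k$ without any genericity assumption.
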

\begin{proof}
We show both 
$\uppsi_{1,k}$ and $\uppsi_{2,k}$ are injective.

The vertex algebra
$H^{\frac{\infty}{2}+0}(L\mf{n},V_{k+1}(\g))$ is naturally conformal.
For a $H^{\frac{\infty}{2}+0}(L\mf{n},V_{k+1}(\g))$-module $M$
and $\Delta\in \C$,
set 
$M_{[\Delta]}=\{m\in M\mid (L_0-\Delta)^r m=0,\  r\gg 0\}$.

Let $N$ be the cokernel of the injection
$$V_{k+1}(\g)\* \mc{C}_k(\g)\hookrightarrow V_k(\g)\* L_1(\g),\quad
u\mathbf{1}\* v\mapsto \Delta(u)v,\quad u\in U(\affg).$$
The long exact sequence of the BRST cohomology gives 
the exact sequence
\begin{align}
H^{\frac{\infty}{2}-1}(L\mf{n},N)\ra H^{\frac{\infty}{2}+0}(L\mf{n},V_{k+1}(\g)\* \mc{C}_k(\g))
\ra
H^{\frac{\infty}{2}+0}(L\mf{n}, V_{k}(\g)\* L_1(\g)).
\label{eq:1}
\end{align}
Since 
 $N_{[\Delta]}=0$ for $\Delta\in \Z_{\leq 0}$ by Lemma \ref{Lem:Cg(g)},
we have
$H^{\frac{\infty}{2}-1}(L\mf{n},N)_{[0]}=0$.
Hence
\eqref{eq:1}
restricts to the embedding
\begin{align}
H^{\frac{\infty}{2}+0}(L\mf{n},V_{k+1}(\g)\* \mc{C}_k(\g))_{[0]}
\hookrightarrow 
H^{\frac{\infty}{2}+0}(L\mf{n}, V_{k}(\g)\* L_1(\g))_{[0]}.
\label{eq:embedding-1}
\end{align}
On the other hand, $H^{\frac{\infty}{2}+0}(L\mf{n},V_{k+1}(\g)\* \mc{C}_k(\g))_{[0]}=
H^{\frac{\infty}{2}+0}(L\mf{n},V_{k+1}(\g))_{[0]}\* \mc{C}_k(\g)=\C\*\mc{C}_k(\g)$.
It follows that \eqref{eq:embedding-1} coincides with $\uppsi_{1,k}$.
In particular,
$\uppsi_{1,k}$ is injective.

Next, 
let $U$ be the cokernel of the embedding $V_k(\g)\hookrightarrow \Wak{0}{k}$.
This gives rise to the exact sequence
$$H^{\frac{\infty}{2}-1}(L\mf{n},U\* L_1(\g))\ra H^{\frac{\infty}{2}+0}(L\mf{n},V_k(\g)\* L_1(\g))\ra
H^{\frac{\infty}{2}+0}(L\mf{n},\Wak{0}{k}\* L_1(\g)).$$
In the same way as above we have
$H^{\frac{\infty}{2}-1}(L\mf{n},U\* L_1(\g))_{[0]}=0$, 
and so 
the above map restricts to the embedding
\begin{align}
H^{\frac{\infty}{2}+0}(L\mf{n},V_k(\g)\* L_1(\g))_{[0]}\hookrightarrow 
H^{\frac{\infty}{2}+0}(L\mf{n},\Wak{0}{k}\* L_1(\g))_{[0]}.
\label{eq:added2019.03.17}
\end{align}
Because
$\on{Com}(\pi_{T+h^{\vee}+1}, H^{\frac{\infty}{2}+0}(L\mf{n},V_T(\g)\* L_1(\g))) \subset H^{\frac{\infty}{2}+0}(L\mf{n},V_k(\g)\* L_1(\g))_{[0]}$,
$ \on{Com}(\pi_{T+h^{\vee}+1}, H^{\frac{\infty}{2}+0}(L\mf{n},\Wak{0}{T}\* L_1(\g)))
\subset H^{\frac{\infty}{2}+0}(L\mf{n},\Wak{0}{k}\* L_1(\g))_{[0]}$
and $\uppsi_{2,k}$ is the restriction
of \eqref{eq:added2019.03.17},
it follows that
$\uppsi_{2,k}$ is injective as well.
\end{proof}

Recall the $\C[K]$-algebras $R$ and $F$ defined in Section \ref{section:universal-coset} (see Definition \ref{def:RandF}), and the Miura map $\Miura$ in Section \ref{section:Miura}. Also, recall that $R'$ and $F'$ denote the $\C[K]$-algebras obtained by modifying the structure map on $R$ and $F$ using \eqref{eq:tprime}.
\begin{Th}\label{Th:iso}
The map $\uppsi_F:\mc{C}_F(\g)\ra \pi_{F'+h^{\vee}}$ is an injective vertex algebra homomorphism onto 
$\Miura(\W^{F'}(\g))$.
Therefore,
$\mc{C}_F(\g)\cong \W^{F'}(\g)$ as vertex algebras.
\end{Th}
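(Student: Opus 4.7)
The plan is to identify the image of the injective map $\uppsi_F$ inside $\pi_{F'+h^\vee}$ with the Miura image of $\W^{F'}(\g)$, which by the last proposition of Section \ref{section:Miura} is cut out as the joint kernel of the screening operators $\int S_i^{W'}(z)\,dz: \pi_{F'+h^\vee} \to \pi_{F'+h^\vee,-\alpha_i}$. Injectivity over $F$ follows by the same argument given above for $\uppsi_k$, since the $t$-structure arguments (vanishing of $N_{[\Delta]}$ for $\Delta \leq 0$ and similarly for the cokernel $U$ of $V_F(\g) \hookrightarrow \Wak{0}{F}$) only require $\tau(K)+h^\vee \notin \Q_{\leq 0}$, which holds for $T=F$.

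To obtain the inclusion $\uppsi_F(\mc{C}_F(\g)) \subseteq \Miura(\W^{F'}(\g))$, I would feed the Feigin-Frenkel resolution of Proposition \ref{Pro:resolution-for-generic-level},
\begin{align*}
0 \to V_F(\g) \to \Wak{0}{F} \xrightarrow{\oplus c_i S_i} \bigoplus_{i=1}^{\on{rank}\g} \Wak{-\alpha_i}{F},
\end{align*}
through the functor $H^{\frac{\infty}{2}+0}(L\mf{n}, -\otimes L_1(\g))$ and then through the $\pi_{F+h^\vee+1}$-commutant. The first step remains exact because Wakimoto modules are acyclic for semi-infinite $L\mf{n}$-cohomology; the second is exact because of the explicit Heisenberg direct-sum decomposition in Lemma \ref{Lem:decomp1}. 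Using Proposition \ref{Pro:com-semiinf-Wakimoto}, the terms of the resolution identify, under the change of Heisenberg generators \eqref{eq:commutant-embedding}, with $\pi_{F'+h^\vee}$ and $\pi_{F'+h^\vee,-\alpha_i}$ respectively. The connecting maps can then be identified, via Proposition \ref{Pro:key-iso}(2) applied to $M = L_1(\g)$, Lemma \ref{Lem:same2}, and an argument formally parallel to the proof of Lemma \ref{Lem:screning-for-W}, as non-zero scalar multiples of $\int S_i^{W'}(z)\,dz$. The image of $\uppsi_F$ thus lies in the joint kernel, which equals $\Miura(\W^{F'}(\g))$.

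For equality, I would compare characters: $\mc{C}_F(\g) = \mc{C}_R(\g)\otimes_R F$ is a free $F$-module whose Hilbert series is independent of specialization by Proposition \ref{Pro:character-of-coset}, and a character computation (either via Kac-Wakimoto's branching rules for generic levels or by specialization to a suitable $k$ where the decomposition into $V_{k+1}(\g)$-components is understood) shows that this series coincides with the known Hilbert series of $\W^{F'}(\g) \cong \Miura(\W^{F'}(\g))$ obtained from \eqref{eq:associated-graded-W-algebra}. An injective $F$-linear homomorphism of free $F$-modules with equal Hilbert series is an isomorphism. The main obstacle is the screening identification in step two: tracking the various isomorphisms (the semi-regular bimodule isomorphism $\Phi$, the Frenkel-Kac realization, and the Heisenberg change of variables) carefully enough to see that the induced operators are exactly, up to non-zero scalars, the screening operators of $\W^{F'}(\g)$ rather than some shifted or rescaled version. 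A secondary technical point, resolved by Lemma \ref{Lem:Cg(g)} and the BRST long exact sequences, is the vanishing of negative-conformal-weight contributions that makes both $\uppsi_{1,F}$ and $\uppsi_{2,F}$ embed into the zero-weight subspaces of the relevant BRST cohomologies.
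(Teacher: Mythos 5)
Your core mechanism coincides with the paper's: feed the resolution of Proposition \ref{Pro:resolution-for-generic-level} through $H^{\frac{\infty}{2}+0}(L\mf{n},-\otimes L_1(\g))$, use acyclicity of Wakimoto modules tensored with $L_1(\g)$, identify the terms via Proposition \ref{Pro:com-semiinf-Wakimoto}, and identify the induced differentials with the $W$-algebra screening operators; this last identification is exactly Proposition \ref{Pro:key}, and your sketch of how to prove it (the semi-regular isomorphism $\Phi$, the splitting $e_i^R\otimes 1-1\otimes e_i$, the weight argument, and the change of Heisenberg variables \eqref{eq:commutant-embedding}) is the paper's argument. Where you deviate is the final equality of the image with $\Miura(\W^{F'}(\g))$. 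The paper never compares characters here: since $\on{KL}_F$ is semisimple, $V_F(\g)\otimes L_1(\g)\cong\bigoplus_{\lam\in P_+}\V_{F+1}(\lam)\otimes\mc{B}_\lam$, so $\on{Com}\bigl(\pi_{F+h^{\vee}+1},H^{\frac{\infty}{2}+0}(L\mf{n},V_F(\g)\otimes L_1(\g))\bigr)=\mc{B}_0$ and $\uppsi_{1,F}$ is an isomorphism of $\mc{C}_F(\g)$ onto the \emph{whole} commutant; combined with the left-exactness you already invoke, the image of $\uppsi_F$ is then exactly, not merely contained in, the joint kernel of the screenings, i.e. $\Miura(\W^{F'}(\g))$. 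Your injectivity argument (transporting the long-exact-sequence proof for $\uppsi_k$ to $F$) is fine, but it yields only injectivity, which is why you are forced to supply surjectivity by a separate counting argument.

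That counting step, as written, is a gap. Inside this paper the equality $\on{ch}\,\mc{C}_k(\g)=\on{ch}\,\W^{\ell}(\g)$ at generic level is Corollary \ref{Co:character-are-the-same}, deduced \emph{from} the theorem you are proving, so it cannot be cited; and the Kac--Wakimoto branching rules you point to concern admissible levels and the simple quotients $\L_k(\mu)\otimes\L_1(\nu)$, whose multiplicity spaces are modules over the simple quotient $\W_{\ell}(\g)$, not the universal coset at generic level. The step is repairable: using semisimplicity of $\on{KL}_F$, the characters of the Weyl modules $\V_{F+1}(\lam)$ and the Frenkel--Kac character of $L_1(\g)$, one finds that the vacuum branching function equals $\bigl(\sum_{w\in W}\epsilon(w)q^{(\rho,\rho-w\rho)}\bigr)\prod_{n\geq 1}(1-q^n)^{-\on{rank}\g}$, and the Weyl denominator identity specialized at $e^{-\alpha}\mapsto q^{(\rho,\alpha)}$ turns this into $\prod_{i}\prod_{n\geq d_i}(1-q^n)^{-1}$, which by \eqref{eq:associated-graded-W-algebra} is $\on{ch}\,\W^{F'}(\g)$. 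But note that this repair already uses the semisimple decomposition of $V_F(\g)\otimes L_1(\g)$; once you have that decomposition, the observation $\uppsi_{1,F}(\mc{C}_F(\g))=\mc{B}_0$ makes the character comparison unnecessary, which is precisely the route the paper takes.
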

The proof of Theorem \ref{Th:iso} will be given in Section \ref{Section:proof-of-generic-iso}.

Theorem \ref{Th:iso}
together with 
Proposition \ref{Pro:character-of-coset}
 immediately gives  the following. 
\begin{Co}\label{Co:character-are-the-same}
Assume that $k+h^{\vee}\not \in \Q_{\leq 0}$.
Then
$\on{ch}\mc{C}_k(\g)=\on{ch}\W^{\ell}(\g)$.
\end{Co}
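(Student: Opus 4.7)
The corollary is a straightforward consequence of the two results it cites, once one tracks how characters behave under base change along the $\C[K]$-algebras $R\subset F$ and $R'\subset F'$. The plan is to reduce both characters to the Hilbert series of a free graded module over $R$ (resp.\ $R'$) and then invoke Theorem \ref{Th:iso} to match them.

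First I would recall, on the coset side, that Proposition \ref{Pro:character-of-coset} already gives $\on{ch}\mc{C}_k(\g)=\on{ch}\mc{C}_R(\g)$ (as characters of $R$-modules, meaning the Hilbert series whose coefficients are the ranks of the graded pieces), and that this equals $\on{ch}\mc{C}_F(\g)$ by \eqref{eq:CFg}, because $\mc{C}_R(\g)$ is free over $R$ with graded pieces of finite rank (Proposition \ref{Pro:Crg}). Next, on the $W$-algebra side, I would recall from Section \ref{section:Miura} that $\W^{T}(\g)=\bigoplus_\Delta \W^T(\g)_\Delta$ with each $\W^T(\g)_\Delta$ a free $T$-module of finite rank, and that $\W^{T_2}(\g)=\W^{T_1}(\g)\otimes_{T_1}T_2$ for any map $T_1\to T_2$. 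Applied to $R'\to F'$ and $R'\to \C_\ell$ (the latter defined because $\ell+h^{\vee}=(k+h^{\vee})/(k+h^{\vee}+1)\neq 0$ under our assumption on $k$), this gives
\begin{equation*}
\on{ch}\W^{\ell}(\g)=\on{ch}\W^{R'}(\g)=\on{ch}\W^{F'}(\g).
\end{equation*}
Finally, Theorem \ref{Th:iso} provides a vertex algebra isomorphism $\mc{C}_F(\g)\cong \W^{F'}(\g)$, in particular an isomorphism of graded $F$-vector spaces (with $F=F'$ as fields), so the two Hilbert series agree.

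Putting these three identifications together yields
\begin{equation*}
\on{ch}\mc{C}_k(\g)=\on{ch}\mc{C}_F(\g)=\on{ch}\W^{F'}(\g)=\on{ch}\W^{\ell}(\g),
\end{equation*}
which is the claim. There is no real obstacle here: the only thing to verify carefully is that the grading appearing in Proposition \ref{Pro:character-of-coset} on the coset side and the conformal grading on $\W^{F'}(\g)$ are matched by the isomorphism of Theorem \ref{Th:iso}, but both are the $L_0$-grading coming from the Sugawara/coset conformal vector, and the construction of the map $\uppsi_T$ in \eqref{eq:uppsiT} is manifestly grading-preserving. All other steps are formal base-change statements.
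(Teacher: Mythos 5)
Your proof is correct and follows the paper's own route: the paper derives this corollary directly from Theorem \ref{Th:iso} together with Proposition \ref{Pro:character-of-coset}, with the base-change and freeness facts for $\W^{T}(\g)$ from Section \ref{section:Miura} left implicit. You have merely spelled out those implicit bookkeeping steps, which is fine.
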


\begin{Th}\label{Th:universalGKO}
\begin{enumerate}
\item The map $\uppsi_R:\mc{C}_R(\g)\ra \pi_{R'+h^{\vee}}$ is an injective vertex algebra homomorphism onto 
$\Miura(\W^{R'}(\g))$.
Therefore,
$\mc{C}_R(\g)\cong \W^{R'}(\g)$ as vertex algebras.
\item Assume that $k+h^{\vee}\not \in \Q_{\leq 0}$.
The map $\uppsi_k:\mc{C}_k(\g)\ra \pi_{\ell+h^{\vee}}$ is an injective vertex algebra homomorphism onto 
$\Miura(\W^{\ell}(\g))$.
Therefore,
$\mc{C}_k(\g)\cong \W^{\ell}(\g)$ as vertex algebras.
\end{enumerate}
\end{Th}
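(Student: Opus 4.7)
My strategy is to bootstrap from Theorem \ref{Th:iso} (the generic case $T=F$) to both Parts (1) and (2) via a base-change argument, with Nakayama's lemma handling the final surjectivity in Part (1). The preliminary step is the naturality of $\uppsi_T$ in $T$: for a $\C[K]$-algebra homomorphism $T_1\to T_2$ (with both $\tau(K)+h^\vee$ and $\tau(K)+h^\vee+1$ invertible in both rings), the square relating $\uppsi_{T_1}$ and $\uppsi_{T_2}$ via the natural base-change maps commutes. This reduces to the functorial nature of $V_T(\g)$, $\Wak{0}{T}$, the semi-infinite cohomology $H^{\frac{\infty}{2}+\bullet}(L\mf{n},-)$, and the commutant formation entering \eqref{eq:uppsi1T}--\eqref{eq:uppsiT}. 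Specializing to $R\hookrightarrow F$ (using \eqref{eq:CFg}) and to $R\twoheadrightarrow \C_k$ (using Proposition \ref{Pro:character-of-coset}) yields the identifications $\uppsi_F = \uppsi_R\otimes_R F$ and $\uppsi_k = \uppsi_R\otimes_R \C_k$.

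For Part (1), since $\mc{C}_R(\g)$ is free over $R$ (Proposition \ref{Pro:Crg}), $\uppsi_R$ is the restriction of the injection $\uppsi_F$ of Theorem \ref{Th:iso}, hence injective, and its image is contained in
\[\pi_{R'+h^\vee}\cap \uppsi_F(\mc{C}_F(\g)) = \pi_{R'+h^\vee}\cap \Miura(\W^{F'}(\g)) = \Miura(\W^{R'}(\g)),\]
where the last equality is Lemma \ref{Lem:base-change-for-Miura}. Both $\mc{C}_R(\g)$ and $\Miura(\W^{R'}(\g))\cong \W^{R'}(\g)$ are graded free $R$-modules whose graded characters agree, by Proposition \ref{Pro:character-of-coset} and Corollary \ref{Co:character-are-the-same} (together with the injectivity of the Miura map). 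Thus $\uppsi_R:\mc{C}_R(\g)\to \Miura(\W^{R'}(\g))$ is a graded injection of free $R$-modules of matching graded rank.

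For Part (2), specialize this containment at $\mathbf{k}=k$: using $\W^{R'}(\g)\otimes_R \C_k = \W^\ell(\g)$ (Section \ref{section:Miura}) together with the compatibility of $\Miura$ with base change, one obtains $\uppsi_k(\mc{C}_k(\g))\subset \Miura(\W^\ell(\g))$. The map $\uppsi_k$ is already known to be injective, and by Corollary \ref{Co:character-are-the-same} the graded characters $\on{ch}\mc{C}_k(\g)$ and $\on{ch}\Miura(\W^\ell(\g))$ coincide; hence $\uppsi_k$ is a graded bijection onto $\Miura(\W^\ell(\g))$, proving Part (2). Finally, for the surjectivity in Part (1), let $N$ be the graded cokernel of $\uppsi_R\colon\mc{C}_R(\g)\to \Miura(\W^{R'}(\g))$; each component $N_\Delta$ is finitely generated over $R$, and Part (2) gives $N_\Delta\otimes_R\C_k=0$ for every maximal ideal $(\mathbf{k}-k)$ of $R$, so Nakayama's lemma applied grade by grade forces $N=0$.

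The main obstacle is the preliminary naturality statement---in particular verifying carefully that the commutant formation of \eqref{eq:uppsi2T} genuinely commutes with the specializations $R\to F$ and $R\to \C_k$, since commutants of vertex subalgebras do not automatically commute with base change of the ambient algebra. Once this functoriality is rigorously in place, the remainder of the argument is a straightforward combination of Theorem \ref{Th:iso}, the character equality of Corollary \ref{Co:character-are-the-same}, the freeness of $\mc{C}_R(\g)$ (Proposition \ref{Pro:Crg}) and of $\W^{R'}(\g)$, and Nakayama's lemma.
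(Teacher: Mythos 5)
Your proposal is correct and follows essentially the same route as the paper: realize $\uppsi_R$ as the restriction of $\uppsi_F$ from Theorem \ref{Th:iso}, land in $\Miura(\W^{R'}(\g))$ via Lemma \ref{Lem:base-change-for-Miura}, use the character equality of Corollary \ref{Co:character-are-the-same} to get the specialized isomorphism in part (2), and then deduce surjectivity over $R$ from equality at every closed point of $\Spec R$ (the paper phrases this last step directly rather than via Nakayama, but it is the same argument). Your flagged concern about compatibility of the construction with base change is exactly the point the paper treats as implicit.
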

\begin{proof}
Consider the vertex algebra homomorphism
$\uppsi_R:\mc{C}_R(\g)\ra \pi_{R'}$.
By Proposition \ref{Pro:Crg}
and  \eqref{eq:CFg},
 $\mc{C}_R(\g)\subset \mc{C}_F(\g)$,
 and so $\uppsi_R$ is the  restriction of $\uppsi_F$ to $\mc{C}_R(\g)$.
 Hence,
 $$\uppsi_R(\mc{C}_R(\g))\subset \uppsi_F(\mc{C}_F(\g))\cap \pi_{R'}= \Miura(\W^{F'}(\g))\cap \pi_{R'}=\Miura(\W^{R'}(\g))$$
 by Lemma \ref{Lem:base-change-for-Miura}.
Since 
$\mc{C}_k(\g)=\mc{C}_R(\g)\*_R\C_k$
by Proposition \ref{Pro:character-of-coset},
this implies that
\begin{align*}
\uppsi_k(\mc{C}_k(\g))
=\on{Im}(\uppsi_R(\mc{C}_R(\g))\*_R \C_k\ra \pi_{R'+h^{\vee}}\*_R \C_k=\pi_{\ell+h^{\vee}} )\\
\subset \Miura(\W^{R'}(\g))\*_{R'} \C_{\ell}= \Miura(\W^{\ell}(\g)).
\end{align*}
Since $\uppsi_k(\mc{C}_k(\g))$ and $\Miura(\W^{\ell}(\g))$
 have the same character by Corollary \ref{Co:character-are-the-same},
we conclude that 
the natural map
$\uppsi_R(\mc{C}_R(\g))\*_R \C_k\ra \pi_{R'+h^{\vee}}\*_R \C_k$
is injective and  that
$\uppsi_k(\mc{C}_k(\g))=\Miura(\W^{\ell}(\g))$.
As $\uppsi_R(\mc{C}_R(\g))\*_R \C_k
= \Miura(\W^{R'}(\g))\*_{R'} \C_{\ell}$
for all $k\in \on{Specm}{R}$,
we get that 
$\uppsi_R(\mc{C}_R(\g))= \Miura(\W^{R'}(\g))$.
\end{proof}

\begin{Rem}
By the Feigin-Frenkel duality,
Theorem \ref{Th:universalGKO}
implies that 
\begin{align*}
(V_k(\g)\* L_1(\g))^{\g[t]}\cong (V_{-k-2h^{\vee}-1}(\g)\* L_1(\g))^{\g[t]}
\end{align*}
for a generic $k$.
\end{Rem}

\section{Key Observation}
In this section we establish the key fact 
needed 
for the proof of  Theorem \ref{Th:iso}.

Let $T$ be an integral $\C[K]$-domain
with the structure map $\tau:\C[K]\ra T$
such that $\tau(K)+h^{\vee}$ is  invertible.

Recall the operator $S_i(z): \Wak{\mu}{T}\ra \Wak{\mu-\alpha_i}{T}$ defined in Section \ref{section:Wakimoto}.
As $S_i(z)$ is an $\wh{\h}\+L\mf{n}$-module homomorphism,
$S_i(z)\* \on{id}_{\L_1(\nu)}$
defines an $\wh{\h}\+L\mf{n}$-module homomorphism
$\Wak{\mu}{T}\* \L_1(\nu)\ra \Wak{\mu-\alpha_i}{T}\* \L_1(\nu)$.
Therefore,
it induces the
$T$-linear map 
$$[S_i(z)\* \on{id}_{\L_1(\nu)}]:H^{\frac{\infty}{2}+\bullet}(L\mf{n} ,\Wak{\mu}{T}\* \L_1(\nu))\ra
H^{\frac{\infty}{2}+\bullet}(L\mf{n} ,\Wak{\mu-\alpha_i}{T}\* \L_1(\nu)).$$

\begin{Pro}\label{Pro:key}
Let  $\nu\in P_+^1$.
Under the isomorphism
in Proposition \ref{Pro:com-semiinf-Wakimoto},
we have
\begin{align*}
[S_i(z)\* \on{id}_{\L_1(\nu)}]= -1\* {S}_i^W(z):
&\bigoplus_{\lam\in \h_T^*\atop
\lam-\mu-\nu\in Q}\pi_{T+h^{\vee}+1,\lam}\otimes \pi_{T'+h^{\vee},\mu-(\tau'(K)+h^{\vee})\lam}
\\ &\ra 
\bigoplus_{\lam\in \h_T^*\atop
\lam-\mu-\nu\in Q}\pi_{T+h^{\vee}+1,\lam}\otimes \pi_{T'+h^{\vee},\mu-\alpha_i-(\tau'(K)+h^{\vee})\lam}.
\end{align*}
\end{Pro}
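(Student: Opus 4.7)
The plan is to mimic the argument of Lemma \ref{Lem:screning-for-W} (where the auxiliary module is $\C_{\Psi}$) with the auxiliary module replaced by $\L_1(\nu)$. I apply Proposition \ref{Pro:key-iso}(1) with $M=\L_1(\nu)$ to obtain a $T$-linear isomorphism
\[
\Phi:\Wak{\mu}{T}\otimes_{\C} \L_1(\nu)\isomap \Wak{\mu}{T}\otimes_{\C} \L_1(\nu),
\]
intertwining $\Delta(x)$ with the left action $x\otimes 1$ on the first factor and carrying $x^R\otimes 1$ to $x^R\otimes 1 -1\otimes x$. This is the $\Phi$ implementing the identification of Lemma \ref{Lem:decomp1}, and by construction it acts as the identity on the $\pi_{T+h^{\vee},\mu}$-factor of $\Wak{\mu}{T}=M_\g\otimes \pi_{T+h^{\vee},\mu}$.

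Writing $S_i(z)=e_i^R(z)\cdot E_i(z)$ (the two factors commute because they lie in independent subsystems of $\Wak{\mu}{T}$) with $E_i(z):=\,:\!e^{\int -\frac{1}{\tau(K)+h^{\vee}}b_i(z)\,dz}\!:$, the $\Phi$-invariance of $E_i(z)$ together with the twist rule for $e_i^R(z)$ yield, as operators on $\Wak{\mu}{T}\otimes \L_1(\nu)$,
\[
\Phi\circ (S_i(z)\otimes 1)\circ \Phi^{-1}=S_i(z)\otimes 1 \;-\; E_i(z)\otimes e_i(z).
\]
Passing to semi-infinite cohomology and using the isomorphism $[\Phi]$ of Lemma \ref{Lem:decomp1}, the first term on the right is annihilated by the same weight argument as in the proof of Lemma \ref{Lem:screning-for-W}: the $e_i^R(z)$-factor of $S_i(z)$ does not contribute to the induced map on $H^{\frac{\infty}{2}+0}(L\mf{n},\Wak{\mu}{T})\cong \pi_{T+h^{\vee},\mu}$. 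Hence
\[
[\Phi\circ (S_i(z)\otimes 1)\circ \Phi^{-1}]=-\,E_i(z)\otimes e_i(z)
\]
on $\pi_{T+h^{\vee},\mu}\otimes \L_1(\nu)$.

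To identify this with $-1\otimes S_i^W(z)$ in the decomposition of Proposition \ref{Pro:com-semiinf-Wakimoto}, I invoke the Frenkel--Kac realization \eqref{eq:Frenkel-Kac-moules}, under which $e_i(z)=\,:\!e^{\int \alpha_i(z)\,dz}\!:$ on $\L_1(\nu)=\bigoplus_{\beta\in Q} \pi_{1,\nu+\beta}$, so the surviving operator equals $-:\!e^{\int(\alpha_i(z)-\frac{1}{\tau(K)+h^{\vee}}b_i(z))\,dz}\!:$. A direct OPE computation using the change of basis \eqref{eq:commutant-embedding} then shows that the combination $\alpha_i(z)-\tfrac{1}{\tau(K)+h^{\vee}}b_i(z)$ is orthogonal to the diagonal field $b_i(z)+\alpha_i(z)$ generating $\pi_{T+h^{\vee}+1}$, and equals $-\tfrac{1}{\tau'(K)+h^{\vee}}\tilde b_i(z)$, where $\tilde b_i(z)$ is the properly normalized Heisenberg generator of $\pi_{T'+h^{\vee}}$. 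Exponentiating and comparing with Lemma \ref{Lem:screning-for-W} applied to $\W^{T'}(\g)$ then realizes the operator as $-1\otimes S_i^W(z)$; the induced weight shift on the second factor is $\mu\mapsto \mu-\alpha_i$, matching the claim.

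The principal technical obstacle lies in making precise the vanishing of $[S_i(z)\otimes 1]$ in cohomology, dismissed as a ``weight consideration'' in Lemma \ref{Lem:screning-for-W}: one must argue that the chain map induced by $e_i^R(z)$ on the semi-infinite complex of $\Wak{\mu}{T}$ produces no weight-$(-\alpha_i)$, conformal-weight-one operator on the trivial cohomology generators of $H^{\frac{\infty}{2}+\bullet}(L\mf{n},M_\g)\cong \C$. All other steps reduce to routine OPE manipulations in a tensor product of Heisenberg vertex algebras.
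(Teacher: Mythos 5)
Your proposal is correct and follows essentially the same route as the paper: apply the isomorphism $\Phi$ of Proposition \ref{Pro:key-iso} with $M=\L_1(\nu)$, use the twist rule $x^R\*1\mapsto x^R\*1-1\*x$ to split the conjugated screening operator into $S_i(z)\*1$ (killed in cohomology by the same weight argument as in Lemma \ref{Lem:screning-for-W}) plus $-E_i(z)\*e_i(z)$, and then identify the survivor with $-1\*S_i^W(z)$ via the Frenkel--Kac realization and the change of basis \eqref{eq:commutant-embedding}. The paper's proof is exactly this computation, including the identity $\frac{1}{\tau(K)+h^{\vee}}b_i(z)-\alpha_i(z)=\frac{1}{\tau'(K)+h^{\vee}}\bigl(\frac{1}{\tau(K)+h^{\vee}+1}b_i(z)-\frac{\tau(K)+h^{\vee}}{\tau(K)+h^{\vee}+1}\alpha_i(z)\bigr)$ that you describe as an OPE/orthogonality check.
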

\begin{proof}
We have used the map
$\Phi$ in 
Proposition \ref{Pro:key-iso}
to achieve the isomorphism
in Proposition \ref{Pro:com-semiinf-Wakimoto},
and so the factor 
$e_i(z)^R$ of $S_i(z)$ 
becomes
$e_i^R(z)$ minus the action of $e_i(z)$ on $\L_1(\nu)$.
Namely,
\begin{align*}
\Phi\circ S_i(z)\* \on{id}_{\L_1(\nu)}&= (e_i^R(z)\*1 -1\*e_i(z)):e^{\int -\frac{1}{\tau(k)+h^{\vee}}b_i(z)}dz:
\circ \Phi\\
&=S_i(z)\* 1 \circ \Phi-
:e^{\int -\left(\frac{1}{\tau(K)+h^{\vee}}b_i(z)-\alpha_i(z)\right)}dz:\circ \Phi.
\end{align*}
Here,
as before,
$b_i(z)$ and $\alpha_i(z)$ are 
generating fields  of 
$\pi_{T+h^{\vee}}\subset \Wak{0}{T}$ and 
$\pi_1\subset L_1(\g)$
described in 
\eqref{eq:b_i} and \eqref{eq:alpha_i},
respectively,
and we have omitted the tensor product sign.
The first factor  $S_i(z)\* 1$
is the zero map by weight consideration.
The second factor equals to 
$-1\* S_i^W(z)$
since
\begin{align*}
&\frac{1}{\tau(K)+h^{\vee}}b_i(z)-\alpha_i(z)\\&=
\frac{1}{\tau'(K)+h^{\vee}}\left(\frac{1}{\tau(K)+h^{\vee}+1}b_i(z)-
\frac{\tau(K)+h^{\vee}}{\tau(K)+h^{\vee}+1}\alpha_i(z)\right),
\end{align*}
see 
\eqref{eq:commutant-embedding}
and Lemma \ref{Lem:screning-for-W}.
This completes the proof.
\end{proof}

\section{Proof of Theorem \ref{Th:iso}}
\label{Section:proof-of-generic-iso}
Let $T$ be the field $F$,
or $\C$ with the structure map $\tau(K)=k \ne \Q$,
so that $\on{KL}_k$ is semisimple.
We will prove that 
$\uppsi_T$ gives the isomorphism
\begin{align}
\uppsi_T:\mc{C}_T(\g)\overset{\sim}{\ra} \Miura(\W^{T'}(\g))\subset  \pi_{T'+h^{\vee}}.
\end{align}
Here $\uppsi_T=\uppsi_{T,2}\circ \uppsi_{T,1}$, where $\uppsi_{T,1}$ and $\uppsi_{T,2}$ are given by  \eqref{eq:uppsi1T} and \eqref{eq:uppsi2T}.

First, let us describe the map 
\begin{align*}
\uppsi_{1,T}:\mc{C}_T(\g)\ra 
\on{Com}(\pi_{T+h^{\vee}+1}, H^{\frac{\infty}{2}+0}(L\mf{n}, V_T(\g)\* L_1(\g))).
\end{align*}
As $\on{KL}_T$ is semisimple,
we have
\begin{align*}
V_T(\g)\* L_1(\g)\cong \bigoplus_{\lam\in P_+}\V_{T+1}(\lam)\* \mc{B}_{\lam},
\quad \mc{B}_{\lam}= \on{Hom}_{\affg}(\V_{T+1}(\lam),V_T(\g)\* L_1(\g)).
\end{align*}
It follows that
\begin{align*}
H^{\frac{\infty}{2}+0}(L\mf{n},V_T(\g)\* L_1(\g))\cong \bigoplus_{\lam\in P_+}H^{\frac{\infty}{2}+0}(L\mf{n},\V_{T+1}(\lam))
\* \mc{B}_{\lam}=\bigoplus_{\lam\in P_+}\pi_{T+h^{\vee}+1,\lam}\* \mc{B}_{\lam}.
\end{align*}
Hence,
$\on{Com}(\pi_{T+h^{\vee}+1}, H^{\frac{\infty}{2}+0}(L\mf{n},V_T(\g)\* L_1(\g)))= \mc{B}_0$.
Therefore,
$$\uppsi_{1,T}:\mc{C}_T(\g) \isomap \mc{B}_0 .$$

Next,
by applying the exact functor
$?\* L_1(\g)$
to the resolution of $V_T(\g)$ described in Proposition \ref{Pro:resolution-for-generic-level},
we obtain the resolution
\begin{align}
&0\ra V_T(\g)\* L_1(\g)\ra C_0'\overset{d_0\* 1}{\ra} C_1'\ra\dots \ra C_n'\ra 0,\label{eq:second-resol}\\
& C_i'=\bigoplus_{w\in W\atop \ell(w)=i}\Wak{w\circ 0}{T}\* L_1(\g) \nonumber
\end{align}
of $V_k(\g)\* L_1(\g)$.
As 
each $\Wak{w\circ 0}{T}\* L_1(\g)$ is acyclic with respect to
$H^{\frac{\infty}{2}+i}(L\mf{n} ,?)$,
we can compute the cohomology
$H^{\frac{\infty}{2}+\bullet}(L\mf{n},V_T(\g)\* L_1(\g))$ using the resolution \eqref{eq:second-resol}.
That is,
$H^{\frac{\infty}{2}+\bullet}(L\mf{n},V_T(\g)\* L_1(\g))$
is
the cohomology of the induced complex
\begin{align*}
&0\ra  H^{\frac{\infty}{2}+0}(L\mf{n},C_0')\overset{}{\ra} H^{\frac{\infty}{2}+0}(L\mf{n},C_1')\ra\dots \ra H^{\frac{\infty}{2}+0}(L\mf{n},C_n')\ra 0,\
\end{align*}
of $H^{\frac{\infty}{2}+0}(L\mf{n},\Wak{0}{T}\* L_1(\g))$-modules.
It follows that we have the exact sequence
\begin{align*}
0\ra H^{\frac{\infty}{2}+0}(L\mf{n},V_T(\g)\* L_1(\g))\overset{\uppsi_{2,T}}{\ra}
H^{\frac{\infty}{2}+0}(L\mf{n},\Wak{0}{T}\* \L_1(\nu))\\
\overset{[d_0\* 1]}{\ra} \bigoplus_{i} H^{\frac{\infty}{2}+0}(L\mf{n},\Wak{-\alpha_i}{T}\* \L_1(\nu)),
\end{align*}
where $[d_0\* 1]$ denotes  the map
induced by $d_0\* 1$.
Hence,
$\uppsi_{T}(\mc{C}_T(\g))$ is the kernel of the map $[d_0\* 1]$.
It follows from
  Proposition \ref{Pro:key}
that
\begin{align*}
\uppsi_{T}(\mc{C}_T(\g))=\bigcap_i\on{Ker} (\int S_i^W(z) dz:
\pi_{T'+h^{\vee}}
\ra  \pi_{T'+h^{\vee},-\alpha_i})=\Miura(\W^{T'}(\g))
\end{align*}
as required.
\qed.

\section{Generic decomposition}
Having established Theorem \ref{Th:universalGKO},
 we set
$T=\C$ with the structure map $\tau(K)=k$
for the rest of the paper.
In this section
 we assume further  that $k\not\in \mathbb{Q}$,
so that 
$\on{KL}_k$ is semisimple
and $\V_k(\lam)=\L_k(\lam)$ for all $\mu\in P_+$.
Also,
$\W^{\ell}(\g)=\W_{\ell}(\g)$ by Theorem \ref{Th:+-and-},
where $\ell$ is as in \eqref{eq:ellintro}.

For $\mu\in P_+$ and $\nu\in P_+^1$,
we have
\begin{align*}
\V_k(\mu)\* \L_1(\nu)=\bigoplus_{\lam\in P_+}\V_{k+1}(\lam)\* \mc{B}_{\lam}^{\mu, \nu},
\quad \mc{B}_{\lam}^{\mu, \nu}=
\on{Hom}_{\affg}(\V_{k+1}(\lam),\V_k(\mu)\* \L_1(\nu)).
\end{align*}
According to  Theorem \ref{Th:universalGKO},
each $\mc{B}_{\lam}^{\mu, \nu}$ is a $\W^{\ell}(\g)$-module.
\begin{Th}\label{thm:genericdecomp}
Suppose that $k\not\in \mathbb{Q}$.
For $\lam,\mu\in P_+$ and $\nu\in P_+^1$,
we have
$$\mc{B}_{\lam}^{\mu, \nu}\cong
\begin{cases}  \mathbf{L}_\ell(\chi_{\mu-(\ell+h^{\vee})\lam})&\text{if }\lam-\mu-\nu\in Q,\\
0&\text{otherwise,}
\end{cases}$$
as  $\W^{\ell}(\g)$-modules.
\end{Th}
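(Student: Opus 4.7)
The plan is to compute the semi-infinite cohomology space $H^{\frac{\infty}{2}+0}(L\mf{n},\V_k(\mu)\otimes\L_1(\nu))$, taken with respect to the diagonal $L\mf{n}$-action, in two different ways and to compare the outcomes. Since $k\notin\Q$, the category $\on{KL}_{k+1}$ is semisimple with $\V_{k+1}(\lam)=\L_{k+1}(\lam)$, so there is a decomposition
$$\V_k(\mu)\otimes\L_1(\nu)\cong \bigoplus_{\lam\in P_+}\V_{k+1}(\lam)\otimes \mc{B}_\lam^{\mu,\nu}$$
as $V_{k+1}(\g)$-modules, and each $\mc{B}_\lam^{\mu,\nu}$ inherits a $\W^\ell(\g)$-module structure via the identification $\mc{C}_k(\g)\cong\W^\ell(\g)$ of Theorem \ref{Th:universalGKO}. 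Applying $H^{\frac{\infty}{2}+0}(L\mf{n},?)$ to this decomposition and using Corollary \ref{Co:generic-BRST-cohomology} (with coweight zero) to evaluate $H^{\frac{\infty}{2}+0}(L\mf{n},\V_{k+1}(\lam))\cong\pi_{k+h^\vee+1,\lam}$ for $\lam\in P_+$ yields
$$H^{\frac{\infty}{2}+0}(L\mf{n},\V_k(\mu)\otimes\L_1(\nu))\cong \bigoplus_{\lam\in P_+}\pi_{k+h^\vee+1,\lam}\otimes \mc{B}_\lam^{\mu,\nu}.$$

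The second computation starts from the Wakimoto resolution of $\V_k(\mu)=\L_k(\mu)$ furnished by Proposition \ref{Pro:resolution-for-generic-level-more-general} (with coweight zero), tensored with $\L_1(\nu)$. Since each $\Wak{w\circ\mu}{k}\otimes\L_1(\nu)$ has vanishing higher semi-infinite $L\mf{n}$-cohomology by Lemma \ref{Lem:decomp1}, the total $H^{\frac{\infty}{2}+0}$ coincides with the $H^0$ of the induced complex. By Proposition \ref{Pro:com-semiinf-Wakimoto}, each of its terms decomposes as $\bigoplus_{\lam:\lam-\mu-\nu\in Q}\pi_{k+h^\vee+1,\lam}\otimes\pi_{\ell+h^\vee,w\circ\mu-(\ell+h^\vee)\lam}$, and Proposition \ref{Pro:key}, iterated to express the composite screenings $S_i(n_i,\Gamma_i)$ with $n_i=\langle\mu+\rho,\alpha_i^\vee\rangle$, shows the differentials act via $\pm S_i^W(n_i,\Gamma_i)$ on the $\pi_{\ell+h^\vee}$-factor alone and trivially on $\pi_{k+h^\vee+1,\lam}$. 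Consequently, for fixed $\lam$ with $\lam-\mu-\nu\in Q$, the $\pi_{k+h^\vee+1,\lam}$-isotypic component of the total cohomology equals $\pi_{k+h^\vee+1,\lam}\otimes H^0(\mc{K}_\lam)$, where $\mc{K}_\lam$ is the W-side screening complex
$$\bigoplus_{\ell(w)=0}\pi_{\ell+h^\vee,w\circ\mu-(\ell+h^\vee)\lam}\ra \bigoplus_{\ell(w)=1}\pi_{\ell+h^\vee,w\circ\mu-(\ell+h^\vee)\lam}\ra\cdots$$
with differential $\sum_i c_i\,S_i^W(n_i,\Gamma_i)$.

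For $\lam\in P_+$, the simply-laced assumption gives $\lam\in P_+^\vee$; taking $\lam_W=\mu$ and $\mu_W=\lam$ in Proposition \ref{Pro:resolution-for-generic-level-more-general} and applying $H_{DS}^0$ realizes $\mc{K}_\lam$ as the $H_{DS}^0$-image of the Wakimoto resolution of $\L_\ell(\mu-(\ell+h^\vee)\lam)$, so Theorem \ref{Th:+-and-} (equivalently, the argument of Proposition \ref{Pro:W-gen-Weyl}) identifies $H^0(\mc{K}_\lam)\cong \mathbf{L}_\ell(\chi_{\mu-(\ell+h^\vee)\lam})$. Matching the two computations term-by-term in the $\pi_{k+h^\vee+1,\lam}$-isotypic decomposition then yields $\mc{B}_\lam^{\mu,\nu}\cong \mathbf{L}_\ell(\chi_{\mu-(\ell+h^\vee)\lam})$ for $\lam\in P_+$ with $\lam-\mu-\nu\in Q$, and $\mc{B}_\lam^{\mu,\nu}=0$ otherwise.

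The principal obstacle will be verifying that the $\W^\ell(\g)$-module structure on $H^0(\mc{K}_\lam)$ inherited from the coset identification $\mc{C}_k(\g)\cong\W^\ell(\g)$ agrees with the intrinsic structure on $\mathbf{L}_\ell(\chi_{\mu-(\ell+h^\vee)\lam})$ arising from the Miura embedding; this should follow by tracking cohomology classes along the same chain of maps used in the proof of Theorem \ref{Th:iso}. A secondary bookkeeping point is that the resolution-side computation naively produces $\pi_{k+h^\vee+1,\lam}$-components for every $\lam\in\mu+\nu+Q$, so one must argue that $H^0(\mc{K}_\lam)=0$ whenever $\lam\notin P_+$, which is compatible with the fact that Proposition \ref{Pro:resolution-for-generic-level-more-general} requires its coweight argument $\mu_W$ to lie in $P_+^\vee$.
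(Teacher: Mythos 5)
Your proposal is correct and follows essentially the same route as the paper's proof: computing $H^{\frac{\infty}{2}+0}(L\mf{n},\V_k(\mu)\* \L_1(\nu))$ both via the semisimple decomposition together with Corollary \ref{Co:generic-BRST-cohomology} and via the Wakimoto resolution of Proposition \ref{Pro:resolution-for-generic-level-more-general} tensored with $\L_1(\nu)$, using Propositions \ref{Pro:com-semiinf-Wakimoto} and \ref{Pro:key}, and then identifying the intersection of kernels of the $W$-screenings with $\mathbf{L}_\ell(\chi_{\mu-(\ell+h^{\vee})\lam})$, which is exactly Proposition \ref{Pro:W-gen-Weyl}. The two concerns you flag are non-issues: the module-structure compatibility is built into Theorem \ref{Th:universalGKO}, since the coset action is given by $\uppsi_k$ whose image in $\pi_{\ell+h^{\vee}}$ is precisely $\Miura(\W^{\ell}(\g))$, and the vanishing of the multiplicity spaces for $\lam\notin P_+$ follows automatically by matching the $\pi_{k+h^{\vee}+1,\lam}$-isotypic components of the two computations rather than requiring a separate argument.
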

\begin{proof}
The proof is similar to that 
of Theorem \ref{Th:iso}.
By Corollary \ref{Co:generic-BRST-cohomology},
we have
\begin{align*}
H^{\frac{\infty}{2}+i}(L\mf{n} ,\V_k(\mu)\* \L_1(\nu))
= \bigoplus_{\lam\in P_+}H^{\frac{\infty}{2}+i}(L\mf{n} ,\V_{k+1}(\lam))\* \mc{B}_{\lam}^{\mu, \nu}
\\\cong \bigoplus_{\lam\in P_+}\left(\bigoplus_{w\in W\atop
\ell(w)=i}\pi_{k+h^{\vee}+1,w\circ \lam}\right)\* \mc{B}_{\lam}^{\mu, \nu}.
\end{align*}
Thus,
\begin{align*}
\mc{B}_{\lam}^{\mu, \nu}\cong \Hom_{\pi_{k+h^{\vee}+1}}(\pi_{k+h^{\vee}+1,\lam},H^{\frac{\infty}{2}+0}(L\mf{n} ,\V_k(\mu)\* \L_1(\nu)))
\end{align*}
as modules  over $\W^{\ell}(\g)=\mc{C}_k(\g)$.
On the other hand,
by applying the exact functor
$?\* L_1(\g)$
to the resolution of $\V_k(\mu)=\L_k(\mu)$ described in Proposition \ref{Pro:resolution-for-generic-level-more-general},
we obtain the resolution
\begin{align}
&0\ra \V_k(\mu)\* \L_1(\nu)\ra C_0'\overset{d_0\* 1}{\ra} C_1'\ra\dots \ra C_n'\ra 0,\label{eq:second-resol}\\
& C_i'=\bigoplus_{w\in W\atop \ell(w)=i}\Wak{w\circ \mu}{k}\* \L_1(\nu)\nonumber
\end{align}
of $\V_k(\mu)\* \L_1(\nu)$.
So
we can compute
$H^{\frac{\infty}{2}+\bullet}(L\mf{n},\V_k(\mu)\* \L_1(\nu))$ using the resolution \eqref{eq:second-resol},
that is,
$H^{\frac{\infty}{2}+\bullet}(L\mf{n},\V_k(\mu)\* \L_1(\nu))$
is
the cohomology of the induced complex
\begin{align*}
&0\ra  H^{\frac{\infty}{2}+0}(L\mf{n},C_0')\overset{}{\ra} H^{\frac{\infty}{2}+0}(L\mf{n},C_1')\ra\dots \ra H^{\frac{\infty}{2}+0}(L\mf{n},C_n')\ra 0
\end{align*}
of $H^{\frac{\infty}{2}+0}(L\mf{n},\Wak{0}{k}\* L_1(\g))$-modules.
In particular,
$H^{\frac{\infty}{2}+0}(L\mf{n},\V_k(\mu)\* \L_1(\nu))$
is isomorphic to the kernel of the
map
\begin{align}
 H^{\frac{\infty}{2}+0}(L\mf{n},\Wak{\mu}{k}\* \L_1(\nu))\ra \bigoplus_{i=1}^{\on{rank}\g} H^{\frac{\infty}{2}+0}(L\mf{n},\Wak{\mu-n_i\alpha_i}{k}\* \L_1(\nu))
 \label{eq:the map}
\end{align}
induced by $d_0\* 1$,
where $n_i=\bra \mu+\rho,\alpha_i^{\vee}\ket$.
Therefore from
Propositions \ref{Pro:com-semiinf-Wakimoto} and  \ref{Pro:key}
one finds that for $\lam\in P_+$
\begin{align*}
\mc{B}^{\mu,\nu}_{\lam}=\bigcap_{i=1}^{\on{rank}\g}\on{Ker} (S^W(n_i,\Gamma_i):
\pi^{\ell+h^{\vee}}_{\mu-(\ell+h^{\vee})\lam}
{\longrightarrow} \pi^{\ell+h^{\vee}}_{\mu-(\ell+h^{\vee})\lam-n_i\alpha_i})
\end{align*}
if $\lam-\mu-\nu\in Q$,
and $\mc{B}^{\mu,\nu}_{\lam}=0$ otherwise.
We are done by Proposition \ref{Pro:W-gen-Weyl}.
\end{proof}

\begin{Co}
Suppose that $k\not \in \Q$.
Then $V_{k+1}(\g)$ and $\W^{\ell}(\g)$ form a dual pair in 
$V_k(\g)\* L_1(\g)$.
\end{Co}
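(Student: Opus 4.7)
The inclusion $V_{k+1}(\g)\subseteq \on{Com}(\W^{\ell}(\g),V_k(\g)\otimes L_1(\g))$ is immediate: by Theorem \ref{Th:universalGKO}, $\W^{\ell}(\g)=\on{Com}(V_{k+1}(\g),V_k(\g)\otimes L_1(\g))$, so the two subalgebras commute and $V_{k+1}(\g)$ lies in the commutant of $\W^{\ell}(\g)$. The entire task is to establish the reverse inclusion.

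My plan is to identify the commutant with a Hom-space and then read off its size from the generic decomposition. I will use the general fact that for any vertex subalgebra $W\subset V$, the assignment $\phi\mapsto \phi(\mathbf{1})$ is a bijection
\[\on{Hom}_{W\text{-mod}}(W,V)\isomap \on{Com}(W,V),\]
with inverse $v\mapsto (u\mapsto u_{(-1)}v)$; both sides consist of vectors $v\in V$ with $w_{(n)}v=0$ for all $n\ge 0$ and $w\in W$. Applying this with $W=\W^{\ell}(\g)$ and substituting the decomposition of Theorem \ref{thm:genericdecomp} with $\mu=\nu=0$,
\[V_k(\g)\otimes L_1(\g)\cong \bigoplus_{\lam\in P_+\cap Q}\V_{k+1}(\lam)\otimes \mathbf{L}_{\ell}(\chi_{-(\ell+h^{\vee})\lam})\]
as $V_{k+1}(\g)\otimes \W^{\ell}(\g)$-modules, and using that $\W^{\ell}(\g)$ acts only through the second tensor factor, I obtain
\[\on{Com}(\W^{\ell}(\g),V_k(\g)\otimes L_1(\g))\cong \bigoplus_{\lam\in P_+\cap Q}\V_{k+1}(\lam)\otimes \on{Hom}_{\W^{\ell}(\g)}\bigl(\W^{\ell}(\g),\mathbf{L}_{\ell}(\chi_{-(\ell+h^{\vee})\lam})\bigr).\]

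Because $k\not\in\Q$, the formula $\ell+h^{\vee}=(k+h^{\vee})/(k+h^{\vee}+1)$ yields an irrational number; in particular $\ell$ is generic, $\W^{\ell}(\g)$ is simple, and applying Theorem \ref{Th:+-and-} to the vacuum $V_{\ell}(\g)=\L_{\ell}(0)$ identifies its underlying $\W^{\ell}(\g)$-module structure with the simple highest-weight module $\mathbf{L}_{\ell}(\chi_0)$. Schur's lemma then collapses the sum to those $\lam$ for which $\chi_{-(\ell+h^{\vee})\lam}=\chi_0$ (the Hom being $\C$ in that case and zero otherwise). The main step --- and the only substantive one --- is to show that this forces $\lam=0$. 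By the Harish-Chandra description of Proposition \ref{Pro:Harish-Chandra} together with $\rho=\rho^{\vee}$ in the simply-laced case, the condition $\chi_{-(\ell+h^{\vee})\lam}=\chi_0$ rearranges to $\lam=\tfrac{(\ell+h^{\vee})-1}{\ell+h^{\vee}}(w\rho-\rho)$ for some $w\in W$. Here $w\rho-\rho$ is a nonzero element of $Q$ when $w\ne 1$ (since $\rho$ is regular), whereas the scalar $\tfrac{(\ell+h^{\vee})-1}{\ell+h^{\vee}}=1-1/(\ell+h^{\vee})$ is irrational, contradicting $\lam\in P$. Hence only $w=1$ and $\lam=0$ survive, the commutant reduces to $\V_{k+1}(0)\otimes\C=V_{k+1}(\g)$, and the proof is complete.
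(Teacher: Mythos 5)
Your proof is correct and follows the same route the paper intends: the corollary is stated as an immediate consequence of Theorem \ref{thm:genericdecomp} with $\mu=\nu=0$, and your argument simply makes the implicit details explicit (identifying $\on{Com}(\W^{\ell}(\g),V_k(\g)\otimes L_1(\g))$ with the multiplicity spaces of vacuum-like vectors, using $\W^{\ell}(\g)=\W_{\ell}(\g)\cong \mathbf{L}_{\ell}(\chi_0)$ at irrational level, and ruling out $\chi_{-(\ell+h^{\vee})\lam}=\chi_0$ for $\lam\neq 0$ by the Harish-Chandra description and irrationality of $-1/(k+h^{\vee})$). No gaps; this matches the paper's approach.
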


\section{Coset construction of minimal series $W$-algebras}
Recall that
the level $k$ is called   an {\em admissible number} for $\affg$
if $L_k(\g)$ is an admissible representation \cite{KacWak89}.
Since we have assumed that $\g$ is
 simply laced,
this condition is equivalent to that
\begin{align}
k+h^{\vee}=\frac{p}{q},\quad p,q\in \Z_{\geq 1},\
(p,q)=1, \ p\geq h^{\vee}.
\label{eq:k-p-q}
\end{align}
An admissible number $k$ is called {\em non-degenerate} (\cite{FKW92}, see also \cite{Ara09b})
if the associated variety (\cite{Ara12}) of $L_k(\g)$ is
exactly the nilpotent cone of $\g$,
which is equivariant to the condition $q\geq h^{\vee}$ for a simply laced $\g$.
The main result of \cite{A2012Dec}
states that 
$\W_k(\g)$ is is rational (and lisse \cite{Ara09b}) if $k$ is an non-degenerate admissible number.
The rational $W$-algebra $\W_{p/q-h^{\vee}}(\g)$ 
for $p,q\in \Z_{\geq 1}$,
$(p,q)=1$,  $p,q \geq h^{\vee}$
is called the {\em $(p,q)$-minimal series principal $W$-algebra} and 
 denoted also by $\W_{p,q}(\g)$.
 Note that $\W_{p,q}(\g)\cong \W_{q,p}(\g)$ 
 by the Feigin-Frenkel duality Theorem \ref{Th:FFduality}.

If $k$ is an admissible number,
then
the level $\ell$,
defined by \eqref{eq:ellintro},
 is given by the formula
\begin{align}
\ell+h^{\vee}=\frac{p}{p+q},
\label{eq:ell-p-q}
\end{align}
so that  $\ell$ is a  non-degenerate admissible number 
and $\W_{\ell}(\g)$ is the $(p+q,q)$-minimal series principal $W$-algebra $\W_{p,p+q}(\g)$.

Let $k$ be an admissible level,
\begin{align*}
Pr_k=\{\lam\in \h^*\mid \hat{\lam}:=\lam+k\Lam_0\text{ is a principal admissible weight of }\affg\}. 
\end{align*}
Here a weight $\hat{\lam}$  of $\affg$ is called principal admissible if 
$\hat{\Delta}(\hat{\lam})\cong \hat{\Delta}_{re}$ as root systems (\cite{KacWak89}).
By \cite{A12-2},
any $L_k(\g)$-module that belongs to $\mc{O}_k$ is completely reducible
and
$\{\L_k(\lam)\mid \lam\in Pr_k\}$ is exactly the set of isomorphism classes  of simple $L_k(\g)$-modules
that belong to in $\mc{O}_k$.

For $k$ an admissible level,
$\mu\in Pr_k$,
$\nu\in P_1^+$.
By  \cite{KacWak90},
we have
\begin{align}
\L_k(\mu)\otimes \L_1(\nu)\cong \bigoplus_{\lam\in Pr^{k+1}
\atop \lam-\mu-\nu\in Q}\L_{k+1}(\lam)\otimes 
\mc{K}_{\mu,\nu}^{\lam},
\label{eq:KW-dec}
\end{align}
where
$$\mc{K}_{\mu,\nu}^{\lam}:=\Hom_{\affg}(\L_{k+1}(\lam),\L_k(\mu)\otimes \L_1(\nu))=
\Hom_{\affg}(\V_{k+1}(\lam),\L_k(\mu)\otimes \L_1(\nu)).$$
Note that
$\mc{K}_{0,0}^0=\on{Com}(L_{k+1}(\g), L_k(\g)\*L_1(\g))=(L_k(\g)\*L_1(\g))^{\g[t]}$.
The
character of each $\mc{K}_{\mu,\nu}^{\lam}$ has been computed in \cite{KacWak90}.
\begin{Th}\label{Th:minimal-sereis}
For an admissible level $k$,
we have
$$\on{Com}(L_{k+1}(\g), L_k(\g)\*L_1(\g))\cong \W_{\ell}(\g)$$ as vertex algebras.
In particular, $\on{Com}(L_{k+1}(\g), L_k(\g)\* L_1(\g))$ is simple,
rational and lisse.
\end{Th}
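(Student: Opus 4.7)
My plan is to leverage the universal result (Theorem \ref{Th:universalGKO}) to construct a canonical vertex algebra homomorphism from $\W^{\ell}(\g)$ to the simple coset, and then to pin down its kernel by a character count, using that $\W^{\ell}(\g)$ has a unique maximal graded proper ideal.

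First I would define the map. Since the admissible condition forces $k+h^{\vee}=p/q>0$, Theorem \ref{Th:universalGKO}(2) applies and gives an isomorphism $\W^{\ell}(\g)\cong\mc{C}_k(\g)=(V_k(\g)\*L_1(\g))^{\g[t]}$. The quotient $V_k(\g)\twoheadrightarrow L_k(\g)$ is a $\g[t]$-equivariant vertex algebra homomorphism, so it induces a vertex algebra homomorphism
\begin{align*}
\phi\colon \W^{\ell}(\g)\;\longrightarrow\;(L_k(\g)\*L_1(\g))^{\g[t]}\;=\;\mc{K}_{0,0}^{0}.
\end{align*}
Because $\phi(\mathbf{1})=\mathbf{1}$, $\ker\phi$ is a proper graded ideal of $\W^{\ell}(\g)$. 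But $\W_{\ell}(\g)$ is by definition the unique simple graded quotient of $\W^{\ell}(\g)$, so $\W^{\ell}(\g)$ has a unique maximal graded proper ideal $J_{\max}$, and hence $\ker\phi\subseteq J_{\max}$. In particular $\phi(\W^{\ell}(\g))$ admits a surjection onto $\W_{\ell}(\g)$, giving $\on{ch}\,\phi(\W^{\ell}(\g))\geq \on{ch}\,\W_{\ell}(\g)$.

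Next I would match characters. Specializing the Kac–Wakimoto branching formula \cite{KacWak90} to $\mu=\nu=0$ in \eqref{eq:KW-dec} computes $\on{ch}\,\mc{K}_{0,0}^{0}$ explicitly. On the other side, Arakawa's proof \cite{Ara07} of the Frenkel–Kac–Wakimoto character conjecture supplies $\on{ch}\,\W_{\ell}(\g)$, and the two expressions coincide — this is precisely the classical piece of evidence for the whole conjecture recalled in the introduction. Hence $\on{ch}\,\mc{K}_{0,0}^{0}=\on{ch}\,\W_{\ell}(\g)$. Since $\phi(\W^{\ell}(\g))\subseteq \mc{K}_{0,0}^{0}$, combining with the previous inequality forces $\on{ch}\,\phi(\W^{\ell}(\g))=\on{ch}\,\mc{K}_{0,0}^{0}$, so $\phi$ is surjective and $\ker\phi=J_{\max}$. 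The induced isomorphism $\W_{\ell}(\g)\isomap \mc{K}_{0,0}^{0}$ is the claim. Rationality, lisseness and simplicity of the coset then come for free: $\ell$ is a non-degenerate admissible level, so $\W_{\ell}(\g)$ has these properties by \cite{Ara09b,A2012Dec}.

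The main obstacle is, so to speak, already behind us: the heavy lifting is done by Theorem \ref{Th:universalGKO} on one side and by the character formulas of \cite{KacWak90,Ara07} on the other. Once those are in hand the argument collapses to the short character sandwich above, organized around the fact that $\W^{\ell}(\g)$ has a unique simple graded quotient, so that any nonzero vertex algebra homomorphism out of it automatically factors through $\W_{\ell}(\g)$.
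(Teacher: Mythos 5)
Your proof is correct and takes essentially the same route as the paper: invoke Theorem \ref{Th:universalGKO}, map $\W^{\ell}(\g)\cong\mc{C}_k(\g)$ into the coset via the quotient $V_k(\g)\twoheadrightarrow L_k(\g)$, and conclude from the Kac--Wakimoto/FKW character match together with the uniqueness of the simple graded quotient of $\W^{\ell}(\g)$. The only minor difference is organizational: the paper establishes surjectivity onto $\on{Com}(L_{k+1}(\g),L_k(\g)\*L_1(\g))$ directly, using projectivity of $V_{k+1}(\g)$ (Lemma \ref{lem:projective}) and the decomposition \eqref{eq:KW-dec}, whereas you deduce it from the character sandwich; both are valid.
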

\begin{proof}
By \cite{CL},
$\on{Com}(L_{k+1}(\g), L_k(\g)\*L_1(\g))$ is a quotient vertex algebra of $C_k(\g)$.
Indeed,
since $V_{k+1}(\g)$ is projective in $\on{KL}_{k+1}$ by Lemma \ref{lem:projective},
the surjection $$V_k(\g)\*L_1(\g)\ra L_k(\g)\*L_1(\g)$$ induces the surjection
\begin{align*}
 C_k(\g)=&\Hom(V_{k+1}(\g), V_k(\g)\*L_1(\g))\ra \Hom(V_{k+1}(\g), L_k(\g)\*L_1(\g)).
\end{align*}
On the other hand,
 \eqref{eq:KW-dec} implies that 
\begin{align*}
\Hom(V_{k+1}(\g), L_k(\g)\*L_1(\g))= 
\Hom(L_{k+1}(\g), L_k(\g)\*L_1(\g))\\=\on{Com}(L_{k+1}(\g), L_k(\g)\*L_1(\g)).
\end{align*}
Thus
$\on{Com}(L_{k+1}(\g), L_k(\g)\*L_1(\g))$ is a quotient 
of $\W^{\ell}(\g)$
by Theorem \ref{Th:universalGKO}.
We are done 
 since 
 it has been already shown in \cite{KacWak90}
 that
 the character of the coset
 $\on{Com}(L_{k+1}(\g), L_k(\g)\*L_1(\g))$ coincides with  that of 
 $\W_{\ell}(\g)$.
\end{proof}

\begin{Rem}
By Theorem \ref{Th:FFduality},
Theorem \ref{Th:minimal-sereis}
realizes {\em all} the minimal series principal $W$-algebras for $ADE$ types.
\end{Rem}

We continue to assume that 
$k$ and $\ell$ are admissible levels defined by
\eqref{eq:k-p-q}  and \eqref{eq:ell-p-q}.
Recall that 
\begin{align*}
\{\mathbf{L}_{\ell}(\chi_{\mu-(\ell+h^{\vee})(\lam+\rho^{\vee})})\mid \lam\in P_+^{p+q-h^{\vee}},\ \mu\in P_+^{p-h^{\vee}}\}
\end{align*}
gives 
the complete set of isomorphism classes of simple $\W_{\ell}(\g)$-modules (\cite{FKW92,A2012Dec}).
Observe 
that 
\begin{align*}
P_+^{p-h^{\vee}}=Pr_{k}\cap P,\quad P_+^{p+q-h^{\vee}}=Pr_{k+1}\cap P.
\end{align*}
Note that $Pr_{k}\cap P=Pr_k=P_+^k$
if $k\in \Z_{\geq 1}$.
\begin{Th}\label{Th:minimal-series-modules}
Let $\mu\in P_+^{p-h^{\vee}}\subset Pr_k$,
$\nu\in P_+^1$.
We have
\begin{align*}
\L_k(\mu)\otimes \L_1(\nu)\cong \bigoplus_{\lam\in P^{p+q-h^{\vee}}_+\atop \lam-\mu-\nu\in Q}
\L_{k+1}(\lam)\otimes  \mathbf{L}_{\ell}(\chi_{\mu-(\ell+h^{\vee})\lam})
\end{align*}
as $L_{k+1}(\g)\* \W_{\ell}(\g)$-modules.
\end{Th}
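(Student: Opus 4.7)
The plan is to combine the Kac-Wakimoto branching rule \eqref{eq:KW-dec} with the coset identification of Theorem \ref{Th:minimal-sereis} and the known character formulas for simple $\W_{\ell}(\g)$-modules. Starting from
\begin{align*}
\L_k(\mu)\otimes \L_1(\nu)\cong \bigoplus_{\lam\in Pr^{k+1},\,\lam-\mu-\nu\in Q}\L_{k+1}(\lam)\otimes \mc{K}_{\mu,\nu}^{\lam},
\end{align*}
I first observe that each multiplicity space $\mc{K}_{\mu,\nu}^{\lam}$ is, by construction, a module over the coset $\on{Com}(L_{k+1}(\g),L_k(\g)\otimes L_1(\g))$, which by Theorem \ref{Th:minimal-sereis} is isomorphic to $\W_{\ell}(\g)$. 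Moreover, admissibility of $k$ ensures that whenever $\mc{K}_{\mu,\nu}^{\lam}\neq 0$ one has $\lam\in Pr_{k+1}\cap P= P_+^{p+q-h^{\vee}}$, matching the index set in the claimed decomposition.

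Next I would invoke the rationality and lisseness of $\W_{\ell}(\g)$ established in \cite{Ara09b,A2012Dec}, which implies that every $\mc{K}_{\mu,\nu}^{\lam}$ decomposes as a finite direct sum of simple $\W_{\ell}(\g)$-modules, and the classification from \cite{FKW92,A2012Dec} lists these simples as $\mathbf{L}_{\ell}(\chi_{\mu'-(\ell+h^{\vee})\lam'})$ with $\mu'\in P_+^{p-h^{\vee}}$, $\lam'\in P_+^{p+q-h^{\vee}}$, whose graded characters are given by the Frenkel-Kac-Wakimoto formula proved in \cite{Ara07}. On the other hand, Kac-Wakimoto \cite{KacWak90} have explicitly computed the characters of the branching multiplicity spaces $\mc{K}_{\mu,\nu}^{\lam}$. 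The comparison of these two character formulas — the historic motivation for the conjecture — yields $\on{ch}\mc{K}_{\mu,\nu}^{\lam}=\on{ch}\mathbf{L}_{\ell}(\chi_{\mu-(\ell+h^{\vee})\lam})$. Since graded characters of pairwise non-isomorphic simple $\W_{\ell}(\g)$-modules are linearly independent, this character equality together with semisimplicity forces the desired isomorphism $\mc{K}_{\mu,\nu}^{\lam}\cong \mathbf{L}_{\ell}(\chi_{\mu-(\ell+h^{\vee})\lam})$.

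The main delicate point is to confirm the precise label $\chi_{\mu-(\ell+h^{\vee})\lam}$, rather than merely the isomorphism class of $\mc{K}_{\mu,\nu}^{\lam}$ as a graded vector space: one must verify that the central (Zhu-algebra) character acting on the top component of $\mc{K}_{\mu,\nu}^{\lam}$ is indeed $\chi_{\mu-(\ell+h^{\vee})\lam}$ and not another $W$-orbit representative. This is encoded in the Heisenberg weight carried by the top of $\mc{K}_{\mu,\nu}^{\lam}$ under the Miura embedding $\Miura:\W^{\ell}(\g)\hookrightarrow \pi_{\ell+h^{\vee}}$, which can be extracted from the diagonal inclusion $\pi_{T+h^{\vee}+1}\subset \pi_{T+h^{\vee}}\otimes \pi_1$ of \eqref{eq:commutant-embedding} together with the Frenkel-Kac realization \eqref{eq:Frenkel-Kac-moules} of $\L_1(\nu)$ — exactly the same Heisenberg-weight matching that fixes the label at generic level in Theorem \ref{thm:genericdecomp}. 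This matching persists at admissible $k$ because the assignment $\mu\mapsto \chi_{\mu-(\ell+h^{\vee})\lam}$ depends polynomially on $k$ and specializes correctly.
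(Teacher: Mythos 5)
Your reduction to the Kac--Wakimoto decomposition \eqref{eq:KW-dec} and the identification of the multiplicity spaces $\mc{K}_{\mu,\nu}^{\lam}$ as modules over $\W_{\ell}(\g)$ via Theorem \ref{Th:minimal-sereis} agrees with the paper. The gap is in how you identify the isomorphism class of $\mc{K}_{\mu,\nu}^{\lam}$. Your argument rests on the claim that graded characters of pairwise non-isomorphic simple $\W_{\ell}(\g)$-modules are linearly independent, but the character here is only $\on{tr}\, q^{L_0}$, and for higher-rank $\g$ distinct simple minimal series modules can share this character: for instance, modules interchanged by the automorphism of $\W_{\ell}(\g)$ induced by a Dynkin diagram automorphism (e.g.\ $W\mapsto -W$ for $\mf{sl}_3$) have identical $q$-characters but different $W_0$-eigenvalues, hence are non-isomorphic. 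So character matching with \cite{KacWak90} plus semisimplicity only determines $\mc{K}_{\mu,\nu}^{\lam}$ up to such ambiguities; it cannot by itself single out $\mathbf{L}_{\ell}(\chi_{\mu-(\ell+h^{\vee})\lam})$.

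You sense this in your ``delicate point,'' but the proposed remedy does not close it. Matching the Heisenberg weight of the top component through \eqref{eq:commutant-embedding} and \eqref{eq:Frenkel-Kac-moules} is exactly the mechanism that works at generic $k$ (Theorem \ref{thm:genericdecomp}), where $\L_k(\mu)=\V_k(\mu)$ admits a one-sided Wakimoto resolution; but at admissible $k$ the simple module $\L_k(\mu)$ is a proper quotient of the Weyl module and is not the specialization of a flat family over generic levels, so ``the label depends polynomially on $k$ and specializes correctly'' is not an argument. The paper's proof replaces this by a genuinely new ingredient: the two-sided BGG-type resolution of $\L_k(\mu)$ by Wakimoto modules from \cite{A-BGG} (indexed by the integral Weyl group $\widehat{W}(\hat\mu)$ with semi-infinite length), which, tensored with $\L_1(\nu)$ and pushed through $H^{\frac{\infty}{2}+0}(L\mf{n},?)$ using Propositions \ref{Pro:com-semiinf-Wakimoto} and \ref{Pro:key}, shows that $\mc{K}_{\mu,\nu}^{\lam}$ is a subquotient of the single Fock module $\pi_{\ell+h^{\vee},\mu-(\ell+h^{\vee})\lam}$ as a $\W^{\ell}(\g)$-module. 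Combined with the fact from \cite{Ara07} that $\mathbf{L}_{\ell}(\chi_{\mu-(\ell+h^{\vee})\lam})$ is the \emph{unique} simple $\W_{\ell}(\g)$-module occurring among the local composition factors of that Fock module, the Kac--Wakimoto character identity then pins down both the label and the multiplicity. Without an argument of this kind locating $\mc{K}_{\mu,\nu}^{\lam}$ inside a specific Fock module (or otherwise computing the Zhu-algebra central character on its top), your proof does not determine which simple module appears.
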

Note that
since  the coset
$P/Q$ can be  represented by nonzero elements of $P_+^1$,
any simple $\W_{\ell}(\g)$-module appears 
in the decomposition of $\L_k(\mu)\otimes \L_1(\nu)$ for some $\mu\in P_+^{p-h^{\vee}}$ and
$\nu\in P_+^1$.

\begin{proof}[Proof of Theorem \ref{Th:minimal-series-modules}]
By Theorem 6.13 of \cite{A-BGG},
we have 
\begin{align*}
H^{\frac{\infty}{2}+i}(L\mf{n},\L_{k+1}(\lam))=\bigoplus_{w\in \widehat{W}(\lam)\atop 
\ell^{\frac{\infty}{2}}(w)=i}\pi_{k+h^{\vee}+1,w\circ_{k+1} \lam}.
\end{align*}
for $\lam\in Pr_{k+1}\cap P$,
where 
$\ell^{\frac{\infty}{2}}(w)$ is the semi-infinite length of $w\in \widehat{W}(\hat \mu)$
and
\begin{align*}
w\circ_k \lam=w\circ \lam\quad (w\in W),\quad 
t_{\mu}\circ_k \lam=\lam+(k+h^{\vee})\mu\quad (\mu\in Q^{\vee}).
\end{align*}
By \eqref{eq:KW-dec}, it follows that
\begin{align}
H^{\frac{\infty}{2}+i}(L\mf{n},\L_k(\mu)\* \L_1(\nu))\cong \bigoplus_{\lam\in P_+^{p+q-h^{\vee}}}
\bigoplus_{w\in \widehat{W}(\lam)\atop 
\ell^{\frac{\infty}{2}}(w)=i}\pi_{k+h^{\vee}+1,w\circ_{k+1} \lam}\* \mc{K}^{\lam}_{\mu,\nu}.
\label{eq:compare-this}
\end{align}
Thus,
\begin{align}
\mc{K}^{\lam}_{\mu,\nu}\cong \Hom_{\pi_{k+h^{\vee}+1}}(\pi_{k+h^{\vee}+1, \lam},
H^{\frac{\infty}{2}+0}(L\mf{n},\L_k(\mu)\* \L_1(\nu)))
\label{eq:compare-this2}
\end{align}
as 
$\W^{\ell}(\g)$-modules.

On the other hand,
by \cite{A-BGG},
there exists a complex 
\begin{align*}
C^{\bullet}:\dots \ra C^{-1}\ra C^0\overset{d_0}{\ra} C^1\ra\dots \ra C^n\ra \dots
\end{align*}
in $\mc{O}_k$
such that 
\begin{align*}
C^{i}=\bigoplus_{w\in \widehat{W}(\hat \mu)\atop 
\ell^{\frac{\infty}{2}}(w)=i}\Wak{w\circ_k \mu}{k},
\quad \text{and}\quad H^i(C^{\bullet})\cong \delta_{i,0}\L_k(\mu).
\end{align*}
Thus,
$C^{\bullet}$ is a two-sided resolution of $\L_k(\mu)$.
So
$C^{\bullet}\* \L_1(\nu)$ is a two-sided resolution of $\L_k(\mu)\* \L_1(\nu)$.

Let $C^{\bullet}_W$ denote the complex $H^{\frac{\infty}{2}+0}(L\mf{n},C^{\bullet}\* \L_1(\nu))$.
Since each $\Wak{w\circ \mu}{k}\* \L_1(\nu)$ is acyclic with respect to $H^{\frac{\infty}{2}+0}(L\mf{n},?)$,
we have
\begin{align}
H^{\frac{\infty}{2}+i}(L\mf{n},\L_k(\nu)\* \L_1(\nu))\cong H^{i}(C^{\bullet}_W).
\label{eq:iso-as-complex}
\end{align}
Note that $C^{\bullet}_W$ is a complex of $H^{\frac{\infty}{2}+i}(L\mf{n},\Wak{0}{k}\* L_1(\g))$-modules,
and hence is a complex of 
$\W^{\ell}(\g)$-modules,
where $\W^{\ell}(\g)$ acts via the embedding $\W^{\ell}(\g)=\mc{C}_k(\g)\hookrightarrow \pi_{\ell+h^{\vee}}
=\on{Com}(\pi_{k+h^{\vee}+1}, H^{\frac{\infty}{2}+i}(L\mf{n},\Wak{0}{k}\* L_1(\g)))$.
Therefore,
\eqref{eq:iso-as-complex}
is an isomorphism of $\W^{\ell}(\g)$-modules.

We have
\begin{align}
C^{i}_W
=\bigoplus_{w\in \widehat{W}(\hat \mu)\atop 
\ell^{\frac{\infty}{2}}(w)=i}\bigoplus_{\lam\in \h^*
\atop \lam-w\circ_k \mu-\nu\in Q}\pi_{k+h^{\vee}+1,\lam}\* \pi_{\ell+h^{\vee},w\circ_k \mu-(\ell+h^{\vee})\lam}
\label{eq:andthis}
\end{align}
 by Proposition \ref{Pro:com-semiinf-Wakimoto}.
 Therefore,
 by \eqref{eq:compare-this2},
we find that $\mc{K}^{\lam}_{\mu,\nu}$ is a subquotient of 
$ \pi_{\ell+h^{\vee},\mu-(\ell+h^{\vee})\lam}$ as a $\W^{\ell}(\g)$-module,
as $\hat\mu$ is regular dominant.
On the other hand,
$\mc{K}_{\mu,\nu}^{\lam}$
is a module over  the rational quotient $\W_{\ell}(\g)$
and
$\mathbf{L}_{\ell}(\chi_{\mu-(\ell+h^{\vee})\lam})$
is the unique simple $\W_{\ell}(\g)$-module that appears in the local composition factor 
of $ \pi_{\ell+h^{\vee},\mu-(\ell+h^{\vee})\lam}$ (\cite{Ara07}).
Since
the character of $\mc{K}_{\mu,\nu}^{\lam}$ coincides with 
that of $ \mathbf{L}_{\ell}(\chi_{\mu-(\ell+h^{\vee})\lam})$
\cite{KacWak90},
we conclude that
$\mc{K}_{\mu,\nu}^{\lam}\cong \mathbf{L}_{\ell}(\chi_{\mu-(\ell+h^{\vee})\rho^\vee})$.
\end{proof}

\begin{Co}
For an admissible level $k$,
Then $L_{k+1}(\g)$ and $\W_{\ell}(\g)$ form a dual pair in 
$L_k(\g)\* L_1(\g)$.
\end{Co}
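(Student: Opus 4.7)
The inclusion $L_{k+1}(\g)\subseteq \on{Com}(\W_\ell(\g),L_k(\g)\otimes L_1(\g))$ is immediate: by Theorem~\ref{Th:minimal-sereis}, $\W_\ell(\g)$ is precisely the commutant of $L_{k+1}(\g)$ inside $L_k(\g)\otimes L_1(\g)$, so the two subalgebras commute. The plan is to prove the reverse inclusion by reading the commutant directly off the branching rule of Theorem~\ref{Th:minimal-series-modules}. Specialized at $\mu=\nu=0$, it gives
\begin{align*}
L_k(\g)\otimes L_1(\g)\;\cong\;\bigoplus_{\lam\in P^{p+q-h^{\vee}}_+\cap Q}L_{k+1}(\lam)\otimes M_\lam,\qquad M_\lam:=\mathbf{L}_\ell(\chi_{-(\ell+h^{\vee})\lam}),
\end{align*}
as $L_{k+1}(\g)\otimes\W_\ell(\g)$-modules. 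Since $\W_\ell(\g)$ acts only on the second tensor factor, this yields
\begin{align*}
\on{Com}(\W_\ell(\g),L_k(\g)\otimes L_1(\g))\;=\;\bigoplus_{\lam}L_{k+1}(\lam)\otimes (M_\lam)^{\W_\ell(\g)},
\end{align*}
where $(M_\lam)^{\W_\ell(\g)}=\{v\in M_\lam:\,w_{(n)}v=0\text{ for all }w\in\W_\ell(\g),\,n\geq 0\}$.

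The next step uses the standard observation that, for a simple vertex algebra $V$ and a simple $V$-module $M$, the space $M^V$ of vacuum-like vectors vanishes unless $M\cong V$, in which case it is one-dimensional and spanned by the vacuum vector. Indeed, any nonzero $v\in M^V$ defines a nonzero $V$-module homomorphism $V\to M$ by $u\mapsto u_{(-1)}v$, which must be an isomorphism since both sides are simple. Applying this to $V=\W_\ell(\g)$, which is simple as a minimal-series $W$-algebra, identifies the commutant with $\bigoplus_{\lam:\,M_\lam\cong\W_\ell(\g)}L_{k+1}(\lam)$.

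It therefore remains to show that the only $\lam\in P^{p+q-h^{\vee}}_+\cap Q$ with $M_\lam\cong\W_\ell(\g)$, equivalently $\chi_{-(\ell+h^{\vee})\lam}=\chi_0$, is $\lam=0$. Using Proposition~\ref{Pro:Harish-Chandra} together with $\rho^\vee=\rho$ (simply laced) and $\ell+h^\vee=p/(p+q)$, this central-character equality rearranges to $\lam=\tfrac{q}{p}(\rho-w\rho)$ for some $w\in W$. The main obstacle is to exclude all $w\ne e$: coprimality $(p,q)=1$ reduces $\lam\in Q$ to the divisibility $\rho-w\rho\in pQ$, and $\rho-w\rho$ is a non-negative integer sum of positive roots whose simple-root coefficients are bounded by the corresponding coefficients of $2\rho$; combined with dominance of $\lam$, the admissibility bound $p\geq h^\vee$, and the level cap $(\lam,\theta^\vee)\leq p+q-h^\vee$, no nontrivial $w$ admits a solution. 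Once this uniqueness is in hand, only the $\lam=0$ summand survives, giving $\on{Com}(\W_\ell(\g),L_k(\g)\otimes L_1(\g))=L_{k+1}(\g)$ and establishing the dual pair.
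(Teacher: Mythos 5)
Your overall strategy is sound and is essentially the argument the paper leaves implicit: one inclusion is Theorem \ref{Th:minimal-sereis}, the $\mu=\nu=0$ case of Theorem \ref{Th:minimal-series-modules} gives the isotypic decomposition, and the vacuum-like-vector lemma for the simple vertex algebra $\W_{\ell}(\g)$ correctly identifies $\on{Com}(\W_{\ell}(\g),L_k(\g)\* L_1(\g))$ with $\bigoplus_{\lam} L_{k+1}(\lam)$, the sum over those $\lam\in P_+^{p+q-h^{\vee}}\cap Q$ with $\mathbf{L}_{\ell}(\chi_{-(\ell+h^{\vee})\lam})\cong \W_{\ell}(\g)$; via Proposition \ref{Pro:Harish-Chandra} this reduces to showing that $\lam=\tfrac{q}{p}(\rho-w\rho)$ lies in $P_+^{p+q-h^{\vee}}\cap Q$ only for $w=e$. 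The genuine gap is that this last exclusion is asserted, not proved. The one sentence you offer does not constitute an argument, and the ingredients you list do not visibly combine to give one: the bound of the simple-root coefficients of $\rho-w\rho$ by those of $2\rho$ is decisive only in small-rank type $A$, since in types $D$ and $E$ (and already for suitable subsystems $\Delta_S$ spanned by the support of $\rho-w\rho$) all coefficients of $2\rho$, resp.\ $2\rho_S$, can be $\geq h^{\vee}\leq p$, so divisibility by $p$ is not immediately contradicted. Moreover dominance and the level cap are not what rescues the borderline cases: for $\g=\mf{sl}_2$ and $p=h^{\vee}=2$, $w=w_0$ gives $\lam=q\omega$, which is dominant and satisfies $(\lam,\theta^{\vee})=q\leq p+q-h^{\vee}$, and is ruled out solely by the lattice condition $\lam\in Q$ together with $(p,q)=1$; the analogous boundary phenomenon for $p=h^{\vee}$ in the other types produces candidate solutions $q\omega_i$ with $\omega_i$ cominuscule, again killed only by the $Q$-condition. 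So the constraints interact in a type- and $p$-dependent way, and ``no nontrivial $w$ admits a solution'' requires a genuine case analysis over all simply-laced types and all $p\geq h^{\vee}$ that you have not carried out.

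To close the gap you can either perform that root-system analysis honestly, or (cleaner) invoke the known parametrization of simple minimal-series $\W$-algebra modules with its field identifications from \cite{FKW92,A2012Dec}: two admissible pairs give isomorphic simples only under the simultaneous action of the relevant group of diagram automorphisms on both labels. With the first label frozen at $\mu=0$ of level $p-h^{\vee}$, a nontrivial identification forces either $p>h^{\vee}$ and a nontrivial automorphism fixing the vacuum weight (impossible), or $p=h^{\vee}$ and $\lam=q\omega_i$ with $\omega_i$ cominuscule, which is excluded because the order of $\omega_i$ in $P/Q$ divides $h^{\vee}=p$ and $(p,q)=1$, so $q\omega_i\notin Q$. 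Either route supplies the missing step; as written, your proof is incomplete precisely at its crux.
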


Let $\g$ be simply laced as before,
and let  $k$ be an admissible level for $\affg$ and let $n\in \mathbb Z_{>0}$. For $i=0, 1, \dots, n-1$
define the rational numbers $\ell_i$ by
the formula 
\begin{align}
\ell_i+h^{\vee}=\frac{k+i+h^{\vee}}{k+i+h^{\vee}+1},
\label{eq:ell}
\end{align}
which are non-degenerate admissible levels for $\affg$
so that each $\W_{\ell_i}(\g)$ is a minimal series $W$-algebra.
Then the invariant subspace
$(L_k(\g)\* L_1(\g)^{\otimes n})^{\g[t]}$ 
with respect to the 
diagonal action of $\g[t]$
is naturally a vertex subalgebra of $L_k(\g)\* L_1(\g)^{\otimes n}$
and by Theorem \ref{Th:minimal-sereis} it is a vertex algebra extension of $\W_{\ell_0}(\g) \* \W_{\ell_1}(\g) \* \dots \* \W_{\ell_{n-1}}(\g)$ and as such rational and lisse as well by \cite[Theorem 3.5]{HKL}. It is also simple by \cite[Lemma 2.1]{ACKL}.
\begin{Co}\label{Co:rationalprod} 
Let $\g$ be simply laced,
$k$ an admissible number,
$n$ a positive integer.
Then
the coset vertex algebra
$(L_k(\g)\* L_1(\g)^{\otimes n})^{\g[t]}$ 
 is simple, rational and lisse. 
 \end{Co}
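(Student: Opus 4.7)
The plan is to proceed by induction on $n$, using Main Theorem \ref{Main1} (more precisely, Theorem \ref{Th:minimal-sereis}) as the base case and leveraging the general extension machinery of Huang--Kirillov--Lepowsky and Creutzig--Kanade--Linshaw for the inductive step.

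For $n=1$, Theorem \ref{Th:minimal-sereis} directly identifies the coset $(L_k(\g)\*L_1(\g))^{\g[t]}$ with the minimal series $W$-algebra $\W_{\ell_0}(\g)$, which is simple, rational, and lisse by \cite{Ara09b, A2012Dec}. For the inductive step, I first observe that admissibility propagates under $k \mapsto k+1$: if $k+h^{\vee}=p/q$ in lowest terms with $p\geq h^{\vee}$, then $k+1+h^{\vee}=(p+q)/q$ is still in lowest terms with $p+q\geq h^{\vee}$. Hence the inductive hypothesis applies at level $k+1$ with $n-1$ copies of $L_1(\g)$: the coset $(L_{k+1}(\g)\*L_1(\g)^{\otimes(n-1)})^{\g[t]}$ is simple, rational, and lisse, and contains $\W_{\ell_1}(\g)\*\cdots\*\W_{\ell_{n-1}}(\g)$ as a conformal subalgebra.

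Main Theorem \ref{Main1} furnishes the embedding $L_{k+1}(\g)\hookrightarrow L_k(\g)\*L_1(\g)$ together with the dual pair $(L_{k+1}(\g),\W_{\ell_0}(\g))$ in $L_k(\g)\*L_1(\g)$. Tensoring with $L_1(\g)^{\otimes(n-1)}$ and passing to diagonal $\g[t]$-invariants yields an embedding
\[
(L_{k+1}(\g)\*L_1(\g)^{\otimes(n-1)})^{\g[t]} \hookrightarrow (L_k(\g)\*L_1(\g)^{\otimes n})^{\g[t]}.
\]
Since $\W_{\ell_0}(\g)=(L_k(\g)\*L_1(\g))^{\g[t]}$ lives in the first two tensor slots and commutes with $L_{k+1}(\g)$ by the dual pair property, it also commutes with this inductive subalgebra. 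Consequently one obtains a conformal embedding
\[
\W_{\ell_0}(\g)\*\W_{\ell_1}(\g)\*\cdots\*\W_{\ell_{n-1}}(\g) \hookrightarrow (L_k(\g)\*L_1(\g)^{\otimes n})^{\g[t]}.
\]
The inner algebra is a tensor product of simple, rational, lisse vertex operator algebras, hence itself simple, rational, and lisse. Then \cite[Theorem 3.5]{HKL} gives rationality and lisseness of the outer coset as a conformal extension, and \cite[Lemma 2.1]{ACKL} provides its simplicity.

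The main obstacle I expect is verifying that the extension satisfies the hypotheses of \cite[Theorem 3.5]{HKL}, namely that $(L_k(\g)\*L_1(\g)^{\otimes n})^{\g[t]}$ decomposes as a \emph{finite} direct sum of simple modules over $\bigotimes_i \W_{\ell_i}(\g)$ and carries the requisite commutative algebra structure in the ambient modular tensor category. This should follow by iterating Theorem \ref{Th:minimal-series-modules}: at each stage one strips off an $L_{k+i}(\g)\*L_1(\g)$ factor and rewrites it as a finite direct sum over $P^{p+i+q-h^{\vee}}_+\cap (\text{a coset of }Q)$, indexed by $L_{k+i+1}(\g)$-isotypic components tensored with simple $\W_{\ell_i}(\g)$-modules. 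Projecting onto the trivial $L_{k+n}(\g)$-isotype at the end gives the required finite semisimple decomposition; the only delicate bookkeeping is to track the central characters of the resulting $W$-algebra modules and to check that the induced module maps are compatible, after which the general extension machinery completes the proof.
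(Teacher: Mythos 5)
Your proposal is correct and follows essentially the same route as the paper: one realizes the coset as a conformal vertex algebra extension of $\W_{\ell_0}(\g)\otimes\W_{\ell_1}(\g)\otimes\cdots\otimes\W_{\ell_{n-1}}(\g)$ by iterating Theorem \ref{Th:minimal-sereis} (your induction on $n$, with the observation that admissibility persists under $k\mapsto k+1$, is just this iteration made explicit), and then concludes rationality and lisseness from \cite[Theorem 3.5]{HKL} and simplicity from \cite[Lemma 2.1]{ACKL}. Your extra care about the finite semisimple decomposition via Theorem \ref{Th:minimal-series-modules} is a point the paper leaves implicit, but it is the same argument.
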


A vertex operator algebra $V$ is called
{\em strongly unitary} if $V$ is unitary 
and any positive energy representation of $V$ is unitary in the sense of \cite{DongLin}.
\begin{Th}[Classification of unitary minimal series $W$-algebras of type $ADE$]\label{Th:Unitarity}
The $(p,q)$-minimal series principal $W$-algebra $\W_{p,q}(\g)$ is strongly unitary if and only if
$|p-q|=1$.
 \end{Th}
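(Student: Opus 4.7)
The plan is to handle the two directions separately: the sufficiency $|p-q|=1\Rightarrow$ strong unitarity follows directly from the coset construction just proved, while the necessity is obtained by exhibiting conformal-weight obstructions to unitarity whenever $|p-q|>1$.

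For the forward direction, by Feigin-Frenkel duality (Theorem \ref{Th:FFduality}) we have $\W_{p,q}(\g)\cong \W_{q,p}(\g)$, so we may assume $q=p+1$ with $p\geq h^{\vee}$. Setting $k=p-h^{\vee}\in \Z_{\geq 0}$, the number $\ell$ defined by $\ell+h^{\vee}=(k+h^{\vee})/(k+h^{\vee}+1)=p/(p+1)$ is non-degenerate admissible and $\W_{\ell}(\g)=\W_{p,p+1}(\g)$. Theorem \ref{Th:minimal-sereis} identifies
\[
\W_{p,p+1}(\g)\;\cong\;\on{Com}\bigl(L_{k+1}(\g),\;L_k(\g)\otimes L_1(\g)\bigr).
\]
Each of $L_k(\g)$, $L_1(\g)$, $L_{k+1}(\g)$ is an integrable affine vertex operator algebra at a non-negative integer level and is therefore unitary; the tensor product of unitary vertex operator algebras is unitary, and the commutant of a unitary sub-VOA in a unitary VOA is again unitary. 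This yields unitarity of $\W_{p,p+1}(\g)$.

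To upgrade to strong unitarity, I invoke Theorem \ref{Th:minimal-series-modules}: every simple $\W_{\ell}(\g)$-module is of the form $\mc{K}^{\lam}_{\mu,\nu}\cong \mathbf{L}_{\ell}(\chi_{\mu-(\ell+h^{\vee})\lam})$ appearing in the branching
\[
\L_k(\mu)\otimes \L_1(\nu)\;\cong\;\bigoplus_{\substack{\lam\in P^{p+q-h^{\vee}}_{+}\\ \lam-\mu-\nu\in Q}}\L_{k+1}(\lam)\otimes \mc{K}^{\lam}_{\mu,\nu},
\]
for some $\mu\in P^{p-h^{\vee}}_{+}$ and $\nu\in P^{1}_{+}$; since $P/Q$ is realized by elements of $P^{1}_{+}$, every label $\lam$ compatible with a prescribed $\mu$ is attained. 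Because $\L_k(\mu)$, $\L_1(\nu)$, and $\L_{k+1}(\lam)$ are unitary integrable representations of the corresponding integrable affine VOAs, the tensor product $\L_k(\mu)\otimes \L_1(\nu)$ is unitary over $L_k(\g)\otimes L_1(\g)$, its decomposition with respect to the unitary subalgebra $L_{k+1}(\g)$ is orthogonal, and each multiplicity space $\mc{K}^{\lam}_{\mu,\nu}$ inherits a positive-definite contravariant form, hence is a unitary $\W_{\ell}(\g)$-module.

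For the reverse direction, I would show that $|p-q|>1$ forces the existence of a simple $\W_{p,q}(\g)$-module with negative $L_0$-eigenvalue on its top component, contradicting unitarity. Using the Miura embedding $\Miura:\W^{\ell}(\g)\hookrightarrow \pi_{\ell+h^{\vee}}$ and the central character formula of Proposition \ref{Pro:Harish-Chandra}, the conformal weight of the top vector of $\mathbf{L}_{\ell}(\chi_{\lam})$ is given by \eqref{eq:lowest-wt}, which for the minimal-series labeling $\lam=\mu-(\ell+h^{\vee})(\lam'+\rho^{\vee})$ (with $\mu\in P^{p-h^{\vee}}_{+}$ and $\lam'\in P^{q-h^{\vee}}_{+}$) takes the standard Kac form
\[
h_{\mu,\lam'}\;=\;\frac{|q(\mu+\rho)-p(\lam'+\rho)|^{2}-(p-q)^{2}|\rho|^{2}}{2pq}.
\]
For $|p-q|=1$ this is non-negative on the admissible range, whereas for $|p-q|\geq 2$ one can pick specific admissible $\mu,\lam'$ (for instance, $\mu=0$ with $\lam'$ chosen along the direction of $\rho$, reducing the computation to the classical Friedan-Qiu-Shenker analysis on the Virasoro sub-VOA generated by the conformal vector $\omega$ and a single Heisenberg direction) making $h_{\mu,\lam'}<0$; the resulting simple module is then not unitary, and strong unitarity fails. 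The main obstacle is the uniform verification of the sign of $h_{\mu,\lam'}$ across all $ADE$ types outside the $|p-q|=1$ series, but this is handled by reduction to the rank-one Friedan-Qiu-Shenker obstruction on an appropriate Virasoro subalgebra.
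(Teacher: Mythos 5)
Your forward direction is essentially the paper's proof: reduce to $q=p+1$ via Feigin--Frenkel duality, realize every simple $\W_{p,p+1}(\g)$-module as a multiplicity space $\Hom_{\affg}(\L_{k+1}(\lam),\L_k(\mu)\otimes\L_1(\nu))$ using Theorem \ref{Th:minimal-series-modules}, and conclude unitarity from unitarity of the integrable modules (the paper cites Corollary 2.8 of Dong--Lin for exactly the "multiplicity space inherits a positive-definite form" step). That half is fine.

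The necessity direction, however, is where you diverge from the paper and where your argument has a genuine gap. You want to exhibit, for every simply laced $\g$ and every $|p-q|\geq 2$, admissible weights with
$|q(\mu+\rho)-p(\lam'+\rho)|^{2}<(p-q)^{2}|\rho|^{2}$,
but you never prove this lattice-minimization statement, and the specific recipe you offer does not work. With $\mu=0$ and $\lam'=(t-1)\rho$ the quantity becomes $(q-pt)^{2}|\rho|^{2}$, so you need an integer $t$ with $|q-pt|<|p-q|$ \emph{and} $(t-1)(h^{\vee}-1)\leq q-h^{\vee}$; already for $\g=\mf{sl}_3$, $(p,q)=(7,9)$ no such $t$ exists ($|9-7t|\geq 2$ for all $t$), so "$\mu=0$, $\lam'$ along $\rho$" fails, and for $t=1$ one only gets equality, i.e.\ $h=0$. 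The fallback "reduce to the rank-one Friedan--Qiu--Shenker obstruction on a Virasoro sub-VOA generated by $\omega$ and a single Heisenberg direction" is not meaningful inside $\W_{p,q}(\g)$: the simple $W$-algebra contains no Heisenberg field, and the Virasoro subalgebra generated by $\omega$ has the full central charge \eqref{eq:ccpq}, not a minimal-model one, so FQS does not apply to it. So as written the "only if" half is incomplete, and completing it along your lines would require a nontrivial analysis of the minimum of $|q\xi-p\eta|$ over the two affine alcoves. The paper avoids this entirely: strong unitarity forces all lowest conformal weights to be $\geq 0$, hence the effective central charge of $\W_{p,q}(\g)$ equals its central charge \eqref{eq:ccpq}; by Dong--Mason the effective central charge equals the growth of the normalized character, which by Kac--Wakimoto is $r-\tfrac{h^{\vee}}{pq}\dim\g$ as in \eqref{eq:growth}; comparing the two expressions (using $\dim\g=r(h^{\vee}+1)$) gives equality precisely when $|p-q|=1$. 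I recommend either adopting that asymptotic-character argument or, if you insist on an explicit negative-weight module, supplying a complete proof of the inequality above for all $ADE$ types and all $|p-q|\geq 2$.
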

\begin{proof}
Suppose that $|p-q|=1$.
Since $\W_{p,q}(\g)$ is rational, it is sufficient to show any simple $\W_{p,q}(\g)$-module
$M$ is unitary.
By Theorem \ref{Th:FFduality},
we may assume that $p=q-1$.
Then
by Theorem \ref{Th:minimal-series-modules},
\begin{align*}
M\cong \Hom_{\affg}(\L_{k+1}(\lam),\L_{k}(\mu)\* \L_1(\nu))
\end{align*}
for some $\lam\in P^{k-h^{\vee}+1}_+$, $\mu\in P^k_+$,
$\nu\in P^1_+$,
where $k=p-h^{\vee}\in \Z_{\geq 0}$.
Since both $L_{k+1}(\lam)$ and 
 $\L_{k}(\mu)\* \L_1(\nu)$ are unitary,
$M$ is unitary
is as well
 (cf.\ Corollary 2.8 of \cite{{DongLin}}).
 Conversely, suppose that 
 $\W_{p,q}(\g)$ is strongly unitary.
 Since any  $\W_{p,q}(\g)$-module  is a direct sum of unitary modules of the Virasoro algebra,
 the lowest $L_0$-eigenvalue of any simple $\W_{p,q}(\g)$-module is non-negative.
 Therefore,
 the {\em effective central charge}  \cite{DongMason}
 of $\W_{p,q}(\g)$ coincides with the central charge of $\W_{p,q}(\g)$,
 which equals to 
 \begin{align}
-\frac{r( (h^{\vee}+1)p-h^{\vee}q)(h^{\vee}p-(h^{\vee}+1)q)}{pq},
\label{eq:ccpq}
\end{align}
where $r$ is the rank of $\g$.
On the other hand, Corollary 3.3 of  \cite{DongMason}
implies that the effective central charge coincides the {\em growth} 
(\cite[Exercise 13.39]{Kac90})
of the normalized character of $\W_{p,q}(\g)$,
which is given by 
\begin{align}
r-\frac{h^{\vee}}{pq}\dim \g
\label{eq:growth}
\end{align}
in Theorem 2.16 of \cite{KacWak08}.
Using the formula $\dim \g=r(h^{\vee}+1)$ (note that $\g$ is simply laced),
we find that 
 \eqref{eq:ccpq}  coincides with  \eqref{eq:growth} if and only if $|p-q|=1$.
\end{proof}
Note that for $\g=\mf{sl}_2$,
the above series is exactly the discrete series \cite{GodKenOli86} of the Virasoro algebra.

\section{Level-rank Duality}

We derive another coset realization of the rational $W$-algebras of types $A$ and $D$. These are called level-rank duality in the physics of conformal field theories, but they have first appeared in work by Igor Frenkel \cite{IgorFrenkel} on vertex algebras of type $A$. There, the decomposition of $L_1(\mathfrak{gl}_{mn})$ into modules of $L_n(\mathfrak{sl}_m) \* L_m(\mathfrak{gl}_n)$ was studied. This level-rank duality has then be investigated further in the context of conformal field theory \cite{Walton, ABI90, NT92}.
We can rephrase it as follows. By $\mathcal H$ we denote the rank one Heisenberg vertex algebra.
\begin{Th}\label{th:levelrankA}
For positive integers $n, m, \ell \geq 2$, one has
\[
\on{Com}\left(L_{m+\ell}(\mathfrak{sl}_n), L_{m}(\mathfrak{sl}_n)\otimes L_{\ell}(\mathfrak{sl}_n)  \right) \cong 
\on{Com}\left(L_{n}(\mathfrak{sl}_m) \otimes L_{n}(\mathfrak{sl}_\ell) \otimes \mathcal H, L_{n}(\mathfrak{sl}_{m+\ell}) \right).
\]
Especially it follows that
\[
\on{Com}\left(L_{n}(\mathfrak{sl}_m)  \otimes \mathcal H, L_{n}(\mathfrak{sl}_{m+1}) \right) \cong
\W_k(\mathfrak{sl}_n)
\]
at level $k$ satisfying $k+n= (m+n)/(m+n+1)$.
\end{Th}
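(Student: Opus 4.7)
The plan is to deduce Theorem \ref{th:levelrankA} in two steps: first the main level-rank isomorphism, then the $\ell = 1$ specialization via Main Theorem \ref{Main1}. I will emphasize the first step since the second is essentially immediate.

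For the first isomorphism, the natural tool is the Frenkel--Howe dual pair construction inside the charged free fermion vertex superalgebra $F$ on $(m+\ell)n$ pairs of generators. One has commuting vertex algebra embeddings
$$L_n(\mf{gl}_{m+\ell}) \hookrightarrow F \hookleftarrow L_{m+\ell}(\mf{gl}_n),$$
coming from the two commuting matrix actions of $\mf{gl}_{m+\ell}$ and $\mf{gl}_n$ on the $(m+\ell) \times n$ index set. The plan is to first recall (and reprove if needed, by a Stone--von Neumann style argument at the level of fermionic Fock modules) the classical bimodule decomposition
$$F \;\cong\; \bigoplus_{\lam} L_n(\mf{gl}_{m+\ell})_{\lam} \otimes L_{m+\ell}(\mf{gl}_n)_{\lam'},$$
with $\lam$ running over Young diagrams that fit in the $n \times (m+\ell)$ box and $\lam'$ the transposed diagram. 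This is I.\ Frenkel's original level-rank duality, refined in \cite{Walton, ABI90, NT92}, and it immediately says $L_n(\mf{sl}_{m+\ell}) \otimes \text{Heisenberg}$ and $L_{m+\ell}(\mf{sl}_n) \otimes \text{Heisenberg}$ are each other's commutants inside $F$.

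Next I would decompose $L_n(\mf{sl}_{m+\ell})$ as a module over the Levi subalgebra vertex algebra $L_n(\mf{sl}_m) \otimes L_n(\mf{sl}_\ell) \otimes \mc{H}$ coming from the parabolic $\mf{sl}_m \oplus \mf{sl}_\ell \oplus \mf{u}_1 \subset \mf{sl}_{m+\ell}$, and dually decompose $L_m(\mf{sl}_n) \otimes L_\ell(\mf{sl}_n)$ with respect to the diagonal $L_{m+\ell}(\mf{sl}_n)$. Matching multiplicity spaces on the two sides of the bimodule decomposition of $F$ identifies the commutant $\on{Com}(L_n(\mf{sl}_m) \otimes L_n(\mf{sl}_\ell) \otimes \mc{H},\, L_n(\mf{sl}_{m+\ell}))$ with $\on{Com}(L_{m+\ell}(\mf{sl}_n),\, L_m(\mf{sl}_n) \otimes L_\ell(\mf{sl}_n))$, since both are the multiplicity space of the vacuum bimodule component in an appropriate projection of $F$. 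The first isomorphism follows.

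For the second statement, I specialize to the boundary case $\ell = 1$. Formally $\mf{sl}_1 = 0$, so $L_n(\mf{sl}_1) = \C$ and the right-hand side of the level-rank isomorphism collapses to $\on{Com}(L_n(\mf{sl}_m) \otimes \mc{H},\, L_n(\mf{sl}_{m+1}))$, while the left-hand side becomes $\on{Com}(L_{m+1}(\mf{sl}_n),\, L_m(\mf{sl}_n) \otimes L_1(\mf{sl}_n))$. Since the hypothesis $\ell \geq 2$ in the first part rules out this boundary case, I instead invoke Main Theorem \ref{Main1} directly with $\g = \mf{sl}_n$ and the integer admissible level $m$: it identifies the left-hand side with $\W_k(\mf{sl}_n)$, where $k$ is the non-degenerate admissible level determined by $k + n = (m+n)/(m+n+1)$. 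This yields the claimed description of the coset $\on{Com}(L_n(\mf{sl}_m) \otimes \mc{H},\, L_n(\mf{sl}_{m+1}))$.

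The principal obstacle is upgrading the first isomorphism from a character identity (which is already packaged in the Kac--Wakimoto branching functions) to a genuine vertex algebra isomorphism. The character matching is automatic from the fermion bimodule decomposition, but one must verify that the two concrete realizations of the coset inside $F$ produce the same vertex subalgebra, which amounts to an explicit check that the two commuting systems of Kac--Moody currents exhaust the joint centralizer in the fermion OPE algebra. This is classical but technically the most delicate step; everything else is either a standard branching computation or a direct citation of Main Theorem \ref{Main1}.
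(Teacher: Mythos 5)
Your treatment of the first isomorphism follows essentially the same route as the paper: realize everything inside the free-fermion algebra $G_{m+\ell}\cong G_m\otimes G_\ell$ on $(m+\ell)n$ fermion pairs and exploit level-rank dual pairs there. The paper's execution is lighter than yours: it never decomposes $G_{m+\ell}$ as a bimodule or matches multiplicity spaces, but simply quotes the mutual-commutant statements $L_m(\mf{sl}_n)=\on{Com}(L_n(\mf{gl}_m),G_m)$ and $L_n(\mf{gl}_m)=\on{Com}(L_m(\mf{sl}_n),G_m)$ (Ostrik--Sun, building on Walton) and then iterates the formal identity $\on{Com}(A\otimes B,V)\cong\on{Com}(A,\on{Com}(B,V))$ to rewrite both cosets as the single subalgebra $\on{Com}\left(L_{m+\ell}(\mf{sl}_n)\otimes L_n(\mf{gl}_m)\otimes L_n(\mf{gl}_\ell),\,G_{m+\ell}\right)$, which yields a vertex algebra isomorphism with no appeal to complete reducibility. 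Two caveats about your version: the embeddings of $L_n(\mf{gl}_{m+\ell})$ and $L_{m+\ell}(\mf{gl}_n)$ do not literally commute in $F$, since their $\mf{gl}_1$ currents are proportional (the total charge current), so the dual pairs must be stated with $\mf{sl}$ on one side and $\mf{gl}$ on the other, as in the paper; and the ``exhaust the joint centralizer'' verification you defer is precisely the cited Ostrik--Sun theorem, so when your multiplicity-space argument is made rigorous it collapses to the same chain of commutant identities.

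For the second statement there is a genuine gap. Main Theorem \ref{Main1} (equivalently Theorem \ref{Th:minimal-sereis}) identifies $\on{Com}\left(L_{m+1}(\mf{sl}_n),\,L_m(\mf{sl}_n)\otimes L_1(\mf{sl}_n)\right)$ with $\W_k(\mf{sl}_n)$; it says nothing about $\on{Com}\left(L_n(\mf{sl}_m)\otimes\mc{H},\,L_n(\mf{sl}_{m+1})\right)$. To reach the claimed conclusion you still need the $\ell=1$ instance of the level-rank identification, which you explicitly set aside (``the hypothesis $\ell\geq 2$ rules out this boundary case'') and never supply; invoking Main Theorem \ref{Main1} ``instead'' does not substitute for it, so your final sentence does not follow from what precedes it. The repair is exactly what the paper does: the commutant chain is proved for arbitrary positive integers, including $\ell=1$, where $G_1$ is the system of $n$ fermion pairs and the relevant dual pair is $L_1(\mf{sl}_n)$ together with the rank-one Heisenberg $L_n(\mf{gl}_1)$; running the same chain at $\ell=1$ gives $\on{Com}\left(L_{m+1}(\mf{sl}_n),\,L_m(\mf{sl}_n)\otimes L_1(\mf{sl}_n)\right)\cong\on{Com}\left(L_n(\mf{sl}_m)\otimes\mc{H},\,L_n(\mf{sl}_{m+1})\right)$, and only then does Theorem \ref{Th:minimal-sereis} convert the left-hand side into $\W_k(\mf{sl}_n)$ with $k+n=(m+n)/(m+n+1)$. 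You should either run your first-part argument verbatim at $\ell=1$ (checking the degenerate dual pair just mentioned) or state the first isomorphism for all positive $\ell$ from the outset.
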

\begin{proof}
Let $n, m, \ell$ be three fixed positive integers. Let $G_m, G_\ell, G_{m+\ell}$ be the vertex algebras of $nm, n\ell, n(m+\ell)$ fermionic ghosts-systems, so that $G_{m+\ell}\cong G_m\otimes G_\ell$. Then $G_m$ is strongly generated by the fields $\psi_{i, j}$ and $\psi^*_{j, i}$ for integers $i, j$ in $1\leq i\leq n, 1\leq j\leq m$ and operator products
\[
\psi_{i,j}(z) \psi^*_{k, r}(w) \sim \delta_{j, k}\delta_{i, r}(z-w)^{-1}.
\]
$G_m$ has various interesting vertex subalgebras. Its even subalgebra is isomorphic to $L_1(\mathfrak{gl}_{nm})$ and the $n^2$, respectively $m^2$, fields of the form
\[
\sum_{j=1}^{m} : \psi_{i, j}(z) \psi^*_{j, k}(z): \qquad \text{respectively} \qquad \sum_{i=1}^{n} : \psi_{i, j}(z) \psi^*_{r, i}(z):
\]
generate vertex operators algebras isomorphic to $L_{m}(\mathfrak{gl}_n)$ respectively $L_{n}(\mathfrak{gl}_m)$.
Furthermore $L_{m}(\mathfrak{sl}_n)$ and $L_{n}(\mathfrak{gl}_m)$ form a mutually commuting pair in $G_m$ \cite[Thm. 4.1]{Ostrik} (which used \cite{Walton}).
Similarly, one easily realizes $L_{m+\ell}(\mathfrak{gl}_n)$, $L_{n}(\mathfrak{gl}_m)$, $L_{n}(\mathfrak{gl}_\ell)$ and $L_{n}(\mathfrak{gl}_{m+\ell})$ as vertex subalgebras of $G_{m+\ell}$. We then have the following list of mutually commuting pairs:
\begin{enumerate}
\item $G_m$ and $G_\ell$ in $G_{m+\ell}$
\item $L_{m}(\mathfrak{sl}_n)$ and $L_{n}(\mathfrak{gl}_m)$ in $G_m$
\item $L_{\ell}(\mathfrak{sl}_n)$ and $L_{n}(\mathfrak{gl}_\ell)$ in $G_\ell$
\item $L_{m+\ell}(\mathfrak{sl}_n)$ and $L_{n}(\mathfrak{gl}_{m+\ell})$ in $G_{m+\ell}$
\end{enumerate}
We thus have the following level-rank duality of coset vertex algebras
\begin{equation}\nonumber
\begin{split}
\on{Com}\left(L_{m+\ell}(\mathfrak{sl}_n), L_{m}(\mathfrak{sl}_n)\right.&\left.\otimes L_{\ell}(\mathfrak{sl}_n)  \right) \cong \\
&\cong \on{Com}\left(L_{m+\ell}(\mathfrak{sl}_n),  \on{Com}\left(L_{n}(\mathfrak{gl}_m) \otimes L_{n}(\mathfrak{gl}_\ell), G_{m+\ell} \right)\right)\\
&\cong \on{Com}\left(L_{m+\ell}(\mathfrak{sl}_n)\otimes L_{n}(\mathfrak{gl}_m) \otimes L_{n}(\mathfrak{gl}_\ell), G_{m+\ell} \right)\\
&\cong 
\on{Com}\left(L_{n}(\mathfrak{gl}_m) \otimes L_{n}(\mathfrak{gl}_\ell), \on{Com}\left(L_{m+\ell}(\mathfrak{sl}_n), G_{m+\ell}\right) \right)\\
&\cong 
\on{Com}\left(L_{n}(\mathfrak{gl}_m) \otimes L_{n}(\mathfrak{gl}_\ell), L_{n}(\mathfrak{gl}_{m+\ell}) \right)\\
&\cong 
\on{Com}\left(L_{n}(\mathfrak{sl}_m) \otimes L_{n}(\mathfrak{sl}_\ell) \otimes \mathcal H, L_{n}(\mathfrak{sl}_{m+\ell}) \right)
\end{split}
\end{equation}
where here $\mathcal H$ denotes the rank one Heisenberg vertex algebra in $L_{n}(\mathfrak{sl}_{m+\ell})$ commuting with $L_{n}(\mathfrak{sl}_m) \otimes L_{n}(\mathfrak{sl}_\ell)$.
The case of $\ell=1$ together with Theorem \ref{Th:minimal-sereis} then implies the second statement. 
\end{proof}
This Theorem has a nice rationality corollary. Define $k_i$ by 
$k_i +n = (m+n-i)/(m+n+1-i)$
and let $\mathcal H(\ell)$ be the Heisenberg vertex operator algebra of rank $\ell$. 
Then the coset $\on{Com}\left(L_{n}(\mathfrak{sl}_{m-\ell})  \otimes \mathcal H(\ell), L_{n}(\mathfrak{sl}_{m}) \right)$ is a vertex algebra extension of
$$\W_{k_1}(\mathfrak{sl}_n)\* \W_{k_2}(\mathfrak{sl}_n)\* \dots \* \W_{k_{\ell-1}}(\mathfrak{sl}_n)\* \W_{k_\ell}(\mathfrak{sl}_n)$$
and as such rational and lisse as well by \cite[Theorem 3.5]{HKL}. It is also simple by \cite[Lemma 2.1]{ACKL}.
$L_{n}(\mathfrak{sl}_{m})$ contains the rank $m-1$ Heisenberg vertex algebra. The latter extends to the lattice VOA of the lattice $\sqrt{n}A_{m-1}$ and the $\mathcal H(\ell)$ subVOA of $L_{n}(\mathfrak{sl}_{m})$ extends to the lattice VOA $V_{\Lambda_\ell}$ with $\Lambda_\ell$ the orthogonal complement of $\sqrt{n}A_{m-\ell-1}$ in $\sqrt{n}A_{m-1}$. The coset $\on{Com}\left(L_{n}(\mathfrak{sl}_{m-\ell}), L_{n}(\mathfrak{sl}_{m}) \right)$ is thus a vertex algebra extension of $\on{Com}\left(L_{n}(\mathfrak{sl}_{m-\ell})  \otimes \mathcal H(\ell), L_{n}(\mathfrak{sl}_{m}) \right)\* V_{\Lambda_\ell}$ and as such rational, lisse and simple as well. Summarizing:
\begin{Co}\label{Co:rationalA}
Let $n, m, \ell$ be positive integers, such that $2 \leq m-\ell$. Then the cosets $\on{Com}\left(L_{n}(\mathfrak{sl}_{m-\ell})  \otimes \mathcal H(\ell), L_{n}(\mathfrak{sl}_{m}) \right)$ and $\on{Com}\left(L_{n}(\mathfrak{sl}_{m-\ell}), L_{n}(\mathfrak{sl}_{m}) \right)$ are rational, simple and lisse. 
\end{Co}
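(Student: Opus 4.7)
The plan is to reduce the corollary to the already-established fact that each minimal series principal $W$-algebra $\W_{k_i}(\mathfrak{sl}_n)$ (for the specific values of $k_i$ given before the corollary) is rational, lisse and simple, and then combine these factors using standard extension theorems. Concretely, I would proceed as follows.

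First I would handle the coset with the Heisenberg factor, namely $A:=\on{Com}(L_n(\mf{sl}_{m-\ell})\otimes \mc{H}(\ell),L_n(\mf{sl}_m))$. Iterating Theorem \ref{th:levelrankA} $\ell$ times, at each step peeling off one copy of $\mf{sl}_{m-i}\hookrightarrow \mf{sl}_{m-i+1}$ together with a rank-one Heisenberg, I would obtain a chain of embeddings exhibiting $A$ as a conformal vertex algebra extension of the tensor product
\begin{equation*}
\W_{k_1}(\mf{sl}_n)\otimes \W_{k_2}(\mf{sl}_n)\otimes\cdots\otimes \W_{k_\ell}(\mf{sl}_n),
\end{equation*}
where $k_i+n=(m+n-i)/(m+n+1-i)$ are the non-degenerate admissible levels from \eqref{eq:ell-p-q}. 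Each tensor factor is a rational, lisse vertex operator algebra by Theorem \ref{Th:minimal-sereis} (or equivalently the general rationality theorem of \cite{A2012Dec}), hence so is their tensor product. Applying \cite[Theorem 3.5]{HKL} then gives that $A$ is rational and lisse, and \cite[Lemma 2.1]{ACKL} yields simplicity. The only genuine verification is that the iterated coset is indeed a conformal extension of the indicated tensor product, which is straightforward once one identifies the commutants at each step of the iteration via Theorem \ref{th:levelrankA}.

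Next I would treat the coset without the Heisenberg, $B:=\on{Com}(L_n(\mf{sl}_{m-\ell}),L_n(\mf{sl}_m))$. The obvious inclusion $L_n(\mf{sl}_{m-\ell})\otimes \mc{H}(\ell)\hookrightarrow L_n(\mf{sl}_m)$ shows $B\supset A$, and by construction $B$ contains a rank-$(m-1)$ Heisenberg vertex subalgebra coming from the Cartan of $\mf{sl}_m$, of which the Heisenberg in $A$ corresponding to $L_n(\mf{sl}_{m-\ell})$ is a sub-lattice image. The Heisenberg $\mc{H}(\ell)$ appearing in $A$ sits inside a copy of the lattice vertex algebra $V_{\Lam_\ell}$ attached to the orthogonal complement $\Lam_\ell$ of $\sqrt{n}A_{m-\ell-1}$ inside $\sqrt{n}A_{m-1}$; this is the standard fact that the Cartan Heisenberg of $L_n(\mf{sl}_m)$ is already extended to a lattice vertex algebra inside $L_n(\mf{sl}_m)$. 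Therefore $B$ is a conformal extension of $A\otimes V_{\Lam_\ell}$. Since $V_{\Lam_\ell}$ is rational, lisse and simple as a positive-definite even lattice VOA, and we have already shown the same for $A$, another application of \cite[Theorem 3.5]{HKL} and \cite[Lemma 2.1]{ACKL} gives the claim for $B$.

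The main obstacle I anticipate is the careful bookkeeping in identifying the lattice $\Lam_\ell$ and in checking that the inclusion $A\otimes V_{\Lam_\ell}\hookrightarrow B$ is really a conformal vertex algebra extension, i.e. that the Virasoro vectors match and that $B$ decomposes as a direct sum of irreducible modules over $A\otimes V_{\Lam_\ell}$ of finite length at each conformal weight. Once this identification is in place, the rationality/lisseness/simplicity conclusions are formal consequences of the extension theorem of \cite{HKL} and the simplicity criterion of \cite{ACKL}.
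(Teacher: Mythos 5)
Your proposal follows essentially the same route as the paper: the coset with the Heisenberg factor is exhibited, by iterating Theorem \ref{th:levelrankA} together with Theorem \ref{Th:minimal-sereis}, as a vertex algebra extension of $\W_{k_1}(\mathfrak{sl}_n)\otimes\cdots\otimes\W_{k_\ell}(\mathfrak{sl}_n)$, whence rational and lisse by \cite[Theorem 3.5]{HKL} and simple by \cite[Lemma 2.1]{ACKL}, and the coset without the Heisenberg is then an extension of the previous coset tensored with the lattice vertex algebra $V_{\Lambda_\ell}$, $\Lambda_\ell$ being the orthogonal complement of $\sqrt{n}A_{m-\ell-1}$ in $\sqrt{n}A_{m-1}$, exactly as in the paper. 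The additional verifications you flag (matching conformal vectors, identifying $\Lambda_\ell$) are the same points the paper treats as routine, so there is no substantive difference.
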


The case of type $D$ is a little bit more complicated. We need two statements. Recall that by a simple current one means an invertible object in the tensor category and by a simple current extension of a vertex algebra, one means a vertex algebra extension by a (abelian) group of simple currents. For lisse vertex algebras, this group must be finite. 
Let $n, m, \ell$ be positive integers and $n$ in addition even. 

Firstly, \cite[Theorem 7.4]{KFPX} says 
especially that a simple current extension of $L_n(\mathfrak{so}_m)\otimes L_m(\mathfrak{so}_n)$ embeds conformally in $L_1(\mathfrak{so}_{nm})$. This means that $\on{Com}\left(L_n(\mathfrak{so}_m), L_1(\mathfrak{so}_{nm})\right)$ is a simple current extension of $L_m(\mathfrak{so}_n)$ and the extension can be read off from \cite[Table 3]{KFPX}, see also \cite[Lemmas 4.11 and 4.13]{Lametal17}. 
\begin{equation}\label{eq:D1}
\begin{split}
\on{Com}\left(L_n(\mathfrak{so}_m), L_1(\mathfrak{so}_{nm})\right) &\cong \begin{cases} L_m(\mathfrak{so}_n) & m \ \text{odd} \\
L_m(\mathfrak{so}_n)\oplus \L_m(m\omega_1) & m \ \text{even},
\end{cases}\\
\on{Com}\left(L_n(\mathfrak{so}_m), L_1(\mathfrak{so}_{nm})\right)^{A_{m, n}} &\cong L_m(\mathfrak{so}_n), \qquad A_{m, n} :=\begin{cases} \{1\} & m \ \text{odd} \\
\mathbb Z/2\mathbb Z & m \ \text{even},
\end{cases}\\
\on{Com}\left(L_m(\mathfrak{so}_n), L_1(\mathfrak{so}_{nm})\right) &\cong L_n(\mathfrak{so}_m)\oplus \L_n(n\omega_1).
\end{split}
\end{equation}
with $\omega_0, \omega_1, \dots, \omega_{m/2}$ the fundamental weights of $\mathfrak{so}_m$ in the $m$ even case. 
Here $A_{m, n}$ is the group of simple currents of this extension. 
The second statement we need is that $L_1\left(\mathfrak{so}_{n(m+\ell)}\right)$ is a simple current extension of $L_1\left(\mathfrak{so}_{nm}\right) \otimes L_1\left(\mathfrak{so}_{n\ell}\right)$ for $n, m, \ell \in \mathbb Z_{>0}$ and $n$ even \cite[Sec. 6.1]{Ada16}. This means that 
\begin{equation}\label{eq:D2}
 L_1\left(\mathfrak{so}_{nm}\right) \otimes L_1\left(\mathfrak{so}_{n\ell}\right) \cong L_1\left(\mathfrak{so}_{n(m+\ell)}\right)^{\mathbb Z/2\mathbb Z}.
\end{equation}
Level-rank duality for type $D$ is then
\begin{Th}\label{th:levelrankD}
For positive integers $n, m, \ell$ and $n$ even, let $G=A_{m, n}\times A_{\ell, n}\times \mathbb Z/2\mathbb Z$ with $A_{m, n}=\mathbb Z/2\mathbb Z$ for even $m$ and $A_{m, n}$ trivial otherwise.
Then 
\[
\on{Com}\left(L_{m+\ell}(\mathfrak{so}_n), L_{m}(\mathfrak{so}_n)\*L_{\ell}(\mathfrak{so}_n)  \right)\cong 
\on{Com}\left(L_{n}(\mathfrak{so}_m)\*L_{n}(\mathfrak{so}_\ell), L_{n}(\mathfrak{so}_{m+\ell})\+\L_n(n\omega_1) \right)^{G}.
\]
Especially it follows that
\[
\on{Com}\left(L_{n}(\mathfrak{so}_m), L_{n}(\mathfrak{so}_{m+1})\oplus \L_n(n\omega_1) \right)^{A_{m, n}\times \mathbb Z/2\mathbb Z} \cong
\W_k(\mathfrak{so}_n)
\]
at level $k$ satisfying $k+n-2= (m+n-2)/(m+n-1)$. 
\end{Th}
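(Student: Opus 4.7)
The plan is to mirror the chain of coset manipulations used in the proof of Theorem \ref{th:levelrankA}, but with the simple lattice vertex algebra $L_1(\mathfrak{so}_{nm})$ playing the role of the free fermion ghost system $G_m$. Since the type $D$ commuting pairs in \eqref{eq:D1} and \eqref{eq:D2} are only visible after passing to simple current extensions/orbifolds, at each step one has to carry along an extra group action, which is precisely what accumulates into $G$.

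Concretely, I would first apply the second line of \eqref{eq:D1} to both factors to rewrite
\[
L_{m}(\mathfrak{so}_n)\otimes L_{\ell}(\mathfrak{so}_n)
\;\cong\;
\on{Com}\bigl(L_n(\mathfrak{so}_m)\otimes L_n(\mathfrak{so}_\ell),\;
L_1(\mathfrak{so}_{nm})\otimes L_1(\mathfrak{so}_{n\ell})\bigr)^{A_{m,n}\times A_{\ell,n}}.
\]
Substituting this into the left-hand side of the theorem, and using that taking the commutant by $L_{m+\ell}(\mathfrak{so}_n)$ (which lies inside the $A_{m,n}\times A_{\ell,n}$-fixed part) commutes with taking this orbifold, together with the rule $\on{Com}(A,\on{Com}(B,V))=\on{Com}(A\otimes B,V)$, converts the LHS into
\[
\on{Com}\bigl(L_{m+\ell}(\mathfrak{so}_n)\otimes L_n(\mathfrak{so}_m)\otimes L_n(\mathfrak{so}_\ell),\;
L_1(\mathfrak{so}_{nm})\otimes L_1(\mathfrak{so}_{n\ell})\bigr)^{A_{m,n}\times A_{\ell,n}}.
\]
Next I would use \eqref{eq:D2}, $L_1(\mathfrak{so}_{nm})\otimes L_1(\mathfrak{so}_{n\ell})\cong L_1(\mathfrak{so}_{n(m+\ell)})^{\mathbb Z/2\mathbb Z}$, and pull out the extra $\mathbb Z/2\mathbb Z$-orbifold by the same principle, so that the ambient algebra becomes $L_1(\mathfrak{so}_{n(m+\ell)})$ and the overall orbifold group is $G=A_{m,n}\times A_{\ell,n}\times\mathbb Z/2\mathbb Z$. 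Finally, reverse the commutation: the third line of \eqref{eq:D1} gives $\on{Com}(L_{m+\ell}(\mathfrak{so}_n),L_1(\mathfrak{so}_{n(m+\ell)}))\cong L_n(\mathfrak{so}_{m+\ell})\oplus\L_n(n\omega_1)$, and $L_n(\mathfrak{so}_m)\otimes L_n(\mathfrak{so}_\ell)$ sits inside $L_n(\mathfrak{so}_{m+\ell})$ via $\mathfrak{so}_m\oplus\mathfrak{so}_\ell\hookrightarrow\mathfrak{so}_{m+\ell}$, producing exactly the right-hand side.

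The second statement is then extracted by setting $\ell=1$: the factor $L_n(\mathfrak{so}_1)$ is trivial, $A_{1,n}$ is trivial, and $G$ collapses to $A_{m,n}\times\mathbb Z/2\mathbb Z$; on the other side the LHS becomes $\on{Com}(L_{m+1}(\mathfrak{so}_n),L_m(\mathfrak{so}_n)\otimes L_1(\mathfrak{so}_n))$, which by Theorem \ref{Th:minimal-sereis} equals $\W_k(\mathfrak{so}_n)$ at $k+n-2=(m+n-2)/(m+n-1)$. The main technical obstacle is justifying each commutant-with-orbifold swap: the needed general fact is that if $V$ is a simple current extension of $V^H$ by characters of a finite abelian group $H$ acting by automorphisms, and $B\subset V^H$ is a vertex subalgebra, then $H$ preserves $\on{Com}(B,V)$ and $\on{Com}(B,V)^H=\on{Com}(B,V^H)$. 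One has to verify this in each of the three uses above, and in particular to check that the subalgebras being commuted with really do land in the fixed-point subalgebra rather than in the larger extension — this is where the specific structure of the coset decompositions in \eqref{eq:D1} and \eqref{eq:D2} is used.
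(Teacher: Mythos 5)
Your proposal is correct and follows essentially the same route as the paper's proof: apply \eqref{eq:D1} to each factor, exchange the finite orbifolds with the commutant by the diagonal $L_{m+\ell}(\mathfrak{so}_n)$ (which lies in the fixed-point subalgebra), merge commutants, pass to $L_1(\mathfrak{so}_{n(m+\ell)})$ via \eqref{eq:D2} picking up the extra $\mathbb Z/2\mathbb Z$, and identify the inner commutant by the third line of \eqref{eq:D1}; the $\ell=1$ specialization with Theorem \ref{Th:minimal-sereis} is also exactly how the second claim is obtained. The commutant--orbifold swap you flag is the same point the paper addresses by noting that the group actions commute with the relevant subalgebra actions, and it holds for any finite automorphism group fixing the subalgebra pointwise, so no further simple-current input is needed there.
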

\begin{proof}
Let $C:= \on{Com}\left(L_{m+\ell}(\mathfrak{so}_n), L_{m}(\mathfrak{so}_n)\otimes L_{\ell}(\mathfrak{so}_n)  \right)$.
Using \eqref{eq:D1} twice, we get
\begin{equation}\nonumber
\begin{split}
C &= \on{Com}\left(L_{m+\ell}(\mathfrak{so}_n), L_{m}(\mathfrak{so}_n)\otimes L_{\ell}(\mathfrak{so}_n)  \right) \\
&\cong \on{Com}\left(L_{m+\ell}(\mathfrak{so}_n), \on{Com}\left(L_n(\mathfrak{so}_m), L_1(\mathfrak{so}_{nm})\right)^{A_{m, n}} \otimes \on{Com}\left(L_n(\mathfrak{so}_\ell), L_1(\mathfrak{so}_{n\ell})\right)^{A_{\ell, n}}   \right)\\
&\cong \on{Com}\left(L_{m+\ell}(\mathfrak{so}_n), \on{Com}\left(L_n(\mathfrak{so}_m), L_1(\mathfrak{so}_{nm})\right) \otimes \on{Com}\left(L_n(\mathfrak{so}_\ell), L_1(\mathfrak{so}_{n\ell})\right)   \right)^{A_{m, n} \times {A_{\ell, n}}} \\
&\cong \on{Com}\left(L_{m+\ell}(\mathfrak{so}_n) \otimes  L_n(\mathfrak{so}_m) \otimes L_n(\mathfrak{so}_\ell), L_1(\mathfrak{so}_{nm})  \otimes  L_1(\mathfrak{so}_{n\ell})   \right)^{A_{m, n} \times {A_{\ell, n}}}. 
\end{split}
\end{equation}
In the third line we used that the actions of $A_{m, n}$ and $L_n(\mathfrak{so}_m)$ commute on $L_1(\mathfrak{so}_{nm})$. Using \eqref{eq:D2} it follows that 
\begin{equation}\nonumber
\begin{split}
C &\cong \on{Com}\left(L_{m+\ell}(\mathfrak{so}_n) \otimes  L_n(\mathfrak{so}_m) \otimes L_n(\mathfrak{so}_\ell), L_1(\mathfrak{so}_{n(m+\ell)})    \right)^{A_{m, n} \times {A_{\ell, n}}\times \mathbb Z/2\mathbb Z}. 
\end{split}
\end{equation}
The claim follows, since $\on{Com}\left(L_{m+\ell}(\mathfrak{so}_n), L_1(\mathfrak{so}_{n(m+\ell)})    \right) \cong L_n(\mathfrak{so}_{m+\ell})\oplus \L_n(n\omega_1)$ by \eqref{eq:D1}.
\end{proof}
We have shown that for even $n$, the coset $\on{Com}\left(L_{n}(\mathfrak{so}_m), L_{n}(\mathfrak{so}_{m+1})\oplus \L_n(n\omega_1) \right)$ has the regular vertex algebra $\W_k(\mathfrak{so}_n)$ as orbifold vertex subalgebra. This in turn means that $\on{Com}\left(L_{n}(\mathfrak{so}_m), L_{n}(\mathfrak{so}_{m+1})\oplus \L_n(n\omega_1) \right)$ is a vertex algebra extension of $\W_k(\mathfrak{so}_n)$. It is regular, since a vertex algebra extension of a regular vertex algebra is regular \cite[Theorem 3.5]{HKL}.

The module $ \L_n(n\omega_1)$ is a simple current for $L_{n}(\mathfrak{so}_{m+1})$ so that  $L_{n}(\mathfrak{so}_{m+1})\oplus \L_n(n\omega_1)$ is a simple current extension of  $L_{n}(\mathfrak{so}_{m+1})$. Conversely, $L_{n}(\mathfrak{so}_{m+1})$ is a $\mathbb Z/2\mathbb Z$-orbifold of $L_{n}(\mathfrak{so}_{m+1})\oplus \L_n(n\omega_1)$. Here $\mathbb Z/2\mathbb Z$ leaves $L_{n}(\mathfrak{so}_{m+1})$ invariant and acts as $-1$ on $\L_n(n\omega_1)$, hence it especially commutes with the action of $L_{n}(\mathfrak{so}_m)$ on $L_{n}(\mathfrak{so}_{m+1})\oplus \L_n(n\omega_1)$ and thus
\begin{equation}\nonumber
\begin{split}
\on{Com}\left(L_{n}(\mathfrak{so}_m), L_{n}(\mathfrak{so}_{m+1}) \right) &= \on{Com}\left(L_{n}(\mathfrak{so}_m), \left(L_{n}(\mathfrak{so}_{m+1})\oplus \L_n(n\omega_1)\right)^{\mathbb Z/2\mathbb Z} \right)\\
&= \on{Com}\left(L_{n}(\mathfrak{so}_m), L_{n}(\mathfrak{so}_{m+1})\oplus \L_n(n\omega_1)\right)^{\mathbb Z/2\mathbb Z}. 
\end{split}
\end{equation}
By \cite[Theorem 5.24]{CarM} the orbifold of a regular vertex operator algebra by a finite cyclic group is regular. Hence $\on{Com}\left(L_{n}(\mathfrak{so}_m), L_{n}(\mathfrak{so}_{m+1}) \right)$ is rational and lisse. It is simple by \cite[Lemma 2.1]{ACKL}. We can even do better by iterating this coset construction. Let $2\leq \ell < m$, then $C_{\ell, m}:=\on{Com}\left(L_{n}(\mathfrak{so}_\ell), L_{n}(\mathfrak{so}_{m}) \right)$ is a vertex algebra extension of   
$C_{\ell, \ell+1} \* C_{\ell+1, \ell+2} \* \dots \* C_{m-2, m-1} \* C_{m-1, m}$
and as such rational, lisse and simple as well.
\begin{Co}\label{Co:rationalD} Let $n, m, \ell$ be integers, such that $n$ is even and $2\leq \ell < m$. Then 
$\on{Com}\left(L_{n}(\mathfrak{so}_\ell), L_{n}(\mathfrak{so}_{m}) \right)$ is simple, rational and lisse. 
\end{Co}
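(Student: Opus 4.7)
The plan is to reduce the assertion to two ingredients: the identification of a minimal series principal $W$-algebra as an orbifold of a commutant (Theorem~\ref{th:levelrankD}), and standard rationality-preservation results for conformal extensions \cite[Theorem 3.5]{HKL}, finite cyclic orbifolds \cite[Theorem 5.24]{CarM}, and simplicity of cosets \cite[Lemma 2.1]{ACKL}. I would first establish the corollary in the base case $m=\ell+1$, and then obtain the general case by iterating along the chain of embeddings $L_n(\mf{so}_\ell)\subset L_n(\mf{so}_{\ell+1})\subset\dots \subset L_n(\mf{so}_m)$.

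For the base case, I would start from Theorem~\ref{th:levelrankD}, which gives
\[
\on{Com}\left(L_{n}(\mf{so}_\ell), L_{n}(\mf{so}_{\ell+1})\oplus \L_n(n\omega_1) \right)^{A_{\ell, n}\times \mathbb{Z}/2\mathbb{Z}} \cong \W_k(\mf{so}_n),
\]
where the right-hand side is a non-degenerate minimal series principal $W$-algebra, hence rational and lisse by \cite{Ara09b,A2012Dec}. Since the full commutant on the left is a conformal extension of this orbifold subalgebra by the finite abelian group $A_{\ell,n}\times \mathbb{Z}/2\mathbb{Z}$, it is itself rational and lisse by \cite[Theorem 3.5]{HKL}. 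Next, because $\L_n(n\omega_1)$ is a simple current for $L_n(\mf{so}_{\ell+1})$, the direct sum $L_n(\mf{so}_{\ell+1})\oplus \L_n(n\omega_1)$ is a $\mathbb{Z}/2\mathbb{Z}$-simple-current extension and $L_n(\mf{so}_{\ell+1})$ is recovered as the fixed points. This $\mathbb{Z}/2\mathbb{Z}$ acts as $\pm 1$ on the two summands and in particular acts trivially on $L_n(\mf{so}_\ell)$, so it commutes with taking the commutant, yielding
\[
\on{Com}\left(L_{n}(\mf{so}_\ell), L_{n}(\mf{so}_{\ell+1}) \right) \cong \on{Com}\left(L_{n}(\mf{so}_\ell), L_{n}(\mf{so}_{\ell+1})\oplus \L_n(n\omega_1)\right)^{\mathbb{Z}/2\mathbb{Z}}.
\]
Rationality and lisseness of the right-hand side now follow from \cite[Theorem 5.24]{CarM}, and simplicity from \cite[Lemma 2.1]{ACKL}. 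This settles the base case $C_{\ell,\ell+1}$.

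For the general case $\ell<m$, I would iterate the base case to obtain a chain of rational, lisse, simple commutants $C_{j,j+1}=\on{Com}(L_n(\mf{so}_j),L_n(\mf{so}_{j+1}))$ for $j=\ell,\dots,m-1$. The chain of subalgebras shows that $C_{\ell,m}=\on{Com}(L_n(\mf{so}_\ell),L_n(\mf{so}_m))$ is a conformal extension of the tensor product $C_{\ell,\ell+1}\otimes C_{\ell+1,\ell+2}\otimes \dots \otimes C_{m-1,m}$, which is rational and lisse as a tensor product of rational lisse vertex operator algebras. One more application of \cite[Theorem 3.5]{HKL} then gives rationality and lisseness of $C_{\ell,m}$, while simplicity again comes from \cite[Lemma 2.1]{ACKL}.

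The only substantive point that requires care in this plan is verifying that the two hypotheses of \cite[Theorem 3.5]{HKL} and \cite[Theorem 5.24]{CarM} genuinely apply in our context: namely, that the extensions we produce are indeed conformal vertex algebra extensions (not merely module inclusions) and that the orbifold action of $\mathbb{Z}/2\mathbb{Z}$ truly commutes with the $L_n(\mf{so}_\ell)$-action at the level of vertex operators. Both follow cleanly from the explicit description of the $\mathbb{Z}/2\mathbb{Z}$-action as a simple current automorphism acting as $-1$ on $\L_n(n\omega_1)$ and trivially on $L_n(\mf{so}_{\ell+1})$, so the main technical effort is already packaged inside Theorem~\ref{th:levelrankD} and the cited general results.
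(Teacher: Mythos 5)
Your proposal is correct and follows essentially the same route as the paper: the base case $\on{Com}(L_n(\mf{so}_\ell),L_n(\mf{so}_{\ell+1}))$ via Theorem \ref{th:levelrankD}, \cite[Theorem 3.5]{HKL} for the extension of $\W_k(\mf{so}_n)$, the $\mathbb{Z}/2\mathbb{Z}$ simple-current orbifold with \cite[Theorem 5.24]{CarM}, simplicity from \cite[Lemma 2.1]{ACKL}, and then iteration by viewing $C_{\ell,m}$ as an extension of $C_{\ell,\ell+1}\otimes\dots\otimes C_{m-1,m}$. No gaps beyond what the paper itself leaves implicit.
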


\section{Kazama-Suzuki coset vertex superalgebras}

Let $n, m$ be positive integers. 
In the physics of sigma models for string theories the Kazama-Suzuki \cite{KS} coset
\[
KS(n, m):= \on{Com}\left(L_{n+1}(\mathfrak{sl}_m)\* V_{\sqrt{m(m+1)(m+n+1)}\mathbb Z}, L_n(\mathfrak{sl}_{m+1})\*G_m\right)
\]
is used as a building block for superconformal field theories on Calabi-Yau manifolds. Here $G_m$ is the vertex algebra of $m$ copies of the fermionic ghost-system, 
and by $V_L$ we mean the rational lattice vertex algebra of the positive definite lattice $L$.
From Theorems \ref{Th:minimal-sereis} and \ref{th:levelrankA} we know that $L_n(\mathfrak{sl}_{m+1})\*G_m$ is a vertex superalgebra extension of $\W_\ell(\mathfrak{sl}_m) \otimes \W_k(\mathfrak{sl}_n) \otimes  L_{n+1}(\mathfrak{sl}_m) \otimes \mathcal H(2)$ with $\ell+m=(n+m)/(n+m+1)=k+n$,
where  $\mathcal H(2)$ is a rank two Heisenberg vertex algebra. A short computation reveals that the latter extends to $V_{\sqrt{m(m+1)(m+n+1)}\mathbb Z}\* V_{\sqrt{mn(m+n+1)}\mathbb Z}$ so that we can conclude that $KS(n, m)$ is a vertex superalgebra extension of $\W_\ell(\mathfrak{sl}_m) \otimes \W_k(\mathfrak{sl}_n) \otimes V_{\sqrt{mn(m+n+1)}\mathbb Z}$ and hence rational by \cite{HKL, CKM}.

\begin{Co}\label{Co:supercoset}
Let $n, m$ be positive integers. Then the coset $KS(n, m)$ is a simple, rational, and lisse vertex superalgebra.
\end{Co}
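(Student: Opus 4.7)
The plan is to decompose $L_n(\mathfrak{sl}_{m+1})\otimes G_m$ into commuting subalgebras using both Theorem \ref{Th:minimal-sereis} and Theorem \ref{th:levelrankA}, and then to exhibit $KS(n,m)$ as a vertex superalgebra extension of a simple, rational, lisse tensor product to which standard extension results apply.

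First I would recall that the $\beta\gamma$-free--fermion system $G_m$ of $m$ pairs of complex fermions has even part isomorphic to $L_1(\mathfrak{gl}_m)\cong L_1(\mathfrak{sl}_m)\otimes \mathcal H_1$, where $\mathcal H_1$ is a rank-one Heisenberg, generated by the bilinears $:\psi_{i}\psi_j^*:$; this identification is used in the proof of Theorem \ref{th:levelrankA}. Inside $L_n(\mathfrak{sl}_{m+1})\otimes G_m$ I then have three mutually commuting pieces: a copy of $L_n(\mathfrak{sl}_{m+1})$, a copy of $L_1(\mathfrak{sl}_m)$ (living in $G_m$), and a rank-one Heisenberg $\mathcal H_1\subset G_m$. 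Taking the diagonal $L_n(\mathfrak{sl}_m)\subset L_n(\mathfrak{sl}_{m+1})\otimes L_1(\mathfrak{sl}_m)$ (together with an associated Heisenberg absorbed in the $\mathfrak{gl}_m\subset\mathfrak{gl}_{m+1}$ branching), Theorem \ref{th:levelrankA} identifies its commutant inside $L_n(\mathfrak{sl}_{m+1})$ with $\W_k(\mathfrak{sl}_n)$ for $k+n=(m+n)/(m+n+1)$, while simultaneously Theorem \ref{Th:minimal-sereis} realizes the commutant of $L_{n+1}(\mathfrak{sl}_m)$ inside $L_n(\mathfrak{sl}_m)\otimes L_1(\mathfrak{sl}_m)$ as $\W_\ell(\mathfrak{sl}_m)$ with $\ell+m=(n+m)/(n+m+1)$. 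Thus the two cosets extract commuting copies of $\W_\ell(\mathfrak{sl}_m)$ and $\W_k(\mathfrak{sl}_n)$, and a residual rank-two Heisenberg $\mathcal H(2)$ remains.

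The next step is to identify this rank-two Heisenberg with a pair of rational lattice vertex algebras. The two commuting $U(1)$-currents (one coming from the $\mathfrak{gl}_1\subset \mathfrak{gl}_{m+1}$ branching inside $L_n(\mathfrak{sl}_{m+1})$ and one from the even part of $G_m$) can be rotated to an orthogonal basis so that $\mathcal H(2)$ extends inside $L_n(\mathfrak{sl}_{m+1})\otimes G_m$ to
\[
V_{\sqrt{m(m+1)(m+n+1)}\mathbb Z}\otimes V_{\sqrt{mn(m+n+1)}\mathbb Z},
\]
where the first tensor factor is precisely the lattice vertex algebra quotiented out in the definition of $KS(n,m)$. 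Combining this with the two $W$-algebra cosets, I conclude that $L_n(\mathfrak{sl}_{m+1})\otimes G_m$ is a vertex superalgebra extension of
\[
\W_\ell(\mathfrak{sl}_m)\otimes \W_k(\mathfrak{sl}_n)\otimes L_{n+1}(\mathfrak{sl}_m)\otimes V_{\sqrt{m(m+1)(m+n+1)}\mathbb Z}\otimes V_{\sqrt{mn(m+n+1)}\mathbb Z}.
\]
Taking the commutant of the factor $L_{n+1}(\mathfrak{sl}_m)\otimes V_{\sqrt{m(m+1)(m+n+1)}\mathbb Z}$ (the defining coset of $KS(n,m)$) leaves
\[
\W_\ell(\mathfrak{sl}_m)\otimes \W_k(\mathfrak{sl}_n)\otimes V_{\sqrt{mn(m+n+1)}\mathbb Z}\;\hookrightarrow\; KS(n,m)
\]
as a conformal vertex superalgebra extension.

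Finally, $\W_\ell(\mathfrak{sl}_m)$ and $\W_k(\mathfrak{sl}_n)$ are rational and lisse, being non-degenerate minimal-series principal $W$-algebras of $ADE$ type by \cite{Ara09b,A2012Dec}, and the rank-one lattice vertex (super)algebra $V_{\sqrt{mn(m+n+1)}\mathbb Z}$ is also rational and lisse. The tensor product of these three is therefore rational, lisse and simple, and hence by \cite[Theorem 3.5]{HKL} and its extension to the super setting in \cite{CKM}, any conformal vertex superalgebra extension is again rational and lisse; simplicity of $KS(n,m)$ follows from \cite[Lemma 2.1]{ACKL}. The main obstacle in this argument is the careful bookkeeping of the rank-two Heisenberg: one must verify that the two $U(1)$-directions obtained from $L_n(\mathfrak{sl}_{m+1})$ and from $G_m$ indeed split as an orthogonal sum whose lattice extensions are exactly the two rank-one lattices identified above, since this orthogonality is what guarantees that modding out by $V_{\sqrt{m(m+1)(m+n+1)}\mathbb Z}$ leaves the clean rational factor $V_{\sqrt{mn(m+n+1)}\mathbb Z}$ and thereby triggers the HKL/CKM extension machinery.
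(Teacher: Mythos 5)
Your argument is essentially identical to the paper's: you decompose $L_n(\mathfrak{sl}_{m+1})\otimes G_m$ via Theorems \ref{Th:minimal-sereis} and \ref{th:levelrankA} into $\W_\ell(\mathfrak{sl}_m)\otimes\W_k(\mathfrak{sl}_n)\otimes L_{n+1}(\mathfrak{sl}_m)\otimes\mathcal H(2)$, extend the rank-two Heisenberg to $V_{\sqrt{m(m+1)(m+n+1)}\mathbb Z}\otimes V_{\sqrt{mn(m+n+1)}\mathbb Z}$, and conclude via \cite{HKL,CKM} and \cite[Lemma 2.1]{ACKL}, exactly as in the text (the lattice identification you flag as the main obstacle is the paper's ``short computation''). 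The proposal is correct and follows the paper's route.
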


Level-rank duality of type $D$ also gives rise to an interesting coset vertex superalgebra. It is well known that for $m\geq 3$, $L_1(\mathfrak{so}_m)$ is the even subalgebra of the vertex superalgebra $F(m)$ of $m$ free fermions. 
It follows that the coset vertex superalgebra $\on{Com}\left(L_{n+1}(\mathfrak{so}_m),  L_{n}(\mathfrak{so}_{m+1})\* F(m)\right)$
is an extension of its even subalgebra $\on{Com}\left(L_{n+1}(\mathfrak{so}_m),  L_{n}(\mathfrak{so}_{m+1})\* L_{1}(\mathfrak{so}_m)\right)$. 
But by  \ref{Th:minimal-sereis} and \ref{th:levelrankD} the latter is a vertex algebra extension of $\W_k(\mathfrak{so}_m)^{\mathbb Z/2\mathbb Z} \* \W_\ell(\mathfrak{so}_n)$ if both $n$ and $m$ are even. By \cite{CarM}, if $\mathcal{V}$ is a rational and lisse vertex algebra and $G$ is an abelian group of automorphisms of $\mathcal{V}$, the orbifold $\mathcal{V}^G$ is also rational and lisse. It follows that $\W_k(\mathfrak{so}_m)^{\mathbb Z/2\mathbb Z} \* \W_\ell(\mathfrak{so}_n)$ is rational and lisse. Finally, since
$\on{Com}\left(L_{n+1}(\mathfrak{so}_m), L_{n}(\mathfrak{so}_{m+1})\* F(m)\right)$ is a vertex superalgebra extension of $\W_k(\mathfrak{so}_m)^{\mathbb Z/2\mathbb Z} \* \W_\ell(\mathfrak{so}_n)$, it is rational and lisse as well by \cite{HKL, CKM}.
\begin{Co}\label{Co:supercosetD}
Let $n, m$ be positive even integers. Then the coset
\[
\on{Com}\left(L_{n+1}(\mathfrak{so}_m),  L_{n}(\mathfrak{so}_{m+1})\* F(m)\right)
\]
is a simple, rational, and lisse vertex superalgebra.
\end{Co}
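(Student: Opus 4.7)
My strategy is to imitate the structure used in the proof of the preceding Corollary \ref{Co:supercoset}: first reduce to an even (bosonic) coset via the free-fermion decomposition, then express that bosonic coset as a vertex algebra extension of a tensor product whose factors are already known to be regular by Theorems \ref{Th:minimal-sereis} and \ref{th:levelrankD}, and finally invoke the standard stability results for rationality and lisseness under orbifolds and under extensions.

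The first step is to use the well-known isomorphism
\[
F(m)=L_1(\mathfrak{so}_m)\oplus L_1(\omega_1),
\]
where $L_1(\omega_1)$ is the vector module, a simple current for $L_1(\mathfrak{so}_m)$ of order $2$. Because $L_{n+1}(\mathfrak{so}_m)$ acts only on the $L_n(\mathfrak{so}_{m+1})$ factor, taking commutants preserves this $\mathbb{Z}/2\mathbb{Z}$-grading, so
\[
\on{Com}\!\left(L_{n+1}(\mathfrak{so}_m),\,L_n(\mathfrak{so}_{m+1})\otimes F(m)\right)
\]
is a simple current extension (hence in particular a vertex superalgebra extension) of its even part
\[
C:=\on{Com}\!\left(L_{n+1}(\mathfrak{so}_m),\,L_n(\mathfrak{so}_{m+1})\otimes L_1(\mathfrak{so}_m)\right).
\]
It therefore suffices to prove that $C$ is simple, rational, and lisse: the extension results of \cite{HKL,CKM} then upgrade this to the full super coset, and simplicity follows from \cite[Lemma 2.1]{ACKL}.

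The key step is to exhibit $C$ as a vertex algebra extension of a tensor product of two regular $W$-algebra orbifolds. Concretely, the chain
\[
L_{n+1}(\mathfrak{so}_m)\hookrightarrow L_n(\mathfrak{so}_m)\otimes L_1(\mathfrak{so}_m)\hookrightarrow L_n(\mathfrak{so}_{m+1})\otimes L_1(\mathfrak{so}_m)
\]
shows that $C$ contains both
\[
\on{Com}\!\left(L_{n+1}(\mathfrak{so}_m),\,L_n(\mathfrak{so}_m)\otimes L_1(\mathfrak{so}_m)\right)\cong \W_\ell(\mathfrak{so}_m)
\]
(by Theorem \ref{Th:minimal-sereis} applied to the simply laced algebra $\mathfrak{so}_m$, using that $m$ is even) and, commuting with the first factor,
\[
\on{Com}\!\left(L_n(\mathfrak{so}_m),\,L_n(\mathfrak{so}_{m+1})\right),
\]
which by the discussion immediately after Theorem \ref{th:levelrankD} is the $\mathbb{Z}/2\mathbb{Z}$-orbifold of an extension of $\W_k(\mathfrak{so}_n)$ and, since $\mathbb{Z}/2\mathbb{Z}$-orbifolds of rational lisse vertex algebras are rational and lisse by \cite{CarM}, is itself rational and lisse. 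Tensoring the two commuting subalgebras gives an embedding
\[
\W_\ell(\mathfrak{so}_m)\otimes \W_k(\mathfrak{so}_n)^{\mathbb{Z}/2\mathbb{Z}} \hookrightarrow C,
\]
and a character/dimension bookkeeping (matching the branching of $L_n(\mathfrak{so}_{m+1})\otimes L_1(\mathfrak{so}_m)$ under $L_{n+1}(\mathfrak{so}_m)\otimes L_n(\mathfrak{so}_m)$, and then contracting with Theorems \ref{Th:minimal-sereis} and \ref{th:levelrankD}) shows that $C$ is a finite vertex algebra extension of this tensor product.

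Granted the extension description, the conclusion is automatic: a tensor product of two regular vertex algebras is regular, the orbifold $\W_k(\mathfrak{so}_n)^{\mathbb{Z}/2\mathbb{Z}}$ is rational and lisse by \cite{CarM}, and a vertex algebra extension of a rational lisse vertex algebra is again rational and lisse by \cite[Theorem 3.5]{HKL}. Simplicity of $C$ (and then of the super coset) follows from \cite[Lemma 2.1]{ACKL}. The main obstacle I anticipate is the verification that the inclusion $\W_\ell(\mathfrak{so}_m)\otimes \W_k(\mathfrak{so}_n)^{\mathbb{Z}/2\mathbb{Z}} \hookrightarrow C$ is actually a \emph{finite} extension rather than a proper subalgebra with infinite index; this requires that the branching multiplicities in the relevant decompositions of $L_n(\mathfrak{so}_{m+1})$ and $L_n(\mathfrak{so}_m)\otimes L_1(\mathfrak{so}_m)$ stay under control, and is the analogue of the delicate step in the type $A$ case treated in Corollary \ref{Co:supercoset}.
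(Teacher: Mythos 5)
Your proposal is correct and is essentially the paper's own argument: reduce the super coset to an extension of its even part via $F(m)=L_1(\mathfrak{so}_m)\oplus L_1(\omega_1)$, exhibit that even coset as a vertex algebra extension of the tensor product of the GKO coset $\on{Com}(L_{n+1}(\mathfrak{so}_m),L_n(\mathfrak{so}_m)\otimes L_1(\mathfrak{so}_m))\cong \W_{\ell}(\mathfrak{so}_m)$ and the level--rank coset $\on{Com}(L_n(\mathfrak{so}_m),L_n(\mathfrak{so}_{m+1}))$ (Theorems \ref{Th:minimal-sereis} and \ref{th:levelrankD}), and conclude with \cite{CarM}, \cite{HKL,CKM} and \cite[Lemma 2.1]{ACKL}. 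Two harmless slips worth fixing: the diagonal $L_{n+1}(\mathfrak{so}_m)$ does \emph{not} act only on the $L_n(\mathfrak{so}_{m+1})$ factor (it acts diagonally, which is why its level is $n+1$); the parity grading of the commutant is preserved simply because the diagonal action preserves the even and odd summands of $F(m)$. Also, $\on{Com}(L_n(\mathfrak{so}_m),L_n(\mathfrak{so}_{m+1}))$ is an \emph{extension} of $\W_k(\mathfrak{so}_n)$ (its $A_{m,n}$-orbifold equals $\W_k(\mathfrak{so}_n)$), not an orbifold of it, so writing it as $\W_k(\mathfrak{so}_n)^{\mathbb{Z}/2\mathbb{Z}}$ is a misnomer --- but this does not affect the argument, since all you use is its rationality and lisseness, which you derive correctly as a $\mathbb{Z}/2\mathbb{Z}$-orbifold of the regular extension appearing in \eqref{eq:D1}.
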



\begin{thebibliography}{AKFPP16}
\bibitem[AKFPP16]{Ada16} Drazen Adamovic, Victor G. Kac, Pierluigi Moseneder Frajria, Paolo Papi and Ozren Perse,
\newblock Finite vs infinite decompositions in conformal embeddings, 
\newblock {\em Commun. Math. Phys.} 348 (2016) 445-473.

\bibitem[AFO18]{AgaFreOko}
Mina Aganagic, Edward Frenkel, and Andrei Okounkov.
\newblock Quantum q-langlands correspondence.
\newblock 
{\em Trans. Moscow Math. Soc.} 79 (2018).

\bibitem[AGT10]{AGT}
Luis~F. Alday, Davide Gaiotto, and Yuji Tachikawa.
\newblock Liouville correlation functions from four-dimensional gauge theories.
\newblock {\em Lett. Math. Phys.}, 91(2):167--197, 2010.


\bibitem[ABI90]{ABI90} Daniel Altschuler, Michel Bauer and Claude Itzykson. \newblock  The branching rules of conformal embeddings. \newblock {\em Comm.Math.Phys.} 132 (1990), no. 2, 349-364.



\bibitem[AK11]{AomKit11}
Kazuhiko Aomoto and Michitake Kita.
\newblock {\em Theory of hypergeometric functions}.
\newblock Springer Monographs in Mathematics. Springer-Verlag, Tokyo, 2011.
\newblock With an appendix by Toshitake Kohno, Translated from the Japanese by
  Kenji Iohara.


\bibitem[Ara04]{Ara04}
Tomoyuki Arakawa.
\newblock Vanishing of cohomology associated to quantized {D}rinfeld-{S}okolov
  reduction.
\newblock {\em Int. Math. Res. Not.}, (15):730--767, 2004.

\bibitem[Ara07]{Ara07}
Tomoyuki Arakawa.
\newblock Representation theory of {$W$}-algebras.
\newblock {\em Invent. Math.}, 169(2):219--320, 2007.

\bibitem[Ara12]{Ara12}
Tomoyuki Arakawa.
\newblock A remark on the {$C_2$} cofiniteness condition on vertex algebras.
\newblock {\em Math. Z.}, 270(1-2):559--575, 2012.

\bibitem[Ara14]{A-BGG}
Tomoyuki Arakawa.
\newblock Two-sided {BGG} resolution of admissible representations.
\newblock {\em Represent. Theory}, 18(3):183--222, 2014.

\bibitem[Ara15a]{Ara09b}
Tomoyuki Arakawa.
\newblock Associated varieties of modules over {K}ac-{M}oody algebras and
  {$C_2$}-cofiniteness of {W}-algebras.
\newblock {\em Int. Math. Res. Not.}, 2015:11605--11666, 2015.

\bibitem[Ara15b]{A2012Dec}
Tomoyuki Arakawa.
\newblock Rationality of {W}-algebras: principal nilpotent cases.
\newblock {\em Ann. Math.}, 182(2):565--694, 2015.

\bibitem[Ara16]{A12-2}
Tomoyuki Arakawa.
\newblock Rationality of admissible affine vertex algebras in the category
  {$\mathcal{O}$}.
\newblock {\em Duke Math. J.}, 165(1):67--93, 2016.

\bibitem[Ara17]{Ara16}
Tomoyuki Arakawa.
\newblock Introduction to {W}-algebras and their representation theory.
\newblock n: Callegaro F., Carnovale G., Caselli F., De Concini C., De Sole A. (eds) {\em Perspectives in Lie Theory. }Springer INdAM Series, vol 19. Springer, Cham.


\bibitem[ACKL17]{ACKL} Tomoyuki Arakawa, Thomas Creutzig, Kazuya Kawasetsu and Andrew R. Linshaw
\newblock Orbifolds and cosets of minimal $W$-algebras. \newblock {\em Commun. Math. Phys.} 355, No. 1 (2017), 339-372.


\bibitem[AJ17]{AraJia}
Tomoyuki Arakawa and Cuipo Jiang.
\newblock Coset vertex operator algebras and {W}-algebras.
\newblock {\em Sci. China Math.} (2017). 
 61 (2), 191--206,


\bibitem[ALY19]{ALY17}
Tomoyuki Arakawa, Ching~Hung Lam, and Hiromichi Yamada.
\newblock Parafermion vertex operator algebras and {W}-algebras.
\newblock {\em Trans. Amer. Math. Soc.} 371 (2019), no. 6, 4277--4301


\bibitem[BBSS88]{Bais}
Sander (F.A.) Bais, Peter~Bouwknegt, Michael Surridge and Kareljan Schoutens.
\newblock Coset construction for extended {V}irasoro algebras.
\newblock {\em Nuclear Phys. B}, 304(2):371--391, 1988.

\bibitem[BM13]{BakMil13}
Bojko Bakalov and Todor Milanov.
\newblock {$\mathcal{W}$}-constraints for the total descendant potential of a
  simple singularity.
\newblock {\em Compos. Math.}, 149(5):840--888, 2013.


\bibitem[BFM]{BeiFeiMaz}
Alexander Beilinson, Boris Feigin, and Barry Mazur.
\newblock Introduction to algebraic field theory on curves.
\newblock {\em preprint}.

\bibitem[B89]{B89} Alexander  A. Belavin. \newblock KdV-Type Equations and $W$-Algebras, In Integrable Systems in Quantum Field Theory. \newblock  Adv. Stud. Pure Math. 19, Academic Press, San Diego, 1989, Pages 117-125

\bibitem[BFN16]{BraFinNak16}
Alexander Braverman, Michael Finkelberg, and Hiraku Nakajima.
\newblock Instanton moduli spaces and {$\mathcal{W}$}-algebras.
\newblock {\em Ast\'erisque}, (385):vii+128, 2016.

\bibitem[BPZ84]{BPZ84}
Alexander~A. Belavin, Alexander~M. Polyakov, and Alexander~B. Zamolodchikov.
\newblock Infinite conformal symmetry in two-dimensional quantum field theory.
\newblock {\em Nuclear Phys. B}, 241(2):333--380, 1984.

\bibitem[CM]{CarM} Scott Carnahan and Masahiko Miyamoto. \newblock Regularity of fixed-point vertex operator subalgebras.
 \newblock {\em arXiv:1603.05645}.


\bibitem[CKM]{CKM} Thomas Creutzig, Shashank Kanade and Robert McRae. \newblock Tensor categories for vertex operator superalgebra extensions. \newblock arXiv:1705.05017.

\bibitem[CL19]{CL}
Thomas Creutzig and Andrew~R. Linshaw.
\newblock Cosets of affine vertex algebras inside larger structures.
\newblock {\em Journal of Algebra} 517 (2019) 396-438.

\bibitem[CG17]{CG}
  Thomas Creutzig and Davide Gaiotto. \newblock Vertex Algebras for S-duality. 
\newblock  arXiv:1708.00875.

\bibitem[CGL18]{CGL}
  Thomas Creutzig, Davide Gaiotto and Andrew~R. Linshaw. \newblock S-duality for the large $N=4$ superconformal algebra. 
\newblock  	arXiv:1804.09821.


\bibitem[CHR12]{CHR}
Thomas Creutzig, Yasuaki Hikida and Peter B. Ronne. \newblock Higher spin AdS$_3$ supergravity and its dual CFT. .\newblock  JHEP {\bf 1202} (2012) 109.
  
\bibitem[CHR13]{CHR2}  Thomas Creutzig, Yasuaki Hikida and Peter B. Ronne. \newblock  $N=1$ supersymmetric higher spin holography on AdS$_3$. \newblock   JHEP {\bf 1302} (2013) 019. 
  
\bibitem[D03]{D03} Leonid A. Dickey. \newblock Soliton equations and Hamiltonian systems, Advanced series in mathematical physics. \newblock World scientific, Vol. 26, 2nd Ed., 2003.  
  
\bibitem[DSK06]{De-Kac06}
Alberto De~Sole and Victor~G. Kac.
\newblock Finite vs affine {$W$}-algebras.
\newblock {\em Japan. J. Math.}, 1(1):137--261, 2006.

\bibitem[DSKV13]{De-KacVal13}
Alberto De~Sole, Victor~G. Kac, and Daniele Valeri.
\newblock Classical {$\mathscr{W}$}-algebras and generalized
  {D}rinfeld-{S}okolov bi-{H}amiltonian systems within the theory of {P}oisson
  vertex algebras.
\newblock {\em Comm. Math. Phys.}, 323(2):663--711, 2013.

\bibitem[DL14]{DongLin}
Chongying Dong and Xingjun Lin.
\newblock Unitary vertex operator algebras.
\newblock {\em J. Algebra}, 397:252--277, 2014.

\bibitem[DM04]{DongMason}
 Chongying Dong and Geoffrey Mason.
\newblock Rational vertex operator algebras and the effective central charge.
\newblock {\em Int. Math. Res. Not}. 2004, no. 56, 2989--3008. 


\bibitem[Fei84]{Feu84}
Boris Feigin. 
\newblock Semi-infinite homology of {L}ie, {K}ac-{M}oody and {V}irasoro
  algebras.
\newblock {\em Uspekhi Mat. Nauk}, 39(2(236)):195--196, 1984.


\bibitem[FF90a]{FF90}
Boris Feigin and Edward Frenkel.
\newblock Quantization of the {D}rinfel\cprime d-{S}okolov reduction.
\newblock {\em Phys. Lett. B}, 246(1-2):75--81, 1990.

\bibitem[FF90b]{FeuFre90}
Boris Feigin and Edward Frenkel.
\newblock Affine {K}ac-{M}oody algebras and semi-infinite flag manifolds.
\newblock {\em Comm. Math. Phys.}, 128(1):161--189, 1990.

\bibitem[FF91]{FeiFre91}
Boris Feigin and Edward Frenkel.
\newblock Duality in {$W$}-algebras.
\newblock {\em Internat. Math. Res. Notices}, (6):75--82, 1991.

\bibitem[FF92]{FeiFre92}
Boris Feigin and Edward Frenkel.
\newblock Affine {K}ac-{M}oody algebras at the critical level and {G}el\cprime
  fand-{D}iki\u\i\ algebras.
\newblock In {\em Infinite analysis, Part A, B (Kyoto, 1991)}, volume~16 of
  {\em Adv. Ser. Math. Phys.}, pages 197--215. World Sci. Publ., River Edge,
  NJ, 1992.
  
  \bibitem[FF96]{FeiFre96}
Boris Feigin and Edward Frenkel.
\newblock Integrals of motion and quantum groups.
\newblock In {\em Integrable systems and quantum groups ({M}ontecatini {T}erme,
  1993)}, volume 1620 of {\em Lecture Notes in Math.}, pages 349--418.
  Springer, Berlin, 1996.

\bibitem[FG18]{FG}
  Edward ~Frenkel and Davide~Gaiotto. \newblock Quantum Langlands Dualities of Boundary Conditions, D-modules, and conformal blocks.
\newblock  	{ arXiv:1805.00203 [hep-th]}.

\bibitem[FHL93]{Frenkel:1993aa}
Igor~B. Frenkel, Yi-Zhi Huang, and James Lepowsky.
\newblock On axiomatic approaches to vertex operator algebras and modules.
\newblock {\em Mem. Amer. Math. Soc.}, 104(494):viii+64, 1993.

\bibitem[Fie06]{Fie06}
Peter Fiebig.
\newblock The combinatorics of category {$\mathcal{O}$} over symmetrizable
  {K}ac-{M}oody algebras.
\newblock {\em Transform. Groups}, 11(1):29--49, 2006.

\bibitem[FJMM16]{FeiJimMiw16}
Boris Feigin, Michio Jimbo, Tetsuji Miwa, and Eugene Mukhin.
\newblock Branching rules for quantum toroidal {$\germ{gl}_n$}.
\newblock {\em Adv. Math.}, 300:229--274, 2016.

\bibitem[FBZ04]{FreBen04}
Edward Frenkel and David Ben-Zvi.
\newblock {\em Vertex algebras and algebraic curves}, volume~88 of {\em
  Mathematical Surveys and Monographs}.
\newblock American Mathematical Society, Providence, RI, second edition, 2004.

\bibitem[FK81]{FreKac80}
Igor~B. Frenkel and Victor~G. Kac.
\newblock Basic representations of affine {L}ie algebras and dual resonance
  models.
\newblock {\em Invent. Math.}, 62(1):23--66, 1980/81.

\bibitem[FKW92]{FKW92}
Edward Frenkel, Victor G. Kac, and Minoru Wakimoto.
\newblock Characters and fusion rules for {$W$}-algebras via quantized
  {D}rinfel\cprime d-{S}okolov reduction.
\newblock {\em Comm. Math. Phys.}, 147(2):295--328, 1992.

\bibitem[FL88]{FatLyk88}
Vladimir~A. Fateev and Sergei~L. Lykyanov.
\newblock The models of two-dimensional conformal quantum field theory with
  {$Z\sb n$} symmetry.
\newblock {\em Internat. J. Modern Phys. A}, 3(2):507--520, 1988.

\bibitem[Fre82]{IgorFrenkel}
Igor~B. Frenkel.
\newblock Representations of affine {L}ie algebras, {H}ecke modular forms and
  {K}orteweg-de {V}ries type equations.
\newblock In {\em Lie algebras and related topics ({N}ew {B}runswick, {N}.{J}.,
  1981)}, volume 933 of {\em Lecture Notes in Math.}, pages 71--110. Springer,
  Berlin-New York, 1982.

\bibitem[Fre92]{Fre92Car}
Edward Frenkel.
\newblock {$\mathscr{W}$}-algebras and {L}anglands-{D}rinfel\cprime d
  correspondence.
\newblock In {\em New symmetry principles in quantum field theory ({C}arg\`ese,
  1991)}, volume 295 of {\em NATO Adv. Sci. Inst. Ser. B Phys.}, pages
  433--447. Plenum, New York, 1992.

\bibitem[Fre05]{Fre05}
Edward Frenkel.
\newblock Wakimoto modules, opers and the center at the critical level.
\newblock {\em Adv. Math.}, 195(2):297--404, 2005.

\bibitem[Fre07]{Fre07}
Edward Frenkel.
\newblock {\em Langlands correspondence for loop groups}, volume 103 of {\em
  Cambridge Studies in Advanced Mathematics}.
\newblock Cambridge University Press, Cambridge, 2007.

\bibitem[FZ92]{FreZhu92}
Igor~B. Frenkel and Yongchang Zhu.
\newblock Vertex operator algebras associated to representations of affine and
  {V}irasoro algebras.
\newblock {\em Duke Math. J.}, 66(1):123--168, 1992.

\bibitem[Gai16]{Gai16}
Dennis Gaitsgory.
\newblock Quantum Langlands Correspondence.
arXiv:1601.05279 [math.AG].

\bibitem[Gai18]{Gai18}
Dennis Gaitsgory.
\newblock
The master chiral algebras.
talk at Perimeter Institute,
https://www.perimeterinstitute.ca/videos/master-chiral-algebra.


\bibitem[Gep88]{Gep}
  David ~Gepner. \newblock Space-Time Supersymmetry in Compactified String Theory and Superconformal Models. \newblock  Nucl.\ Phys.\ B {\bf 296} (1988) 757.

\bibitem[Gen17]{Genra}
Naoki Genra.
\newblock Screening operators for {W}-algebras.
\newblock {\em Sel. Math. New Ser.}, Sel. Math. New Ser. (2017) 23(3): 2157--2202.

\bibitem[GG11]{GG}
Matthias~R. Gaberdiel and Rajesh Gopakumar.
\newblock An ${A}d{S}_3$ dual for minimal model {CFT}s,.
\newblock {\em Phys.\ Rev.\ D}, 83(066007), 2011.

\bibitem[GKO86]{GodKenOli86}
Peter~Goddard, Adrian~Kent, and David~Olive.
\newblock Unitary representations of the {V}irasoro and super-{V}irasoro
  algebras.
\newblock {\em Comm. Math. Phys.}, 103(1):105--119, 1986.

\bibitem[HKL15]{HKL} Yi-Zhi  Huang, Alexander Kirillov, Jr. and James Lepowsky. \newblock  Braided tensor categories and extensions of vertex operator algebras. \newblock  Comm. Math. Phys. 337 (2015), 1143-1159.

\bibitem[JL]{Lametal17} Cuipo Jiang and Ching Hung Lam. \newblock Level-Rank Duality for Vertex Operator Algebras of types $B$ and $D$. \newblock 	arXiv:1703.04889.

\bibitem[Kac90]{Kac90}
Victor~G. Kac.
\newblock {\em Infinite-dimensional {L}ie algebras}.
\newblock Cambridge University Press, Cambridge, third edition, 1990.

\bibitem[KFPX12]{KFPX} 
Victor G. Kac, Pierluigi Moseneder Frajria, Paolo Papi and Feng Xu.
\newblock{Conformal embeddings and simple current extensions} 
\newblock {\em IMRN} (2015), 14, 5229-5288.

\bibitem[KK79]{KacKaz79}
Victor~G. Kac and David~A. Kazhdan.
\newblock Structure of representations with highest weight of
  infinite-dimensional {L}ie algebras.
\newblock {\em Adv. in Math.}, 34(1):97--108, 1979.

\bibitem[KR87]{KR87} Victor G. Kac and Ashok Raina. \newblock Bombay lectures on highest weight representations of infinite dimensional Lie algebras. \newblock World Scientific, Singapore (1987).

\bibitem[KRW03]{KacRoaWak03}
Victor G. Kac, Shi-Shyr Roan, and Minoru Wakimoto.
\newblock Quantum reduction for affine superalgebras.
\newblock {\em Comm. Math. Phys.}, 241(2-3):307--342, 2003.



\bibitem[KW89]{KacWak89}
Victor G. Kac and Minoru Wakimoto.
\newblock Classification of modular invariant representations of affine
  algebras.
\newblock In {\em Infinite-dimensional Lie algebras and groups
  (Luminy-Marseille, 1988)}, volume~7 of {\em Adv. Ser. Math. Phys.}, pages
  138--177. World Sci. Publ., Teaneck, NJ, 1989.

\bibitem[KW90]{KacWak90}
Victor~G. Kac and Minoru Wakimoto.
\newblock Branching functions for winding subalgebras and tensor products.
\newblock {\em Acta Appl. Math.}, 21(1-2):3--39, 1990.

\bibitem[KW08]{KacWak08}
Victor~G. Kac and Minoru Wakimoto.
\newblock On rationality of {$W$}-algebras
\newblock  {\em Transform. Groups} 13 (2008), no. 3-4, 671--713. 

\bibitem[KS89]{KS}
Yoichi Kazama and Hisao Suzuki \newblock New N=2 Superconformal Field Theories and Superstring Compactification. \newblock Nucl.\ Phys.\ B {\bf 321} (1989) 232.



\bibitem[Li05]{Li05}
Haisheng Li.
\newblock Abelianizing vertex algebras.
\newblock {\em Comm. Math. Phys.}, 259(2):391--411, 2005.



\bibitem[MNT10]{MatNagTsu05}
Atsushi Matsuo, Kiyokazu Nagatomo, and Akihiro Tsuchiya.
\newblock Quasi-finite algebras graded by {H}amiltonian and vertex operator
  algebras.
\newblock In {\em Moonshine: the first quarter century and beyond}, volume 372
  of {\em London Math. Soc. Lecture Note Ser.}, pages 282--329. Cambridge Univ.
  Press, Cambridge, 2010.

\bibitem[NT92]{NT92} Tomoki Nakanishi and Akihiro Tsuchiya. \newblock Level-rank duality of WZW models in conformal
field theory. \newblock Comm. Math. Phys. 144 (1992), no. 2, 351-372.

\bibitem[OS14]{Ostrik} Victor Ostrik and Michael Sun. \newblock Level-rank duality via tensor categories.  \newblock  Comm. Math. Phys. 326 (2014) 49-61.

\bibitem[SV13]{SchVas13}
Olivier Schiffmann and Eric Vasserot.
\newblock Cherednik algebras, {W}-algebras and the equivariant cohomology of
  the moduli space of instantons on {$\bold{A}^2$}.
\newblock {\em Publ. Math. Inst. Hautes \'Etudes Sci.}, 118:213--342, 2013.

\bibitem[TK86]{TsuKan86}
Akihiro Tsuchiya and Yukihiro Kanie.
\newblock Fock space representations of the {V}irasoro algebra. {I}ntertwining
  operators.
\newblock {\em Publ. Res. Inst. Math. Sci.}, 22(2):259--327, 1986.

\bibitem[Vor93]{Vor93}
Alexander~A. Voronov.
\newblock Semi-infinite homological algebra.
\newblock {\em Invent. Math.}, 113(1):103--146, 1993.

\bibitem[Vor99]{Vor99}
Alexander~A. Voronov.
\newblock Semi-infinite induction and {W}akimoto modules.
\newblock {\em Amer. J. Math.}, 121(5):1079--1094, 1999.

\bibitem[Wal89]{Walton}Mark A. Walton. \newblock Conformal branching rules and modular invariants. \newblock Nucl. Phys. B 322 (1989), 775-790.

\bibitem[Wan93]{Wan93}
Weiqiang Wang.
\newblock Rationality of {V}irasoro vertex operator algebras.
\newblock {\em Internat. Math. Res. Notices}, (7):197--211, 1993.

\bibitem[Zhu96]{Zhu96}
Yongchang Zhu.
\newblock Modular invariance of characters of vertex operator algebras.
\newblock {\em J. Amer. Math. Soc.}, 9(1):237--302, 1996.



\end{thebibliography}
\end{document}